\newtheorem{theorem}{Theorem}[section]
\newtheorem{conjecture}[theorem]{Conjecture}
\newtheorem{proposition}[theorem]{Proposition}
\newtheorem{corollary}[theorem]{Corollary}
\newtheorem{example}[theorem]{Example}
\newtheorem{remark}[theorem]{Remark}
\newtheorem{definition}[theorem]{Definition}
\DeclareMathOperator{\codim}{codim}
\DeclareMathOperator{\rank}{rank}
\DeclareMathOperator{\Gr}{Gr}
\DeclareMathOperator{\Fl}{Fl}
\DeclareMathOperator{\Jac}{Jac}
\DeclareMathOperator{\Pl}{Pl}
\DeclareMathOperator{\rowspace}{rowspace}
\DeclareMathOperator{\colspace}{colspace}
\DeclareMathOperator{\s}{S}
\DeclareMathOperator{\hats}{\widehat{\s}}
\DeclareMathOperator{\tbox}{\rule{-2pt}{8pt}}
\DeclareMathOperator{\Span}{span}
\DeclareMathOperator{\St}{St}
\DeclareMathOperator{\Syl}{Syl}
\DeclareMathOperator{\SYT}{SYT}
\DeclareMathOperator{\Sturm}{Sturm}
\DeclareMathOperator{\Wr}{Wr}
\DeclareMathOperator{\GL}{GL}
\DeclareMathOperator{\HG}{HG}
\DeclareMathOperator{\LG}{LG}
\DeclareMathOperator{\RGr}{\mathbb{R}Gr}
\DeclareMathOperator{\RLG}{\mathbb{R}LG}
\DeclareMathOperator{\SL}{SL}
\DeclareMathOperator{\Sp}{Sp}
\DeclareMathOperator{\Id}{Id}
\DeclareMathOperator{\IN}{in}
\DeclareMathOperator{\Mat}{Mat}
\DeclareMathOperator{\vari}{var}
\DeclareMathOperator{\sign}{sign}
\DeclareMathOperator{\closure}{closure}
\newcommand{\ba}{{\bf a}}
\newcommand{\be}{{\bf e}}
\newcommand{\bff}{{\bf f}}
\newcommand{\bg}{{\bf g}}
\newcommand{\bh}{{\bf h}}
\newcommand{\bk}{{\bf k}}
\newcommand{\balpha}{{\bm \alpha}}
\newcommand{\bbeta}{{\bm \beta}}
\newcommand{\C}{{\mathbb C}}
\newcommand{\R}{{\mathbb R}}
\newcommand{\p}{{\mathbb P}}
\newcommand{\Fdot}{F_\bullet}
\newcommand{\Gdot}{G_\bullet}
\newcommand{\Edot}{E_\bullet}
\newcommand{\Eop}{E_\bullet'}
\newcommand{\barF}{\overline{\Fdot}}
\newcommand{\Fdual}{F_\bullet^\perp}
\newcommand{\I}{\raisebox{-1pt}{\includegraphics{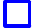}}}
\newcommand{\Is}{\includegraphics{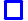}}
\newcommand{\It}{\includegraphics{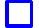}}
\newcommand{\Iss}{\includegraphics{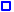}}
\newcommand{\IIs}{\includegraphics{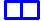}}
\newcommand{\III}{\raisebox{-1pt}{\includegraphics{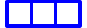}}}
\newcommand{\IIIt}{\includegraphics{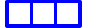}}
\newcommand{\IIIIIt}{\includegraphics{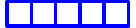}}
\newcommand{\IIi}{\raisebox{-4.5pt}{\includegraphics{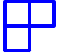}}}
\newcommand{\IIis}{\includegraphics{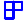}}
\newcommand{\IIit}{\includegraphics{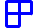}}
\newcommand{\IIii}{\raisebox{-4.5pt}{\includegraphics{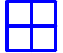}}}
\newcommand{\IIiib}{\includegraphics{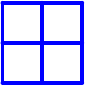}}
\newcommand{\IIiis}{\includegraphics{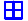}}
\newcommand{\IIiit}{\includegraphics{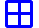}}
\newcommand{\IIIi}{\raisebox{-4.5pt}{\includegraphics{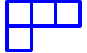}}}
\newcommand{\IIIis}{\includegraphics{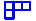}}
\newcommand{\IIIit}{\includegraphics{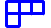}}
\newcommand{\IIIiiit}{\includegraphics{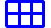}}
\newcommand{\IIIIiiiit}{\includegraphics{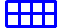}}
\newcommand{\lenIIi}{\raisebox{-4.5pt}{\includegraphics{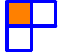}}}
\newcommand{\lenIIii}{\raisebox{-4.5pt}{\includegraphics{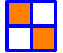}}}
\newcommand{\IiI}{{\raisebox{-8pt}{\includegraphics{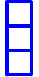}}}}
\newcommand{\IiIt}{\includegraphics{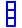}}
\newcommand{\IIiI}{{\raisebox{-8pt}{\includegraphics{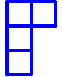}}}}
\newcommand{\IIiIs}{\includegraphics{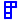}}
\newcommand{\IIiiIIt}{\includegraphics{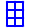}}
\newcommand{\IIIiI}{{\raisebox{-8pt}{\includegraphics{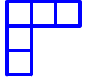}}}}
\newcommand{\IIIiIs}{\includegraphics{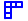}}
\newcommand{\IIIiIt}{\includegraphics{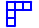}}
\newcommand{\IIIiiIs}{\includegraphics{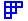}}
\newcommand{\IIIiiIt}{\includegraphics{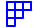}}
\newcommand{\IIIiiiII}{{\raisebox{-8pt}{\includegraphics{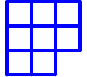}}}}
\newcommand{\IIIiiiIIt}{\includegraphics{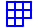}}
\newcommand{\IIIiiiIII}{{\raisebox{-8pt}{\includegraphics{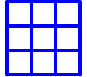}}}}
\newcommand{\IIIiiiIIIt}{\includegraphics{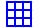}}
\newcommand{\IIIIiiiI}{{\raisebox{-8pt}{\includegraphics{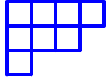}}}}
\newcommand{\lenIIIIiiiI}{{\raisebox{-8pt}{\includegraphics{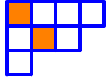}}}}
\newcommand{\omas}{{\includegraphics{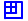}}}
\newcommand{\omat}{\includegraphics{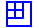}}
\newcommand{\IIIiiiIminusIi}{{\raisebox{-8pt}{\includegraphics{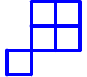}}}}
\newcommand{\IIIiiiIminusIit}{\includegraphics{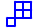}}
\newcommand{\IIIIiiiIminusIIi}{{\raisebox{-8pt}{\includegraphics{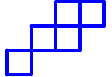}}}}
\newcommand{\IIIIiiiIminusIIib}{{\includegraphics{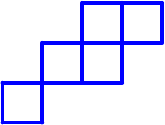}}}
\newcommand{\IIIiiiIIminusIIiit}{\includegraphics{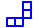}}
\newcommand{\IIIiiiIIminusIIiib}{\includegraphics{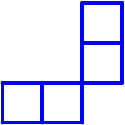}}
\newcommand{\IIIiiiIIminusIb}{\includegraphics{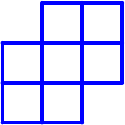}}
\newcommand{\IIIiGrIIIvii}{{\raisebox{-8pt}{\includegraphics{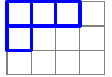}}}}
\newcommand{\IIIIiiiIGrIIIvii}{{\raisebox{-8pt}{\includegraphics{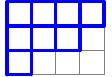}}}}
\newcommand{\IIIiIp}{\includegraphics{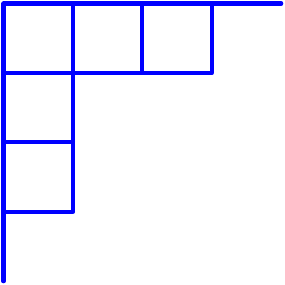}}
\newcommand{\ik}{{(\includegraphics{figures/1t.pdf}\rule{0pt}{8pt}^{k^2})}}
\definecolor{lightgray}{gray}{0.8}
\definecolor{4LinesPurple}{rgb}{0.49,0,0.49} 
\definecolor{4LinesGreen}{rgb}{0.20,0.68,0} 
\definecolor{4LinesRed}{rgb}{0.95,0,0} 
\definecolor{4LinesBlue}{rgb}{0,0,0.9} 
\definecolor{4LinesDarkBlue}{rgb}{0,0,0.55} 
\newcommand{\diagdots}[3][-25]{%
  \rotatebox{#1}{\makebox[0pt]{\makebox[#2]{\xleaders\hbox{$\cdot$\hskip#3}\hfill\kern0pt}}}%
}
\begin{document}

  %
  \tamuthesissetmanuscripttitle{Reality and Computation in Schubert Calculus}
  \tamuthesissetpapertype{Dissertation}
  \tamuthesissetfullname{Nickolas Jason Hein}
  \tamuthesissetdegree{Doctor of Philosophy}
  \tamuthesissetdepthead{Emil Straube}
  \tamuthesissetgradmonth{August}
  \tamuthesissetgradyear{2013}
  \tamuthesissetdepartment{Mathematics}
  \tamuthesissetchairone{Frank Sottile}       
  \tamuthesissetchairtwo{}    
  \tamuthesissetmemberone{Joseph Landsberg}   
  \tamuthesissetmembertwo{J.\ Maurice Rojas}   
  \tamuthesissetmemberthree{Nancy Amato} 
  
  \tamuthesistitlepage 

  \begin{tamuthesisabstractchapter}

The Mukhin-Tarasov-Varchenko Theorem (previously the Shapiro Conjecture) asserts that a Schubert problem has all solutions distinct and real if the Schubert varieties involved osculate a rational normal curve at real points.
When conjectured, it sparked interest in real osculating Schubert calculus, and computations played a large role in developing the surrounding theory.
Our purpose is to uncover generalizations of the Mukhin-Tarasov-Varchenko Theorem, proving them when possible.
We also improve the state of the art of computationally solving Schubert problems, allowing us to more effectively study ill-understood phenomena in Schubert calculus.

We use supercomputers to methodically solve real osculating instances of Schubert problems.
By studying over 300 million instances of over 700 Schubert problems, we amass data significant enough to reveal possible generalizations of the Mukhin-Tarasov-Varchenko Theorem and compelling enough to support our conjectures.
Combining algebraic geometry and combinatorics, we prove some of these conjectures.
To improve the efficiency of solving Schubert problems, we reformulate an instance of a Schubert problem as the solution set to a square system of equations in a higher-dimensional space.

During our investigation, we found the number of real solutions to an instance of a symmetrically defined Schubert problem is congruent modulo four to the number of complex solutions.
We proved this congruence, giving a new invariant in enumerative real algebraic geometry.
We also discovered a family of Schubert problems whose number of real solutions to a real osculating instance has a lower bound depending only on the number of defining flags with real osculation points.

We conclude that our method of computational investigation is effective for uncovering phenomena in enumerative real algebraic geometry.
Furthermore, we point out that our square formulation for instances of Schubert problems may facilitate future experimentation by allowing one to solve instances using certifiable numerical methods in lieu of more computationally complex symbolic methods.
Additionally, the methods we use for proving the congruence modulo four and for producing an unexpected square system of equations are both quite general, and they may be of use in future projects.

\end{tamuthesisabstractchapter}

  \begin{tamuthesisdedicationchapter}

Mom, your confidence in me believes I can do anything.
It gives me hope and pushes me on.

Dad, thanks for teaching me to work hard and keep having fun.
I hope I never stop learning and growing.

Jeanette, where would I be without you?
I don't know, but it would be less fun.

\end{tamuthesisdedicationchapter}

  \begin{tamuthesisacknowledgementschapter}

I would like to thank my advisor Frank Sottile for giving generously of his time and energy to help me achieve my goals.

\end{tamuthesisacknowledgementschapter}


\tamuthesistoc

\tamuthesislof

\tamuthesislot

  \tamuthesisintroductionchapter
\label{chapIntro}

The fundamental theorem of algebra states that the number of complex roots of a univariate polynomial is the degree of the polynomial, counting multiplicities.
B\'ezout's Theorem gives the number of points of intersection of two projective plane curves, thereby generalizing the fundamental theorem of algebra.
Enumerative algebraic geometry studies the further generalization of counting solutions to polynomial systems with geometric meaning.
The most elegant results in enumerative algebraic geometry, such as the fundamental theorem of algebra and B\'ezout's Theorem, depend on working over an algebraically closed field, so they are of limited use in applications which require information about real solutions.

One real analogue to the fundamental theorem of algebra is Descar\-tes's rule of signs, which bounds the number of positive roots of a univariate polynomial with coefficients in $\R$.
With very little work, one may use the rule of signs to  find an upper bound $R$ on the number $r$ of real roots of a real polynomial.
Since nonreal roots of real polynomials come in pairs, we have $r\equiv R$ mod 2.

The inelegance of counting the real roots of a polynomial compared to counting complex roots is typical of statements in enumerative real algebraic geometry.
This makes real statements harder to detect and less attractive to prove.
As a result, the enumerative theory of real algebraic geometry is not as well formed as its complex companion.

With the use of computers we may now engage in a study of enumerative real algebraic geometry that is long overdue.
One example of success in this field is the Shapiro Conjecture, made by the brothers Boris and Michael Shapiro in 1993.
The conjecture was refined and supported by computational data collected by Sottile \cite{Sottile2000}.
Eremenko and Gabrielov proved partial results \cite{EG2002b}, and the full conjecture for the real Schubert calculus of Grassmannians was proved by Mukhin, Tarasov, and Varchenko \cite{MTV2009a,MTV2009b}.
The Mukhin-Tarasov-Varchenko Theorem states that a Schubert problem has all solutions real and distinct if the Schubert varieties involved are defined with respect to distinct real flags osculating a single real parametrized rational normal curve.
Thus the number of real solutions to the corresponding system of real polynomials depends only on the Schubert problem, and this number may be obtained using the Littlewood-Richardson rule.

Computational projects \cite{secant,monotone,RSSS2006} have suggested generalizations to the Mukhin-Tarasov-Varchenko Theorem, some of which now have been proven \cite{EG2002b,mod4,MTV2009a,MTV2009b}.
In this thesis, we describe a computational project extending the study by Eremenko and Gabrielov \cite{EG2002a} of lower bounds on the number of real solutions to certain Schubert problems.
Eremenko and Gabrielov computed a topological degree which gives a lower bound for the number of real points in an intersection of osculating Schubert varieties when the intersection is stable under complex conjugation and at most two of the Schubert varieties are not hypersurfaces.
We solved over 339 million instances of 756 Schubert problems, including those involving non-hypersurface Schubert varieties, to investigate these Eremenko-Gabrielov type lower bounds.
For Schubert problems involving at most two non-hypersurface Schubert varieties, we tested the sharpness of known bounds.

During our computational investigation, we observed that the number of real solutions to a real Schubert problem with certain symmetries is congruent modulo four to the number of complex solutions.
This is stronger than the usual congruence modulo two arising from nonreal solutions coming in complex conjugate pairs.
While this congruence was unexpected, the underlying reason is simple enough: there are two involutions acting on the solutions to a real symmetric Schubert problem, complex conjugation and a Lagrangian involution.
When subtle nondegeneracy conditions are satisfied, the involutions are independent.
This gives Theorem \ref{thm:mod4}, the first of our two main results.

Computational complexity can be a serious obstacle when studying systems of equations, and even more so when we investigate systems by the hundreds of millions.
As with previous large-scale computations, we were limited by the severe complexity of symbolic computation \cite{HL2011,MM1982}.
Numerical homotopy methods provide an alternative for solving problems which are infeasible by Gr\"obner basis methods in characteristic zero, but the approximate solutions produced do not come with a certificate verifying the solutions.
There is software which may be used to certify approximate solutions \cite{alphaCertified}, but the algorithms used require a square polynomial system.
That is, the number of equations must equal the number of variables, and there must be finitely many solutions.
Like many other problems in algebraic geometry, Schubert problems are traditionally not defined by a square system.

We give a primal-dual formulation of a Schubert problem which presents it as a square system in local coordinates.
This reformulation is presented in our second main result, Theorem \ref{thm:SchubProbCompInt}, and it allows one to certify approximate solutions obtained numerically.

In Chapter \ref{chapSchubert}, we give definitions needed for Schubert calculus and a brief history of conjectures and theorems in real Schubert calculus.
In Chapter \ref{chapLower}, we describe the computational project extending the study of Eremenko-Gabrielov type lower bounds.
In Chapter ~\ref{chapMod4}, we prove a new theorem in enumerative real algebraic geometry, the congruence modulo four discovered in the computational project.
In Chapter ~\ref{chapSquare}, we give a method for formulating a general Schubert problem as a square system.

\chapter{\uppercase {Real Schubert Calculus}}
  \label{chapSchubert}
  Schubert calculus is the study of linear spaces having special position with respect to fixed but general linear spaces.
We provide background and describe a series of surprising conjectures and theorems about real solutions to problems in Schubert calculus.
Large computations played a big role in uncovering conjectures and motivating theorems in this area.

\section{Preliminaries}\label{sec:prelim}

We assume knowledge of \cite{CLO2007} as a basic reference.
We are interested in enumerative problems in real Schubert calculus which are solved by counting real points in a variety.
We provide background which is useful for counting these points when the associated ideal is generated by a set of real multivariate polynomials.
The first step is to write the generators in a standard form using a Gr\"obner basis.

Let $x=(x_1,\dotsc,x_q)$ denote variables and $a=(a_1,\dotsc,a_q)$ denote an \emph{exponent vector} so that $x^a := {x_1}^{a_1}\cdots {x_q}^{a_q}$ is a monomial.
A \emph{term order} on $\C[x]$ is a well-ordering of monomials of $\C[x]$, for which $1$ is minimal, and which respects multiplication.
The \emph{lexicographic term order $\prec$} on $\C[x]$ is the term order, such that $x^a\prec x^b$ if the last nonzero entry of $b-a$ is positive.
We give some comparisons for $q=3$,
\[1 \prec x_1 \prec x_1^9 \prec x_1^{17} \prec x_2 \prec x_2^2x_1^6 \prec x_2^3 \prec x_3 \prec x_3x_1.\]

Let $f(x)\in \C[x]$ be a multivariate polynomial.
The \emph{initial term $\IN_{\prec} f$} of $f(x)$ is the maximal term of $f$ with respect to $\prec$.
For example, if $f(x) = 4 - 2x_1 + x_2 + 3x_3$ and $g(x) = x_2x_1^{9} + 3 x_2^2x_1 + 2 x_2^{5} - 5 x_3x_1$, then $\IN_\prec f=3x_3$ and $\IN_\prec g= -5 x_3x_1$.

This definition extends to ideals.
The \emph{initial ideal $\IN_\prec I$} of an ideal $I\subset\C[x]$ is the ideal generated by the initial terms of elements of $I$,
\[\IN_\prec I :=( \IN_\prec f |\ f\in I )\,.\]

A \emph{Gr\"obner basis $B=(g_1,\dotsc,g_N)$} of an ideal $I$ is a generating set for $I$ with the property that $(\IN_\prec g_1,\dotsc,\IN_\prec g_N)$ is a generating set of $\IN_\prec I$.
There are efficient algorithms implemented in the computer algebra system \url{Singular}, which calculate Gr\"obner bases \cite{Singular}.

Suppose $f=(f_1,\dotsc,f_p)$ is a system of multivariate polynomials in the variables $x$ with finitely many common zeros and let $I$ be the ideal generated by $f$.
An \emph{eliminant of $I$} is a univariate polynomial $g(x_1)\subset I$ of minimal degree.
This implies that the roots of $g_1$ are the $x_1$-values of the points in the variety $\mathcal{V}(I)\subset \C^q$.

If $\mathcal{V}(I)$ is zero-dimensional, then the \emph{degree} $d$ of $I$ is the number of points in $\mathcal{V}(I)$, counting multiplicity.
(Here, the multiplicity of a point in a zero-dimensional scheme is the usual Hilbert-Samuel multiplicity.)
In this case, if the points of $\mathcal{V}(I)$ have distinct $x_1$-values, then an eliminant $g$ of $I$ has degree $d$.
An eliminant may be calculated using a Gr\"obner basis with respect to the lexicographic term order $\prec$.
Indeed, one of the generators will be an eliminant.

A \emph{reduced lexicographic Gr\"obner basis of $I$} is a Gr\"obner basis $B=(b_1,\dotsc,b_N)$ with respect to the lexicographic term order $\prec$ such that $\IN_\prec b_i$ does not divide any term of $b_j$ for distinct $i,j\leq N$.
Given a Gr\"obner basis with respect to $\prec$, one may obtain a reduced lexicographic Gr\"obner basis by iteratively reducing the generators using the Euclidean algorithm.
\begin{proposition}[The Shape Lemma \cite{shape}]\label{prop:shapeLemma}
 Let $I\subset\C[x]$ be an ideal such that $\mathcal{V}(I)$ is zero-dimensional.
 Suppose $f$ is a generating set for $I$ and $B$ is a reduced lexicographic Gr\"obner basis of $I$ obtained by applying Buchberger's algorithm to $f$.
 If the eliminant $g\in B$ has degree $d = \deg(I)$ and $g$ is square-free, then
 \[B = (g(x_1),x_2-g_2(x_1),\dotsc,x_q-g_q(x_1))\,,\]
 with $\deg(g_{j})<d$ for $j>1$.
\end{proposition}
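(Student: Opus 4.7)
The plan is to establish the explicit form of $B$ by first identifying $\mathcal{V}(I)$ as a set of $d$ reduced points with distinct $x_1$-coordinates, then exhibiting the required polynomials in $I$ via Lagrange interpolation, and finally using the reducedness of $B$ to pin down their exact shape.

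First, since the eliminant $g\in I$ depends only on $x_1$, the $x_1$-coordinates of points of $\mathcal{V}(I)$ lie among the roots of $g$. As $g$ is square-free of degree $d$, it has $d$ distinct roots $\alpha_1,\dotsc,\alpha_d\in\C$. Because $\deg(I)=d$ is the total number of points in $\mathcal{V}(I)$ counted with (Hilbert–Samuel) multiplicity, and because points with distinct $x_1$-values cannot coalesce, it follows that $\mathcal{V}(I)$ consists of exactly $d$ reduced points $p_i=(\alpha_i,\beta_{i,2},\dotsc,\beta_{i,q})$. In particular $I$ is radical, so $I=I(\mathcal{V}(I))$.

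Second, for each $j>1$, I would use Lagrange interpolation at the $d$ distinct nodes $\alpha_1,\dotsc,\alpha_d$ to produce a unique $h_j(x_1)\in\C[x_1]$ of degree strictly less than $d$ with $h_j(\alpha_i)=\beta_{i,j}$ for every $i$. The polynomial $x_j-h_j(x_1)$ then vanishes on every point of $\mathcal{V}(I)$ and hence lies in $I$ by radicality. Under $\prec$ its initial term is $x_j$, since $\deg h_j<d$ and lex places $x_j$ above every monomial in $x_1$ alone. Combining this with $\IN_\prec g = x_1^d$, we obtain
\[
J \;:=\; (x_1^d,\,x_2,\dotsc,x_q)\;\subseteq\;\IN_\prec I.
\]
Now invoke the standard identity $\dim_\C\C[x]/I=\dim_\C\C[x]/\IN_\prec I$; since $\dim_\C\C[x]/J=d=\dim_\C\C[x]/I$, the containment is forced to be an equality $\IN_\prec I = J$.

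Third, I would read off the structure of $B$. The minimal generators of $\IN_\prec I$ are $x_1^d, x_2,\dotsc,x_q$, so the reduced lex Gröbner basis $B$ contains exactly one monic element for each such generator: one with initial term $x_1^d$ (which is $g$ itself) and, for each $j>1$, one with initial term $x_j$. By the reducedness condition, no non-leading term of the latter element may be divisible by $x_1^d$ or by any $x_k$ with $2\le k\le q$. The only monomials strictly below $x_j$ in lex order that avoid these divisors are $1,x_1,\dotsc,x_1^{d-1}$, so the element has the required form $x_j-g_j(x_1)$ with $\deg g_j<d$.

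The main obstacle is the second step: one must justify both that $I$ is radical (which requires the multiplicity argument from the first step together with the hypothesis $\deg g=\deg I$) and that $\dim_\C\C[x]/I=\dim_\C\C[x]/\IN_\prec I$, so that the containment $J\subseteq\IN_\prec I$ can be upgraded to equality by a dimension count. Once this is in hand, the shape of $B$ is a formal consequence of what it means to be a reduced Gröbner basis for the monomial ideal $J$.
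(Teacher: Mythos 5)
Your proof is correct in outline but takes a genuinely different and somewhat longer route than the paper's. The paper argues entirely at the level of initial ideals: because $g$ generates $I\cap\C[x_1]$ and $x_1$ is the smallest variable in the lex order, no element of $I$ can have initial term $x_1^j$ with $j<d$, so $1,x_1,\dotsc,x_1^{d-1}$ are standard monomials; since the number of standard monomials is $\deg(I)=d$, these are all of them, hence $\IN_\prec I=(x_1^d,x_2,\dotsc,x_q)$, and the shape of $B$ follows by reduction. Note that this never invokes the square-free hypothesis. Your route instead first establishes that $I$ is radical and $\mathcal{V}(I)$ consists of $d$ reduced points with distinct $x_1$-coordinates (which uses square-freeness in an essential way), then exhibits the polynomials $x_j-g_j(x_1)\in I$ explicitly by Lagrange interpolation, and closes with the same dimension count $\dim_\C\C[x]/J=d=\dim_\C\C[x]/\IN_\prec I$. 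The payoff of your route is that it is constructive, identifying the tails $g_j$ as the interpolating polynomials through the points of $\mathcal{V}(I)$; the cost is that it needs a hypothesis the shorter argument does not.

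There is one weak link in your first paragraph. You record only that the $x_1$-coordinates of $\mathcal{V}(I)$ \emph{lie among} the roots of $g$, and then conclude from $\deg(I)=d$ that $\mathcal{V}(I)$ consists of $d$ reduced points with distinct $x_1$-coordinates. That does not follow from the one-sided containment: it is a priori consistent with your stated premises that only $d'<d$ of the roots $\alpha_i$ are attained while some points of $\mathcal{V}(I)$ carry multiplicity greater than one. What you need, and what is true, is the stronger fact that the roots of $g$ are \emph{exactly} the $x_1$-coordinates of $\mathcal{V}(I)$. This holds because $g$ generates the elimination ideal $I\cap\C[x_1]$, so by elimination theory $\mathcal{V}(g)$ equals the projection of $\mathcal{V}(I)$; equivalently, the injection $\C[x_1]/(g)\hookrightarrow\C[x]/I$ of $\C$-vector spaces of equal dimension $d$ is an isomorphism, and the source is reduced since $g$ is square-free. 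The paper states this characterization of the eliminant in its preliminaries, so you may appeal to it, but as written your ``lie among'' does not carry the step.
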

\begin{proof}
 The polynomial $g$ generates $I\cap \C[x_1]$, so $1,x,\dotsc,x^{d-1}$ are standard monomials.
 The number of standard monomials of $I$ with respect to $\prec$ is $\deg(I)$, so there are no other standard monomials.
 The generators in $B$ have initial terms $x_1^d,x_2,\dotsc,x_q$, so reducing the generators gives a Gr\"obner basis of the stated form.
\end{proof}

For each root $r_1$ of the eliminant $g\in B$, there is a unique point $r:=(r_1,g_2(r_1),\dotsc,g_q(r_1))\in\mathcal{V}(I)$.
Therefore, the projection $\pi : \C^q\rightarrow \C$ given by $(c_1,\dotsc,c_q)\mapsto c_1$ sends the points of $\mathcal{V}(I)$ to the roots of $g$.
When $I$ has a generating set of real polynomials, Buchberger's algorithm produces a Gr\"obner basis of real polynomials.
Using this Gr\"obner basis, one obtains a reduced Gr\"obner basis $B$ whose generators are real polynomials.
Thus the Shape Lemma asserts that $r_1$ is real in and only if $r$ is real.
This allows us to use an eliminant to calculate the number of real points in a zero-dimensional variety.
The following corollary to the Shape Lemma has been useful in computational experiments in Schubert calculus \cite{secant,monotone,RSSS2006}.

\begin{corollary}\label{prop:shape}
 Suppose the hypotheses of Proposition \ref{prop:shapeLemma} are satisfied.
 If $f$ is real then the number of real points in $\mathcal{V}(I)$ is equal to the number of real roots of $g$.
\end{corollary}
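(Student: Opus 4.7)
The plan is to leverage the explicit form of the reduced lexicographic Gr\"obner basis given by Proposition~\ref{prop:shapeLemma} and to track rationality (really, realness) of coefficients through Buchberger's algorithm. The key observation is that the bijection between $\mathcal{V}(I)$ and the roots of $g$ supplied by the Shape Lemma is defined by polynomials with real coefficients, so it restricts to a bijection on real points.

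First I would note that Buchberger's algorithm uses only field operations (addition, multiplication, division) on the coefficients of the input system $f$. Hence, starting from a real generating set, every Gr\"obner basis element produced has real coefficients, and the subsequent Euclidean-style reduction that turns this into a reduced Gr\"obner basis preserves realness of the coefficients. Therefore the basis $B$ in Proposition~\ref{prop:shapeLemma} consists entirely of polynomials in $\R[x]$, and in particular the eliminant $g(x_1)$ and the polynomials $g_2(x_1),\dotsc,g_q(x_1)$ all lie in $\R[x_1]$.

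Next I would invoke the Shape Lemma's explicit description to obtain the map
\[
\varphi \colon \{\text{roots of } g\} \longrightarrow \mathcal{V}(I), \qquad r_1 \longmapsto \bigl(r_1,\, g_2(r_1),\,\dotsc,\, g_q(r_1)\bigr),
\]
and observe that $\varphi$ is inverse to the projection $\pi$ onto the first coordinate. Because every $g_j\in\R[x_1]$, if $r_1\in\R$ then each coordinate $g_j(r_1)$ is real, so $\varphi(r_1)\in\R^q$. Conversely, if $\varphi(r_1)\in\R^q$, then in particular its first coordinate $r_1$ is real. Thus $\varphi$ restricts to a bijection between the real roots of $g$ and the real points of $\mathcal{V}(I)$, which yields the equality of counts.

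There is no real obstacle in this argument: the only subtlety is being careful that Buchberger's algorithm preserves the field of definition of the coefficients, which is standard. Everything else follows at once from the shape of $B$ guaranteed by Proposition~\ref{prop:shapeLemma}.
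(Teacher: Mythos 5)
Your argument is correct and is essentially identical to the paper's: it observes that Buchberger's algorithm and the subsequent reduction preserve real coefficients, so the basis $B$ lies in $\R[x]$, and then uses the explicit shape of $B$ to get a bijection $r_1\mapsto(r_1,g_2(r_1),\dotsc,g_q(r_1))$ that matches real roots of $g$ with real points of $\mathcal{V}(I)$. No discrepancies.
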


If the restriction of the projection $\pi$ to $\mathcal{V}(I)$ is not injective, then one may permute the variables $x$ or use more sophisticated methods to rectify this \cite{Rou99}.
To use Corollary \ref{prop:shape}, we require an algorithm for counting the real roots of $g$, which is based on sequences of polynomials.
Let $y$ denote the minimal variable in $x$ after reordering.
\begin{definition}
 If $f_1,f_2 \in \C[y]$ are univariate polynomials, the \emph{Sylvester sequence} $\Syl(f_1,f_2)$ is the subsequence of nonzero entries of the recursively defined sequence,
 \[f_{j}:=-\mbox{remainder}(f_{j-2},f_{j-1})\quad \mbox{for}\quad j>2\,.\]
 Here, the remainder is calculated via the Euclidean algorithm, so $\Syl(f_1,f_2)$ is finite with final entry $f_s = \pm \gcd(f_1,f_2)$.
\end{definition}
\begin{definition}
 If $f \in \C[y]$ is a univariate polynomial, the \emph{Sturm sequence of $f\in \C[y]$} is $\Sturm(f):=\Syl(f,f')$.
\end{definition}
 We point out that while none of the entries of $\Sturm(f)$ are identically zero, its evaluation $\Sturm(f(a))$ at a point $a\in\C$ may contain zeros.
 We are concerned with the number of sign changes that occur between the nonzero entries.
\begin{definition}
 Suppose $f \in \C[y]$ is a univariate polynomial, $a\in \C$ is a complex number, $\Sigma^a$ is the subsequence of nonzero entries of\/ $\Sturm(f(a))$, and $l$ is the number of entries in $\Sigma^a$.
 For $j\in [l-1]$, the product $\Sigma^a_j \Sigma^a_{j+1}$ is negative if and only if the $j$th and $(j+1)$th entries of $\Sigma^a$ have different signs.
 The \emph{variation of $f$ at $a$} is obtained by counting sign alternations,
 \[\vari(f,a):=\#\{j\in [l-1]\ |\ \Sigma^a_j \Sigma^a_{j+1}<0\}\,.\]
\end{definition}
\begin{theorem}[Sturm's Theorem]
 Let $f\in \R[y]$ be a univariate polynomial and $a,b\in\mathbb R$ with $a<b$ and $f(a),f(b)\neq 0$.
 Then the number of distinct zeros of $f$ in the interval $(a,b)$ is the difference $\vari(f,a)-\vari(f,b)$.
\end{theorem}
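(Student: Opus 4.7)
The plan is to treat $\vari(f,y)$ as a piecewise constant function of the real parameter $y$ on $[a,b]$ and evaluate its jumps at the finitely many points where some entry of $\Sturm(f) = (f_1, \ldots, f_s)$ vanishes; the theorem will follow by summing these jumps over $(a,b)$. Before beginning the case analysis I would reduce to the situation where all real roots of $f$ are simple. Setting $h := \gcd(f,f')$ (so $f_s = \pm h$) and $\bar f := f/h$, the polynomial $\bar f$ has exactly the distinct real zeros of $f$ and its Sturm sequence terminates in a nonzero constant. A standard comparison argument shows $\vari(f,y_0) = \vari(\bar f,y_0)$ whenever $h(y_0) \neq 0$, and since $h$ divides $f$, the hypothesis $f(a),f(b) \neq 0$ forces $h(a),h(b) \neq 0$. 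Hence both sides of the desired identity are unchanged by passing to $\bar f$, and I may assume $\gcd(f,f') = 1$, so that $f_s$ is a nonzero constant and every real root of $f$ is simple.

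Under this assumption the Sturm chain enjoys two classical properties. First, no two consecutive entries share a real zero: if $f_{j-1}(c) = f_j(c) = 0$, the Euclidean relation $f_{j-2} = q_{j-1} f_{j-1} - f_j$ forces $f_{j-2}(c) = 0$, and iterating the relation in both directions drags every $f_i(c)$ to zero, contradicting $f_s$ being a nonzero constant. Second, at any real zero $c$ of an intermediate $f_j$ with $1 < j < s$, the same relation evaluated at $c$ gives $f_{j-1}(c) = -f_{j+1}(c)$, both nonzero. I would then analyze the variation in a small neighborhood of each $c \in (a,b)$ at which some $f_j$ vanishes. If $c$ is a zero of an intermediate $f_j$ but not of $f_1$, then $f_{j-1}$ and $f_{j+1}$ have fixed opposite signs near $c$; irrespective of the sign assumed by $f_j$ on either side, the triple $(f_{j-1},f_j,f_{j+1})$ contributes exactly one alternation on each side, leaving $\vari$ unchanged across $c$. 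The case $j = s$ never arises, since $f_s$ is a nonzero constant. If instead $c$ is a zero of $f_1 = f$, the reduction makes it simple, so $f_2(c) = f'(c) \neq 0$; as $y$ crosses $c$ from left to right, $\sign f_1$ flips from $-\sign f'(c)$ to $+\sign f'(c)$ while $\sign f_2$ remains equal to $\sign f'(c)$, so the pair $(f_1,f_2)$ passes from one alternation to none, producing a net drop of exactly $1$ in $\vari$.

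Summing the local contributions, the total drop from $\vari(f,a)$ to $\vari(f,b)$ equals the number of distinct real zeros of $f$ in $(a,b)$, which is the claim. The main obstacle I anticipate is the initial reduction to simple roots: verifying $\vari(f,y_0) = \vari(\bar f,y_0)$ at points away from zeros of $h$ requires comparing two Sturm chains generated from different Euclidean inputs and confirming that their sign patterns at each such $y_0$ yield the same variation count, which is not an immediate identity of the entries themselves. Once that reduction is in place, the neighborhood-by-neighborhood jump analysis sketched above completes the proof cleanly.
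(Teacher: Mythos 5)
The paper does not supply a proof of Sturm's Theorem; it states that the proof is standard and cites \cite[p.~57]{BPR}, so there is no internal argument to compare against. Evaluating your proposal on its own terms: the outline is the classical argument and is sound, and you are right to flag the reduction step as the one delicate point. The claim $\vari(f,y_0)=\vari(\bar f,y_0)$, with the right-hand side computed from the genuine Sturm sequence $\Sturm(\bar f)=\Syl(\bar f,\bar f')$, is not an entrywise identity: the divided sequence $(f_0/h,\dotsc,f_s/h)$ and $\Sturm(\bar f)$ diverge from the second term onward, since $f'/h-\bar f'=\bar f\, h'/h$ is a nonzero polynomial whenever $\deg h\ge 1$. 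Trying to reconcile the two chains directly is exactly the kind of comparison you worried about.

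The cleaner route, which is the one \cite{BPR} takes, avoids $\Sturm(\bar f)$ entirely: observe that the divided sequence $(f_0/h,\dotsc,f_s/h)$ is itself a \emph{generalized} Sturm sequence for $\bar f=f/h$. Dividing the Euclidean relations $f_{j-2}=q_{j-1}f_{j-1}-f_j$ by $h$ shows it begins with $\bar f$, ends with $\pm 1$, has no two consecutive simultaneous vanishings, and satisfies the sign-opposition condition at interior zeros. At a simple zero $c$ of $\bar f$, writing $f=(y-c)^m u$ and $h=(y-c)^{m-1}v$ with $u(c),v(c)\ne 0$, one computes $(f/h)(c)=0$, $(f'/h)(c)=m\,u(c)/v(c)\ne 0$, and $(f/h)'(c)=u(c)/v(c)$, so the leading pair of the divided sequence drops a sign change exactly as in your squarefree analysis. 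Since scaling a finite real sequence by the common nonzero number $h(y_0)$ preserves the count of sign alternations, $\vari$ computed from $\Sturm(f)$ agrees at every $y_0$ with $h(y_0)\ne 0$ with $\vari$ computed from the divided sequence, and in particular at $a$ and $b$. Your local jump analysis then applies verbatim to the divided sequence and yields the theorem, with no need to compare against $\Sturm(\bar f)$. With this substitution your proof is complete.
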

The proof is standard.
One treatment may be found in \cite[p.\ 57]{BPR}.
The bitsize of coefficients in a Sturm sequence may grow quickly.
Implementations may control this growth by using a normalized Sturm-Habicht sequence.
Each entry of a Sturm-Habicht sequence is a positive multiple of the corresponding entry of a Sturm sequence, so $\vari(f,a)$ may be calculated via the normalized sequence.
The library \url{rootsur.lib} written by Enrique A.\ Tobis for \url{Singular} implements algorithms from \cite{BPR} to compute a Sturm-Habicht sequence of a univariate polynomial to count its distinct real roots.

\section{The Grassmannian}

We fix positive integers $k<n$ and a complex linear space $V$ of dimension $n$.
The choice of standard basis $\be$ identifies $V$ with $\mathbb C^n$, giving it a real structure.
Complex conjugation $v\mapsto\overline{v}$ is an involution on $V$.
\begin{definition}
 The \emph{Grassmannian $\Gr(k,V)$ of $k$-planes in $V$} is the set of $k$-dimensional linear subspaces of $V$,
 \[\Gr(k,V) := \{ H \subset V\ |\ \dim(H)=k\}\,.\]
\end{definition}
The automorphism $v\mapsto \overline{v}$ preserves the dimension of subspaces, so $H\in \Gr(k,V)$ implies $\overline{H}\in \Gr(k,V)$.

Let $\Mat_{k \times n}$ denote the set of $k\times n$ matrices with complex entries.
The determinant of an $i\times i$ submatrix of $M\in \Mat_{k \times n}$ is called an \emph{$i\times i$ minor} of $M$.
The determinant of a maximal square submatrix of $M$ is called a \emph{maximal minor} of $M$.
\begin{definition}
 The \emph{Stiefel manifold $\St(k,n)$} is the set of full-rank $k\times n$ matrices,
 \[\St(k,n) := \{M\in \Mat_{k \times n}\ |\ \rank(M)=k\}\,.\]
\end{definition}
Since $\rank(M)<k$ is a closed condition (given by the vanishing of minors), $\St(k,n)$ is a dense open subset of a vector space and thus a smooth manifold.

The Stiefel manifold parametrizes the Grassmannian by associating $P\in \St(k,n)$ to its row space $H\in \Gr(k,V)$.
There is a left action of $\GL(k,\C)$ on $\St(k,n)$ given by multiplication.
Since the set of all points in $\St(k,n)$ with row space $H$ is the $\GL(k,\C)$ orbit of $P$, $\St(k,n)$ is a $\GL(k,\C)$ fiber bundle over $\Gr(k,V)$.
Complex conjugation extends to matrices, and $\rowspace(P) = H$ implies $\rowspace(\overline{P})=\overline{H}$.
\begin{definition}
 A complex projective algebraic variety $X$ is called a \emph{real variety} if\/ $\overline{X} = X$.
\end{definition}
Note that a nonempty real variety need not contain any closed points with residue field $\R$.
For example, the curve defined by $x^2+y^2+z^2=0$ in $\p^2$ is real, but contains no closed points with residue field $\R$.
\begin{definition}
 Let $\wedge$ denote the usual exterior product in $V$, and $\bigwedge^k V$ the $k$th exterior power of\/ $V$.
 The product $v_1 \wedge \cdots \wedge v_k\in \bigwedge^k V$ is alternating, since transposing $v_i$ and $v_{i+1}$ is equivalent to multiplication by $-1$.
 If $H$ is a $k$-plane then $\bigwedge^k H$ is a line through the origin in $\bigwedge^k V$.
 Thus $\bigwedge^k H$ is a point in projective space, and we have a well-defined map
\begin{align*}
  \Phi \colon \Gr(k,V) &\longrightarrow \p(\mbox{$\bigwedge^k V$})\,,\\
  H &\longmapsto \mbox{$\bigwedge^k H$}
\end{align*}
 called the \emph{Pl\"ucker map}.
 We call the space $\p(\bigwedge^k V)$ \emph{Pl\"ucker space}.
\end{definition}
\begin{definition}
 Let $\tbinom{[n]}{k}$ denote the set of sublists of $[n]:=\{1,2,\dotsc,n\}$ with $k$ entries.
\end{definition}
\begin{definition}
The basis $\be$ of $V$ induces a basis of $\bigwedge^k V$ whose generators are
\[e_\alpha := e_{\alpha_1}\wedge \cdots \wedge e_{\alpha_k}\]
for $\alpha\in\tbinom{[n]}{k}$.
The coordinates $[\,p_\alpha\ |\ \alpha\in\tbinom{[n]}{k}]$ dual to this basis are called \emph{Pl\"ucker coordinates}.
For $H\in\Gr(k,V)$ we write
\[\Phi(H) = \sum_{\alpha\in\binom{[n]}{k}} p_\alpha(H)e_\alpha\,,\]
with $p_\alpha(H)\in\C$.
We call $p_\alpha(H)$ the \emph{$\alpha$th Pl\"ucker coordinate of $H$}.
\end{definition}
The Pl\"ucker coordinates are closely related to the parametrization of $\Gr(k,V)$ given by $\St(k,n)$.
Suppose $Q\in\St(k,n)$ has row space $H\in\Gr(k,V)$ and $\alpha\in\tbinom{[n]}{k}$.
Let $Q_\alpha$ denote the maximal minor of $Q$ involving columns $\alpha_1,\dotsc,\alpha_k$.
Then $[\,Q_\alpha\ |\ \alpha\in\tbinom{[n]}{k}]$ and $[\,p_\alpha(H)\ |\ \alpha\in\tbinom{[n]}{k}]$ are the same point in Pl\"ucker space.
The proofs of the two following propositions are based partially on \cite{KL1972}.
\begin{proposition}\label{prop:PluckerCoords}
 The Pl\"ucker map is injective.
\end{proposition}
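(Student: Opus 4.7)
The plan is to reconstruct $H \in \Gr(k,V)$ explicitly from the point $\Phi(H) \in \mathbb{P}(\bigwedge^k V)$, which directly shows injectivity. The reconstruction rests on the classical fact that the exterior product detects linear dependence: a vector lies in a subspace exactly when wedging it with a generator of the top exterior power kills it.

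First, I would choose any basis $v_1, \ldots, v_k$ of $H$ and set $\omega := v_1 \wedge \cdots \wedge v_k$, so $\omega$ is a nonzero generator of the line $\bigwedge^k H \subset \bigwedge^k V$; this line is exactly the point $\Phi(H)$. Next, I would establish the key claim: for $v \in V$,
\[
v \in H \quad \Longleftrightarrow \quad v \wedge \omega = 0 \ \text{in } \textstyle\bigwedge^{k+1} V.
\]
The forward direction is immediate, since if $v = \sum c_i v_i$ then $v \wedge \omega$ expands as a sum of wedge products each containing a repeated factor $v_i$, and every such product is zero by alternation. For the converse, if $v \notin H$ then $v, v_1, \ldots, v_k$ are linearly independent in $V$, so they may be extended to a basis of $V$; with respect to the induced basis of $\bigwedge^{k+1} V$, the element $v \wedge v_1 \wedge \cdots \wedge v_k$ is a basis vector and is therefore nonzero.

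Once this equivalence is in hand, I would observe that the condition $v \wedge \omega = 0$ depends only on the line $\mathbb{C}\cdot\omega$, not on the particular choice of $\omega$, since rescaling $\omega$ by a nonzero scalar does not affect vanishing. Therefore
\[
H \;=\; \{ v \in V \mid v \wedge \omega = 0 \}
\]
is recovered from $\Phi(H)$ alone. If $H, H' \in \Gr(k,V)$ satisfy $\Phi(H) = \Phi(H')$, then both subspaces equal this same kernel, so $H = H'$.

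I do not expect a serious obstacle here: the only content is the nondegeneracy of the wedge product on $V$, which is standard multilinear algebra. The one point to state carefully is the converse of the key equivalence, where the choice to extend $v, v_1, \ldots, v_k$ to an actual basis of $V$ (rather than merely arguing abstractly) makes the nonvanishing of $v \wedge \omega$ completely transparent.
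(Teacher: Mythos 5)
Your proof is correct, but it takes a genuinely different route from the paper. You recover $H$ intrinsically as the kernel
\[
H = \{\, v \in V \mid v \wedge \omega = 0 \,\},
\]
where $\omega$ is any nonzero element of the line $\Phi(H)$; the two directions of the key equivalence are established exactly as you describe, and the reconstruction visibly depends only on the line, so injectivity follows. The paper instead argues in coordinates: it fixes a matrix $Q \in \St(k,n)$ with row space $H$, assumes (after relabeling) that the Pl\"ucker coordinate $p_{[k]}(H)$ is nonzero, normalizes $Q$ to the block form $[\Id_k \mid A^{-1}B]$, and then shows that each entry of $A^{-1}B$ is, up to sign, a Pl\"ucker coordinate $p_{\alpha(i,j)}(H)$, so the normalized matrix --- hence $H$ --- is read off from $\Phi(H)$. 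Your approach is cleaner and coordinate-free, resting only on the nondegeneracy of the wedge pairing and requiring no choice of affine chart. The paper's approach is deliberately computational because it simultaneously sets up the affine cover $\{U_\alpha\}$ of Pl\"ucker space and the local coordinates on $\Gr(k,V)$ that are reused immediately afterward (to show the image of $\Phi$ is a variety, and later in the Stiefel/Schubert coordinate constructions). So the paper pays for a less elegant injectivity proof by getting the chart machinery for free; your argument buys elegance at the cost of having to set up the charts separately.
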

\begin{proof}
 Let $Q\in \St(k,n)$ be a matrix with row space $H\in\Gr(k,V)$.
 The $k$-plane $H$ has some nonzero Pl\"ucker coordinate, so without loss of generality $p_{[k]}(H) \neq 0$.
 Thus $Q$ may be written in block form $[A|B]$ where $A$ is a $k\times k$ invertible matrix.
 Multiplying, we have $A^{-1}Q = [\Id_k|A^{-1}B]$, which gives another matrix with row space $H$.

 For $i\in[k]$ and $j\in\{k+1,\dotsc,n\}$ we define $\alpha(i,j):=(1,\dotsc,\widehat{i},\dotsc,k,j)$.
 We may express the $(i,j)$th entry of $A^{-1}Q$ as a maximal minor
 \[(A^{-1}Q)_{ij} = (-1)^{k-i} (A^{-1}Q)_{\alpha(i,j)} = p_{\alpha(i,j)}(H)\,.\]
 Since the maximal minors of $A^{-1}Q$ are the Pl\"ucker coordinates $[\,p_\alpha(H)\ |\ \alpha\in\tbinom{[n]}{k}]$, $H$ may be recovered from the Pl\"ucker coordinates $[\,p_\alpha(H)\ |\ \alpha\in\tbinom{[n]}{k}]$.
 Therefore, the Pl\"ucker map is injective.
\end{proof}
In the course of the proof, we used an affine cover of Pl\"ucker space.
To formalize this, let
\begin{equation}\label{def:Ucover}
 \mathcal U := \{U_\alpha\ |\ \alpha\in\tbinom{[n]}{k}\}
\end{equation}
be the cover of $\p(\bigwedge^k V)$ where $U_\alpha$ is the open set of $\p(\bigwedge^k V)$ given by the open condition $p_\alpha \neq 0$.
If $\alpha=[k]$ then the set $\s$ of $k\times n$ matrices of the form $[\Id_k|B]$ \emph{parametrize} $\Phi(\Gr(k,V))\cap U_{[k]}$, i.e., the map $\rowspace : \s \rightarrow U_{[k]}$ gives injective coordinates for $\Phi(\Gr(k,V))\cap U_{[k]}$ which are linear in the parameters of $\s$.
By permuting the columns of matrices in $\s$, we may similarly parametrize $\Phi(\Gr(k,V))\cap U_\alpha$ for $\alpha\in\tbinom{[n]}{k}$.
\begin{proposition}
 The image of the Pl\"ucker map is a projective variety.
\end{proposition}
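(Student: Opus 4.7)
The plan is to use the affine cover $\mathcal{U} = \{U_\alpha\}$ from \eqref{def:Ucover} and show that $\Phi(\Gr(k,V)) \cap U_\alpha$ is Zariski closed in the affine open $U_\alpha$ for each $\alpha \in \tbinom{[n]}{k}$. Since closedness is a local property and the $U_\alpha$ cover $\p(\bigwedge^k V)$, this will exhibit $\Phi(\Gr(k,V))$ as a closed subset of Pl\"ucker space, hence as a projective variety.

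First I would work in the representative chart $U_{[k]}$, using affine coordinates $p_\beta/p_{[k]}$ for $\beta \neq [k]$. Every $H \in \Phi^{-1}(U_{[k]})$ has a unique row-reduced representative $[\,\Id_k\,|\,B\,] \in \s$, and the calculation in the proof of Proposition~\ref{prop:PluckerCoords} shows that the $k(n-k)$ ``linear'' Pl\"ucker coordinates $p_{\alpha(i,j)}(H)$ recover the entries $\pm B_{ij}$ directly. For any remaining $\beta \in \tbinom{[n]}{k}$, $p_\beta(H)$ equals the $\beta$-maximal minor of $[\,\Id_k\,|\,B\,]$, which is a fixed polynomial $Q_\beta$ in the $B_{ij}$. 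Substituting, $\Phi(\Gr(k,V)) \cap U_{[k]}$ is the graph of a polynomial map in the affine coordinates on $U_{[k]}$, so it is cut out by polynomial equations and therefore closed.

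Next I would extend this to the other charts. By permuting columns of the matrices in $\s$, an identical argument parametrizes $\Phi(\Gr(k,V)) \cap U_\alpha$ as the graph of a polynomial map in affine coordinates on $U_\alpha$ for each $\alpha \in \tbinom{[n]}{k}$, so $\Phi(\Gr(k,V)) \cap U_\alpha$ is closed in $U_\alpha$. Since the $U_\alpha$ cover $\p(\bigwedge^k V)$, $\Phi(\Gr(k,V))$ is Zariski closed in Pl\"ucker space and is therefore a projective variety.

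The main obstacle is the bookkeeping in the second step: I must verify not only that every image point satisfies the polynomial identities $p_\beta = Q_\beta(\cdots)$ (which is automatic from the matrix representative), but also the converse, that any point of $U_{[k]}$ satisfying them lies in $\Phi(\Gr(k,V))$. This requires that the values of the $p_{\alpha(i,j)}$ determine $B$ completely and hence determine all other maximal minors of $[\,\Id_k\,|\,B\,]$, which in turn requires careful sign tracking in the Laplace expansion. Once this is settled, closedness on each chart follows immediately. An alternative approach, which I would avoid here, is to show directly that $\Phi(\Gr(k,V))$ is cut out globally by the quadratic Pl\"ucker relations; that approach is more elegant but requires substantially more work than the chart-by-chart argument above.
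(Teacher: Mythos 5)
Your proposal is correct and follows essentially the same route as the paper: restrict to each affine chart $U_\alpha$, identify $\Phi(\Gr(k,V))\cap U_\alpha$ as the graph of the polynomial map sending the "linear" Pl\"ucker coordinates $p_{\alpha(i,j)}$ (equivalently the entries of $B$) to the remaining minors $Q_\beta$, and conclude closedness chart by chart. The "converse" you flag as the main obstacle is automatic for a graph: the locus cut out by $p_\beta = Q_\beta(\,\cdot\,)$ is by construction precisely $\{(B,Q(B))\} = \Phi(\rowspace[\Id_k\,|\,B])$, so no separate verification is needed.
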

\begin{proof}
 Since $\mathcal U$ is an affine cover of Pl\"ucker space, it suffices to show that the dense open set $\Phi(\Gr(k,V))\cap U_\alpha$ is an affine variety for each $\alpha\in\tbinom{[n]}{k}$.
 We show this for $\alpha=[k]$, and the other cases follow by symmetry.
 Let
 \[G_\alpha := \Phi^{-1}(\Phi(\Gr(k,V))\cap U_\alpha)\,.\]
 In the proof of Proposition \ref{prop:PluckerCoords}, we show that points in $G_\alpha$ are linear spaces of the form $\rowspace[\Id_k|B]\subset V$ such that $B\in \Mat_{k\times (n-k)}$.
 This identification defines a bijective map $\Psi: \Mat_{k \times n-k} \rightarrow G_\alpha$.
 The composition $\Phi\circ\Psi$ is injective, by Proposition \ref{prop:PluckerCoords}.
 Since this composition is given by minors, it is a regular map.
 We observed that the entries of $B$ are Pl\"ucker coordinates, so they span an affine space in Pl\"ucker space.
 Let $W$ denote the complementary affine space, and $\Omega:U_\alpha\rightarrow W$ the projection.
 Then $G_\alpha$ is the graph of the regular map $\Omega\circ\Phi\circ\Psi$.
 It follows that $G_\alpha$ is defined by polynomials in $U_\alpha$, so it is an affine variety, and the image of the Pl\"ucker map is a projective variety.
\end{proof}
\begin{corollary}\label{cor:GrassDim}
 The Grassmannian $\Gr(k,V)$ is a projective variety of dimension $k(n-k)$.
\end{corollary}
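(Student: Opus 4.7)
The plan is to deduce the corollary from the two immediately preceding propositions. By the previous proposition, $\Phi(\Gr(k,V))$ is a projective variety in $\p(\bigwedge^k V)$; by Proposition~\ref{prop:PluckerCoords}, the Pl\"ucker map $\Phi$ is injective, so it identifies $\Gr(k,V)$ with this image. This gives $\Gr(k,V)$ the structure of a projective variety, settling the first half of the claim.

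For the dimension, I would reuse the local parametrization from the proof that the image of $\Phi$ is a projective variety. For each $\alpha\in\tbinom{[n]}{k}$, the set $G_\alpha=\Phi^{-1}(\Phi(\Gr(k,V))\cap U_\alpha)$ is the graph of a regular map, and the map $\Psi\colon \Mat_{k\times (n-k)}\to G_\alpha$ sending $B$ to $\rowspace[\Id_k\,|\,B]$ (after an appropriate column permutation for general $\alpha$) is a bijective regular map with regular inverse given by projection onto the $B$-entries. Thus each $G_\alpha$ is isomorphic to the affine space $\Mat_{k\times(n-k)}$ of dimension $k(n-k)$.

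Next I would argue that the $G_\alpha$ cover $\Gr(k,V)$: every $H\in\Gr(k,V)$ has at least one nonvanishing Pl\"ucker coordinate $p_\alpha(H)$, since otherwise $\Phi(H)=0$ would not represent a point of projective space. Thus $\Gr(k,V)=\bigcup_\alpha G_\alpha$, and each $G_\alpha$ is a nonempty open affine subvariety isomorphic to $\C^{k(n-k)}$. Finally, any two $G_\alpha$ and $G_\beta$ have nonempty intersection (a generic $k$-plane has all Pl\"ucker coordinates nonzero), so $\Gr(k,V)$ is irreducible, and hence its dimension equals the dimension $k(n-k)$ of any of its nonempty open subsets $G_\alpha$.

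The only subtlety I anticipate is making sure that the local piece $G_\alpha$ truly inherits the dimension of the ambient variety; this is handled by the irreducibility remark above. Everything else is a direct reading-off of what was already established in the proofs of Proposition~\ref{prop:PluckerCoords} and the preceding proposition.
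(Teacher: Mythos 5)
Your proof is correct and follows essentially the same route as the paper: identify $\Gr(k,V)$ with a projective variety via the Pl\"ucker embedding, then read off the dimension from the affine chart $G_\alpha\cong\Mat_{k\times(n-k)}$. You are more explicit than the paper about covering and irreducibility (the paper simply asserts $G_\alpha$ is dense), but the argument is the same.
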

\begin{proof}
 The Pl\"ucker map is injective, so $\Gr(k,V)$ is a projective variety.
 The dense subset $G_\alpha\subset \Gr(k,V)$ is isomorphic to $\Mat_{k \times n-k}$, so $\dim(\Gr(k,V)) = k(n-k)$.
\end{proof}


\section{Schubert Varieties}

Schubert varieties are distinguished projective subvarieties of a Grassmannian.
They are defined with respect to a flag and a list $\alpha\in\tbinom{[n]}{k}$.

\begin{definition}
 A \emph{flag $\Fdot$} on $V$ is a list of nested linear subspaces of $V$,
 \[\Fdot : 0\subsetneq F_1 \subsetneq F_2 \subsetneq \cdots \subsetneq F_n = V\,,\]
 with $\dim(F_i)=i$ for $i\in[n]$.
 If $f_1,\dotsc,f_n\in V$ and $F_i = \langle f_1,\dotsc,f_i \rangle$ for $i\in[n]$, then we say the $n\times n$ matrix
 \begin{equation}\label{eqn:flagMatrix}
  \Fdot :=
   \left(
    \begin{matrix}
     f_1 \\
     \vdots \\
     f_n
    \end{matrix}
   \right)  
 \end{equation}
 is a \emph{basis} for the flag $\Fdot$.
 We sometimes refer to the list $(f_1,\dotsc,f_n)$ as a basis for $\Fdot$.
\end{definition}
The flag $\Edot$ with basis $(e_1,\dotsc,e_n)$ is called the \emph{standard flag}.
We note that the identity matrix $\Id_n$ is a basis for the standard flag.
\begin{definition}\label{Def:SchubertVariety}
 Let $\alpha\in\tbinom{[n]}{k}$ and $\Fdot$ a flag in $V$.
 The \emph{Schubert variety $X_\alpha \Fdot\subset \Gr(k,V)$} is the set of $k$-planes satisfying the incidence conditions,
 \[X_\alpha \Fdot := \{ H\in \Gr(k,V)\ |\ \dim(H\cap F_{\alpha_i})\geq i\ \mbox{for}\ i\in[k]\}\,.\]
 We call $\alpha$ a \emph{Schubert condition} on $\Gr(k,V)$ and $\Fdot$ a \emph{defining flag} for $X_\lambda \Fdot$.
\end{definition}
We will give determinantal equations in Proposition \ref{prop:detConds} which locally define $X_\alpha\Fdot$ as a subvariety of $\Gr(k,V)$.
If $\alpha_{i+1}=\alpha_i+1$ then the incidence condition on $X_\alpha\Fdot$ given by $\alpha_i$ is implied by the condition given by $\alpha_{i+1}$.
The implied conditions are called \emph{irrelevant}.
If $\alpha_k=n$, then the corresponding condition is also irrelevant since $H\cap F_n = H$ has dimension $k$ for $H\in\Gr(k,V)$.
The necessary defining conditions are called \emph{relevant}.
\begin{example}
 The $k$-planes $H\in X_{(2,3,5)} \Fdot \subset \Gr(3,\C^5)$ satisfy
 \begin{itemize}
  \item[(1)] $\dim(H\cap F_2)\geq 1$,
  \item[(2)] $\dim(H\cap F_3)\geq 2$, and
  \item[(3)] $\dim(H\cap F_5)\geq 3$.
 \end{itemize}
 Condition (3) is trivial since $\dim(H\cap F_5) = \dim(H) = 3 \geq 3$.
 Condition (1) is implied by (2) and is thus irrelevant.
 Condition (2) is the only relevant condition defining $X_{(2,3,5)} \Fdot$.
\end{example}

We give sets of matrices $\s(\alpha)$, $S_\alpha$, and $S_\alpha^\beta$, which locally parametrize the Grassmannian $\Gr(k,V)$, the Schubert variety $X_\alpha \Edot$, and the intersection $X_\alpha \Edot \cap X_\beta \Eop$ respectively, with respect to the standard basis $\be$ of $V$.
\begin{definition}\label{def:StiefelCoords}
 For $\alpha\in\tbinom{[n]}{k}$, the subset $\s(\alpha)\subset\St(k,n)$ of the Stiefel manifold is the set of matrices $M$ with $(i,\alpha_j)$th entry
 \[M_{i,\alpha_j} := \delta_{ij} \quad \mbox{for} \quad i,j\in[k] \,,\]
 and with other entries arbitrary.
 The parameters of $M$ give coordinates for the dense open set $\Gr(k,V) \cap U_\alpha \subset \Gr(k,V)$, and we call $\s(\alpha)$ \emph{Stiefel coordinates} on $\Gr(k,V)$.
\end{definition}
\begin{example}
 If $\alpha=[k]$, then matrices in $\s(\alpha)$ have block form $[\Id_k|B]$.
\end{example}
\begin{example}
 For $k=3$ and $n=7$, the matrices in $\s(2,5,7)$ have the form
 \[
  \left(
   \begin{matrix}
    * & 1 & * & * & 0 & * & 0 \\
    * & 0 & * & * & 1 & * & 0 \\
    * & 0 & * & * & 0 & * & 1 \\
   \end{matrix}
  \right)\,.
 \]
\end{example}
The 1 in position $(i,\alpha_i)$ of a matrix in $\s(\alpha)$ is called a \emph{pivot}.
The following is a consequence of the proof of Proposition \ref{prop:PluckerCoords}.
\begin{definition}\label{def:SAlpha}
 For $\alpha\in\tbinom{[n]}{k}$, the subset $\s_\alpha \subset \s(\alpha)$ is the subset of matrices such that each entry to the right of a pivot is $0$.
 We call $\s_\alpha$ the \emph{Stiefel coordinates} on $X_\alpha \Edot$.
\end{definition}
\begin{example}
 For $k=3$ and $n=7$, the matrices in $\s_{(2,5,7)}$ have the form
 \[
  \left(
   \begin{matrix}
    * & 1 & 0 & 0 & 0 & 0 & 0 \\
    * & 0 & * & * & 1 & 0 & 0 \\
    * & 0 & * & * & 0 & * & 1 \\
   \end{matrix}
  \right)\,.
 \]
\end{example}
\begin{definition}
 Let $\alpha\in\tbinom{[n]}{k}$.
 We call $X_\alpha \Edot^\circ := X_\alpha \Edot \cap U_\alpha$ the \emph{big cell} of $X_\alpha \Edot$.
\end{definition}
\begin{proposition}\label{prop:SchubBirat}
 The restriction to $\s_\alpha$ of the isomorphism $\phi:\s(\alpha)\rightarrow \Gr(k,V) \cap U_\alpha$ given by $H\mapsto [\,p_\alpha(H)\ |\ \alpha\in\tbinom{[n]}{k}]$ is an isomorphism $\phi_\alpha:\s_\alpha \rightarrow X_\alpha \Edot^\circ$.
\end{proposition}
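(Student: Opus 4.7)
The plan is to establish the containment $\phi(\s_\alpha)\subseteq X_\alpha \Edot^\circ$ and the reverse, then inherit the isomorphism structure from $\phi$. Throughout, I identify $M\in\s(\alpha)$ with the $k$-plane $H:=\rowspace(M)$, so that the condition $p_\alpha(H)\neq 0$ is automatic because the submatrix of $M$ on columns $\alpha_1,\dots,\alpha_k$ is the identity.

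\textbf{Forward containment.} For $M\in\s_\alpha$, I will check directly that each incidence condition in Definition \ref{Def:SchubertVariety} holds. By the pivot shape of $\s_\alpha$, every entry of the $j$th row $m_j$ of $M$ lying in a column $c>\alpha_j$ is zero. So for each $i\in[k]$ and each $j\leq i$, the row $m_j$ has support contained in $\{1,\dots,\alpha_j\}\subseteq\{1,\dots,\alpha_i\}$, hence $m_j\in F_{\alpha_i}$. The rows $m_1,\dots,m_i$ are linearly independent because of the pivots in columns $\alpha_1,\dots,\alpha_i$, so $\dim(H\cap F_{\alpha_i})\geq i$. Combined with $p_\alpha(H)\neq 0$, this gives $H\in X_\alpha\Edot^\circ$.

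\textbf{Reverse containment.} Take $H\in X_\alpha\Edot^\circ$ and let $M\in\s(\alpha)$ be the unique matrix with $\rowspace(M)=H$ produced in the proof of Proposition \ref{prop:PluckerCoords}. I need to show every entry to the right of a pivot in $M$ is zero. Fix $i\in[k]$ and consider the subspace $H\cap F_{\alpha_i}$, whose dimension is at least $i$. A vector $vM\in H$ lies in $F_{\alpha_i}$ exactly when $(vM)_c=0$ for every column $c>\alpha_i$. Taking $c=\alpha_j$ with $j>i$ forces $v_{i+1}=\dots=v_k=0$, so the set of such $v$ is a subspace of the $i$-dimensional space $\C^i\times\{0\}^{k-i}$; by the Schubert inequality it must equal that whole subspace. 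Thus for every $j\leq i$ and every $c>\alpha_i$ with $c\notin\{\alpha_{i+1},\dots,\alpha_k\}$ we obtain $M_{j,c}=0$; for the remaining $c=\alpha_{j'}$ with $j'>i$ the entry $M_{j,c}$ is already zero by membership in $\s(\alpha)$. Applying this with $i=j$ for each $j$ yields $M_{j,c}=0$ for all $c>\alpha_j$, which is precisely the defining condition for $\s_\alpha$.

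\textbf{Isomorphism.} By Proposition \ref{prop:PluckerCoords} and the parametrization discussion preceding it, $\phi\colon\s(\alpha)\to\Gr(k,V)\cap U_\alpha$ is a regular isomorphism of affine varieties (the Plücker coordinates of $\rowspace(M)$ are polynomials in the entries of $M$, and the inverse extracts a linear subset of these coordinates). The subset $\s_\alpha\subset\s(\alpha)$ is a coordinate affine subspace defined by the vanishing of certain parameters, and $X_\alpha\Edot^\circ = X_\alpha\Edot\cap U_\alpha$ is a closed subvariety of $\Gr(k,V)\cap U_\alpha$. The two containments above show the bijection $\phi|_{\s_\alpha}\colon\s_\alpha\to X_\alpha\Edot^\circ$, and since it is the restriction of the isomorphism $\phi$ to a closed subvariety mapping onto a closed subvariety, it is itself an isomorphism.

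\textbf{Main obstacle.} The essential step is the reverse containment, where the abstract Schubert inequalities must be converted into the vanishing of explicit matrix entries. The delicate point is the bookkeeping that shows the only room for the $i$-dimensional subspace $H\cap F_{\alpha_i}$ is the span of the first $i$ rows of $M$, which in turn forces those rows to lie in $F_{\alpha_i}$ and hence be supported in the first $\alpha_i$ columns.
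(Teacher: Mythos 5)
Your proof is correct and follows essentially the same route as the paper's: both arguments reduce the incidence conditions to the vanishing of entries of the unique $\s(\alpha)$-representative of $H$. Your reverse containment is spelled out in more detail (explicitly identifying the preimage of $F_{\alpha_i}$ inside $\C^k$ and forcing it to be all of $\C^i\times\{0\}^{k-i}$), and your isomorphism step appeals to restriction of an isomorphism to subvarieties rather than noting directly that both $\phi_\alpha$ and $\phi_\alpha^{-1}$ are given by polynomials (minors and Pl\"ucker coordinates), but these are only cosmetic differences.
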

\begin{proof}
 The incidence conditions on $H\in X_\alpha \Edot^\circ$ given in Definition \ref{Def:SchubertVariety} are equivalent to the conditions that $H$ contains independent vectors $h_i\in\langle e_1,\dotsc,e_{\alpha_i}\rangle$ for $i\in[k]$.
 If $H\in \Gr(k,V) \cap U_\alpha$, then $h_i$ may be chosen to be
 \[h_i = e_{\alpha^i} + \sum_{j=1}^{\alpha^i - 1} h_{ij}e_j\,.\]
 Therefore, $\s_\alpha$ is a subset of $\s(\alpha)$ which maps into $X_\alpha \Edot^\circ$ via $\phi$.
 The inverse $\phi_\alpha^{-1}$ exists on $X_\alpha \Edot^\circ$.
 The map $\phi_\alpha$ is given by minors, which are polynomials.
 The inverse $\phi^{-1}_\alpha$ is given by polynomials as the nonzero entries which are not identically $1$ are Pl\"ucker coordinates.
\end{proof}
Any flag $\Fdot$ has a basis $f:=(f_1,\dotsc,f_n)$.
Using $f$ as a basis for $V$ realizes $\Fdot$ as the standard flag.
We apply Proposition \ref{prop:SchubBirat}.
\begin{corollary}\label{cor:SVdimAux}
 Suppose $\alpha\in\tbinom{[n]}{k}$ and $\Fdot$ is a flag in $V$.
 Then a matrix $M_\alpha$ parametrizing $\s_\alpha$ gives local coordinates for $X_\alpha \Fdot$.
\end{corollary}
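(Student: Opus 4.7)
The plan is to reduce to the already-proved case of the standard flag via a change of basis on $V$. Proposition \ref{prop:SchubBirat} provides the assertion when $\Fdot = \Edot$, so the only remaining issue is to transport this parametrization along an automorphism of $V$ taking one basis to the other.

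First, I would invoke the hypothesis that $\Fdot$ has a basis $f = (f_1,\dotsc,f_n)$, as guaranteed by the definition of a flag. Let $g \in \GL(V)$ denote the unique linear isomorphism sending $e_i \mapsto f_i$ for $i \in [n]$. By construction, $g$ carries the standard flag $\Edot$ to $\Fdot$, i.e.\ $g(F_i^{\text{std}}) = F_i$ for every $i$, where $F_i^{\text{std}} = \langle e_1,\dotsc,e_i\rangle$.

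Next, I would observe that $g$ induces an automorphism $\tilde g \colon \Gr(k,V)\to \Gr(k,V)$, $H \mapsto g(H)$. Because the Schubert conditions of Definition \ref{Def:SchubertVariety} are defined purely in terms of dimensions of intersections, and $g$ preserves intersections and dimensions, $\tilde g$ restricts to a biregular isomorphism $\tilde g \colon X_\alpha \Edot \to X_\alpha \Fdot$ and likewise on the dense open subsets $X_\alpha \Edot^\circ \to X_\alpha \Fdot \cap \tilde g(U_\alpha)$. Composing with $\phi_\alpha \colon \s_\alpha \to X_\alpha \Edot^\circ$ from Proposition \ref{prop:SchubBirat} yields an isomorphism $\tilde g \circ \phi_\alpha \colon \s_\alpha \to X_\alpha \Fdot \cap \tilde g(U_\alpha)$ onto a dense open subset of $X_\alpha \Fdot$. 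Concretely, if $M_\alpha \in \s_\alpha$ is a matrix of parameters, then $M_\alpha \cdot \Fdot$ (using the basis presentation in equation \eqref{eqn:flagMatrix} of $\Fdot$ as an $n\times n$ matrix) is a $k \times n$ matrix whose row space is the corresponding point of $X_\alpha \Fdot$; this expresses the entries of $M_\alpha$ as local coordinates on $X_\alpha \Fdot$.

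No serious obstacle arises: the argument is almost entirely formal, consisting of noting that Schubert varieties are equivariant under the $\GL(V)$-action on $\Gr(k,V)$ in the sense that $\tilde g(X_\alpha \Edot) = X_\alpha(g\cdot\Edot)$. The only point that requires mild care is to verify that $\tilde g(U_\alpha)$ still meets $X_\alpha \Fdot$ in a dense open subset; this follows because $\tilde g$ is an isomorphism of $\Gr(k,V)$ and therefore preserves density and openness, so the image of $\s_\alpha$ is a dense open chart on $X_\alpha \Fdot$. This completes the reduction and establishes the corollary.
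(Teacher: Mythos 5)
Your proposal is correct and takes essentially the same approach as the paper: choose a basis $f$ of $\Fdot$, use it to identify $\Fdot$ with the standard flag (equivalently, apply the $\GL(V)$-equivariance of Schubert varieties), and then invoke Proposition \ref{prop:SchubBirat}. The paper's treatment is just a terser statement of the same reduction, while you spell out the underlying automorphism $\tilde{g}$ and the explicit parametrization $M_\alpha \cdot \Fdot$ in more detail.
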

The $i$th row of $M_\alpha$ has $a_i-i$ indeterminates.
Corollary \ref{cor:SVdimAux} allows us to calculate the dimension of a Schubert variety.
\begin{corollary}
 The dimension of $X_\alpha \Fdot$ is
 \[\dim(X_\alpha \Fdot) = \sum_{i=1}^k \alpha_i - i.\]
\end{corollary}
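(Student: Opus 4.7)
The plan is to invoke Corollary \ref{cor:SVdimAux} to reduce the dimension computation to counting the free parameters in the parametrizing matrix $M_\alpha$ of $\s_\alpha$. Since a dense open subset of $X_\alpha \Fdot$ is isomorphic to the affine variety $\s_\alpha$ (via the chosen basis realizing $\Fdot$ as the standard flag, together with Proposition \ref{prop:SchubBirat}), its dimension equals the number of independent parameters in $M_\alpha$.

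The main computation is a careful bookkeeping of entries in $M_\alpha$, row by row. Recall that $\s(\alpha)$ is characterized by the constraint $M_{i,\alpha_j} = \delta_{ij}$, and $\s_\alpha$ is further cut out by requiring every entry strictly to the right of a pivot to vanish. I would analyze the $i$th row: its pivot sits in column $\alpha_i$, every entry in columns $\alpha_i+1,\dotsc,n$ is zero, and among columns $1,\dotsc,\alpha_i-1$ the positions $\alpha_1,\dotsc,\alpha_{i-1}$ (which number exactly $i-1$) are forced to zero by the $\delta_{ij}$ condition, while the remaining $\alpha_i - 1 - (i-1) = \alpha_i - i$ entries are free indeterminates. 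Summing over $i \in [k]$ gives the claimed dimension
\[\dim(X_\alpha \Fdot) \;=\; \sum_{i=1}^k (\alpha_i - i).\]

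There is no serious obstacle here; the only point requiring attention is justifying that dimension is preserved when passing from the open cell $X_\alpha \Edot^\circ$ (locally modeled on $\s_\alpha$) to the full Schubert variety $X_\alpha \Fdot$. This is handled by noting that $X_\alpha \Edot^\circ$ is dense in $X_\alpha \Edot$ and that the choice of basis for $\Fdot$ induces an isomorphism of varieties taking $X_\alpha \Edot$ to $X_\alpha \Fdot$, so the dense open piece parametrized by $\s_\alpha$ detects the dimension faithfully.
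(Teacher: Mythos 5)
Your proposal is correct and follows the paper's own route: the paper states immediately before the corollary that the $i$th row of $M_\alpha$ has $\alpha_i - i$ indeterminates and that Corollary \ref{cor:SVdimAux} then yields the dimension, which is exactly the parameter count and density argument you spell out.
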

Using Corollary \ref{cor:GrassDim}, we calculate the codimension of a Schubert variety.
\begin{definition}
 The codimension of $X_\alpha \Fdot$ in $\Gr(k,V)$ is
 \[|\alpha| := k(n-k) - \sum_{i=1}^k \alpha_i - i\,.\]
\end{definition}
With this definition, we see that each Grassmannian $\Gr(k,V)$ admits a unique Schubert condition $(k,k+2,\dotsc,n)$ which defines Schubert varieties of codimension one.
We write ${\It}$ to denote this condition, and we call $X_{\Is} \Fdot$ a \emph{hypersurface Schubert variety}.

There is an implicit way express the open dense subset $X_\alpha \Fdot \cap U_\beta \subset X_\alpha \Fdot$ using the Stiefel coordinates $\s(\beta)$ parametrizing $U_\beta$ with respect to $\be$.
Let the matrix $\Fdot$ denote a basis for the flag $\Fdot$ with respect $\be$.
Similarly, let $F_i$ denote the $i\times n$ submatrix of $\Fdot$ whose row space is the subspace $F_i$ in the flag $\Fdot$.
\begin{proposition}\label{prop:detConds}
 Let $\alpha,\beta\in\tbinom{[n]}{k}$ be Schubert conditions.
 Let $X_\alpha \Fdot\subset \Gr(k,V)$, and $M \in \s(\beta)$ be a matrix parametrizing $U_\beta\subset \Gr(k,n)$.
 Then the open dense subset $X_\alpha \Fdot \cap U_\beta \subset X_\alpha \Fdot$ is defined by the vanishing of the $r_i\times r_i$ minors of $\binom{M}{F_{\alpha_i}}$, where $r_i = k+\alpha_i-i+1$ for $i\in[k]$.
\end{proposition}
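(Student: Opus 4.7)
The plan is to translate the incidence conditions of Definition \ref{Def:SchubertVariety} into rank conditions on the combined matrix, and then express these rank conditions as the vanishing of minors.

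First, I would rewrite the defining conditions using a dimension-counting identity. For $H \in \Gr(k,V) \cap U_\beta$ and each $i \in [k]$, the formula
\[
\dim(H \cap F_{\alpha_i}) + \dim(H + F_{\alpha_i}) = \dim(H) + \dim(F_{\alpha_i}) = k + \alpha_i
\]
shows that the incidence condition $\dim(H \cap F_{\alpha_i}) \geq i$ is equivalent to the upper-bound condition $\dim(H + F_{\alpha_i}) \leq k + \alpha_i - i$.

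Next, I would identify the sum $H + F_{\alpha_i}$ with the row space of the stacked matrix $\binom{M}{F_{\alpha_i}}$, where $M \in \s(\beta)$ parametrizes $H$ and $F_{\alpha_i}$ denotes the $\alpha_i \times n$ matrix whose rows span $F_{\alpha_i}$. Hence $\dim(H + F_{\alpha_i}) = \rank \binom{M}{F_{\alpha_i}}$, and the incidence condition becomes the rank inequality
\[
\rank \binom{M}{F_{\alpha_i}} \leq k + \alpha_i - i = r_i - 1.
\]
By the standard characterization of rank, this inequality holds if and only if every $r_i \times r_i$ minor of $\binom{M}{F_{\alpha_i}}$ vanishes. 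Since the entries of $M$ are the Stiefel coordinates and the entries of $F_{\alpha_i}$ are constants, each such minor is a polynomial in the coordinates on $U_\beta$.

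Finally, I would assemble these polynomial equations over all $i \in [k]$ to conclude that $X_\alpha \Fdot \cap U_\beta$ is cut out in $U_\beta$ exactly by the vanishing of the $r_i \times r_i$ minors of $\binom{M}{F_{\alpha_i}}$ for $i \in [k]$. The main subtlety is not the translation itself, which is mechanical, but checking that the equivalence between the incidence condition and the rank condition is genuinely biconditional set-theoretically: the forward implication is immediate from the dimension formula, while the reverse uses that any $k$-plane $H$ with $\dim(H + F_{\alpha_i}) \leq k + \alpha_i - i$ forces $\dim(H \cap F_{\alpha_i}) \geq i$ by the same formula. Irrelevant conditions (with $\alpha_{i+1} = \alpha_i + 1$ or $\alpha_k = n$) automatically yield trivial minor conditions, which is consistent with the redundancy already noted after Definition \ref{Def:SchubertVariety}.
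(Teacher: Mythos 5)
Your proof is correct and follows the same approach as the paper: translating the incidence condition $\dim(H \cap F_{\alpha_i}) \geq i$ into a rank bound on the stacked matrix $\binom{M}{F_{\alpha_i}}$ and expressing that bound as the vanishing of the corresponding minors. The only difference is that you make the dimension-counting identity explicit where the paper simply asserts the equivalence; this is a welcome elaboration, not a departure.
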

\begin{proof}
 The definition (\ref{Def:SchubertVariety}) is equivalent to the requirement that the rows of $M$ and rows of $F_{\alpha_i}$ span a space of dimension at most $r_i-1$.
 The implied rank conditions on $\binom{M}{F_{\alpha_i}}$ are given by the vanishing of $r_i\times r_i$ minors.
\end{proof}
\begin{example}\label{ex:7max}
 The Schubert variety $H\in X_{(2,3,5,6)} \Fdot\subset \Gr(4,6)$ has only one relevant condition, $\dim(H\cap F_3)\geq 2$, so its determinantal conditions from Proposition \ref{prop:detConds} consist of the seven maximal minors of $\binom{M}{F_{3}}$.
\end{example}
\begin{definition}
 Regarding $\Gr(k,V)$ as a variety in Pl\"ucker space via the Pl\"ucker embedding, the \emph{Pl\"ucker ideal} $\Pl_{k,n}$ is the ideal $\Pl_{k,n}:=\mathcal{I}(\Gr(k,V))$.
\end{definition}

The partially ordered set of Schubert conditions in $\tbinom{[n]}{k}$ given by
\[\alpha \leq \beta\quad\mbox{if}\quad \alpha_i\leq \beta_i\quad\mbox{for}\quad i\in[k]\]
is called the \emph{Bruhat order}.
This order gives us a way to determine the number of determinants needed to define a Schubert variety.
\begin{proposition}\label{prop:standardBruhat}
 The ideal of the Schubert variety $X_\alpha \Edot$ in Pl\"ucker space is
 \[\Pl_{k,n} + (p_\beta\ |\ \beta\not\leq\alpha)\,.\]
\end{proposition}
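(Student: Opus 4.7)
The plan is to establish the ideal equality in two stages: first prove the set-theoretic equality of the corresponding subvarieties of Pl\"ucker space, and then upgrade this to equality of ideals by arguing that $J := \Pl_{k,n} + (p_\beta \mid \beta \not\leq \alpha)$ is already radical, hence equal to the prime ideal of the irreducible variety $X_\alpha \Edot$.

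For the set-theoretic step I would use the Bruhat decomposition $\Gr(k,V) = \bigsqcup_\beta X_\beta \Edot^\circ$ together with Proposition \ref{prop:SchubBirat}, which identifies each Schubert cell $X_\beta \Edot^\circ$ with the affine space $\s_\beta$ of Stiefel matrices having pivots at $\beta$ and zeros to the right of each pivot within its row. The key computation is that for $M \in \s_\beta$, the Pl\"ucker coordinate $p_\gamma(M)$ vanishes whenever $\gamma \not\leq \beta$: expanding the minor $p_\gamma(M)$ by the Leibniz formula, a nonzero contribution requires a permutation $\sigma$ of $[k]$ with $\gamma_{\sigma(i)} \leq \beta_i$ for every $i$, because $M_{i,j} = 0$ for $j > \beta_i$; a short pigeonhole argument (at least $k-j+1$ indices $i$ with $\sigma(i) \geq j$ must include some $i^* \leq j$, giving $\gamma_j \leq \gamma_{\sigma(i^*)} \leq \beta_{i^*} \leq \beta_j$) then shows such a $\sigma$ exists precisely when $\gamma_i \leq \beta_i$ for all $i$. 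Since $X_\alpha \Edot = \bigsqcup_{\beta \leq \alpha} X_\beta \Edot^\circ$, this gives the inclusion $X_\alpha \Edot \subseteq V(J)$; the reverse is immediate, since any $H \in V(J)$ lies in some cell $X_\beta \Edot^\circ$ with $p_\beta(H) \neq 0$, forcing $\beta \leq \alpha$.

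To upgrade to equality of ideals I would argue locally in the affine chart $U_\alpha$ parametrized by the Stiefel coordinates $\s(\alpha)$ of Definition \ref{def:StiefelCoords}. In this chart $p_\alpha$ is a unit, and in the spirit of the proof of Proposition \ref{prop:PluckerCoords} the free entries $M_{i,j}$ of $\s(\alpha)$ are themselves (up to sign) the Pl\"ucker coordinates $p_\gamma$ obtained from $\alpha$ by replacing $\alpha_i$ with $j$. A direct bookkeeping check shows that the entries $M_{i,j}$ with $j > \alpha_i$---precisely those one must set to zero to cut $\s_\alpha$ out of $\s(\alpha)$---correspond to $p_\gamma$ with $\gamma \not\leq \alpha$, while every remaining $p_\gamma$ with $\gamma \not\leq \alpha$ vanishes identically on $\s_\alpha$ by the Leibniz argument above. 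Consequently, restricting $J$ to $\C[\s(\alpha)]$ modulo the Pl\"ucker relations recovers the ideal of the affine space $\s_\alpha$, which is prime of codimension $|\alpha|$. Since $X_\alpha \Edot^\circ = X_\alpha \Edot \cap U_\alpha$ is dense in the irreducible variety $X_\alpha \Edot$, this local equality forces $J$ to coincide globally with the prime ideal of $X_\alpha \Edot$.

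The main obstacle is ruling out embedded or extraneous primary components of $V(J)$ outside the chart $U_\alpha$---that is, on the locus $\{p_\alpha = 0\}$ where the local argument gives no information. The clean global route is a standard-monomial argument: exhibit a Gr\"obner basis of $\Pl_{k,n}$ refining Bruhat order so that the standard monomials of the quotient by $J$ are indexed by weakly increasing chains in the Bruhat interval below $\alpha$, and verify that their count matches the Hilbert function of $X_\alpha \Edot$. Alternatively, one can induct on $|\alpha|$ using that $X_\alpha \Edot$ is the union of the divisors $X_{\alpha'} \Edot$ for $\alpha'$ covered by $\alpha$ in Bruhat order, reducing to the hypersurface case where a single Pl\"ucker coordinate cuts out $X_{\alpha'} \Edot$ inside $X_\alpha \Edot$ and the radicality assertion becomes transparent.
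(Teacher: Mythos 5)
Your proposal takes the same core route as the paper---reduction to the Stiefel coordinates $\s_\alpha \subset \s(\alpha)$ on the chart $U_\alpha$---but is substantially more careful, and you correctly spot a gap that the paper's two-sentence proof glosses over. The paper observes only that the free entries of a matrix in $\s(\alpha)$ that must be set to zero to cut out $\s_\alpha$ are, up to sign, Pl\"ucker coordinates $p_\beta$ with $\beta\not\leq\alpha$ (namely those $\beta$ differing from $\alpha$ in a single position). This pins down the restriction of the candidate ideal $J$ to $U_\alpha$, but it says nothing about the scheme structure of $V(J)$ on the locus $\{p_\alpha = 0\}$, nor about primality or saturation of $J$---which is exactly the ``main obstacle'' you flag. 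Your set-theoretic preliminary (Bruhat decomposition plus the Leibniz/pigeonhole lemma that $p_\gamma$ vanishes on $\s_\beta$ unless $\gamma\leq\beta$) is correct and cleanly handles the vanishing-locus inclusion in both directions. Both routes you sketch for controlling extraneous primary components---a Hodge-type standard-monomial basis refining the Bruhat order, or induction on Bruhat covers via the divisorial inclusions $X_{\alpha'}\Edot \subset X_\alpha\Edot$---are standard ways to finish, with the standard-monomial argument being the classical one. In short, your proposal is a rigorous version of what the paper outlines, and the extra work you identify really is needed for the stated ideal-theoretic (as opposed to merely set-theoretic or chart-local) conclusion.
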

\begin{proof}
Suppose the matrix $M$ parametrizes $\s(\alpha)$, and consider the Stiefel coordinates $\s_\alpha\subset \s(\alpha)$ on $X_\alpha \Edot^\circ \subset \Gr(k,V) \cap U_\alpha$.
As observed in the proof of Proposition \ref{prop:PluckerCoords}, the parameters of $M$ which are identically zero on $\s_\alpha$ are the Pl\"ucker coordinates $p_\beta$ such that $\beta\not\leq\alpha$.
\end{proof}
This gives us the number of linearly independent generators of $\mathcal{I}(X_\alpha \Fdot)$ as a subvariety of $\Gr(k,V)$.
The right action of $g\in\GL(n,\C)$ on $V$ induces a dual left action on the Pl\"ucker coordinates of $\Gr(k,V)$.
The Grassmannian is invariant under the action of $\GL(n,\C)$, so the Pl\"ucker ideal is invariant under the dual action.
Thus for $g\in \GL(n,\C)$ the ideal $\mathcal{I}(X_\alpha \Fdot.g)$ is given by the sum of ideals 
\[\mathcal{I}(X_\alpha \Fdot.g) = \Pl_{k,n} + (g^{-1}.p_\beta\ |\ \beta\not\leq\alpha)\,.\]
\begin{corollary}\label{cor:numGens}
 Let $\Fdot$ be any flag in $V$.
 The ideal of the Schubert variety $X_\alpha \Fdot$ as a subvariety of $\Gr(k,V)$ is generated by
 \[\#\{p_\beta\ |\ \beta\not\leq\alpha\}\]
 linearly independent determinantal equations.
\end{corollary}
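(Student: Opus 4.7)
The strategy is to reduce to the standard-flag case handled by Proposition \ref{prop:standardBruhat} via the $\GL(n,\C)$-action, and then show that the number of generators and their linear independence are preserved.

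First, I would note that $\GL(n,\C)$ acts transitively on the set of complete flags in $V$. So given any flag $\Fdot$, there exists $g\in\GL(n,\C)$ with $\Edot.g=\Fdot$. Applying the formula displayed just above the corollary, we obtain
\[
\mathcal{I}(X_\alpha\Fdot)\;=\;\Pl_{k,n}\,+\,(\,g^{-1}.p_\beta\ |\ \beta\not\leq\alpha\,)\,.
\]
Passing to the homogeneous coordinate ring of $\Gr(k,V)$ (that is, working modulo $\Pl_{k,n}$), the ideal of $X_\alpha\Fdot$ inside $\Gr(k,V)$ is therefore generated by the transforms $\{g^{-1}.p_\beta:\beta\not\leq\alpha\}$.

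Next, I would verify that this generating set has the correct cardinality and is linearly independent. The Pl\"ucker coordinates $\{p_\beta:\beta\in\tbinom{[n]}{k}\}$ form a basis for the degree-one piece of the homogeneous coordinate ring of $\p(\bigwedge^k V)$, so the subset $\{p_\beta:\beta\not\leq\alpha\}$ is linearly independent of cardinality $\#\{p_\beta:\beta\not\leq\alpha\}$. The induced action of $g^{-1}$ on this degree-one piece is the linear automorphism $\bigwedge^k(g^{-1})^\ast$, which preserves both linear independence and cardinality. Thus $\{g^{-1}.p_\beta:\beta\not\leq\alpha\}$ consists of $\#\{p_\beta:\beta\not\leq\alpha\}$ linearly independent linear forms on Pl\"ucker space.

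Finally, I would argue that these generators are determinantal. Each $p_\beta$ is, by the discussion after Proposition \ref{prop:PluckerCoords}, the $k\times k$ minor $Q_\beta$ of a Stiefel representative $Q$ of a point of $\Gr(k,V)$; equivalently, $p_\beta=e_{\beta_1}^\ast\wedge\cdots\wedge e_{\beta_k}^\ast$ evaluated on $\bigwedge^k H$. The form $g^{-1}.p_\beta$ is the analogous minor after replacing the standard basis by the basis determined by $g$, hence is itself a $k\times k$ determinant in the entries of a matrix representative, whose rows are linear combinations of the rows of $Q$ with coefficients from $g$. Alternatively, one can appeal directly to Proposition \ref{prop:detConds}, which exhibits defining equations for $X_\alpha\Fdot$ as minors of the augmented matrices $\binom{M}{F_{\alpha_i}}$; the content of the corollary is that, after discarding linear dependencies among these minors, exactly $\#\{p_\beta:\beta\not\leq\alpha\}$ of them survive as an irredundant generating set.

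The only real subtlety is the linear independence claim, which must be proved before one can assert the precise count; but this is immediate from the fact that $g^{-1}$ acts as an element of $\GL$ on the space of linear forms in Pl\"ucker coordinates, so no nontrivial linear relations can be created or destroyed.
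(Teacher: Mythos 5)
Your proof is correct and follows the same route the paper takes: reduce to the standard flag via the transitive $\GL(n,\C)$-action (this is exactly the displayed formula just before the corollary), observe that the Pl\"ucker ideal has no linear part so linear independence of the forms $\{p_\beta : \beta\not\leq\alpha\}$ descends to the coordinate ring of $\Gr(k,V)$, and note that $g^{-1}$ acts as a linear automorphism of the degree-one piece, preserving both the count and linear independence. The paper states the corollary without an explicit proof, treating it as immediate from the preceding paragraph; you have supplied the details, in particular making explicit why the action on Pl\"ucker coordinates cannot create or destroy linear relations. One small caution on your closing remark: the corollary asserts that some $\#\{p_\beta : \beta\not\leq\alpha\}$ linearly independent determinantal equations generate, but it does not quite follow that one may always pick such a generating set as a sub\emph{set} of the specific minors of Proposition \ref{prop:detConds}; a priori a basis of their span might require genuine linear combinations. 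This is tangential to your main argument, which stands on the $g^{-1}.p_\beta$'s themselves (each of which is a maximal minor of $Qg^{-1}$ for a Stiefel representative $Q$, hence determinantal).
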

Using this, we see how far one may reduce the system of determinantal equations given by Proposition \ref{prop:detConds}.
For example, the seven maximal minors in Example \ref{ex:7max} may be reduced to three linearly independent minors.

The hypersurface $X_{\Is}\Fdot \subset \Gr(k,V)$ has one relevant condition given by $\det \binom{M}{F_{n-k}} = 0$.
Using Corollary \ref{cor:numGens}, we see the number of linearly independent determinants from Proposition \ref{prop:detConds} needed to define $X_\alpha\Fdot$ is greater than $|\alpha|$ when $|\alpha|>1$ and $\min\{k,n-k\}\geq 2$.

\section{Schubert Problems}

We have now seen two ways to locally express a Schubert variety $X_\alpha \Fdot$, one by choosing a basis $\bff$ of $V$ so that $\s_\alpha$ parametrizes a dense subset of $X_\alpha \Fdot$ and another by determinantal equations in parameters for some $U_\beta$ with respect to the standard basis $\be$.
Thus we may express the intersection points of $X_\alpha \Fdot \cap X_\beta \Gdot$ using either determinantal conditions defining $X_\alpha \Fdot$ and $X_\beta \Gdot$ in local Stiefel coordinates for $\Gr(k,V)$ or determinantal conditions defining $X_\beta \Gdot$ in local Stiefel coordinates for $X_\alpha \Fdot$.
We give a third formulation of $X_\alpha \Fdot \cap X_\beta \Gdot$ when $\Fdot$ and $\Gdot$ are in sufficiently general position.

\begin{definition}
 The flag $\Eop$ with basis $(e_n,\dotsc,e_1)$ is called the \emph{standard opposite flag}.
\end{definition}
We note that the $n\times n$ matrix with ones along the antidiagonal and zeros elsewhere is a basis for the standard opposite flag.
\begin{definition}\label{def:local2vars}
 For $\alpha,\beta\in\tbinom{[n]}{k}$, the subset $\s_\alpha^\beta\subset\Mat_{k \times n}$ consists of matrices $M$ whose entries satisfy
 \[M_{ij} = 1 \mbox{ if } j=\alpha_i\qquad\mbox{and}\qquad M_{ij} = 0 \mbox{ if } j>\alpha_i \mbox{ or } j<n+1-\beta_{k-i+1}\,,\]
and whose other entries are arbitrary.
\end{definition}
\begin{example}
 Let $\alpha=(2,5,7,9)$ and $\beta=(4,5,7,8)$ be Schubert conditions in $\tbinom{[9]}{4}$.
 The variety $X_\alpha \Edot \cap X_\beta \Eop$ has local Stiefel coordinates
 \begin{equation}\label{eqn:StiefelStandardOpp}
  \left(
   \begin{matrix}
    0 & 1 & 0 & 0 & 0 & 0 & 0 & 0 & 0 \\
    0 & 0 & * & * & 1 & 0 & 0 & 0 & 0 \\
    0 & 0 & 0 & 0 & * & * & 1 & 0 & 0 \\
    0 & 0 & 0 & 0 & 0 & * & * & * & 1
   \end{matrix}
  \right)\,.
 \end{equation}
\end{example}

We describe flags in sufficiently general position.
\begin{definition}\label{def:generalFlags}
 The flags $\Fdot$ and $\Gdot$ in $V$ are in \emph{linear general position} if
 \[\dim (F_i\cap G_j) = \max\{0, i + j - n\} \quad \mbox{for} \quad i,j\in[n]\,.\]
\end{definition}
\begin{proposition}\label{prop:linGenFlagBases}
 If the flags $\Fdot$ and $\Gdot$ are in linear general position, then they have bases $(f_1,\dotsc,f_n)$ and $(g_1,\dotsc,g_n)$ respectively, such that $g_i=f_{n-i+1}$ for $i\in[n]$.
\end{proposition}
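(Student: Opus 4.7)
The plan is to construct the common basis one vector at a time, using the hypothesis of linear general position to locate a canonical one-dimensional subspace at each step. Specifically, for each $i \in [n]$ I will set
\[
L_i := F_i \cap G_{n-i+1}.
\]
By the linear general position hypothesis in Definition~\ref{def:generalFlags},
\[
\dim L_i = \max\{0,\; i + (n-i+1) - n\} = 1,
\]
so I can pick a nonzero vector $f_i \in L_i$ for each $i$, and then set $g_i := f_{n-i+1}$. This immediately enforces the required relation $g_i = f_{n-i+1}$, and the real content of the proof is to check that $(f_1,\dotsc,f_n)$ is a basis of $\Fdot$ and $(g_1,\dotsc,g_n)$ is a basis of $\Gdot$.

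The key subclaim is that $f_i \notin F_{i-1}$ and $f_i \notin G_{n-i}$. I would prove each by another application of linear general position: if $f_i \in F_{i-1}$, then $f_i \in F_{i-1} \cap G_{n-i+1}$, but this intersection has dimension $\max\{0,(i-1)+(n-i+1)-n\} = 0$, forcing $f_i = 0$, a contradiction. The same argument with indices $(i, n-i)$ gives $f_i \notin G_{n-i}$.

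From $f_j \notin F_{j-1}$ I get linear independence of $(f_1,\dotsc,f_n)$ by a standard triangular argument: in any nontrivial dependence $\sum c_j f_j = 0$, isolating the largest index $j$ with $c_j \neq 0$ expresses $f_j$ as a combination of $f_1,\dotsc,f_{j-1} \in F_{j-1}$, contradicting $f_j \notin F_{j-1}$. Since $f_1,\dotsc,f_i \in F_i$ are $i$ independent vectors in an $i$-dimensional space, they span $F_i$, so $(f_1,\dotsc,f_n)$ is indeed a basis of $\Fdot$. The dual statement for $\Gdot$ follows symmetrically: $g_j = f_{n-j+1} \in G_{n-(n-j+1)+1} = G_j$, so $g_1,\dotsc,g_i$ lie in $G_i$, and they are independent since $(f_1,\dotsc,f_n)$ is.

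There is no serious obstacle here; the whole proof hinges on the observation that the linear general position hypothesis makes $F_i \cap G_{n-i+1}$ a line while making the next-smaller intersections on either side trivial. The mild bookkeeping step is getting the indexing on the $g$-side right, and reading off that the map $i \mapsto n-i+1$ is an involution that sends the flag condition $\dim(F_i \cap G_{n-i+1}) = 1$ to its counterpart $\dim(G_j \cap F_{n-j+1}) = 1$, which is why the same vectors serve as a basis for the opposite flag.
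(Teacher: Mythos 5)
Your proof is correct and follows the same strategy as the paper's: choose nonzero $f_i \in F_i \cap G_{n-i+1}$ (which linear general position makes a line) and verify independence. The paper's one-line proof leaves the independence argument implicit; you have simply spelled out the triangular bookkeeping that makes it work.
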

\begin{proof}
 We simply choose nonzero vectors $f_i\in F_i\cap G_{n-i+1}$, and Definition \ref{def:generalFlags} ensures that the sets $(f_1,\dotsc,f_i)$ and $(f_n,\dotsc,f_i)$ are each linearly independent.
\end{proof}
Let $\Fdot,\Gdot$ be flags in $V$ in linear general position and $\alpha,\beta\in \tbinom{[n]}{k}$ with $\alpha_i+\beta_{k-i+1}\geq n+1$ for $i\in[k]$.
Let $\bff$ be a basis of $\Fdot$ as described in Proposition \ref{prop:linGenFlagBases}, so that $(f_n,\dotsc,f_1)$ is a basis of $\Gdot$.
We observe that $\s_\alpha^\beta$ parametrizes the dense subset of $X_\alpha \Fdot \cap X_\beta \Gdot$, given by the linear span of the vectors
\[M_{i,\alpha_i} + \sum_{j = n-\beta_{k-i+1}+1}^{\alpha_i-1} M_{i,j}f_j\,,\]
for $i\in [k]$.

For an example, the row spaces of the matrices of the form (\ref{eqn:StiefelStandardOpp}) with respect to the standard basis $\be$ form a dense open subset of $X_\alpha\Edot \cap X_\beta \Edot'$.

\begin{definition}
 An intersection $X:=X_1\cap \cdots \cap X_m$ of subvarieties of a variety $G$ is said to be transverse at a point $x\in X$ if the equations defining the tangent spaces of $X_1,\dotsc,X_m$ at the point $x$ are in direct sum.
\end{definition}
\begin{definition}
 An intersection $X:=X_1\cap \cdots \cap X_m$ of subvarieties of a variety $G$ is said to be generically transverse if, for each component $Y\subset X$, there is a dense open subset $Z\subset Y$ such that $X$ is transverse at every point in $Z$.
 If $X$ is zero dimensional, then it is generically transverse if and only if it is transverse at every point $x\in X$.
\end{definition}
\begin{definition}
 Let $\balpha = (\alpha^1,\dotsc,\alpha^m)$ be a list of Schubert conditions in $\tbinom{[n]}{k}$.
 We define $|\balpha| := |\alpha^1|+\cdots+|\alpha^m|$.
\end{definition}
The following result is fundamental for Schubert calculus.
\begin{proposition}[Generic Transversality]\label{prop:genTrans}
 Let $\balpha = (\alpha^1,\dotsc,\alpha^m)$ be a list of Schubert conditions in $\tbinom{[n]}{k}$.
 If $\Fdot^1,\dotsc,\Fdot^m$ are general flags, then
 \begin{equation}\label{GeneralIntersection}
  X:=X_{\alpha^1} \Fdot^1 \cap \cdots \cap X_{\alpha^m} \Fdot^m
 \end{equation}
 is generically transverse.
 In particular, if $X$ is nonempty, then $\codim(X) = |\balpha|$.
\end{proposition}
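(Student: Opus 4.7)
The plan is to proceed by induction on $m$, invoking Kleiman's theorem on transversality of generic translates as the main engine. The underlying geometric fact is that the group $\GL(n,\C)$ acts transitively on $\Gr(k,V)$ and, via its induced action on ordered bases, also acts transitively on the set of flags in $V$, with the equivariance $X_{\alpha}(g\cdot\Fdot) = g\cdot X_{\alpha}\Fdot$ for every $g\in\GL(n,\C)$. Since $\GL(n,\C)$ is connected (and irreducible as an algebraic variety), Kleiman's theorem applies: for any two subvarieties $X,Z\subset\Gr(k,V)$, the intersection $X\cap g\cdot Z$ is generically transverse for $g$ in a dense open subset of $\GL(n,\C)$, and each component has codimension $\codim(X)+\codim(Z)$.

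The base case $m=1$ is immediate: $X_{\alpha^1}\Fdot^1$ is itself generically transverse and has codimension $|\alpha^1|$ by the dimension formula for Schubert varieties established via Corollary \ref{cor:SVdimAux}. For the inductive step, fix general flags $\Fdot^1,\dotsc,\Fdot^{m-1}$ so that
\[
X' := X_{\alpha^1}\Fdot^1\cap\cdots\cap X_{\alpha^{m-1}}\Fdot^{m-1}
\]
is either empty or generically transverse of pure codimension $|\alpha^1|+\cdots+|\alpha^{m-1}|$. Write $X_{\alpha^m}\Fdot^m = g\cdot X_{\alpha^m}\Edot$, so as $\Fdot^m$ ranges over flags, $g$ ranges over $\GL(n,\C)$. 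Applying Kleiman's theorem to the pair $X'$ and $X_{\alpha^m}\Edot$ under the transitive $\GL(n,\C)$ action on $\Gr(k,V)$, we obtain a dense open set of $g\in\GL(n,\C)$ — equivalently, of flags $\Fdot^m$ — for which $X' \cap g\cdot X_{\alpha^m}\Edot = X$ is generically transverse. On each component $Y$ of $X$, transversality on a dense open subset forces
\[
\codim(Y) \;=\; \codim(X') + |\alpha^m| \;=\; |\balpha|\,,
\]
which establishes the codimension statement whenever $X$ is nonempty.

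The main obstacle is simply the careful invocation of Kleiman's theorem, which requires three verifications: that $\GL(n,\C)$ is a connected algebraic group, that its action on $\Gr(k,V)$ is transitive, and that we are in characteristic zero (the theorem's standard hypothesis). All three hold in our setting. A secondary minor point is that ``generic transversality'' of a zero-dimensional intersection coincides with pointwise transversality, which is the form of the statement most relevant to counting intersection points in subsequent chapters. Once Kleiman's theorem is in hand, the induction is mechanical and the codimension assertion follows from the elementary fact that transverse intersections add codimensions.
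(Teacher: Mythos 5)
The paper itself does not prove Proposition~\ref{prop:genTrans}; immediately after stating it, the text cites Kleiman~\cite{Kleiman} for characteristic zero and Vakil~\cite{Va06b} for positive characteristic and moves on. So there is no ``paper's proof'' to compare against line by line; your job was to reconstruct the standard deduction, and you have done so in the expected way: $\GL(n,\C)$ is connected and acts transitively on $\Gr(k,V)$, Schubert varieties are translates of the standard ones, and Kleiman's transversality-of-a-general-translate theorem does the rest. Your three hypothesis checks (connected group, transitive action, characteristic zero) are exactly the right ones, and the remark that generic transversality of a zero-dimensional intersection is the same as pointwise transversality is a useful observation that the paper itself makes in its definition.

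One technical point deserves flagging. In the inductive step you obtain, for each good $(m-1)$-tuple of flags, a dense open set of admissible $m$-th flags. The union of these slices over the good $(m-1)$-tuples is dense in the space of $m$-tuples, but it is not \emph{a priori} open, whereas ``general flags'' should mean a dense \emph{open} (or at least dense constructible, hence containing a dense open) locus of $m$-tuples. To close this gap you either (a) observe that the locus of $m$-tuples giving a generically transverse intersection is constructible, by applying Chevalley's theorem to the natural incidence correspondence over $(\text{flags})^m$, so that dense constructible $\Rightarrow$ contains a dense open; or (b) skip the induction entirely and invoke the multi-translate form of Kleiman's theorem, which states directly that for $(g_1,\dotsc,g_m)$ in a dense open subset of $G^m$ the intersection $g_1 Y_1 \cap \cdots \cap g_m Y_m$ is generically transverse with each component of the expected codimension. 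Route (b) is what Kleiman's paper actually provides and is closer to what the paper's bare citation is implicitly using; route (a) is fine but should be spelled out so the ``dense and open'' conclusion is not just asserted. A second, more cosmetic, point: in the inductive step $X'$ may be reducible, so Kleiman's theorem should be applied component by component of $X'$, which is harmless but worth saying since the theorem is typically stated for an irreducible (or smooth) subvariety.
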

Kleiman proved Proposition \ref{prop:genTrans} for algebraically closed fields of characteristic zero \cite{Kleiman}, and Vakil proved the analogue for algebraically closed fields of positive characteristic \cite{Va06b}.
\begin{remark}
 As an immediate consequence, if $\Fdot,\Gdot$ are general and $X_\alpha\Fdot\cap X_\beta\Gdot\neq \emptyset$, then
 \[\codim (X_\alpha \Fdot \cap X_\beta \Gdot) = |\alpha|+|\beta|\,.\]
 Straightforward calculation shows that $\s_\alpha^\beta$ has dimension $k(n-k)-|\alpha|-|\beta|$.
\end{remark}
\begin{definition}
 A list $\balpha = (\alpha^1,\dotsc,\alpha^m)$ of Schubert conditions on $\Gr(k,V)$ satisfying
 \[\sum_{i=1}^m |\alpha^i| = k(n-k) = \dim(\Gr(k,V))\]
 is called a \emph{Schubert problem on $\Gr(k,V)$}.
 By Proposition \ref{prop:genTrans}, given general flags $\Fdot^1,\dotsc,\Fdot^m$ on $V$, the intersection
 \[X := X_{\alpha^1} \Fdot^1 \cap \cdots \cap X_{\alpha^m} \Fdot^m\]
 is empty or zero-dimensional.
 We call $X$ an \emph{instance of the Schubert problem $\balpha$}.
\end{definition}
Since general flags are in linear general position, we may formulate an instance $X$ of $\balpha$ with minors involving local coordinates for $\Gr(k,V)$, $X_{\alpha^1}\Fdot^1$, or $X_{\alpha^1}\Fdot^1\cap X_{\alpha^2}\Fdot^2$.
The third formulation may be the most efficient for computation, since it involves the fewest determinantal equations and variables.
A \emph{real instance} of a Schubert problem $\balpha$ is an instance
\[X_{\alpha^1} \Fdot^1 \cap \cdots \cap X_{\alpha^m} \Fdot^m\,,\]
which is a real variety.
\begin{remark}
 Traditionally, Schubert calculus asks for the number of intersection points in a general instance of a Schubert problem.
 In this thesis, we study the number of intersection points with residue field $\R$ (i.e.\ real subspaces of $V$) in a real instance of a Schubert problem.
 We say that a real instance of a Schubert problem has been solved if we have successfully counted the number of real points in the intersection.
 We call the complex intersection points \emph{solutions} to the Schubert problem.
\end{remark}
\begin{definition}
 A \emph{parametrized rational normal curve} $\gamma\subset \p^{n-1}$ is a curve of the form
 \[\gamma(s,t):=(\gamma_1(s,t)\,,\dotsc,\gamma_n(s,t))\,,\quad\mbox{for}\quad (s,t)\in\p^1\,,\]
so that the components $\gamma_1,\dotsc,\gamma_n$ give a basis for the space of degree $n-1$ forms on $\p^1$.
 If each $\gamma_i$ has real coefficients, then we say that $\gamma$ is a \emph{real parametrized rational normal curve}.
\end{definition}
If $\gamma^1$ and $\gamma^2$ are parametrized rational normal curves, then their components give bases for the space of degree $n-1$ forms, so they differ by a change of basis $B\in \GL(n,\C)$,
\[\gamma^1(s,t)B=\gamma^2(s,t)\,.\]
Furthermore, if $\gamma^1$ and $\gamma^2$ are real, then they give real bases for the space of $n-1$ forms on $\p^1$, and there is a real change of basis $C\in\GL(n,\R)$,
\[\gamma^1(s,t)C=\gamma^2(s,t)\,.\]
Therefore, all real parametrized rational normal curves are equivalent by the action of $\GL(n,\R)$.

Throughout this thesis, we consider the real curve $\gamma(s,t)$ to be fixed.
While we may make different choices of $\gamma$ to facilitate proof, the resulting theorems hold for all other choices by applying the $\GL(n,\R)$ action.
\begin{example}
 The Veronese curve parametrized by
 \[\gamma(s,t) := (s^{n-1}\,,s^{n-2}t\,,\dotsc,st^{n-2}\,,t^{n-1})\]
 is a real parametrized rational normal curve.
 By convention, $\gamma(t) := \gamma(1,t)$ for $t\in\C$, and $\gamma(\infty) := \gamma(0,1)$.
\end{example}
\begin{definition}
 For $a\in \p^1$, the \emph{osculating flag $\Fdot(a)$} is the flag whose $i$th subspace $F_i(a)$ is the $i$-dimensional row space of the matrix,
 \begin{equation}\label{def:osculatingMatrix}
  F_i(a) := \left(
   \begin{matrix}
    \gamma(a)      \\
    \gamma'(a)      \\
    \vdots           \\
    \gamma^{(i-1)}(a) \\
   \end{matrix}\right)\,.
 \end{equation}
\end{definition}
If $\gamma$ is the Veronese curve, then $\Fdot(0)$ is the standard flag, and $\Fdot(\infty)$ is the standard opposite flag.
\begin{definition}
 Let $\alpha$ be a Schubert condition on $\Gr(k,V)$, $a\in \p^1$, and $\Fdot(a)$ the flag osculating $\gamma$ at $\gamma(a)$.
 We call the Schubert variety $X_\alpha (a):=X_\alpha \Fdot(a)$ an \emph{osculating Schubert variety}.
 We say that \emph{$X_\alpha (a)$ osculates $\gamma$ at $\gamma(a)$}.
\end{definition}

\section{Associated Schubert Varieties}

Let $V^*$ be the usual dual vector space to $V$.
The duality between $V$ and $V^*$ induces an association between the Grassmannian $\Gr(k,V)$ and the Grassmannian $\Gr(n-k,V^*)$.
We find it useful to study the corresponding association of Schubert varieties.
\begin{definition}
 Let $\Fdot$ be a flag in $V$.
 The flag $\Fdual$ \emph{dual to} $\Fdot$ is the flag in $V^*$ whose $i$-dimensional subspace $F_i^\perp$ is the annihilator of $F_{n-i}$ for $i\in[n-1]$,
 \[\Fdual : 0 \subsetneq (F_{n-1})^\perp \subsetneq \cdots \subsetneq (F_{1})^\perp \subsetneq F^\perp_n := V^*\,.\]
\end{definition}
The \emph{complement} of $\alpha\in\tbinom{[n]}{k}$ is the list $\alpha^c:=[n]\setminus \alpha$.
We realize a Schubert condition $\alpha\in\tbinom{[n]}{k}$ as a permutation $\sigma(\alpha)$ on $[n]$, by appending $\alpha^c$ to $\alpha$,
\[\sigma(\alpha) := (\alpha,\alpha^c)\,.\]
\begin{example}
 The Schubert condition $(1,3,6)\in\tbinom{[7]}{3}$ is a permutation
 \[\sigma(\alpha) = (1,3,6\,|\,2,4,5,7)\,.\]
 We use a vertical line in place of a comma to denote the position where the entries of $\sigma(\alpha)$ are allowed to decrease.
\end{example}
Let $\omega:=(n,n-1,\dotsc,2,1)$ be the longest permutation on $[n]$.
\begin{definition}
 Let $\alpha\in\tbinom{[n]}{k}$ be a Schubert condition.
 The Schubert condition $\alpha^\perp \in\tbinom{[n]}{n-k}$ associated to $\alpha$ is given by the composition of permutations
 \[\alpha^\perp := \omega\sigma(\alpha)\omega\,.\]
\end{definition}
\begin{example}
 Let $\alpha = (2,3) \in\tbinom{[5]}{2}$ be a Schubert condition.
 Writing
 \[\alpha^\perp = \omega (2,3\,|\,1,4,5) \omega = (1,2,5\,|\,3,4)\]
 as an element of $\tbinom{[5]}{3}$ gives the Schubert condition $\alpha^\perp = (1,2,5)$.
\end{example}
\begin{definition}
 Let ${\perp}:\Gr(k,V)\rightarrow \Gr(n-k,V^*)$ be the \emph{dual map}, mapping a $k$-plane to its annihilator, $H\mapsto H^\perp$.
 Since $(H^\perp)^\perp = H$, $\perp$ is a bijection.
\end{definition}
\begin{proposition}\label{prop:dualSV}
 Let $X_\alpha \Fdot \subset \Gr(k,V)$ be a Schubert variety.
 Then ${\perp}(X_\alpha \Fdot) = X_{\alpha^\perp} \Fdual$.
\end{proposition}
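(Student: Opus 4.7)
The plan is to reduce the statement to a single duality identity on dimensions of intersections, followed by a combinatorial reconciliation of the two Schubert conditions. The only linear-algebraic input needed is the standard fact that for any subspaces $A,B\subset V$ one has $(A+B)^\perp = A^\perp\cap B^\perp$.

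First, applying this identity to $A=H$ and $B=F_j$ and using $\dim(H+F_j) = k + j - \dim(H\cap F_j)$, I obtain
\[
\dim\bigl(H^\perp\cap F_j^\perp\bigr) \;=\; n - k - j + \dim\bigl(H\cap F_j\bigr).
\]
Rewriting this in the dual flag's indexing, where $(\Fdual)_m = F_{n-m}^\perp$, it becomes
\[
\dim\bigl(H^\perp\cap (\Fdual)_m\bigr) \;=\; m - k + \dim\bigl(H\cap F_{n-m}\bigr) \quad\text{for all } m\in[0,n].
\]

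Next, I would rephrase the defining incidence conditions in a uniform fashion: $H\in X_\alpha\Fdot$ if and only if $\dim(H\cap F_j)\geq \#\{i\in[k]:\alpha_i\leq j\}$ for every $j\in[0,n]$. The forward direction is immediate from Definition \ref{Def:SchubertVariety}, and the reverse direction follows by specializing to $j=\alpha_i$ and using that $\alpha$ is strictly increasing. Applying this reformulation to $H^\perp\in X_{\alpha^\perp}\Fdual$, substituting the identity above, and setting $j=n-m$, the desired equivalence ${\perp}(X_\alpha\Fdot) = X_{\alpha^\perp}\Fdual$ reduces to the purely combinatorial claim
\[
\#\{i\in[k]:\alpha_i\leq j\} \;=\; j - n + k + \#\{i\in[n-k]:\alpha^\perp_i\leq n-j\}
\quad\text{for every } j\in[0,n].
\]

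Finally, unwinding the definition $\alpha^\perp = \omega\sigma(\alpha)\omega$ yields the set-theoretic description $\alpha^\perp = \{\,n+1-a : a\in\alpha^c\,\}$, so that $\alpha^\perp_i\leq n-j$ corresponds to $a\geq j+1$, giving $\#\{i:\alpha^\perp_i\leq n-j\} = |\alpha^c\cap[j+1,n]|$. Using $|\alpha^c|=n-k$ together with the partition $|\alpha\cap[1,j]| + |\alpha^c\cap[1,j]| = j$, this count simplifies to $(n-k) - j + \#\{i:\alpha_i\leq j\}$, and the claimed identity falls out. The only real obstacle here is bookkeeping: one must juggle $\alpha$, $\alpha^c$, $\alpha^\perp$, and the re-indexing $(\Fdual)_m = F_{n-m}^\perp$ without confusion. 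Once these conventions are pinned down, the proof is a two-line dimension count followed by a one-line combinatorial check.
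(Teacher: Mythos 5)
Your proposal is correct and takes essentially the same route as the paper's proof: both rephrase the defining incidence conditions uniformly as $\dim(H\cap F_j)\geq\#\{i:\alpha_i\leq j\}$ for all $j$, pass to the span $\Span(H,F_j)$ and its annihilator (your identity $(A+B)^\perp = A^\perp\cap B^\perp$ is exactly the paper's step $\Span(H,F_i)^\perp = H^\perp\cap F^\perp_{n-i}$), reindex, and then match the resulting count against $\alpha^\perp$; you are simply more explicit than the paper about the final combinatorial reconciliation via $\alpha^\perp = \{n+1-a : a\in\alpha^c\}$.
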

\begin{proof}
 Let $H\in X_\alpha \Fdot$.
 Definition (\ref{Def:SchubertVariety}) is equivalent to the condition
 \[\dim(H \cap F_i) \geq \#\{\alpha_j\in \alpha\ |\ \alpha_j\in[i]\}\]
 for $i\in[n]$.
 Equivalently, $\dim(\Span(H,F_i)) \leq k+i-\#\{\alpha_j\in \alpha\ |\ \alpha_j\in[i]\}$, so $\dim(\Span(H,F_i)^\perp)$ is at least
 \[n-k-i+\#\{\alpha_j\in \alpha\ |\ \alpha_j\in[i]\} = n-i-\#\{\alpha_j\in \alpha\ |\ \alpha_j \geq i+1\}\,.\]
 This yields
 \[\dim(\Span(H,F_i)^\perp) = \dim(H^\perp \cap F^\perp_{n-i}) \geq n-i-\#\{\alpha_j\in \alpha\ |\ \alpha_j \geq i+1\}\,.\]
 By changing indices and applying the definition of $\alpha^\perp$, we have
 \[\dim(H^\perp \cap F^\perp_{i}) \geq i-\#\{\alpha_j\in \alpha\ |\ \alpha_j \geq n-i+1\} = \#\{\alpha^\perp_j\in \alpha^\perp\ |\ \alpha^\perp_j \in[i]\}\,,\]
 for $i\in[n]$.
 This is equivalent to Definition \ref{Def:SchubertVariety} for $X_{\alpha^\perp}\Fdual$.
\end{proof}
Let $\Fdot$ be the standard flag, whose basis is given by the row vectors $e_1,\dotsc,e_n$.
Since $\Fdual$ is a flag in the dual space $V^*$, it has a dual basis of column vectors,
\[e_n^*=
 \left(
  \begin{matrix}
   0 \\ \vdots \\ 0 \\ 0 \\ 1
  \end{matrix}
 \right)\,,\ 
 e_{n-1}^*=
 \left(
  \begin{matrix}
   0 \\ \vdots \\ 0 \\ 1 \\ 0
  \end{matrix}
 \right)\,,\dotsc,\ 
 e_2^*=
 \left(
  \begin{matrix}
   0 \\ 1 \\ 0 \\ \vdots \\ 0
  \end{matrix}
 \right)\,,\ 
 e_1^*:=
 \left(
  \begin{matrix}
   1 \\ 0 \\ 0 \\ \vdots \\ 0
  \end{matrix}
 \right)\,.
\]
We adapt the coordinates (\ref{def:SAlpha}) on $X_\alpha \Fdot$, giving local coordinates on the associated Schubert variety $X_{\alpha^\perp} \Fdual$.
\begin{definition}
 Let $\alpha^\perp\in\tbinom{[n]}{n-k}$ be a Schubert condition for $\Gr(n-k,V^*)$.
 The set $\hats_{\alpha^\perp}\subset\Mat_{n\times (n-k)}$ consists of matrices $M$ whose entries satisfy
 \begin{equation}\label{def:HatSBeta}
  M_{n+1-\alpha^\perp_{i,j}} = \delta_{i,j}\quad\mbox{if}\quad i,j\in[n-k]\,,\quad\mbox{and}\quad M_{i,j} = 0\quad\mbox{if}\quad i < n+1-\alpha^\perp_j\,,
 \end{equation}
 and whose other entries are arbitrary.
\end{definition}
\begin{remark}
 The matrices of\/ $\hats_{\alpha^{\perp}}$ are related to transposes of the matrices of $\s_{\alpha^\perp}$.
 Suppose $M_{\alpha^\perp}$ is a matrix of indeterminates parametrizing $\s_{\alpha^\perp}$, and $N:=(\delta_{i,n-j+1})$ is the $n\times n$ matrix with ones along the antidiagonal.
 Then $\hats_{\alpha^\perp}$ is parametrized by the product
 \[M^{\alpha^\perp} := NM_{\alpha^\perp}\,.\]
\end{remark}
\begin{example}
 If $\alpha = (2,5)$ is a Schubert condition on $\Gr(2,6)$, then we have $\alpha^\perp = (1,3,4,6)$.
 The coordinates $S_\alpha$ and $\widehat{S}_{\alpha^\perp}$ are given by the matrices
 \[
  \left(
   \begin{matrix}
    a & 1 & 0 & 0 & 0 & 0\\
    b & 0 & c & d & 1 & 0
   \end{matrix}
  \right)\qquad\mbox{and}\qquad
  \left(
   \begin{matrix}
    0 &  0 &  0 &  1 \\
    0 &  0 &  0 & -a \\
    0 &  0 &  1 &  0 \\
    0 &  1 &  0 &  0 \\
    0 & -d & -c & -b \\
    1 &  0 &  0 &  0 \\
   \end{matrix}
  \right)\,.
 \]
 Note that choosing the arbitrary entries of one matrix determines those of the other so that each gives the null space of the other.
 This identification is canonical.
\end{example}
Let $(x_0,y_0)$ and $(x_1,y_1)$ be points in the Cartesian plane with $x_0>x_1$ and $y_0>y_1$.
A \emph{left step} is the vector $(-1,0)$, and a \emph{down step} is the vector $(0,-1)$.
A path from $(x_0,y_0)$ to $(x_1,y_1)$ is a sequence $p$ of length $L:=x_0-x_1+y_0-y_1$ of left steps and down steps such that $(x_0,y_0) + \sum_{i=1}^L p_i = (x_1,y_1)$.

\begin{definition}\label{def:path}
To $\alpha\in\tbinom{[n]}{k}$ we associate the path $p(\alpha)$ from $(n-k,0)$ to $(0,-k)$ given by
\[p(\alpha)_i = (0,-1)\mbox{ if }i\in \alpha\,,\quad\mbox{and}\quad p(\alpha)_i = (-1,0)\mbox{ if }i\not\in\alpha\,.\]
\end{definition}
The association $\alpha \leftrightarrow p(\alpha)$ is a bijection between Schubert conditions $\tbinom{[n]}{k}$ and paths from $(n-k,0)$ to $(0,-k)$.
\begin{example}
 If $\alpha = (2,5) \in \tbinom{[6]}{2}$ then $\alpha^\perp = (1,3,4,6) \in \tbinom{[6]}{4}$.
 Then $p(\alpha)$ and $p(\alpha^\perp)$ are given by thick lines in Figure \ref{fig:dualPaths}.
\begin{figure}[H]
  \begin{centering}
    $\includegraphics[scale=1.6]{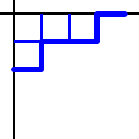}\qquad\qquad \includegraphics[scale=1.6]{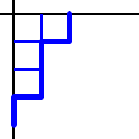}$
  \caption{\label{fig:dualPaths}$p(\alpha)$ and $p(\alpha^\perp)$.}
  \end{centering}
\end{figure}
\end{example}
\begin{proposition}\label{prop:perpSameCodim}
 We have the equality of codimensions $|\alpha| = |\alpha^\perp|$.
\end{proposition}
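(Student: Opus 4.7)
The plan is to exploit the path interpretation of Definition \ref{def:path}, which was introduced precisely for this sort of bookkeeping. First I would interpret the dimension formula $\dim X_\alpha \Fdot = \sum_{i=1}^k (\alpha_i - i)$ geometrically: the quantity $\alpha_i - i$ counts the left steps of $p(\alpha)$ strictly preceding its $i$-th down step, so $\sum_i (\alpha_i - i)$ enumerates the unit squares of the ambient $(n-k)\times k$ rectangle lying strictly southeast of $p(\alpha)$. Consequently, $|\alpha| = k(n-k)-\dim X_\alpha \Fdot$ equals the number of unit squares lying strictly northwest of $p(\alpha)$.

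Next I would unpack $\alpha^\perp = \omega\sigma(\alpha)\omega$ at the level of sets. Writing $(\omega\sigma(\alpha)\omega)(i) = n+1-\sigma(\alpha)(n+1-i)$ and using $\sigma(\alpha)(k+j) = \alpha^c_j$, one obtains the clean description $\alpha^\perp = \{n+1-c \mid c\in\alpha^c\}$; equivalently, $j\in\alpha^\perp$ iff $n+1-j\notin\alpha$. This is precisely the relationship visible in Figure \ref{fig:dualPaths}: $p(\alpha^\perp)$ is the image of $p(\alpha)$ under the involution obtained by transposing the ambient rectangle via $(x,y)\mapsto (-y,-x)$ (which exchanges left steps with up steps and down steps with right steps) and then reversing the traversal order (which turns up steps into down steps, right steps into left steps, and sends step $i$ to step $n+1-i$). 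Indeed, the $j$-th step of the resulting path is a down step iff the $(n+1-j)$-th step of $p(\alpha)$ is a left step iff $n+1-j \notin \alpha$ iff $j\in\alpha^\perp$, so the composite really is $p(\alpha^\perp)$.

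Both transposition and path reversal preserve area, and this composite bijection carries the northwest region of the $(n-k)\times k$ rectangle housing $p(\alpha)$ onto the northwest region of the $k\times(n-k)$ rectangle housing $p(\alpha^\perp)$. Therefore $|\alpha| = |\alpha^\perp|$.

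The main obstacle is purely bookkeeping: one must verify that left/down and northwest/southeast translate correctly under the transpose-reverse map and that the endpoints of the two rectangles line up. A shorter but less illuminating alternative is a direct algebraic substitution: plug $\alpha^\perp_i = n+1-\alpha^c_{n-k-i+1}$ into the codimension formula and apply the identity $\sum_i \alpha_i + \sum_j \alpha^c_j = \binom{n+1}{2}$ to reduce the claim to $k(n-k)+\binom{k+1}{2} = \binom{n+1}{2}-\binom{n-k+1}{2}$, an elementary identity in $n$ and $k$.
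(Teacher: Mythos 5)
Your argument is correct and is essentially the paper's own proof: you identify $|\alpha|$ and $|\alpha^\perp|$ with the areas of the regions enclosed by $p(\alpha)$ and $p(\alpha^\perp)$ (respectively) and the axes, and you observe that $p(\alpha^\perp)$ is the reflection of $p(\alpha)$ across $y=-x$, an area-preserving map. You supply somewhat more detail than the paper (the step-by-step verification that the transpose-reverse composite sends $p(\alpha)$ to $p(\alpha^\perp)$, plus the algebraic fallback), but the underlying idea is the same.
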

\begin{proof}
 Given $\alpha\in\tbinom{[n]}{k}$, $|\alpha|$ is equal to the area of the region enclosed by $p(\alpha)$ and the axes.
 Similarly, $|\alpha^\perp|$ corresponds to the region enclosed by $p(\alpha^\perp)$ and the axes.
 The path $p(\alpha^\perp)$ is the reflection of the path $p(\alpha)$ across the line $y=-x$, so the regions defining $|\alpha|$ and $|\alpha^\perp|$ have the same area.
\end{proof}
The enclosed regions in Figure \ref{fig:dualPaths} illustrate the equality $|\alpha| = 4 = |\alpha^\perp|$ for $\alpha = (2,5) \in \tbinom{[6]}{2}$.

\section{Osculating Schubert Calculus}

The study of osculating Schubert calculus is made possible by work of Eisenbud and Harris \cite{EH1983}.
They showed that given a set of Schubert varieties that osculate a rational normal curve at distinct points, their intersection is dimensionally transverse.
To prove this, we use a correspondence between Schubert calculus and the Wronskian which originated in work by Castelnuovo \cite{Castelnuovo1889}.
\begin{definition}
 Let $\C_n[t]$ be the vector space of polynomials in the variable $t$ of degree less than $n$ with coefficients in $\C$.
 The \emph{Wronskian} of $f_1,\dotsc,f_k\in\C_n[t]$ is the determinant
 \begin{equation}\label{def:Wr}
  \Wr(f_1,\dotsc,f_k):=\det
  \left(
   \begin{matrix}
    f_1         & \cdots & f_k      \\
    f'_1        & \cdots & f'_k      \\
    \vdots      &        & \vdots     \\
    f^{(k-1)}_1 & \cdots & f^{(k-1)}_k \\
   \end{matrix}
  \right)\,.
 \end{equation}
\end{definition}
Suppose $\bff:=(f_1,\dotsc,f_k)$ spans a $k$-dimensional subspace $H$.
If $\bg$ is another basis of $H$, and $B$ is a change-of-basis matrix such that $B\bff=\bg$, then $\det(B)\Wr(\bff)=\Wr(\bg)$.
Therefore, the roots of $\Wr(f_1,\dotsc,f_k)$ depend only on $H$.
\begin{proposition}\label{prop:degreeOfWronskian}
 Suppose $f_1,\dotsc,f_k\in\C_n[t]$ are complex univariate polynomials of degree at most $n-1$.
 The Wronskian $\Wr(f_1,\dotsc,f_k)$ is a univariate polynomial of degree at most $k(n-k)$.
\end{proposition}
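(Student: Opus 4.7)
The plan is to reduce to the case where the $f_i$ are monomials via multilinearity and antisymmetry, then compute the degree directly for monomials.

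First, I would note that the Wronskian, viewed as the determinant of the matrix whose $(j,i)$ entry is $f_i^{(j-1)}$, is multilinear and alternating as a function of $(f_1,\dotsc,f_k)$. Expanding each $f_i = \sum_{m=0}^{n-1} a_{i,m} t^m$ in the monomial basis of $\C_n[t]$ and using multilinearity, I obtain
\[
\Wr(f_1,\dotsc,f_k) \;=\; \sum_{m_1,\dotsc,m_k \in \{0,\dotsc,n-1\}} a_{1,m_1}\cdots a_{k,m_k}\,\Wr(t^{m_1},\dotsc,t^{m_k})\,.
\]
By antisymmetry (two equal arguments give two equal columns in the determinant), only tuples $(m_1,\dotsc,m_k)$ with distinct entries contribute. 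After grouping by the underlying sorted $k$-subset, it therefore suffices to bound $\deg \Wr(t^{e_1},\dotsc,t^{e_k})$ for $0 \le e_1 < \cdots < e_k \le n-1$.

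Next, I would compute this Wronskian explicitly. Since $(t^{e_i})^{(j-1)} = e_i(e_i-1)\cdots(e_i-j+2)\,t^{e_i-j+1}$, every term in the permutation expansion of the determinant has the same $t$-degree
\[
\sum_{j=1}^{k} (e_{\sigma(j)} - j + 1) \;=\; \sum_{i=1}^{k} e_i \;-\; \binom{k}{2}\,.
\]
In particular, $\Wr(t^{e_1},\dotsc,t^{e_k})$ is a scalar multiple of $t^{\sum e_i - \binom{k}{2}}$ (the scalar being a determinant of falling factorials, which can be shown to be nonzero by a Vandermonde-type argument, though that is not needed for the degree bound).

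Finally, I would bound $\sum e_i$: the maximum of $e_1 + \cdots + e_k$ over strictly increasing $k$-subsets of $\{0,1,\dotsc,n-1\}$ is $(n-k) + (n-k+1) + \cdots + (n-1) = k(n-k) + \binom{k}{2}$. Subtracting $\binom{k}{2}$ gives the bound $k(n-k)$ on the degree of every monomial Wronskian, hence on $\Wr(f_1,\dotsc,f_k)$ itself. The main subtlety — and the only place a naive term-by-term degree count on the original matrix fails — is that the expected degree $\sum_{j}(n-j) = kn - \binom{k+1}{2}$ exceeds $k(n-k)$; the multilinear reduction is precisely what forces the top-degree contributions to cancel, so committing to this reduction rather than a direct expansion of the $f_i$'s is the crucial first move.
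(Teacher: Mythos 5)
Your proof is correct, and it takes a genuinely different route from the paper's. The paper's proof handles the degenerate case (linearly dependent $f_i$) first, then assumes $\bff$ spans a $k$-plane $H$, represents $H$ by a reduced row echelon matrix in the monomial basis to get a basis $g_1,\dotsc,g_k$ with $\deg(g_i)\le n-i$, and then bounds the determinant entry-by-entry using $\deg(M_{ij})\le n-i-j+1$. You instead expand multilinearly over the monomial basis, use antisymmetry to discard tuples with repeated exponents, and compute each monomial Wronskian exactly as a scalar times $t^{\sum e_i-\binom{k}{2}}$. Your route avoids the separate degenerate case entirely (multilinearity absorbs it), and yields more refined information — the exact degree of every monomial Wronskian, and the fact that each one is a pure monomial — while the paper's route is shorter on the page but relies on the prior normalization step (row echelon form) to make the naive entry-wise bound tight. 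Both hinge on the same underlying phenomenon: that after choosing the right normal form (echelon rows, or distinct-exponent monomial tuples), the would-be top degree $kn-\binom{k+1}{2}$ visibly collapses to $k(n-k)$, whereas a direct bound on an arbitrary Wronskian matrix overshoots.
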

\begin{proof}
 If $\bff$ is not linearly independent, then $\Wr(\bff)=0$, so we may assume that $\bff$ spans a $k$-plane $H\in \Gr(k,V)$.
 Representing polynomials in the monomial basis, we may assume $H$ is the row space of a matrix in reduced row echelon form.
 Denoting the rows by $g_1,\dotsc,g_k$, we have
 \[\deg(g_1) > \cdots > \deg(g_k)\,.\]

 Since $\bg$ and $\bff$ span the same $k$-plane, their Wronskians have the same roots, so $\deg(\Wr(\bff))=\deg(\Wr(\bg))$.
 Let $M$ denote the matrix in Definition (\ref{def:Wr}) giving $\Wr(\bg)$, whose entries are polynomials.
 Since $\deg(g_i)\leq n-i$ for $i\in[n]$, we have $\deg(M_{ij})\leq n-i-j+1$.
 It follows directly that $\deg(\det(M)) \leq k(n-k)$.
\end{proof}
\begin{remark}
 In general, the upper bound $k(n-k)$ on the degree of $\Wr$ is attained.
 In particular, we will prove Proposition \ref{prop:WrOrdAt0}, which implies that if $H$ is a solution to an instance of a Schubert problem involving only osculating hypersurface Schubert varieties, then $\Wr(H)$ has $k(n-k)$ distinct roots in $\p^1$.
\end{remark}
Since the Wronskians of bases $\bff$ and $\bg$ of a $k$-plane $H\in\C_n[t]$ are proportional, the Wronskian induces a well-defined map, called the \emph{Wronski map},
\[\Wr:\Gr(k,\C_n[t])\longrightarrow \p\C_{k(n-k)+1}[t]\,.\]
By Proposition \ref{prop:degreeOfWronskian}, $\dim(\p\C_{k(n-k)+1}[t]) = k(n-k)= \dim(\Gr(k,\C_n[t]))$.

The proofs of the following Proposition and Corollary are based on an argument in \cite{Sottile2011}.
Recall the definition (\ref{def:osculatingMatrix}) of the matrix $F_i(a)$.
\begin{proposition}\label{prop:WrCorresp}
 Let $V=\C_n[x]$ have standard basis $(1,x,\dotsc,x^{n-1})$, let $H\in\Gr(k,V)$, and let $L:=H^{\perp}\in\Gr(n-k,V^*)$ be the annihilator of $H$.
 If $F_k(x)$ is the matrix corresponding to the $k$-planes in $V$ osculating the Veronese curve $\gamma(t):=(1,t,\dotsc,t^{n-1})$ at $\gamma(x)$, then $L$ is the row space of a $(n-k)\times n$ matrix, also denoted by $L$, with
 \begin{equation}\label{eqn:Castelnuovo}
  \det \left(
   \begin{matrix}
    F_k(t) \\ L
   \end{matrix}
  \right) =
  \Wr(H) \in \p\C_{k(n-k)+1}[x]\,. 
 \end{equation}
\end{proposition}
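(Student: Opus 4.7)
The plan is to realize the stacked determinant in \eqref{eqn:Castelnuovo} as a block factorization that cleanly isolates a Wronskian factor from a nonzero constant, using only that $L=H^\perp$ and the identification of $\gamma^{(i-1)}(t)$ with the functional ``evaluate the $(i-1)$st derivative at $t$.''

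First, I would fix a basis $(f_1,\dotsc,f_k)$ of $H$ and assemble the coefficient vectors in the monomial basis $(1,x,\dotsc,x^{n-1})$ of $V=\C_n[x]$ as the columns of an $n\times k$ matrix $F$. The key observation is that for any $f\in V$ with coefficient vector $v$, the row $\gamma^{(i-1)}(t)$ paired with $v$ returns $f^{(i-1)}(t)$. Consequently the $k\times k$ product $F_k(t)\,F$ has $(i,j)$-entry $f_j^{(i-1)}(t)$; this is precisely the Wronskian matrix in \eqref{def:Wr}, so $\det(F_k(t)\,F)=\Wr(f_1,\dotsc,f_k)$, which equals $\Wr(H)$ in $\p\C_{k(n-k)+1}[t]$ up to a constant coming from the chosen basis of $H$.

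Next, I would extend $F$ to an invertible $n\times n$ matrix $A:=[\,F\,|\,G\,]$ by picking any $n\times(n-k)$ matrix $G$ whose columns form a basis for a complement of $H$ in $V$. Because the rows of $L$ annihilate $H$, the identity $LF=0$ holds, and right-multiplication by $A$ gives the block-triangular relation
\[
\begin{pmatrix} F_k(t) \\ L \end{pmatrix} A \;=\; \begin{pmatrix} F_k(t)\,F & F_k(t)\,G \\ 0 & L\,G \end{pmatrix}.
\]
Taking determinants yields
\[
\det(A)\cdot \det\begin{pmatrix} F_k(t) \\ L \end{pmatrix} \;=\; \Wr(f_1,\dotsc,f_k)\cdot\det(L\,G),
\]
and since $A$ is invertible while the restriction of $L=H^\perp$ to the complement spanned by $G$ is a non-degenerate pairing, both $\det(A)$ and $\det(L\,G)$ are nonzero constants in $t$. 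This produces \eqref{eqn:Castelnuovo} as an equality in $\p\C_{k(n-k)+1}[t]$; Proposition \ref{prop:degreeOfWronskian}, together with a direct degree count on rows of $F_k(t)$, confirms that both sides already lie within this degree bound, so passing to projective space loses nothing.

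The main obstacle I anticipate is purely bookkeeping: keeping the row/column conventions between $V$ and $V^*$ straight so that ``$LF=0$'' really does express ``the rows of $L$ annihilate $H$,'' and so that the $k\times k$ matrix $F_k(t)\,F$ reproduces the Wronskian matrix in \eqref{def:Wr} rather than its transpose. The geometric content—namely, that $\det(L\,G)\neq 0$ because $H^\perp$ pairs non-degenerately with any complement of $H$—is immediate once the set-up is right, and the argument is otherwise a one-line consequence of the block factorization.
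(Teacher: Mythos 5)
Your proof is correct, and it takes a genuinely different route from the paper. You observe that $\det(F_k(t)F)$ is exactly the Wronskian matrix for a basis of $H$, then prove \eqref{eqn:Castelnuovo} by a block factorization: extending $F$ to an invertible $A=[F\mid G]$, the annihilation $LF=0$ turns $\binom{F_k(t)}{L}A$ into a block upper-triangular matrix, and taking determinants gives $\det\binom{F_k(t)}{L}\cdot\det A=\Wr(H)\cdot\det(LG)$ with both constants nonzero. The paper instead argues \emph{indirectly}: it also identifies $\det(F_k(x)\bh)$ with $\Wr(H)$, but then shows the two polynomials in \eqref{eqn:Castelnuovo} share the same zero set (via $F_k(t)\cap H^\perp\neq 0\iff H\cap F_k(t)^\perp\neq 0$), bounds the degree of the stacked determinant by Laplace expansion, specializes to $H$ generic so that $\Wr(H)$ has $k(n-k)$ distinct roots to force equality of degrees and hence proportionality, and finally extends to all $H$ by a limiting argument. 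Your factorization buys three things: it avoids the genericity hypothesis and the limit, it identifies the proportionality constant explicitly as $\det(LG)/\det A$, and it makes the appearance of $L=H^\perp$ purely mechanical (it is precisely what makes the lower-left block vanish). The paper's approach, by working through the Laplace expansion, is the one that feeds directly into the degree bound reused in the subsequent corollary, so both have their uses; but as a self-contained proof of the proposition yours is cleaner.
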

\begin{proof}
 We prove this for $H$ with the general property that $\Wr(H)$ has $k(n-k)$ distinct roots.
 The other cases follow by a limiting argument.
 We reverse the roles of $\Gr(k,V)$ and $\Gr(n-k,V^*)$, so we consider $H^\perp\subset V^*$ to be spanned by row vectors and $H\subset V$ to be spanned by column vectors $h_1(x),\dotsc,h_k(x)$.

 Set $\bh:=(h_1,\dotsc,h_k)\in\Mat_{n\times k}$, where $h_i$ be the column vector of coefficients in $\C^n$ such that the polynomial $h_i(x)$ is the dot product $\gamma(x)\cdot h_i^T$.
 We observe that the product $F_k(x)\bh$ is the matrix given in Definition (\ref{def:Wr}) giving $\Wr(\bh)$, and $\rowspace(\bh) = H$, so $\det(F_k(x)\bh) = \Wr(H)$.
 Since $L$ is the null space of $H$, the determinant $W:=\det\binom{F_k(x)}{L}$ and $\Wr(H)$ vanish at the same points.

 Laplace expansion along the first $k$ rows of $\binom{F_k(x)}{L}$ gives
 \[W=\sum_{\alpha}(-1)^{(k-1)(n-k)+\sum_i \alpha_i}L_{\alpha}F_k(x)_{\alpha^c}\,,\]
 where $L_{\alpha}$ is the maximal minor of $L$ involving columns $\alpha$, and $F_k(x)_{\alpha^c}$ is the maximal minor of $F_k(x)$ involving columns $\alpha^c$.
 Thus, we have an upper bound for the degree of $W$,
 \[\deg(W) \leq \deg(F_k(x)_{(n-k+1,\dotsc,n)}) = k(n-k)\,.\]
 
 Since $W$ vanishes at the $k(n-k)$ distinct roots of $\Wr(H)$, $\deg(W) = k(n-k)$.
 Since $W$ and $\Wr(H)$ have the same roots and the same degree, they are proportional.
\end{proof}
\begin{corollary}
 If $H\in \Gr(k,V)$, then $H$ is contained in the hypersurface $X_{\Is} (t)$ for at most $k(n-k)$ values of $t\in \C$.
\end{corollary}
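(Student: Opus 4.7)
My plan is to recognize the set $\{t\in\C : H\in X_{\Is}(t)\}$ as the zero locus in $\C$ of a single nonzero polynomial in $t$ of degree at most $k(n-k)$, then invoke the fundamental theorem of algebra.

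The hypersurface $X_{\Is}(t)$ has a unique relevant incidence $\dim(H\cap F_{n-k}(t))\ge 1$, so by the dimension formula the condition $H\in X_{\Is}(t)$ is equivalent to the vanishing of the $n\times n$ determinant
\[
D_H(t)\;:=\;\det\binom{F_{n-k}(t)}{H}\,,
\]
where $H$ is represented by any full-rank $k\times n$ matrix and $F_{n-k}(t)$ is the osculating matrix of~(\ref{def:osculatingMatrix}). Since the entries of $F_{n-k}(t)$ are polynomials in $t$, so is $D_H$, and the corollary is reduced to showing that $D_H$ is nonzero of degree at most $k(n-k)$.

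To control $D_H$ I would apply Proposition~\ref{prop:WrCorresp} with the roles of $k$ and $n-k$ swapped and with $H^\perp\in\Gr(n-k,V^*)$ taking the place of $H\in\Gr(k,V)$. Under the dual-basis identification $V^*\cong\C_n[x]$, the Veronese $\gamma$ still parametrizes a rational normal curve and the $(n-k)$-dimensional osculating flag in $V^*$ is identified with $F_{n-k}(t)$ in $V$, while $(H^\perp)^\perp=H$ plays the role of $L$ in the proposition. The conclusion then rewrites $D_H(t)$ as $\pm\Wr(H^\perp)(t)$ in $\p\C_{k(n-k)+1}[t]$, and Proposition~\ref{prop:degreeOfWronskian} applied to $H^\perp$ bounds this by $(n-k)\bigl(n-(n-k)\bigr)=k(n-k)$.

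Nonvanishing is the classical fact that the Wronskian of linearly independent univariate polynomials is a nonzero polynomial. Since $H^\perp$ is a genuine $(n-k)$-plane, any basis for it is linearly independent in $\C_n[x]$, so $\Wr(H^\perp)\not\equiv 0$, hence $D_H\not\equiv 0$. The main (small) obstacle is the bookkeeping in the dualized application of Proposition~\ref{prop:WrCorresp}: one must match the osculating flags on the two sides of the duality $\Gr(k,V)\leftrightarrow\Gr(n-k,V^*)$, which is a routine check for the Veronese curve under the dual-basis identification but is the only place where care is really needed.
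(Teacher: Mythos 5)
Your proposal is correct and takes essentially the same route as the paper: both reduce the question to the degree bound on the Wronskian via Proposition~\ref{prop:WrCorresp}, differing only in whether one first argues in $\Gr(n-k,V^*)$ and then transfers by Proposition~\ref{prop:dualSV} (as the paper does) or applies Proposition~\ref{prop:WrCorresp} with $k$ and $n-k$ exchanged from the outset (as you do). Your explicit remark that $\Wr(H^\perp)\not\equiv 0$ is a useful addition, since the paper leaves this implicit in the projective-space identification.
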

\begin{proof}
 We argue in the a Grassmannian $\Gr(n-k,V^*)$.
 Let $L$ denote both a matrix $L\in \St(n-k,n)$ and a $(n-k)$-plane $L\in \Gr(n-k,V^*)$, so that $\rowspace(L) = L$.
 As we have previously observed, $X_{\Is} (t)\subset \Gr(n-k,V^*)$ has one relevant condition given by $\det\binom{F_k(t)}{L} = 0$ for $L \in X_{\Is} (t)$.
 So by Proposition \ref{prop:WrCorresp}, choosing a $n\times k$ matrix $H$ with $H:=\colspace(H) = L^\perp$, we have $\Wr(\colspace(H)) = \det\binom{F_k(t)}{L}$ as a point in $\p\C_{k(n-k)+1}[x]$.
 Since
 \[\deg\left(\det\tbinom{F_k(t)}{L}\right) = \deg(\Wr(H)) \leq k(n-k)\,,\]
 there are at most $k(n-k)$ values of $t$ for which $\det\binom{F_k(t)}{L}=0$.
 Equivalently, there are at most $k(n-k)$ values of $t$ for which $L\in X_{\Is}(t)$.
 By Proposition \ref{prop:dualSV}, we reverse the roles of $\Gr(k,V)$ and $\Gr(n-k,V^*)$, giving the result.
\end{proof}
\begin{proposition}\label{prop:WrOrdAt0}
 Let $H \in X_\alpha (0)$.
 Then $\Wr(H^\perp)$ has a root at $x = 0$ of order at least $|\alpha|$.
\end{proposition}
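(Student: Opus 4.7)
My plan is to turn the statement into an explicit polynomial identity via a dualized version of Proposition~\ref{prop:WrCorresp}, and then read the order of vanishing at $x=0$ off a Laplace expansion. Applying Proposition~\ref{prop:WrCorresp} with the roles of $k$ and $n-k$ swapped (i.e.\ to the $(n-k)$-plane $H^\perp\subset V^*$, whose annihilator is $H$), I obtain
\[
\Wr(H^\perp)\;=\;\det\binom{F_{n-k}(x)}{H}\in\p\C_{k(n-k)+1}[x]\,,
\]
where on the right $H$ denotes a $k\times n$ matrix whose row space is $H$. Since $\Fdot(0)$ is the standard flag for the Veronese curve and $H\in X_\alpha(0)$, I can pick this basis so that the $j$th row of $H$ is supported in columns $1,\dotsc,\alpha_j$.

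Next, I Laplace-expand $\det\binom{F_{n-k}(x)}{H}$ along the bottom $k$ rows, obtaining a signed sum
\[
\det\binom{F_{n-k}(x)}{H}\;=\;\sum_{\beta\in\binom{[n]}{k}}\pm\,H_\beta\cdot F_{n-k}(x)_{\beta^c}\,.
\]
I will establish two facts. First, $H_\beta=0$ unless $\beta\le\alpha$ in Bruhat order: if $\beta_j>\alpha_j$ for some $j$, then for all $i\le j$ and $l\ge j$ one has $\beta_l\ge\beta_j>\alpha_j\ge\alpha_i$, so the submatrix indexed by columns $\beta$ has a $j\times(k-j+1)$ block of zeros in its top-right corner, forcing its determinant to vanish. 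Second, the minor $F_{n-k}(x)_{\beta^c}$ is a single monomial in $x$ of degree
\[D_{\beta^c}\;:=\;\textstyle\sum_{\gamma\in\beta^c}\gamma\;-\;\binom{n-k+1}{2}\,.\]
This uses that the $(i,l)$-entry of $F_{n-k}(x)$ equals $\tfrac{(l-1)!}{(l-i)!}x^{l-i}$ for $l\ge i$ and $0$ otherwise, so every nonvanishing term in the permutation expansion of the minor has the same total degree in $x$.

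Finally, an elementary computation yields $D_{\beta^c}-|\alpha|=\sum_j(\alpha_j-\beta_j)$, relying only on the identity $\binom{n+1}{2}-\binom{n-k+1}{2}=k(n-k)+\binom{k+1}{2}$. For $\beta\le\alpha$ this difference is nonnegative, so every nonzero term in the Laplace expansion vanishes to order at least $|\alpha|$ at $x=0$, and hence so does the whole sum. The only conceptually delicate step is the Bruhat-order vanishing in the first fact above; the rest is straightforward bookkeeping.
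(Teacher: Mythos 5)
Your proof is correct, and the computations all check out: the Bruhat-order vanishing of $H_\beta$ (your zero-block argument is sound), the fact that $F_{n-k}(x)_{\beta^c}$ is a single monomial because its $(i,j)$-entry has degree $(\beta^c)_j - i$, and the identity $D_{\beta^c}-|\alpha|=\sum_j(\alpha_j-\beta_j)$ all hold.

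Your route is close in spirit to the paper's but technically different. The paper proves what it calls the dual statement by passing to the column-vector coordinates $\hats_\alpha$, writing $\Wr$ as a $k\times k$ determinant $\det\bigl(F_k(x)\cdot (\text{column basis})\bigr)$, and directly bounding the degree of each permutation term by $\sum_{j}(n+1-\alpha_j-j)=|\alpha|$. You instead apply Proposition~\ref{prop:WrCorresp} to the $(n-k)$-plane $H^\perp$ to get $\Wr(H^\perp)=\det\binom{F_{n-k}(x)}{H}$, Laplace-expand this $n\times n$ determinant into products $H_\beta\cdot F_{n-k}(x)_{\beta^c}$, and then use two facts: $H_\beta=0$ unless $\beta\le\alpha$ in Bruhat order (this is really Proposition~\ref{prop:standardBruhat}, which you re-derive), and each $F_{n-k}(x)_{\beta^c}$ is a monomial whose degree exceeds $|\alpha|$ by $\sum_j(\alpha_j-\beta_j)\ge 0$. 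What your decomposition buys you is that it isolates the lowest-order term $\pm H_\alpha x^{|\alpha|}$ explicitly, so it delivers not only the order-$\ge|\alpha|$ bound but also the exact order-$=|\alpha|$ statement when $p_\alpha(H)\neq0$, which is precisely what Corollaries~\ref{cor:WrHigherOrder1} and~\ref{cor:WrHigherOrder} need. The paper's route is a bit more compact but has to invoke the Plücker-coordinate identification separately in those corollaries.
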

\begin{proof}
 Using the notation of Proposition \ref{prop:WrCorresp}, we prove the dual statement, that is, if $L = H^\perp \in X_{\alpha^\perp} (0) \subset \Gr(n-k,V^*)$ then $\Wr(H)$ has a root at $0$ of order at least $|\alpha^\perp|$.
 Since $X_{\alpha^\perp} (0)$ has local coordinates $\s_{\alpha^\perp}$, we use coordinates $\hats_\alpha$ for $X_\alpha (\Fdot(0))^\perp$.
 Thus the columns $h_j$ form a basis of $H$ where $h_{ji} = 0$ if $i<n+1-\alpha_j$.
 Let $H$ denote the $n\times (n-k)$ matrix with these columns, so that the determinant of the product $F_k(x)H$ is $\Wr(H)$.
 Since $h_{ji} = 0$ for $i<n+1-\alpha_j$, every term of $\Wr(H) = \det(F_k(x)H)$ has degree at least
 \[\sum_{j=1}^k n+1-\alpha_j-j = -k(k+1) + \sum_{j=1}^k n+1-(\alpha_j-j) = |\alpha|\,.\]
 By Proposition \ref{prop:perpSameCodim}, every term of $\Wr(H)$ has a root at $0$ of order at least $|\alpha^\perp|$.
\end{proof}
Recall that the parametrized rational normal curve curve $\gamma(t)$ is in fact a local parametrization of the curve $\gamma(s,t)$ with $(s,t)\in\p^1$.
Thus the action of $\SL(2,\C)$ on $\p^1$ induces a dual action on $\gamma(t)$.
\begin{corollary}\label{cor:WrHigherOrder1}
 Let $H \in X_\alpha (t)$ for some $t\in\C$.
 Then $\Wr(H^\perp)$ has a root at $x = t$ of order at least $|\alpha|$.
\end{corollary}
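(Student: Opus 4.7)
My plan is to reduce to the case $t = 0$ using the $\SL(2,\C)$-action mentioned in the sentence preceding the corollary, and then invoke Proposition \ref{prop:WrOrdAt0}. Specifically, I fix an element $g \in \SL(2,\C)$ with $g \cdot 0 = t$ (any element sending $0$ to $t$ will do). The induced action on degree $n-1$ binary forms, i.e.\ on $V \cong \C_n[x]$ under the identification given by $\gamma$, is the $(n-1)$st symmetric power $\rho := \mathrm{Sym}^{n-1}(g) \in \GL(n,\C)$. The key equivariance I need is that the osculating flag map intertwines the two actions:
\[
\rho \cdot \Fdot(0) \;=\; \Fdot(t),
\]
which follows because the osculating flag at a point of $\gamma$ is defined by derivatives of $\gamma$, and $\gamma$ itself is $\SL(2,\C)$-equivariant.

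From this equivariance I deduce that $H' := \rho^{-1} \cdot H \in X_\alpha(0)$, so by Proposition \ref{prop:WrOrdAt0}, $\Wr((H')^\perp)$ has a root at $x = 0$ of order at least $|\alpha|$. The second step is to compare $\Wr((H')^\perp)$ to $\Wr(H^\perp)$. Under the dual action $\rho^{-T}$ on $V^*$ one has $(H')^\perp = \rho^{-T} \cdot H^\perp$, so I need the classical transformation law for Wronskians under the $\mathrm{Sym}^{n-1}$-action: if $\phi(x) = g^{-1} \cdot x$ is the corresponding M\"obius map on $\p^1$, then for any $L \in \Gr(n-k, V^*)$
\[
\Wr(\rho^{-T} \cdot L)(x) \;=\; c(x) \cdot \Wr(L)(\phi(x)),
\]
where $c(x)$ is a nonvanishing factor on the open set where $\phi$ is regular (a power of the denominator $cx+d$ arising from homogenization and dehomogenization of degree $n-1$ forms). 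Thus the order of vanishing of the two Wronskians at corresponding points is equal.

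Applying this with $L = H^\perp$ and noting $\phi(t) = 0$, the root of $\Wr((H')^\perp)$ at $0$ of order at least $|\alpha|$ transports to a root of $\Wr(H^\perp)$ at $x = t$ of the same order, which is the desired conclusion.

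I expect the main obstacle to be the bookkeeping around the transformation rule for the Wronskian, in particular being careful that (i) the symmetric power action on $V$ really sends $\Fdot(0)$ to $\Fdot(t)$ (rather than to some other flag differing by a scalar matrix, which is harmless but must be checked), and (ii) the factor $c(x)$ in the Wronskian transformation law is nonvanishing at $x = t$, so that orders of vanishing transfer cleanly. Both points are standard but require the explicit verification using the Veronese formula $\gamma(s,t) = (s^{n-1}, s^{n-2}t, \ldots, t^{n-1})$ and a direct derivative computation.
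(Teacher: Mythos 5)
Your proposal is correct, and the opening move is the same as the paper's: reduce to $t = 0$ via the $\SL(2,\C)$-action and then appeal to what is known at $t = 0$. Where you diverge is in what happens after the reduction. The paper's proof silently reduces to $t=0$ \emph{and} to the Veronese curve, and then \emph{redoes} the Pl\"ucker-coordinate computation (rather than citing Proposition~\ref{prop:WrOrdAt0}) in order to extract the precise lowest-degree term $(-1)^{|\alpha^\perp|}\,p_{\alpha^\perp}(H)\,x^{|\alpha^\perp|}$ of $\Wr(H)$; that extra precision is exactly what the next statement, Corollary~\ref{cor:WrHigherOrder}, requires (order equal to $|\alpha|$ on a dense open subset). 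Your route is more modular: you make the $\SL(2,\C)$-equivariance of the osculating flag and the covariance of the Wronskian under $\mathrm{Sym}^{n-1}$ explicit, and then just cite Proposition~\ref{prop:WrOrdAt0}. This unpacks the ``we may assume $t=0$'' step more honestly (the paper elides the Wronskian transformation law entirely), at the cost of not producing the leading-term formula that the paper needs downstream. One small bookkeeping slip in your write-up: since the annihilator transforms contragrediently, from $H = \rho H'$ one gets $(H')^\perp = \rho^{T}\,H^\perp$, not $\rho^{-T} H^\perp$; this does not affect the transfer of vanishing orders, but it is worth fixing, together with your noted checks that $\rho$ sends $\Fdot(0)$ to $\Fdot(t)$ on the nose and that the covariance factor $c(x)$ does not vanish at $x = t$ (which holds because $\phi(t)=0\neq\infty$).
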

\begin{proof}
 Using the $\SL(2,\C)$ action on $\p^1$ we may assume $t=0$.
 Using the $\GL(n,\C)$ action on $\gamma$ we may further assume $\gamma(x)=(1,x,\dotsc,x^{n-1})$ is the Veronese curve.
 Thus the flag defining $X_\alpha(x)$ has basis
 \[
  \Fdot(x) =
  \left(
   \begin{matrix}
    1      & x  & x^2 & \cdots & x^{n-1} \\
    0      & 1      & 2x  & \cdots & (n-1)x^{n-2} \\
    0      & 0      & 2       & \cdots & (n-1)(n-2)x^{n-3} \\
    \vdots & \vdots & \vdots  &        & \vdots \\
    0      & 0      & 0       & \cdots & (n-1)!
   \end{matrix}
  \right)\,.
 \]
 A direct calculation using \ref{prop:WrCorresp} shows the lowest-degree term of the Wronskian $\Wr(H)$ is $(-1)^{|\alpha^\perp|}p_{\alpha^\perp}(H)x^{|\alpha^\perp|}$, where $p_\bullet(H)$ are the Pl\"ucker coordinates of the null space $L$.
 Since $|\alpha| = |\alpha^\perp|$, the result follows.
\end{proof}
Recall the open cover $\mathcal{U}$ of Pl\"ucker space from Definition \ref{def:Ucover}, which restricts to an open cover
\begin{equation}\label{def:Galpha}
 \mathcal{G} := \{G_\alpha := \Gr(k,V) \cap U_\alpha\ |\ \alpha \in \tbinom{[n]}{k}\}\,.
\end{equation}
\begin{definition}\label{def:UalphaT}
 The matrix $\Fdot(t)^{-1}$ acts on $X_\alpha(0)$, giving $X_\alpha(0).\Fdot(t)^{-1} = X_\alpha(t)$.
 We define $\mathcal{G}(t)$ to be the collection of dense open sets of $\Gr(k,V)$ defined by the corresponding action,
 \[G_\alpha(t) := G_\alpha.\Fdot(t)^{-1}\quad\mbox{for}\quad G_\alpha \in \mathcal{G}\,.\]
\end{definition}
The lower bound on the order of vanishing of $\Wr(H)$ at $t=0$ given in the proof of Proposition \ref{prop:WrOrdAt0} is attained for all $H$ in the dense open subset $X_\alpha(t) \cap G_\alpha(t)$ of $X_\alpha(t)$.
This proves a stronger statement.
\begin{corollary}\label{cor:WrHigherOrder}
 Let $H \in X_\alpha (t)\cap G_\alpha(t)$ for some $t\in\C$.
 Then $\Wr(H^\perp)$ has a root at $x = t$ of order $|\alpha|$.
\end{corollary}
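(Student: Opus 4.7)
The plan is to re-examine the calculation already inside the proof of Proposition \ref{prop:WrOrdAt0}, observing that the lower bound established there is actually attained whenever the relevant Pl\"ucker coordinate is nonzero, which is exactly the additional hypothesis $H \in G_\alpha(t)$.

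First I would reduce to the case $t = 0$ and to the Veronese curve using the $\SL(2,\C)$ action on $\p^1$ together with the $\GL(n,\C)$ action on $\gamma$, as in Corollary \ref{cor:WrHigherOrder1}. Both actions are algebraic isomorphisms on the relevant Grassmannians and pull back osculating flags to osculating flags, so they preserve the order of vanishing of $\Wr(H^\perp)$ at the distinguished point and also preserve the open sets $G_\bullet(t)$ from Definition \ref{def:UalphaT}. Under the reduction, the hypothesis becomes $H \in X_\alpha(0) \cap G_\alpha = X_\alpha \Edot^\circ$, the big cell of $X_\alpha(0)$.

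Second, I would run the Stiefel-coordinate computation used to prove Proposition \ref{prop:WrOrdAt0}: representing $H^\perp$ by a matrix in $\hats_\alpha$-type coordinates, every term in the Laplace expansion of $\det(F_k(x)H)$ has degree at least $|\alpha|$ in $x$, which is exactly how the lower bound $|\alpha|$ on the order of vanishing is obtained. The further direct calculation, which is the one flagged in the proof of Corollary \ref{cor:WrHigherOrder1}, identifies the coefficient of $x^{|\alpha^\perp|}$ in $\Wr(H^\perp)$ as $(-1)^{|\alpha^\perp|} p_{\alpha^\perp}(H^\perp)$, up to the sign conventions already fixed there. By Proposition \ref{prop:perpSameCodim} the exponent $|\alpha^\perp|$ equals $|\alpha|$, so this is precisely the coefficient of $x^{|\alpha|}$.

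Third, I would translate the hypothesis $H \in G_\alpha(t)$ through the duality. By Proposition \ref{prop:dualSV} the map $\perp$ sends $G_\alpha$ bijectively to $G_{\alpha^\perp}$, so $p_\alpha(H) \neq 0$ is equivalent to $p_{\alpha^\perp}(H^\perp) \neq 0$. Hence the coefficient of $x^{|\alpha|}$ in $\Wr(H^\perp)$ is nonzero, and combined with the lower bound from Proposition \ref{prop:WrOrdAt0} this forces the order of vanishing at $x = 0$ to be exactly $|\alpha|$. The main obstacle is purely bookkeeping: checking that the Pl\"ucker coordinate appearing in the Laplace expansion is really $p_{\alpha^\perp}$ rather than some other coordinate, and that the $\SL(2,\C) \times \GL(n,\C)$ reduction carries $G_\alpha(t)$ to $G_\alpha(0) = G_\alpha$ compatibly with the identification in Proposition \ref{prop:WrCorresp}; no new geometric input beyond what was used for Proposition \ref{prop:WrOrdAt0} and Corollary \ref{cor:WrHigherOrder1} should be required.
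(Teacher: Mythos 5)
Your proposal matches the paper's argument: reduce to $t=0$ and the Veronese curve by the $\SL(2,\C)\times\GL(n,\C)$ action, run the Stiefel-coordinate computation of Proposition~\ref{prop:WrOrdAt0} to identify the lowest-degree coefficient of the Wronskian as $\pm p_{\alpha^\perp}$, and observe that the hypothesis $H\in G_\alpha(t)$ is exactly the nonvanishing of that coefficient. The only small imprecision is citing Proposition~\ref{prop:dualSV} for $\perp(G_\alpha)=G_{\alpha^\perp}$ (that proposition concerns Schubert varieties, not the big cells), but the underlying Pl\"ucker-coordinate duality is precisely what the paper's definition of $\alpha^\perp$ encodes, so the step is sound.
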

Given a list of Schubert conditions $\balpha = (\alpha^1,\dotsc,\alpha^m)$, we define
\[|\balpha| := |\alpha^1|+\cdots+|\alpha^m|\,.\]
We may now prove dimensional transversality for intersections of osculating Schubert varieties.
\begin{theorem}[Eisenbud-Harris]\label{thm:dimTrans}
 Let $\balpha = (\alpha^1,\dotsc,\alpha^m)$ be a list of Schubert conditions on $\Gr(k,V)$ and $a_1,\dotsc,a_m\in \p^1$ be distinct points.
 If the intersection
 \begin{equation}\label{intersection}
  X := X_{\alpha^1} (a_1) \cap \cdots \cap X_{\alpha^m} (a_m)
 \end{equation}
 is nonempty, then $\codim(X) = |\balpha|$.
\end{theorem}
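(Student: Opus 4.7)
The plan is to sandwich $\codim X$ between two complementary bounds. For the upper bound $\codim X \leq |\balpha|$, I would appeal to smoothness of $\Gr(k,V)$: on a smooth ambient variety, every irreducible component of an intersection $A\cap B$ has codimension at most $\codim A + \codim B$, so applying this inductively to $X = \bigcap_i X_{\alpha^i}(a_i)$ shows that each component of $X$, when nonempty, has codimension at most $\sum_i|\alpha^i| = |\balpha|$.

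The substantive direction is the matching lower bound $\codim X \geq |\balpha|$, which I would derive from the Wronski map. By Corollary \ref{cor:WrHigherOrder1}, every $H\in X$ has $\Wr(H^\perp)$ vanishing at $x=a_i$ to order at least $|\alpha^i|$ for each $i$. Since the $a_i$ are distinct, this places $\Wr(H^\perp)$ in the linear subvariety
\[
V_{\balpha,\ba}\;:=\;\Bigl\{\,f\in\p\C_{k(n-k)+1}[x]\;:\;\prod_{i=1}^m(x-a_i)^{|\alpha^i|}\text{ divides }f\,\Bigr\}\,,
\]
whose codimension in $\p\C_{k(n-k)+1}[x]$ equals $|\balpha|$ when $|\balpha|\leq k(n-k)$, and which is empty if $|\balpha|>k(n-k)$ (forcing $X=\emptyset$ against the hypothesis and so disposing of that case). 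Composing the duality isomorphism of Proposition \ref{prop:dualSV} with the Wronski map produces a morphism $\Psi\colon\Gr(k,V)\to\p\C_{k(n-k)+1}[x]$, $H\mapsto\Wr(H^\perp)$, between projective varieties of the common dimension $k(n-k)$, and $X\subseteq\Psi^{-1}(V_{\balpha,\ba})$. Granted that $\Psi$ has finite fibers, we have $\dim\Psi^{-1}(V_{\balpha,\ba})\leq\dim V_{\balpha,\ba}=k(n-k)-|\balpha|$, and combining this with the upper bound gives $\codim X=|\balpha|$.

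The main obstacle will be verifying that $\Psi$ has finite fibers. My approach would be to reduce to the extremal case $|\balpha|=k(n-k)$: a fiber $\Psi^{-1}(f_0)$ over a polynomial $f_0=c\prod(x-b_j)^{d_j}$ is contained in an intersection of osculating Schubert varieties at the points $b_j$ of total codimension $k(n-k)$, so finiteness of all fibers follows from finiteness in the top-codimension situation. That case can be addressed directly through the explicit Plücker description of Proposition \ref{prop:WrCorresp} combined with properness of the Wronski map (a morphism between projective varieties), avoiding circularity with the present theorem. Alternatively one may simply cite the classical fact that the Wronski map is a finite morphism, after which the dimension bound is immediate.
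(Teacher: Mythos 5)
Your framework is genuinely different from the paper's, and its middle step hides a gap. The paper never invokes finiteness of the Wronski map: it argues by contradiction, supposing $\codim X = c < |\balpha|$ so that $\dim X = k(n-k)-c > 0$, then cuts $X$ with $k(n-k)-c$ additional osculating hypersurface Schubert varieties at fresh distinct points. Each such hypersurface is a Pl\"ucker hyperplane section, so the resulting intersection is nonempty; for any $H$ in it, $\Wr(H^\perp)$ must vanish to total order at least $|\balpha| + (k(n-k)-c) > k(n-k)$ by Corollary~\ref{cor:WrHigherOrder1}, contradicting the degree bound of Proposition~\ref{prop:degreeOfWronskian}. This is elementary, self-contained, and handles the lower bound that is the substance of the theorem (the upper bound you supply via smoothness is standard and routinely left implicit). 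Your reorganization --- packaging the Wronskian constraint as membership in a linear subvariety $V_{\balpha,\ba}$ of codimension $|\balpha|$ and pulling back along $\Psi$ --- is legitimate, and once finiteness of $\Psi$ is in hand the dimension bound does drop out.

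The gap is your second route to finiteness. Properness alone does not give finite fibers: a proper morphism between projective varieties of the same dimension can contract subvarieties, and Proposition~\ref{prop:WrCorresp} is a description of the map, not a bound on fiber dimensions. What you actually need is that $\Psi$ has no positive-dimensional fiber, which is precisely the $|\balpha| = k(n-k)$ case of the theorem you are proving; so your first option (reduce to the extremal case) is a sound reduction but leaves the essential step untouched, and the properness argument does not close it. Your third option, citing finiteness of the Wronski map as a classical fact, is formally acceptable, but the standard proof of that fact is the paper's own hyperplane-section-plus-degree-bound argument, so the citation black-boxes exactly what should be proved. For a self-contained argument, adopt the paper's route: slicing by enough Pl\"ucker hyperplanes lets the degree bound on $\Wr$ produce the contradiction directly, and finiteness of the Wronski map then falls out as a corollary rather than being assumed.
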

\begin{proof}
 Assume for a contradiction that $X$ from (\ref{intersection}) has codimension $c < |\balpha|$.
 Consider distinct points $t_1,\dotsc,t_{k(n-k)-c}\in \p^1\setminus\{a_1,\dotsc,a_m\}$.
 Since $\dim X = k(n-k)-c$, and $X_{\Is} (t_i)$ is a hyperplane section for each $i$, we have
 \[X\cap X_{\Is} (t_1) \cap \cdots \cap X_{\Is} (t_{k(n-k)-c})\neq \emptyset\,.\]
 Let $H$ be a point in this intersection.
 By Proposition \ref{prop:degreeOfWronskian}, $\Wr(H)$ is a polynomial of degree at most $k(n-k)$.
 However, by Corollary \ref{cor:WrHigherOrder}, $\Wr(H)$ has $|\balpha|+k(n-k)-c > k(n-k)$ roots, which is a contradiction.
\end{proof}
\begin{proposition}\label{prop:determinedSCandOP}
 A $k$-plane $H\in\Gr(k,V)$ uniquely determines a Schubert problem $\balpha$ and an osculating instance $X$ of $\balpha$ with $H\in X$.
\end{proposition}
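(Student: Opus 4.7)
The plan is to use the Wronski map as a dictionary between $k$-planes and osculating instances of Schubert problems. Given $H\in\Gr(k,V)$, I read off both the Schubert conditions and the osculation points from the multiset of roots (in $\p^1$) of the polynomial $\Wr(H^\perp)\in\p\C_{k(n-k)+1}[t]$.

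For each $a\in\p^1$, define $\alpha(a)\in\tbinom{[n]}{k}$ to be the unique Schubert condition with $\alpha_i(a)$ equal to the least $j$ such that $\dim(H\cap F_j(a))\geq i$. Since $j\mapsto\dim(H\cap F_j(a))$ is a non-decreasing step function on $\{0,1,\dotsc,n\}$ ranging from $0$ to $k$ with jumps of at most one, $\alpha(a)$ is well-defined. By construction $H$ lies in the distinguished dense open cell $X_{\alpha(a)}(a)\cap G_{\alpha(a)}(a)$, so by Corollary~\ref{cor:WrHigherOrder} the Wronskian $\Wr(H^\perp)$ vanishes at $a$ to order exactly $|\alpha(a)|$.

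Let $S:=\{a\in\p^1 : |\alpha(a)|>0\}$; this is precisely the (finite) set of roots of $\Wr(H^\perp)$, which I enumerate as $a_1,\dotsc,a_m$. Setting $\alpha^i:=\alpha(a_i)$, the multiplicities sum to the degree of the Wronskian as a point in projective space, so
\[
 \sum_{i=1}^m |\alpha^i| \;=\; k(n-k).
\]
Thus $\balpha:=(\alpha^1,\dotsc,\alpha^m)$ is a Schubert problem, and since $H\in X_{\alpha^i}(a_i)$ for each $i$ the intersection $X:=X_{\alpha^1}(a_1)\cap\cdots\cap X_{\alpha^m}(a_m)$ is an osculating instance of $\balpha$ containing $H$. (To handle $a=\infty$ cleanly, apply the $\SL(2,\C)$ action as in Corollary~\ref{cor:WrHigherOrder1} to move the point into an affine chart before invoking Corollary~\ref{cor:WrHigherOrder}.)

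For uniqueness, suppose $\bbeta=(\beta^1,\dotsc,\beta^{m'})$ with osculation points $b_1,\dotsc,b_{m'}$ yields another osculating instance containing $H$. Since $H\in X_{\beta^j}(b_j)$, Corollary~\ref{cor:WrHigherOrder1} forces $\Wr(H^\perp)$ to vanish at $b_j$ to order at least $|\beta^j|$. Summing over $j$ gives $k(n-k)=\sum_j |\beta^j|\leq k(n-k)$, and equality forces each inequality to be an equality. Hence each $b_j$ is a root of $\Wr(H^\perp)$ with $|\beta^j|$ equal to the exact multiplicity, and every root of the Wronskian appears among the $b_j$. Since Schubert problems have only relevant (positive-codimension) conditions, this forces $\{b_j\}=S$ and $\beta^j=\alpha(b_j)$, i.e.\ $\bbeta=\balpha$ and the instance agrees with $X$. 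The main obstacle is verifying the exact-order statement of Corollary~\ref{cor:WrHigherOrder} applies at every relevant point, including $\infty$; the $\SL(2,\C)$ reduction resolves this.
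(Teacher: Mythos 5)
Your proof is correct and proves somewhat more than the paper's own argument. The paper's proof treats only uniqueness: it assumes $H$ already lies in two osculating instances $X_1$, $X_2$, uses Corollary~\ref{cor:WrHigherOrder} to identify the two Wronskian factorizations, and then derives a contradiction when some $\alpha^i\neq\beta^i$ by forming the componentwise minimum $\omega$ and invoking Theorem~\ref{thm:dimTrans} to conclude that the intersection with $X_\omega(a_1)$ (of total codimension exceeding $k(n-k)$) is empty. Existence of an osculating instance containing $H$ is left tacit. You supply existence constructively: at each $a\in\p^1$ you extract the Schubert cell of $H$ relative to $\Fdot(a)$, i.e.\ your $\alpha(a)$ is exactly the condition with $H\in X_{\alpha(a)}(a)\cap G_{\alpha(a)}(a)$, so Corollary~\ref{cor:WrHigherOrder} pins the vanishing order of $\Wr(H^\perp)$ at $a$ to exactly $|\alpha(a)|$; the $\SL(2,\C)$ action handles $a=\infty$, the orders sum to $k(n-k)$, and the nontrivial $\alpha(a)$ assemble into a Schubert problem. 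Your uniqueness step then reaches the same conclusion by a more direct route: from $H\in X_{\beta^j}(b_j)$, minimality of the cell gives $\alpha(b_j)\leq\beta^j$, while exact vanishing combined with the degree bound gives $|\alpha(b_j)|=|\beta^j|$, forcing $\alpha(b_j)=\beta^j$. This is the same Bruhat-order mechanism the paper reaches through $\omega$, just applied directly. The one step worth spelling out for the reader is that final inference: $\alpha\leq\beta$ in Bruhat order with $|\alpha|=|\beta|$ forces $\alpha=\beta$, since $\alpha_i\leq\beta_i$ for all $i$ together with $\sum_i\alpha_i=\sum_i\beta_i$ leaves no room for strict inequality.
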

\begin{proof}
 Suppose $H$ is a solution to instances $X_1,X_2$ of Schubert problems $\balpha,\bbeta$,
 \[X_1 := X_{\alpha^1} (a_1) \cap \cdots \cap X_{\alpha^m} (a_m)\qquad\mbox{and}\qquad X_2 := X_{\beta^1} (b_1) \cap \cdots \cap X_{\beta^p} (b_p)\,.\]
 We may use the action of $\SL(2,\C)$ on each $X_i$ to avoid having any osculation points at $\infty$.
 This induces an invertible action on $H$, so we lose no generality in doing this.

 Since $\balpha$ and $\bbeta$ are Schubert problems, $|\balpha| = |\bbeta| = k(n-k)$.
 By Corollary \ref{cor:WrHigherOrder}, we have the equality
 \[\prod_{i=1}^m(x-a_i)^{|\alpha^i|} = \Wr(H) = \prod_{i=1}^p(x-b_i)^{|\beta^i|}\,,\]
 in projective space.
 So $m=p$, and we may reorder the Schubert varieties involved in $X_2$ so that $a_i=b_i$ and $|\alpha^i|=|\beta^i|$ for $i\in[m]$.
 Assume for a contradiction that $\alpha^i \neq \beta^i$ for some $i$ (without loss of generality, $i=1$).
 Thus $H\in X_{\beta^1}(a_1) \cap X_{\alpha^1}(a_1) = X_{\omega}(a_1)$ where $\omega\in\tbinom{[n]}{k}$ is given by
 \[\omega_i := \min\{\beta^1_i,\alpha^1_i\}\quad\mbox{for}\quad i\in [k]\,,\]
 and so
 \begin{equation}\label{eqn:determinedContradiction}
    H\in X_{\omega} (a_1) \cap X_{\alpha^2} (a_2) \cap \cdots \cap X_{\alpha^m} (a_m)\,.
 \end{equation}
 Since $\alpha^i \neq \beta^i$, we have $|\omega| > |\alpha^1|$, so $|\omega|+|\alpha^2|+\dotsb+|\alpha^m| > |\balpha| = k(n-k)$, which implies the intersection (\ref{eqn:determinedContradiction}) is empty by Theorem \ref{thm:dimTrans}.
 This contradiction implies $\alpha^i = \beta^i$ for all $i$, proving the statement.
\end{proof}


\section{The Shapiro Conjecture}

The dimensional transversality of Eisenbud and Harris shows that it is reasonable to study the Schubert calculus of osculating Schubert varieties.
In 1993, the brothers Boris and Michael Shapiro made the remarkable conjecture that an instance of a Schubert problem in a Grassmannian given by real osculating Schubert varieties has all solutions real.
The conjecture was proved in \cite{MTV2009a,MTV2009b}.
\begin{theorem}[Mukhin-Tarasov-Varchenko]\label{Th:MTV}
 Let $\balpha = (\alpha^1,\dotsc,\alpha^m)$ be a Schubert problem on $\Gr(k,V)$.
 If $a_1,\dotsc,a_m\in \R\p^1$ are distinct, then the intersection
 \[X_{\alpha^1} (a_1) \cap \cdots \cap X_{\alpha^m} (a_m)\]
 is transverse with all points real.
\end{theorem}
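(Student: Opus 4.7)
The plan is to reduce the statement to a question about eigenvalues of a family of commuting operators, following the strategy of Mukhin-Tarasov-Varchenko. First I would use Corollary \ref{cor:WrHigherOrder} and Proposition \ref{prop:determinedSCandOP} to recast the intersection $X_{\alpha^1}(a_1)\cap\cdots\cap X_{\alpha^m}(a_m)$ as a fiber of the Wronski map $\Wr:\Gr(k,\C_n[t])\to\p\C_{k(n-k)+1}[t]$ over a polynomial whose roots are the $a_i$ with prescribed multiplicities $|\alpha^i|$. Because the $a_i$ lie in $\R\p^1$, this target polynomial is real (up to scalar), so complex conjugation acts on the fiber, and what must be shown is that this action is trivial and the fiber is reduced of the expected size. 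I would then dualize using Proposition \ref{prop:dualSV} and Proposition \ref{prop:WrCorresp} to think of points in the fiber as $(n-k)$-dimensional subspaces $L\subset\C_n[t]^*$ with prescribed order of contact with osculating flags at each $a_i$.

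The main tool will be the Bethe ansatz for the $\mathfrak{sl}_2$ Gaudin model. To the data $(\balpha,(a_1,\dotsc,a_m))$ one associates a tensor product $M_{\balpha}$ of irreducible $\mathfrak{sl}_n$-modules (with highest weights read off from $\balpha$) and a commuting family of Gaudin Hamiltonians $H_1(a_1,\dotsc,a_m),\dotsc,H_m(a_1,\dotsc,a_m)$ acting on the singular-vector subspace $M_{\balpha}^{\sing}$. The central theorem I would invoke is the one Mukhin-Tarasov-Varchenko establish: the Bethe algebra generated by these operators is isomorphic, as a $\C$-algebra with a cyclic module, to the coordinate ring of the scheme-theoretic Wronski fiber, with common eigenlines corresponding exactly to the points of $X_{\alpha^1}(a_1)\cap\cdots\cap X_{\alpha^m}(a_m)$. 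With this dictionary established, the two assertions of the theorem become two assertions about the Bethe algebra: reality of points becomes reality of the spectrum, and transversality becomes semisimplicity of the Bethe algebra action.

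The heart of the proof, and the step I expect to be by far the hardest, is the self-adjointness argument. When $a_1,\dotsc,a_m$ are real, the tensor product $M_{\balpha}$ carries a positive-definite Hermitian (Shapovalov) form, and one must verify that each Gaudin Hamiltonian $H_i$ is self-adjoint with respect to this form. The verification is an algebraic identity involving the explicit formula $H_i=\sum_{j\neq i}\Omega_{ij}/(a_i-a_j)$ where $\Omega_{ij}$ is the Casimir acting in the $i$th and $j$th tensor factors; symmetry of $\Omega$ and reality of the $a_i$ do the work. Self-adjointness of a commuting family forces the spectrum to lie in $\R$ and forces simultaneous diagonalizability, which is the semisimplicity we need.

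Finally I would translate these representation-theoretic facts back through the Bethe-algebra/Wronski-fiber isomorphism. Real eigenvalues of the $H_i$ correspond to Bethe roots lying on the real line, which via the Wronski correspondence of Proposition \ref{prop:WrCorresp} correspond to real $k$-planes $H\in\Gr(k,V)$. Semisimplicity of the Bethe algebra action forces the scheme-theoretic fiber to be reduced, which by the dictionary is exactly transversality of the intersection in $\Gr(k,V)$. Combined with the Eisenbud-Harris dimensional transversality of Theorem \ref{thm:dimTrans}, which guarantees the count is correct once we know the intersection is nonempty, this completes the proof. The representation-theoretic isomorphism between the Bethe algebra and the Wronski-fiber coordinate ring is the obstacle: setting it up rigorously (rather than heuristically) requires the full machinery of the Mukhin-Tarasov-Varchenko papers and is not something I would attempt to reconstruct from scratch here.
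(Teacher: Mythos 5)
The paper does not contain a proof of Theorem \ref{Th:MTV}: it is cited from the Mukhin--Tarasov--Varchenko papers, and the only argument in the thesis is the worked-out computation for $\Gr(2,\C^4)$ in Example \ref{ex:4lineDiscrim}, which the author explicitly presents as an illustration of the theorem rather than a proof. So there is no ``paper's own proof'' against which to measure your proposal, and you have correctly recognized that the theorem is imported.

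As an outline of the actual MTV argument your proposal is broadly faithful, and you have isolated the two genuinely hard steps: the identification of the Bethe algebra with the coordinate ring of the Wronski fiber, and the self-adjointness of the Gaudin Hamiltonians with respect to a positive-definite Hermitian form. Two inaccuracies are worth flagging. You attribute the argument to the $\mathfrak{sl}_2$ Gaudin model and later to $\mathfrak{sl}_n$-modules; for a general Schubert problem in $\Gr(k,V)$ the relevant integrable system is a higher-rank $\mathfrak{gl}_N$ Gaudin model, with $N$ determined by the Grassmannian data, and the tensor factors are $\mathfrak{gl}_N$-modules whose highest weights encode the Schubert conditions --- the $\mathfrak{sl}_2$ picture covers only part of the story. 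Also, the Wronski fiber over $\prod_i(x-a_i)^{|\alpha^i|}$ is strictly larger than the Schubert intersection in question: by Corollary \ref{cor:WrHigherOrder} the Wronskian records only $|\alpha^i|$, so that fiber is a disjoint union over all Schubert problems with those codimensions at those points, and one must refine (as Proposition \ref{prop:determinedSCandOP} does at the level of individual $k$-planes) to isolate the component indexed by $\balpha$ before the Bethe-algebra isomorphism can be applied. Neither issue undermines the strategy, but both would need attention in a detailed write-up.
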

The Shapiro Conjecture may be seen in the first nontrivial Schubert problem, which asks how many 2-dimensional subspaces of $\C^4$ meet four fixed 2-dimensional subspaces nontrivially.
If the flags are general, the answer is two.
Theorem \ref{Th:MTV} asserts that both solutions are real and distinct if the flags involved osculate a rational normal curve at distinct real points.
We show this in the following example.
\begin{example}\label{ex:4lineDiscrim}
 Let $\gamma(t):=(1,t,t^2,t^3)$ parametrize the Veronese curve, and $\Fdot(t)$ be family of osculating flags.
 Suppose $t_1,\dotsc,t_4\in\R\p^1$ are distinct, and consider the four 2-dimensional subspaces $F_2(t_1),\dotsc,F_2(t_4)\subset \C^4$.
 We ask two questions: (1) how many 2-dimensional subspaces of $\C^4$ meet all four fixed subspaces nontrivially, and (2) how many \emph{real} 2-dimensional subspaces of $\C^4$ meet all four fixed subspaces nontrivially?

 We observe that Question (1) is a Schubert problem, and Question (2) is a real Schubert problem, because we are counting the points in the intersection
 \[X_{\Is}(t_1) \cap X_{\Is}(t_2) \cap X_{\Is}(t_3) \cap X_{\Is}(t_4)\,.\]

 Since $t_1,t_2$ are real and distinct, there is some $s\in\SL(2,\R)$ such that $t_1.s=0$, $t_2.s=\infty$, $t_3.s=:a\in\R$, and $t_4.s=:b\in \R$.
 Explicitly, if $t_1=(t_{11},t_{12})$ and $t_2=(t_{21},t_{22})$, then
\[ s = \left(
 \begin{matrix}
  t_{11} & t_{12} \\
  t_{21} & t_{22}
 \end{matrix}\right)^{-1}\,.
\]
 Since $s$ is invertible, the points $0,\infty,a,b$ are distinct.
 By a change of real basis on $(\gamma^1,\dotsc,\gamma^n)$, we may assume $\gamma(t)$ is the Veronese curve.
 Using these actions, we replace the flags $\Fdot(t_1),\dotsc,\Fdot(t_4)$ of Questions (1) and (2) by the flags $\Fdot(0)$, $\Fdot(\infty)$, $\Fdot(a)$, and $\Fdot(b)$, which does not affect whether solutions to the Schubert problem are real.
 
 The only relevant condition for $X_{\Is}(0)$ is that every $H\in\Gr(2,\C^4)$ meets $F_2(0)$ nontrivially.
 Similarly, if $H\in X$ it meets the other fixed 2-planes nontrivially.
 Thus Question (1) is given by counting the points in the intersection
 \[X := X_{\Is}(0) \cap X_{\Is}(\infty) \cap X_{\Is}(a) \cap X_{\Is}(b)\,.\]
 The intersection $X_{\Is}(0) \cap X_{\Is}(\infty)$ is parametrized by the matrix
 \[M:=\left(
  \begin{matrix}
   x & 1 & 0 & 0 \\
   0 & 0 & y & 1
  \end{matrix}   
 \right)\,,\]
 so we find the set on which $\rowspace(M)$ meets $F_2(t)$ nontrivially for $t=a,b$.
 This condition is equivalent to the equations
 \[\det\left(
   \begin{matrix}
    x & 1 & 0 & 0 \\
    0 & 0 & y & 1 \\
    1 & a & a^2 & a^3 \\
    0 & 1 & 2a & 3a^2
   \end{matrix}
  \right) = \det \left(
   \begin{matrix}
    x & 1 & 0 & 0 \\
    0 & 0 & y & 1 \\
    1 & b & b^2 & b^3 \\
    0 & 1 & 2b & 3b^2
   \end{matrix}
  \right) = 0\,.
 \]
 Thus we solve the system of equations
 \[
  \begin{array}{l}
   f:=-2xya^3+a^2x+3a^2y-2a = 0 \\
   g:=-2xyb^3+b^2x+3b^2y-2b = 0\,. \\
  \end{array}
 \]
 Using $f$ to eliminate the $xy$-term of $g$ yields
 \begin{equation}\label{eqn:yEquals}
   y=\frac{2a+2b-abx}{3ab}\,,
 \end{equation}
 which is defined since $a,b\neq 0$ and $a\neq b$ (we require $a\neq b$ to divide by $a-b$).
 Substituting back into $f=0$ and multiplying by the nonzero constant $\frac{3b}{2a^2}$ gives
 \begin{equation}\label{eqn:readyForDiscrim}
  abx^2-2(a+b)x+3=0\,.
 \end{equation}
 This equation has two solutions,
 \[x=\frac{a+b \pm \sqrt{a^2-ab+b^2}}{ab}\,,\]
 each determining a unique $y$-value by (\ref{eqn:yEquals}).
 We observe that the discriminant of (\ref{eqn:readyForDiscrim}) is a sum of squares,
 \[a^2-ab+b^2 = \frac{1}{2}a^2 + \frac{1}{2}b^2 + \frac{1}{2}(a-b)^2\,,\]
 so it is positive for all $a,b\neq 0$ with $a\neq b$.
 This implies that there are two distinct solutions, answering Question (1).
 
 Since the discriminant of (\ref{eqn:readyForDiscrim}) is positive, the two solutions of Question (1) have real $x$-values.
 These determine real $y$-values by Equation \ref{eqn:yEquals}, which implies that both solutions to the complex Schubert problem are real, answering Question (2).
 Question (2) is the first nontrivial example of Theorem \ref{Th:MTV}.
\end{example}

\section{The Problem of Four Real Tangent Lines}
The projective space $\p^{n-1}$ is the Grassmannian $\Gr(1,\C^n)$ of lines through the origin of $\C^n$.
That is, a 1-dimensional subspace of $\C^n$ is a point (or 0-dimensional affine space) in $\p^{n-1}$.
We extend this to higher dimensional subspaces and realize $\Gr(k,\C^n)$ as the set of $(k-1)$-dimensional affine spaces in $\p^{n-1}$.
In this way, Example \ref{ex:4lineDiscrim} is a question about lines which intersect four fixed lines in $\C^3$.

We illustrate this problem of four lines by giving another instance of Theorem \ref{Th:MTV}.
We assume $\gamma$ to be the twisted cubic curve in $\p^3$ parametrized by
\[\gamma(t) := \left( -1+6t^2,\, \frac{7}{2}t^3+\frac{3}{2}t,\, -\frac{1}{2}t^3+\frac{3}{2}t \right)\,,\]
and let $\ell_1,\ell_2,\ell_3,\ell_4$ be the fixed lines tangent to $\gamma(t)$ at $t=-1,0,1,\frac{1}{2}$ respectively.
This is the same curve used in \cite{Sottile2011}, chosen for aesthetic reasons.
Since all real rational normal curves are equivalent by a real change of basis, this curve is equivalent to the Veronese curve used in the previous example.

Since the family of quadric surfaces in $\p^3$ is 9-dimensional, and the restriction that a quadric $A$ contain a fixed line imposes $3$ independent conditions on that quadric, three mutually skew lines determine $A$.
Figure \ref{fig:curve} displays the ruling of the hyperboloid $A$ containing the lines $\ell_1,\ell_2,$ and $\ell_3$.
The lines in the opposite ruling are the lines in $\p^3$ which meet $\ell_1,\ell_2,$ and $\ell_3$.

\begin{figure}[H]
 \centering
  \includegraphics[width=.73\textwidth]{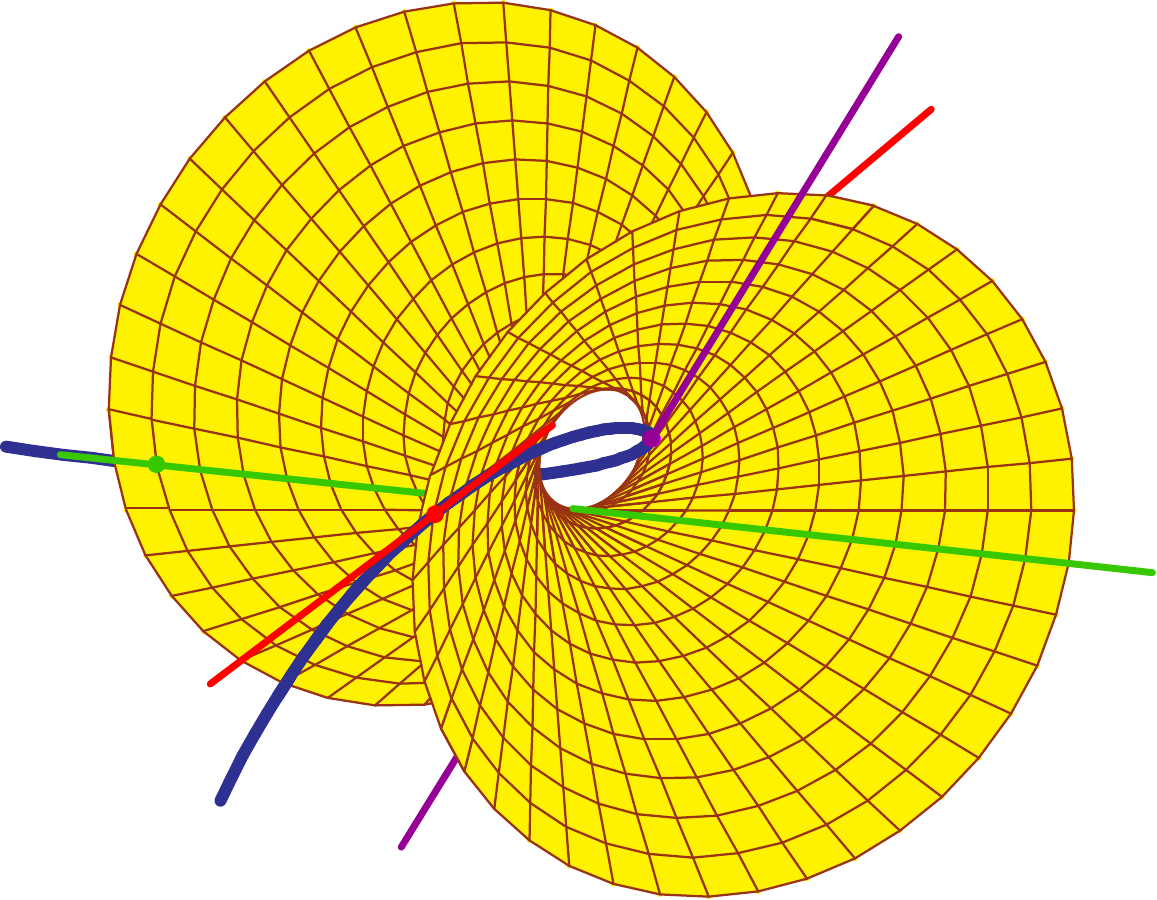}
  \caption{{\color{4LinesDarkBlue}$\gamma(t)$}, {\color{4LinesGreen}$\ell_1$}, {\color{4LinesPurple}$\ell_2$}, and {\color{4LinesRed}$\ell_3$}.}
  \label{fig:curve}
\end{figure}

Figure \ref{fig:solutions} shows the opposite ruling of $A$, containing the real lines meeting $\ell_1,\ell_2,$ and $\ell_3$.
The two lines meeting all four tangents are real if and only if the fourth tangent meets the hyperboloid at two real points, and in this case the lines containing those points are the two solutions.
The thick black line in Figure \ref{fig:solutions} is tangent to $\gamma$ at $\gamma\left(\frac{1}{2}\right)$, so the blue real lines are the two lines predicted by Schubert calculus when the four fixed lines are tangent at $t=-1,0,1,\frac{1}{2}$.

\begin{figure}[H]
 \centering
  \includegraphics[width=.68\textwidth]{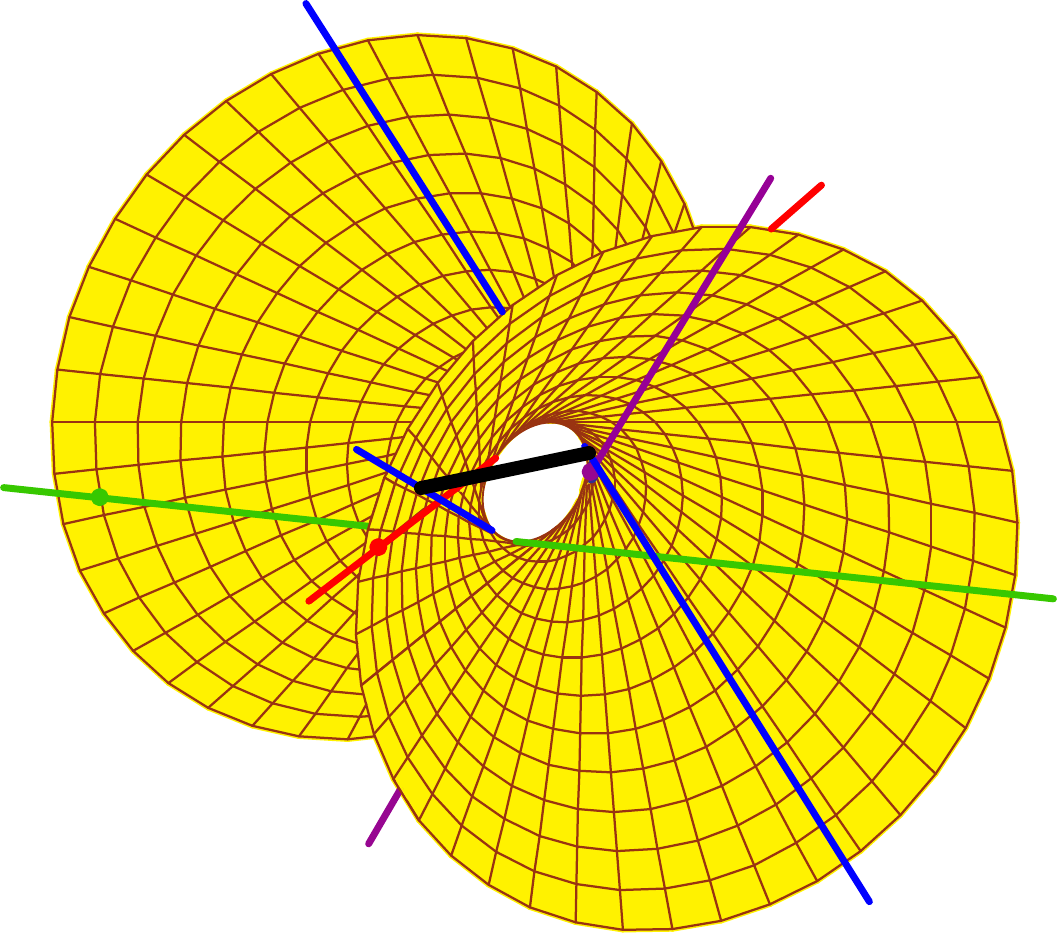}
  \caption{{\bf $\ell_4$} and {\color{4LinesBlue}two solution lines}.}
  \label{fig:solutions}
\end{figure}


\section{Conjectures with Computational Support}\label{sec:IWP}

Computer experimentation provided evidence in favor of the Shapiro Conjecture \cite{RS1998}, but further experimentation revealed that the most straightforward generalization to general flag varieties is false \cite{Sottile2000}.
After these computations, Eremenko and Gabrielov proved the Shapiro Conjecture for the Grassmannian of lines $\Gr(2,n)$ \cite{EG2002b}.
Mukhin, Tarasov, and Varchenko eventually proved the Shapiro Conjecture for all Grassmannians in type A \cite{MTV2009b}.

Computational experiments \cite{secant,monotone,lower,RSSS2006} suggested generalizations and variants of the Shapiro conjecture, some of which have been proven \cite{EGSV,mod4}.
We describe a variant of the problem which will be the focus of much of this thesis.

Recall the Wronski map from a Grassmannian to a projective space,
\[\Wr : \Gr(k,\C_n[t]) \longrightarrow \p\C_{k(n-k)+1}[t]\,.\]
Restricting the domain to the Grassmannian of polynomials with real coefficients, denoted by $\Gr(k,\R_n[t])$, gives the \emph{real Wronski map},
\[\Wr_\R : \Gr(k,\R\C_n[t]) \longrightarrow \p\R_{k(n-k)+1}[t]\,.\]
By Theorem \ref{thm:dimTrans}, the fibers of $\Wr$ are finite.
The problem of determining the number of points in a fiber of $\Wr$ is called the \emph{inverse Wronski problem}.

Since $\Wr_\R$ is a map between manifolds of the same dimension, it may have a topological degree, which gives a lower bound for the number of real points in the fiber $\Wr_\R^{-1}(f)$ over a general point $f\in\p\R_{k(n-k)+1}[t]$.
The main result of \cite{EG2002a} calculates this topological degree, finding nontrivial lower bounds on the number of points in a fiber $\Wr_\R^{-1}(f)$.
If $n$ is even, then either $\Gr(k,\R\C_n[t])$ or $\p\R_{k(n-k)+1}[t]$ is be orientable, and the topological degree, which we denote by $\sigma_{k,n}$, is zero.
If $n$ is odd, then neither $\Gr(k,\R\C_n[t])$ nor $\p\R_{k(n-k)+1}[t]$ is be orientable, and $\sigma_{k,n}$ may be nontrivial.
For $k\leq n-k$, we have
\begin{equation}\label{def:WronskiDegree}
 \sigma_{k,n} :=
  \frac{ 1! 2! \cdots (k{-}1)! (n{-}k{-}1)! (n{-}k{-}2)! \cdots (n{-}2k{+}1)! \left(\frac{k(n{-}k)}{2}\right)!}{(n{-}2k{+}2)!(n{-}2k{+}4)!\cdots(n{-}2)!\left(\frac{n{-}2k{+}1}{2}\right)!\left(\frac{n{-}2k{+}3}{2}\right)!\cdots\left(\frac{n{-}1}{2}\right)!}\,.
\end{equation}
If $n$ is odd and $k > n-k$, we have $\sigma_{k,n} = \sigma_{n-k,n}$.
This is a lower bound on the number of real points in the fiber $\Wr_\R^{-1}(f)$ over $f\in\p\R_{k(n-k)+1}[t]$ with $k(n-k)$ distinct roots.
We give the main theorem of \cite{EG2002a} in the language of Schubert calculus.
\begin{theorem}[Eremenko-Gabrielov]\label{Th:EG}
 Suppose $a \in (\p^1)^{k(n-k)}$ is a list of distinct points in $\p^1$.
 Furthermore, suppose $a$ is stable under complex conjugation, that is, $a_1,\dotsc,a_{k(n-k)}$ are the roots of a real polynomial.
 Then the real osculating Schubert problem
 \[X:=X_{\Is}(a_1) \cap \cdots \cap X_{\Is}(a_{k(n-k)})\]
 contains at least $\sigma_{k,n}$ real points.
\end{theorem}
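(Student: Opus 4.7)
The plan is to reinterpret the statement as a lower bound coming from the topological degree of the real Wronski map, as developed by Eremenko and Gabrielov. The first step is to identify the fiber of $\Wr_\R$ with the intersection $X$. Fix a monic polynomial $f\in\p\R_{k(n-k)+1}[t]$ whose (distinct) roots are the conjugation-stable list $a=(a_1,\dotsc,a_{k(n-k)})$. By Corollary \ref{cor:WrHigherOrder} applied with $\alpha=\Is$ (so $|\alpha|=1$), a $k$-plane $H\in\Gr(k,V)$ lies in $X_{\Is}(a_i)$ exactly when $\Wr(H^\perp)$ vanishes at $a_i$. Since the Wronskian has total degree at most $k(n-k)$ by Proposition \ref{prop:degreeOfWronskian} and $|a|=k(n-k)$, vanishing at all the $a_i$ forces $\Wr(H^\perp)=f$ in projective space. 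Thus $X$ is exactly the fiber $\Wr^{-1}(f)$, and its real points correspond to the real preimages of $f$ under $\Wr_\R:\Gr(k,\R_n[t])\to\p\R_{k(n-k)+1}[t]$.

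Next I would invoke topological degree theory. Both source and target are smooth compact real manifolds of the same dimension $k(n-k)$, and $f$ is a regular value (by Theorem \ref{thm:dimTrans}, the complex intersection is reduced for generic $f$, hence $\Wr_\R$ is a submersion at each real preimage). The signed count of real preimages is then a deformation invariant, namely the topological degree $\deg(\Wr_\R)$, and the number $|\Wr_\R^{-1}(f)\cap\R|$ of real points in the fiber is at least $|\deg(\Wr_\R)|$. When $n$ is even, an orientability mismatch between the two manifolds forces this integer degree to vanish; when $n$ is odd, both manifolds are non-orientable, but the pullback of the orientation sheaf of the target matches that of the source, producing a well-defined $\Z$-valued degree.

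To compute this degree one chooses an accessible regular value of $\Wr_\R$ and enumerates its real preimages with signs. The natural choice is to degenerate the roots of $f$ to a configuration in which the real solutions are in explicit bijection with a combinatorial set. Two approaches suggest themselves: (i) let the $a_i$ cluster near a single real point and apply the Schubert-calculus degeneration that replaces the osculating intersection by a chain of Pieri-type intersections, whose solutions are indexed by standard Young tableaux of shape $(n-k)^k$; (ii) let the $a_i$ tend to the real $(k(n-k))$th roots of unity, where the map $\Wr_\R$ reduces to a combinatorial model on rational functions and the signs alternate in a controlled way. In either case, each real preimage acquires a sign from the orientation behavior of $\Wr_\R$, and summing these signs produces the ratio of hook-length-type factorials appearing in (\ref{def:WronskiDegree}).

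The principal obstacle is precisely this sign calculation. One must check that the local degree of $\Wr_\R$ at each real preimage in the degenerate fiber is computable from the combinatorial data of the associated standard Young tableau (for instance, from the parity of the number of descents, or from a chess-board type coloring on the Bruhat cell structure). Verifying the resulting alternating sum equals $\sigma_{k,n}$ reduces to a Selberg-integral or Schur-function identity, which can be proven by induction on $k$ using the Pieri rule. Once this signed enumeration is established, the inequality $|\Wr_\R^{-1}(f)\cap\R|\ge\sigma_{k,n}$ follows for every regular real $f$, and a standard approximation argument extends it to every $f$ with distinct roots, giving the theorem.
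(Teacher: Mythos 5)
The paper does not prove Theorem \ref{Th:EG}: it is presented as the main theorem of \cite{EG2002a}, restated in the language of Schubert calculus, and the formula (\ref{def:WronskiDegree}) for $\sigma_{k,n}$ is given without derivation. There is therefore no in-paper argument to compare your proposal against, so I will assess it on its own terms.

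Your strategy is the right one, and it is exactly the strategy of Eremenko and Gabrielov: identify $X$ with a fiber of the real Wronski map and bound the number of real points from below by the absolute value of a topological degree. One small misattribution in your first paragraph: Corollary \ref{cor:WrHigherOrder} gives only the forward implication (membership in $X_{\Is}(a_i)$ implies vanishing of the Wronskian, and then only under a genericity condition); the equivalence you want comes from the Castelnuovo identity of Proposition \ref{prop:WrCorresp}, which expresses $\Wr(H)$ as the determinant $\det\binom{F_k(t)}{H^\perp}$, so that $H^\perp\in X_{\Is}(t)$ holds if and only if the Wronskian vanishes at $t$, and one then passes between $\Gr(k,V)$ and $\Gr(n-k,V^*)$ via Proposition \ref{prop:dualSV}. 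With the degree bound of Proposition \ref{prop:degreeOfWronskian}, this does establish the bijection between $X$ and $\Wr_\R^{-1}(f)$ as you describe.

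The genuine gap is the degree computation. You correctly note that when $n$ is odd the $\Z$-valued degree is defined because the twisted orientation sheaves of source and target agree, and that the degree is then found by choosing a regular value and summing local signs. But the two ``approaches'' you offer --- clustering roots, or sending them to roots of unity --- are restatements of the problem, not steps of a proof. Determining the local sign at each real preimage of a convenient fiber, checking it depends only on a combinatorial invariant of the solution, and proving the resulting signed sum equals the factorial ratio $\sigma_{k,n}$ is the entire mathematical content of \cite{EG2002a}; deferring it to ``a Selberg-integral or Schur-function identity'' names a flavor without doing the work. As a description of where the theorem lives and of the architecture of its proof, your proposal is accurate; as a proof, the hard part remains untouched.
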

The topological lower bounds of Eremenko and Gabrielov extend to Schubert problems of the form $\balpha=(\alpha,\It,\dotsc,\It)$ where $\alpha$ is an arbitrary Schubert condition.
Soprunova and Sottile extended these topological lower bounds to Schubert problems of the form $\balpha=(\alpha^1,\alpha^2,\It,\dotsc,\It)$ \cite{SS2006}, and we present their formula in Proposition \ref{prop:EGSS}.
When $k$ and $n$ are even, there is a choice of points in Theorem \ref{Th:EG} such that there are no real points in $X$, so the topological lower bound $\sigma_{k,n}=0$ is sharp \cite{EG2002a}.
For other cases, the lower bound given by $\sigma_{k,n}$ is not known to be sharp.

In the next two Chapters, we discuss a computational investigation of Eremenko-Gabrielov type lower bounds from the more general point of view of Schubert calculus and prove results inspired by the data.
We include a report on the observed sharpness of many of the bounds $\sigma_{k,n}$.
In this project, we keep track of whether each Schubert condition is associated to a real or nonreal osculation point to detect additional structure.

\chapter{\uppercase{Investigation of Lower Bounds}}
  \label{chapLower}
  The Mukhin-Tarasov-Varchenko Theorem \ref{Th:MTV} states that an instance of a Schubert problem on a Grassmannian involving Schubert varieties osculating a rational normal curve $\gamma$ at real points has all solutions real.
Intersections involving complex osculation points with the nonreal points coming in pairs may be real varieties, but they typically contain some nonreal points.
While the number of real points in such intersections may not be an invariant of the Schubert problem, there may be related invariants such as the topological lower bound on the number of real solutions in Theorem \ref{Th:EG} given by Eremenko and Gabrielov.
We study real instances of Schubert problems with the goal of understanding these invariants.
Thus we describe real osculating instances of Schubert problems as real enumerative problems and give a method for solving them.

\section{Real Osculating Instances}

We retain the conventions of Chapter \ref{chapSchubert}.
In particular, the results of this chapter depend on fixing a real parametrized rational normal curve $\gamma(t)$.
For a flag $\Fdot$ in $V$, we define the \emph{conjugate flag}
\[\barF : 0\subset \overline{F_1} \subset \cdots \subset \overline{F_n} = V\,.\]
\begin{proposition}
 Let $\alpha\in\tbinom{[n]}{k}$ be a Schubert condition and $\Fdot$ a flag in $V$.
 We have $\overline{X_\alpha \Fdot} = X_\alpha \barF$.
\end{proposition}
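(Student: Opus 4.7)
The plan is to prove set equality by establishing both inclusions directly from Definition \ref{Def:SchubertVariety}, using elementary properties of complex conjugation on subspaces of $V$.

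First, I would record the preliminary facts about conjugation. Since the involution $v \mapsto \overline{v}$ is $\R$-linear and sends a $\C$-basis of any complex subspace $W \subset V$ to a $\C$-basis of $\overline{W}$, we have $\dim_\C \overline{W} = \dim_\C W$. Moreover, $v \in W_1 \cap W_2$ if and only if $\overline{v} \in \overline{W_1} \cap \overline{W_2}$, so conjugation commutes with intersection: $\overline{W_1 \cap W_2} = \overline{W_1} \cap \overline{W_2}$. These two facts are the entire engine of the proof.

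Next, I would take $H \in X_\alpha \Fdot$, so $\dim(H \cap F_{\alpha_i}) \geq i$ for each $i \in [k]$. Applying conjugation and the two facts above gives
\[
  \dim\bigl(\overline{H} \cap \overline{F_{\alpha_i}}\bigr) = \dim\overline{H \cap F_{\alpha_i}} = \dim(H \cap F_{\alpha_i}) \geq i.
\]
Since $\overline{F_{\alpha_i}}$ is by definition the $\alpha_i$-th subspace of $\barF$, this says $\overline{H} \in X_\alpha \barF$. Hence $\overline{X_\alpha \Fdot} \subseteq X_\alpha \barF$. Applying the same argument with $\Fdot$ replaced by $\barF$ gives $\overline{X_\alpha \barF} \subseteq X_\alpha \Fdot$, and taking conjugates of both sides (using that $\overline{\overline{X}} = X$ because conjugation is an involution) yields the reverse inclusion $X_\alpha \barF \subseteq \overline{X_\alpha \Fdot}$.

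There is essentially no obstacle here beyond bookkeeping: the result is really the statement that the Schubert incidence conditions are equivariant under the involution. The one place to be a little careful is keeping straight that conjugation acts on points of $\Gr(k,V)$ via $H \mapsto \overline{H}$ and simultaneously carries the defining flag $\Fdot$ to $\barF$, so no extraneous comparison of tangent spaces or schematic structure is needed—the set-theoretic identity suffices because both sides are Zariski-closed real subvarieties of $\Gr(k,V)$ with the same closed points.
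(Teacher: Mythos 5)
Your argument is correct and is essentially the same as the paper's: the paper likewise uses that conjugation preserves dimension and commutes with intersection, just packaged as a single chain of equivalences $\overline{H}\in\overline{X_\alpha\Fdot} \iff \dim(\overline{H}\cap\overline{F_{\alpha_i}})\geq i$ rather than as two separate inclusions.
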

\begin{proof}
 For $H\in\Gr(k,V)$, we have the chain of equivalences
 \[
  \begin{array}{rcl}
   \overline{H}\in \overline{X_\alpha \Fdot}
    & \iff &  \dim(H\cap F_{\alpha_i}) \geq i\mbox{ for }i\in [k] \\
    & \iff & \dim\left(\overline{H\cap F_{\alpha_i}}\right) \geq i\mbox{ for }i\in [k] \\
    & \iff & \dim\left(\overline{H}\cap \overline{F_{\alpha_i}}\right) \geq i\mbox{ for }i\in [k] \\
    & \iff & \overline{H}\in X_\alpha \barF\,.
  \end{array}
 \]
\end{proof}
We note that $\overline{at^b} = a\overline{t}^b$ for any real number $a$ and any positive integer $b$.
\begin{corollary}\label{cor:SVbar}
 Let $\alpha\in\tbinom{[n]}{k}$ be a Schubert condition.
 We have $\overline{X_{\alpha}(t)} = X_{\alpha}(\overline{t})$.
\end{corollary}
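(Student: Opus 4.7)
The plan is to combine the preceding proposition (which tells us $\overline{X_\alpha \Fdot} = X_\alpha \barF$ for any flag $\Fdot$) with the reality of the parametrization $\gamma$. Specializing the proposition to $\Fdot = \Fdot(t)$ reduces the corollary to showing the flag-level identity $\overline{\Fdot(t)} = \Fdot(\overline{t})$.

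First I would unpack the definition of the osculating flag. Since $F_i(t)$ is the row space of the matrix whose rows are $\gamma(t), \gamma'(t), \ldots, \gamma^{(i-1)}(t)$, we have $\overline{F_i(t)}$ equal to the row space of the matrix with rows $\overline{\gamma(t)}, \overline{\gamma'(t)}, \ldots, \overline{\gamma^{(i-1)}(t)}$. The key input is that $\gamma$ was declared a \emph{real} parametrized rational normal curve, so each component $\gamma_j$ is a polynomial with real coefficients. The remark in the proof above ($\overline{a t^b} = a\overline{t}^b$ when $a\in\R$) then applies componentwise to each derivative, giving $\overline{\gamma^{(i)}(t)} = \gamma^{(i)}(\overline{t})$ for every $i\geq 0$.

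Consequently $\overline{F_i(t)}$ is the row space of the matrix whose rows are $\gamma(\overline{t}), \gamma'(\overline{t}), \ldots, \gamma^{(i-1)}(\overline{t})$, which is exactly $F_i(\overline{t})$. Thus $\overline{\Fdot(t)} = \Fdot(\overline{t})$, and applying the proposition yields
\[\overline{X_\alpha(t)} \;=\; \overline{X_\alpha \Fdot(t)} \;=\; X_\alpha \overline{\Fdot(t)} \;=\; X_\alpha \Fdot(\overline{t}) \;=\; X_\alpha(\overline{t}).\]

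There is no real obstacle here; the whole statement is essentially a bookkeeping exercise that reality of $\gamma$ commutes with differentiation and with passage to row span. The only point worth flagging is to handle $t = \infty\in\p^1$ uniformly with the finite case: one should either work in the homogeneous parametrization $\gamma(s,t)$ where derivatives are taken in a real local chart, or observe that the $\GL(n,\R)$-action used to normalize $\gamma$ carries the statement at $\infty$ to a statement at a finite real point. Either way the argument is the same.
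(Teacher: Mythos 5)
Your proof is correct and matches the paper's intended (unwritten) argument: the paper states the conjugate-flag proposition, then inserts the remark ``$\overline{at^b}=a\overline{t}^b$'' precisely so that the reader can deduce $\overline{\Fdot(t)}=\Fdot(\overline{t})$ from the reality of $\gamma$ and then apply the proposition, exactly as you do. Your extra observation about handling $t=\infty$ uniformly via the homogeneous parametrization or the $\GL(n,\R)$-action is a reasonable added bit of care, but the substance of the argument is the same.
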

An instance of the Schubert problem $\balpha=(\It\,,\dotsc,\It)$ is a real variety if the corresponding osculation points $t_1,\dotsc,t_m$ are the roots of a real polynomial of degree $m$.
We refine this condition to give criteria for an osculating instance of a general Schubert problem to be a real variety.
\begin{corollary}\label{cor:SVarsComeInPairs}
 Suppose $\balpha=(\alpha^1,\dotsc,\alpha^m)$ is a Schubert problem on $\Gr(k,V)$, and $|\alpha^i|>0$ for $i\in[m]$.
 Furthermore, suppose $t_1,\dotsc,t_m\in\p^1$ are distinct, and the instance
 \begin{equation}\label{eqn:svcip}
   X := X_{\alpha^1}(t_1) \cap \cdots \cap X_{\alpha^m}(t_m)
 \end{equation}
 of $\balpha$ is nonempty.
 Then $X$ is a real variety if and only if for $i\in[m]$ there exists $j\in[m]$ such that $X_{\alpha^i}(\overline{t_i}) = X_{\alpha^j}(t_j)$.
\end{corollary}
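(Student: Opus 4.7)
The plan is to apply Corollary \ref{cor:SVbar} termwise and then invoke the uniqueness statement of Proposition \ref{prop:determinedSCandOP}. Taking complex conjugates in the defining intersection (\ref{eqn:svcip}) and using $\overline{X_{\alpha^i}(t_i)} = X_{\alpha^i}(\overline{t_i})$ gives
\[\overline{X} \;=\; X_{\alpha^1}(\overline{t_1}) \cap \cdots \cap X_{\alpha^m}(\overline{t_m})\,,\]
so the question reduces to comparing the two expressions
\[X_{\alpha^1}(t_1) \cap \cdots \cap X_{\alpha^m}(t_m) \qquad\text{and}\qquad X_{\alpha^1}(\overline{t_1}) \cap \cdots \cap X_{\alpha^m}(\overline{t_m})\]
as subvarieties of $\Gr(k,V)$.

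For the reverse implication, suppose for each $i$ there is a $j$ with $X_{\alpha^i}(\overline{t_i}) = X_{\alpha^j}(t_j)$. Since the $t_i$ are distinct and the $|\alpha^i|$ are positive, this pairing $i\mapsto j$ is an involution on $[m]$, and conjugating $X$ just reindexes the factors. Hence $\overline{X}=X$ and $X$ is a real variety.

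For the forward implication, assume $\overline{X}=X$ and pick any $H \in X$; this is possible because $X$ is nonempty. Then $H$ lies in both osculating instances displayed above, so $H$ belongs to two presentations of a point as a member of an osculating instance of a Schubert problem. Proposition \ref{prop:determinedSCandOP} asserts that such a presentation is unique (up to reordering of the factors), and its proof identifies the osculation points together with their associated codimensions as the roots of $\Wr(H^\perp)$ with their multiplicities. The hypothesis $|\alpha^i|>0$ is exactly what ensures that every osculation point contributes a genuine root to the Wronskian and hence cannot be lost in the identification. Therefore there must exist a permutation $\tau$ of $[m]$ with $X_{\alpha^i}(\overline{t_i}) = X_{\alpha^{\tau(i)}}(t_{\tau(i)})$ for all $i$, which is the claimed pairing.

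The only subtle point is making sure Proposition \ref{prop:determinedSCandOP} applies to both sides of the comparison, since we need the Schubert conditions and their osculation points to match on the nose. This is why the hypothesis $|\alpha^i|>0$ is stated: without it, an $\alpha^i$ with $|\alpha^i|=0$ would give $X_{\alpha^i}(t_i)=\Gr(k,V)$, the osculation point $t_i$ would be invisible to the Wronskian, and the pairing assertion could fail for trivial reasons even though $X$ is real.
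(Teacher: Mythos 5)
Your proof is correct and follows the same route as the paper: both directions rest on applying Corollary \ref{cor:SVbar} to rewrite $\overline{X}$ and then invoking Proposition \ref{prop:determinedSCandOP} to conclude that the resulting presentation of $H$ as a point of an osculating instance must agree with the original one up to a permutation of the factors. You are somewhat more explicit than the paper on two points it leaves to the reader (that the pairing yields $\overline{X}=X$ in the reverse direction, and that $|\alpha^i|>0$ is what keeps each osculation point visible to the Wronskian), but the underlying argument is the same.
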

If $X$ from Equation (\ref{eqn:svcip}) is real, then we call it a \emph{real osculating instance of} $\balpha$.
\begin{proof}
 Suppose $X$ is a real variety and $H\in X$.
 Since $X = \overline{X}$, we have
 \[H\in \overline{X_{\alpha^1}(t_1)} \cap \cdots \cap \overline{X_{\alpha^m}(t_m)}\,.\]
 By Proposition \ref{prop:determinedSCandOP}, one may recover $\balpha$ and $t$ from $H$, so
 there is an involution
 \[(X_{\alpha^1}(t_1) ,\dotsc, X_{\alpha^m}(t_m))\longmapsto (\overline{X_{\alpha^1}(t_1)} ,\dotsc, \overline{X_{\alpha^m}(t_m)})\,.\]
 Applying Corollary \ref{cor:SVbar}, we have the forward implication.
 The reverse implication is elementary, since $\overline{X} = X$ implies $X$ is real.
\end{proof}
Let $\Re(a)$ and $\Im(a)$ denote the real and imaginary parts of a complex number.
Then the real part $\Re(f)$ or imaginary part $\Im(f)$ of a complex polynomial may be defined by taking the real or imaginary part respectively of the coefficients defining $f$.
\begin{proposition}\label{prop:realGenSet}
 The intersection $X_\alpha(t)\cap X_\alpha(\overline{t})$ of complex conjugate Schubert varieties is defined by the vanishing of the real and imaginary parts of the minors which define $X_\alpha(t)$.
\end{proposition}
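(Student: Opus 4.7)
The plan is to combine the determinantal description of $X_\alpha(t)$ from Proposition~\ref{prop:detConds} with Corollary~\ref{cor:SVbar}, which identifies $X_\alpha(\bar t)$ as the complex conjugate of $X_\alpha(t)$, and then decompose each defining minor into real and imaginary parts.

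First, I would fix a chart $U_\beta$ from the cover $\mathcal{U}$ of Pl\"ucker space with Stiefel coordinates $\s(\beta)$ parametrized by a matrix $M$ in variables $x_1,\dotsc,x_q$. By Proposition~\ref{prop:detConds}, the dense subset $X_\alpha(t)\cap U_\beta$ is the zero locus of a finite list of minors $f_1,\dotsc,f_p\in\C[x_1,\dotsc,x_q]$, whose coefficients are built from the rows of the osculating matrices $F_{\alpha_i}(t)$ defined in~(\ref{def:osculatingMatrix}). Because the parametrized rational normal curve $\gamma(t)$ is real, these matrices have entries in $\R[t]$, so coefficient-wise complex conjugation of $f_i$ is the same as substituting $\bar t$ for $t$. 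Combined with Corollary~\ref{cor:SVbar}, this shows that $X_\alpha(\bar t)\cap U_\beta$ is cut out by the coefficient-wise conjugates $\bar f_1,\dotsc,\bar f_p$.

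Next, I would write each minor as $f_i = g_i + \sqrt{-1}\, h_i$, where $g_i := \Re(f_i)$ and $h_i := \Im(f_i)$ are polynomials in $x_1,\dotsc,x_q$ with real coefficients, so that $\bar f_i = g_i - \sqrt{-1}\, h_i$. For any complex point $H\in U_\beta$, the simultaneous vanishing of $f_i(H)$ and $\bar f_i(H)$ is equivalent, via the invertible linear combinations $\tfrac{1}{2}(f_i+\bar f_i)$ and $\tfrac{1}{2\sqrt{-1}}(f_i-\bar f_i)$, to the simultaneous vanishing of $g_i(H)$ and $h_i(H)$. Hence on $U_\beta$ the intersection $X_\alpha(t)\cap X_\alpha(\bar t)$ coincides with the common zero locus of $\{g_i, h_i\}_{i=1}^p$. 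Since the chart was arbitrary and $\mathcal{U}$ covers Pl\"ucker space, the proposition follows.

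I expect no serious obstacle: the whole argument is a local coordinate computation, and the only substantive input is the reality of $\gamma$, which lets us identify coefficient-wise conjugation of a defining minor with the substitution $t\mapsto\bar t$ and thereby apply Corollary~\ref{cor:SVbar} in coordinates.
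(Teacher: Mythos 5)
Your proposal is correct and follows essentially the same route as the paper: the key observation in both is that $\Re(f)$ and $\Im(f)$ are invertible complex linear combinations of $f$ and $\overline{f}$, so the two systems cut out the same locus. The paper phrases this as an ideal-theoretic identity $J = I + \overline{I}$ rather than a pointwise vanishing argument, but the underlying algebra is identical.
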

When we have a real osculating instance of a Schubert problem, Proposition \ref{prop:realGenSet} gives us a real generating set for its ideal.
\begin{proof}
 Let $I$ be the ideal generated by the minors from Proposition \ref{prop:detConds} whose vanishing defines $X_\alpha(t)$, let $\overline{I}$ be the ideal with generators conjugate to those of $I$, and let $J$ be the ideal generated by the real and imaginary parts of the generators of $I$.
 Note that $\mathcal{V}(\overline{I})$ is $X_\alpha(\overline{t})$, and for a complex polynomial $f$, we have $\Re(f)=(f+\overline{f})/2$ and $\Im(f)=(f-\overline{f})/(2i)$.
 Since the generators of $J$ are complex linear combinations of the generators of $I$ and $\overline{I}$, we have $J \subset I+\overline{I}$.
 Similarly, the generators of $I$ are complex linear combinations of the generators of $J$, so $I\subset J$.
 By symmetry, $\overline{I}\subset J$, so $I+\overline{I} \subset J$.
 Therefore, $J=I+\overline{I}$, which implies that $J$ is the ideal of $X_\alpha(t)\cap X_\alpha(\overline{t})$.
\end{proof}

\section{Computations in Real Schubert Calculus}

We use modern software tools to study the real inverse Wronski problem as a problem in Schubert calculus.
Computation has been used to symbolically solve real osculating instances of Schubert problems with the more restrictive hypothesis that all osculation points are real \cite{secant,monotone,RSSS2006,Sottile2000}.
The framework used to interface with networks of computers for \cite{secant} (later adapted for \cite{monotone}) is described in \cite{framework}.
We adapt this framework to study Eremenko-Gabrielov type lower bounds for general Schubert problems (those involving more than two non-hypersurface Schubert varieties).

The data collected in \cite{secant,monotone} were gathered using the computer algebra system \url{Singular} with custom libraries.
We use the determinantal equations of Proposition \ref{prop:detConds} to formulate a real instance $X$ of a Schubert problem $(\alpha^1,\dotsc,\alpha^m)$ involving Schubert varieties which osculate the Veronese curve $\gamma(t)$ in local coordinates $\s(1,\dotsc,k)$, $\s_{\alpha^1}$, or $\s_{\alpha^1}^{\alpha^2}$ on $\Gr(k,V)$, $X_{\alpha^1}(0)$, or $X_{\alpha^1}(0) \cap X_{\alpha^2}(\infty)$ respectively.

We use the real generators from Proposition \ref{prop:realGenSet} to model the ideal of an instance $X$ of a Schubert problem, and we apply the tools of Section \ref{sec:prelim} to count the real solutions.
That is, we calculate an eliminant and then determine the number of real roots using a Sturm-Habicht sequence.
Our custom library calculates the real generators, and \url{schubert.lib} and the standard \url{Singular} libraries perform the remaining tasks.
This is structured with efficiency and repeatability in mind.
We use the software architecture of \cite{framework}, adapting the code of Hillar, et al.

We use a database hosted by the mathematics department at Texas A\&M University to keep track of instances of Schubert problems we wish to compute.
It also records results of computations, data needed to repeat the (pseudo)random computations, and enough information to recover from most errors.
The database is automatically backed up at regular intervals using \url{mysqldump}, and when otherwise unrecoverable errors occur, we use a \url{perl} script designed to repair the damaged part of the database using a recent backup.

Using algorithms based on the Littlewood-Richardson rule, we generated Schubert problems and determined the numbers of complex solutions to corresponding instances.
For each Schubert problem $\balpha$ studied, we run timing tests to compare computational efficiency subject to a choice of local coordinates, $\s(1,\dotsc,k),\s_{\alpha^1}$ or $\s_{\alpha^1}^{\alpha^2}$.
After making practical decisions, we assign a corresponding \emph{computation type} to $\balpha$, which denotes whether instances of $\balpha$ are to be solved using the coordinates $\s(1,\dotsc,k)$ for all osculation types or using $\s(1,\dotsc,k)$ for some types and $\s_{\alpha^1}$ or $\s_{\alpha^1}^{\alpha^2}$ for others.
We then load the Schubert problem $\balpha$ into the database.
This is automated by a script which generates an entry in a table used to keep track of pending requests to solve a reasonable number of random instances of $\balpha$.

This experiment is automated.
The scheduling program \url{crontab} periodically invokes scripts which check how many computations are running and submits job requests to a supercomputer.
Each job runs a \url{perl} wrapper which communicates with the database using standard \url{DBI::mysql} and \url{DBD::mysql} modules.
The main procedure queries the database for a computation request and then generates a \url{Singular} input file which models instances of the requested Schubert problem.
It then invokes \url{Singular} to run the input file.
The \url{Singular} process performs all tasks needed to count the number of real points in a randomly generated instance, and the \url{perl} wrapper records the results in the database.

This project continues to run on the brazos cluster at Texas A\&M University, a high-performance computing cluster.
We also benefited from the night-time use of the calclab, a Beowulf cluster of computers used by day for calculus instruction.

\section{Topological Lower Bounds and Congruences}

We denote the Schubert condition $\alpha$ in a visually appealing way by its Young diagram $d(\alpha)$, which is a northwest justified collection of boxes with $n-k+i-\alpha_i$ boxes in the $i$th row for $i=1,\dotsc,k$.
Immediately, one verifies that the number of boxes in $d(\alpha)$ is equal to $|\alpha|$, giving us immediate access the codimension of $X_\alpha\Fdot$.
\begin{example}
 The Schubert condition $\alpha = (3,6,8)$ on $\Gr(3,\C^8)$ has Young diagram
 \[d(\alpha) = {\IIIi}\,.\]
\end{example}
The shape above and to the left of the path $p(\alpha)$ from Definition \ref{def:path} is the same as the shape of $d(\alpha)$.
To express $\balpha$ in a compact way, we introduce \emph{exponential notation}.
Let $\widehat{\balpha} = (\widehat{\alpha}^1,\dotsc,\widehat{\alpha}^p)$ denote the distinct Schubert conditions comprising the Schubert problem $\balpha$, and let $\ba = (a_1,\dotsc,a_p)$ be an exponent vector.
Then $\widehat{\balpha}^\ba := ((\widehat{\alpha}^1)^{a_1},\dotsc,(\widehat{\alpha}^p)^{a_p})$ and $\balpha$ represent the same Schubert problem problem if $\balpha$ consists of exactly $a_i$ copies of $\widehat{\alpha}^i$ for $i\in[p]$.
We will often use $d(\alpha)$ in lieu of $\alpha$ when using exponential notation.
For an example, let $\widehat{\alpha}^1 = (5,6,9),\widehat{\alpha}^2 = (5,7,9),\widehat{\alpha}^3 = (6,8,9)\in\tbinom{[3]}{9}$.
The following represent the same Schubert problem,
\[\balpha := (\widehat{\alpha}^1,\widehat{\alpha}^1,\widehat{\alpha}^2,\widehat{\alpha}^2,\widehat{\alpha}^2,\widehat{\alpha}^3) \longleftrightarrow ({\IIiit}^2,{\IIit}^3,{\It}) =: \widehat{\balpha}^\ba\,.\]

Recall in a real osculating instance of a Schubert problem $\balpha$, some osculation points are real, while the rest come in complex conjugate pairs.
Given such an instance, we write $r_{\alpha}$ to denote the number of Schubert varieties involved with Schubert condition $\alpha$ osculating at a real point.

Suppose $\widehat{\balpha}$ and $\balpha$ represent the same Schubert problem.
If $\widehat{\alpha^j} = \alpha$, then $r_{\alpha} \equiv a_j \mod 2$.
We call $(r_\alpha\ |\ \alpha\in \widehat{\balpha})$ the \emph{osculation type} of the corresponding instance of $\balpha$.
\begin{example}
 The instance
 \[X_{\Is}(0) \cap X_{\Is}(\infty) \cap X_{\Is}(1) \,\cap\, X_{\IIs}(2) \cap X_{\IIs}(i) \cap X_{\IIs}(-i)\]
 in $\Gr(3,\C^6)$ has osculation type $(r_{\Is},r_{\IIs}) = (3,1)$.
\end{example}

The Mukhin-Tarasov-Varchenko Theorem \ref{Th:MTV}, asserts that a real instance of a Schubert problems with all osculation points real has all solutions real.
Eremenko and Gabrielov gave examples with other osculation types in which no solutions are real.
Thus the number of real solutions to a real osculating instance of a Schubert problem is sensitive to the osculation type, and we track this in our data.

Table \ref{tab:staircase} shows the observed frequency of real solutions after computing 400,000 random instances of $({\IIIIIt}\,,{\It}^7)$ in $\Gr(2,\C^8)$.
We leave a cell blank if there are no observed instances of the given type with the given number of real solutions.
Having tested 100,000 instances with exactly one pair of complex conjugate osculation points ($r_{\Is}=5$), none had only two real solutions, but 77,134 had exactly four real solutions.
We note that there are always six complex solutions to the Schubert problem, and the observed distribution in the $r_{\Is}=7$ column is forced by the Mukhin-Tarasov-Varchenko Theorem \ref{Th:MTV}, since all osculation points are real.
Collecting the data in Table \ref{tab:staircase} consumed 1.814 GHz-days of processing power.

\begin{table}[tp]
 \caption{Frequency table with inner border.}
  \begin{center}
   \begin{tabular}{|c||c|c|c|c||c|}\hline											
    {\bf $\#$ Real} & \multicolumn{4}{c||}{\rule{0pt}{12pt}${\IIIIIt}\quad{\It}^7$}               & \multirow{2}{*}{\bf Total} \\ \cline{2-5}
    {\bf Solutions} & $r_{\Is} = 7$ & $r_{\Is} = 5$ & $r_{\Is} = 3$ & $r_{\Is} = 1$ &                             \\ \hline \hline
    {\bf 0}         &              &              &              & 8964         & {\bf 8964}                        \\ \hline
    {\bf 2}         &              &              & 47138        & 67581        & {\bf 114719}                      \\ \hline
    {\bf 4}         &              & 77134        & 47044        & 22105        & {\bf 146283}                      \\ \hline
    {\bf 6}         & 100000       & 22866        & 5818         & 1350         & {\bf 130034}                      \\ \hline \hline
    {\bf Total}     & {\bf 100000 }& {\bf 100000 }& {\bf 100000 }& {\bf 100000 }& {\bf 400000}                      \\ \hline
   \end{tabular}											
  \end{center}
 \label{tab:staircase}											
\end{table}

In \cite{EG2002a}, Eremenko and Gabrielov gave lower bounds on the number of real solutions to a real osculating instance of a Schubert problem involving at most one Schubert variety not given by $\It$.
In \cite{SS2006}, Soprunova and Sottile extended these lower bounds to Schubert problem involving two nonhypersurface Schubert varieties.
We refer to these as topological lower bounds.
The following definitions allow us to calculate topological lower bounds.
\begin{definition}
 Let $\alpha\in\tbinom{[n]}{k}$ be a Schubert condition.
 The \emph{complementary Schubert condition $\alpha' \in\tbinom{[n]}{k}$} is
 \[\alpha'_i := n+1-\alpha_{k+1-i}\,,\quad\mbox{for}\quad i=1,\dotsc,k\,.\]
\end{definition}
It is illustrative to draw the Young diagrams of $\alpha$ and $\alpha'$ inside the diagram $d(1,\dotsc,k)$.
For example, if $k=3,n=4,\alpha=(2,5,7)$, then
\[d(\alpha) = {\IIIiGrIIIvii}\qquad\mbox{and}\qquad d(\alpha') = {\IIIIiiiIGrIIIvii}\,.\]

Recall that the Bruhat order on Schubert conditions $\alpha\leq \beta$ is given by $\alpha\leq \beta$ for $i\in[k]$.
This induces an order on diagrams so that $d(\alpha)\leq d(\beta)$ if $d(\alpha)$ fits inside $\d(\beta)$.
For example, $(1,3,6)\leq (3,5,7)$ in $\tbinom{[7]}{3}$, so
\[\IIIIiiiI \leq\, \IIi\,.\]
\begin{definition}
 Given $\alpha,\beta\in\tbinom{[n]}{k}$ with $\alpha \leq \beta$ in the Bruhat order, the \emph{skew Young diagram} $d(\alpha/\beta):=d(\alpha)/d(\beta)$ is the diagram $d(\alpha)$ with the boxes of $d(\beta)$ removed.
\end{definition}
For $\Gr(3,\C^7)$,
\begin{equation}\label{def:skewShape}
  \lambda := d((1,3,6)/(3,5,7)) = \IIIIiiiI\bigg/\IIi = {\IIIIiiiIminusIIi}\,.
\end{equation}
A \emph{standard Young tableau of shape $d(\alpha/\beta)$} is an association between the boxes of a skew Young diagram $d(\alpha/\beta)$ with $N$ boxes and the set $[N]$ which is increasing in each row from left to right and increasing in each column from top to bottom.
We give examples of standard Young tableaux of shape $\lambda$ defined in Equation (\ref{def:skewShape}) ,
\[
  \begin{picture}(127.5,36)
   \put (0,0){\IIIIiiiIminusIIib}
   \put (80,0){\IIIIiiiIminusIIib}

   \put (26.5,26){1}
   \put (38,26){2}
   \put (15,14.5){3}
   \put (26.5,14.5){4}
   \put (3.5,3){5}

   \put (106.5,26){1}
   \put (118,26){3}
   \put (95,14.5){2}
   \put (106.5,14.5){5}
   \put (83.5,3){4}

  \end{picture}
\]
The standard Young tableau of shape $d(\alpha/\beta)$ which associates the boxes of $d(\alpha/\beta)$ to the set $[N]$ in order from left to right starting with the top and working down is called the \emph{standard filling of $d(\alpha/\beta)$}.
The tableau on the left pictured above is the standard filling of $\lambda$.
The set of standard Young tableaux of shape $d(\alpha/\beta)$ is denoted $\SYT(d(\alpha/\beta))$.

The diagram $\IIiit$ has two standard fillings,
\[
 \begin{picture}(84.5,24.5)
   \put (0,0){\IIiib}
   \put (60,0){\IIiib}

   \put (3.5,14){1}
   \put (15,14){2}
   \put (3.5,2.5){3}
   \put (15,2.5){4}

   \put (63.5,14){1}
   \put (75,14){3}
   \put (63.5,2.5){2}
   \put (75,2.5){4}

  \end{picture}\,,
\]
so $\#\left(\SYT\left(\IIiit\right)\right)=2$.

A tableau of shape $\IIIiiiIminusIit$ may have $i$ in the southwest box for $i\in [5]$.
The order of the other boxes given by their entries $(1,\dotsc,\widehat{i},\dotsc,5)$ is the same as the order in one of the standard tableaux of shape $\IIiit$, so
\[\#\left(\SYT\left(\IIIiiiIminusIi\right)\right) = 5\cdot \#\left(\SYT\left(\IIii\right)\right)=10\,.\]

Every standard Young tableau $T$ of shape $d(\alpha/\beta)$ has a parity, $\sign(T)=\pm 1$, which is the parity of the permutation mapping the standard filling to $T$.
\begin{definition}
 Suppose $\alpha,\beta\in \tbinom{[n]}{k}$, and $\alpha'\leq \beta$.
 The \emph{sign imbalance of $\alpha'/\beta$} is
 \[\Sigma(\alpha,\beta):=\left| \sum_{T\in \SYT(\alpha'/\beta)}\sign(T) \right|\,.\]
\end{definition}
\begin{proposition}[Soprunova-Sottile]\label{prop:EGSS}
 Suppose $\alpha,\beta\in \tbinom{[n]}{k}$, $\alpha'\leq \beta$, and
 \[X:=X_\alpha(t_1) \cap X_\beta(t_2) \cap X_{\Is}(t_3) \cap \cdots \cap X_{\Is}(t_m)\]
 is a real osculating instance of a Schubert problem with $t_1,t_2\in\R\p^1$.
 Then $X$ contains at least $\Sigma(\alpha,\beta)$ real points.
\end{proposition}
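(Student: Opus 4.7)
The plan is to emulate the topological degree strategy of Eremenko and Gabrielov \cite{EG2002a} used for Theorem \ref{Th:EG}, but with the Grassmannian replaced by the real subvariety $Y := X_\alpha(t_1)\cap X_\beta(t_2)$. Since $t_1,t_2\in\R\p^1$, Corollary \ref{cor:SVbar} ensures that $Y$ is a real subvariety of $\Gr(k,V)$, and Theorem \ref{thm:dimTrans} gives its complex dimension as $N := k(n-k) - |\alpha| - |\beta|$, which is nonnegative since $\alpha'\leq\beta$ forces $|\alpha|+|\beta|\leq k(n-k)$.

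First, I would restrict the Wronski map to $Y$. By Corollary \ref{cor:WrHigherOrder} applied at both $t_1$ and $t_2$, for $H$ in a dense open subset of $Y$ the Wronskian $\Wr(H^\perp)$ has a root of order exactly $|\alpha|$ at $t_1$ and exactly $|\beta|$ at $t_2$. Dividing by $(x-t_1)^{|\alpha|}(x-t_2)^{|\beta|}$ (projectivized when an osculation point is $\infty$) produces a regular morphism $\widetilde{\Wr}:Y\to \p\C_{N+1}[x]$ whose fiber over $f(x):=\prod_{i=3}^m(x-t_i)$ is exactly the instance $X$ in the statement. Passing to real points yields a map $\widetilde{\Wr}_\R : Y(\R)\to\p\R_{N+1}[x]$ between real manifolds of the same dimension $N$, so the number of real points in $X$ equals $\#\widetilde{\Wr}_\R^{-1}(f)$.

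Second, I would define a signed topological degree of $\widetilde{\Wr}_\R$ and evaluate it at a convenient regular value. When $Y(\R)$ and $\p\R_{N+1}[x]$ admit compatible orientations the standard $\Z$-valued degree works; otherwise one passes to a relative or twisted degree, as in \cite{EG2002a}. To compute it, pick distinct real $s_3,\dotsc,s_m$ and set $f_0 := \prod_{i=3}^m(x-s_i)$. By Theorem \ref{Th:MTV} the fiber
\[\widetilde{\Wr}_\R^{-1}(f_0) \;=\; X_\alpha(t_1)\cap X_\beta(t_2)\cap X_{\Is}(s_3)\cap\cdots\cap X_{\Is}(s_m)\]
is transverse and totally real, and the Littlewood-Richardson rule (via Aitken's skew formula $f^{\alpha'/\beta}=\sum_\lambda f^\lambda c^{\alpha'}_{\beta\lambda}$) puts its points in natural bijection with $\SYT(\alpha'/\beta)$. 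Granting that the local orientation sign of $\widetilde{\Wr}_\R$ at the solution indexed by $T$ equals $\sign(T)$, the degree equals $\sum_T\sign(T)$, whose absolute value $\Sigma(\alpha,\beta)$ then bounds $\#\widetilde{\Wr}_\R^{-1}(f)$ from below for every real regular value $f$, proving the proposition.

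The hardest step is the sign identification above. A workable route is a Schubert-style degeneration in which $s_3,\dotsc,s_m$ are coalesced into $t_1$ one by one, using Vakil's geometric Littlewood-Richardson rule to track how real solutions deform into standard Young tableau growths. At each collision, transverse real solutions persist while pairs of nonreal solutions are created or destroyed in orientation-canceling pairs, so the signed real count is invariant and is read off at the fully degenerate combinatorial limit. The same mechanism explains why orientability can fail for certain parities of $N$ and $k(n-k)$, which is precisely why the absolute value appears in the definition of $\Sigma(\alpha,\beta)$: only $|\sum_T\sign(T)|$ is guaranteed to be a lower bound.
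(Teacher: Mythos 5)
The paper does not prove Proposition~\ref{prop:EGSS}; it states it as a result of Soprunova and Sottile \cite{SS2006}, so there is no in-text proof to compare against. Your general strategy (a topological degree of the Wronski map restricted to $Y = X_\alpha(t_1)\cap X_\beta(t_2)$, evaluated at a totally-real regular value furnished by Theorem~\ref{Th:MTV}) is the right one and is consistent with how \cite{EG2002a} and \cite{SS2006} obtain such bounds.

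However, the proposal has a genuine gap at exactly the step you flag as the hardest. You ``grant'' that the local orientation sign of $\widetilde{\Wr}_\R$ at the solution indexed by a tableau $T$ equals $\sign(T)$, and you propose to deduce this by coalescing the $s_i$ into $t_1$ and invoking Vakil's geometric Littlewood--Richardson rule, asserting that during such a degeneration ``transverse real solutions persist while pairs of nonreal solutions are created or destroyed in orientation-canceling pairs.'' This is not a substitute for the sign computation. The invariance of the signed count along a path is a restatement of degree-invariance, not a mechanism for reading off the individual local signs, and real solutions certainly can collide and leave $\R$ under specializations. Identifying each local sign with $\sign(T)$ requires an explicit normal form for the differential of the (restricted) Wronski map at each solution, which is precisely what Soprunova and Sottile build through their poset/Wronski framework. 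Also left unaddressed: properness of $\widetilde{\Wr}_\R$ on $Y(\R)$ (needed for the degree to bound fiber sizes), the fact that $Y$ may be singular so $Y(\R)$ need not be a manifold, and the well-definedness of the twisted degree when $Y(\R)$ is non-orientable. These are not cosmetic; they are exactly the technical content that makes the proposition a theorem rather than a heuristic.
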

The lower bound $\Sigma(\alpha,\beta)$ is obtained by calculating the topological degree of a map, and we it a \emph{topological lower bound}.
 If $\alpha=\beta=\It$, then $\Sigma(\alpha,\beta)=\sigma(k,n)$ from definition (\ref{def:WronskiDegree}).

Of the 756 Schubert problems we have studied so far, 267 of them have associated topological lower bounds $\Sigma(\alpha,\beta)$ for the numbers of real solutions, and the other 489 involve intersections of more than two hypersurfaces.
We calculated sign imbalances and tested the sharpness of topological lower bounds $\Sigma(\alpha,\beta)$.
In cases where $k$ and $n$ are even, Eremenko and Gabrielov showed that their lower bound $\Sigma(\It\,,\It)=0$ is sharp.
This applies to three of the 267 Schubert problems we studied with $k=2$ and $n=4$, $6$, or $8$.
Our symbolic computations verified sharpness for 258 of the remaining 264 cases tested.
We do not give witnesses to these verifications here, but our stored data are sufficient for repeating these calculations.

Our data suggest that the other six lower bounds may be improved.
Table \ref{tab:sharpness1} gives frequency tables associated to two of these Schubert problems, $(\IIIiiit\,,\IiIt\,,\It^7)$ and $(\IIiiIIt\,,\IIIt\,,\It^7)$, each with 35 solutions in $\Gr(4,\C^8)$.
Theorem \ref{Th:MTV} asserts that if $r_{\Is} = 7$ then all 35 solutions are real.
We verified this fact 200,000 times for each problem, but we omit the data from the frequency table.
Since nonreal solutions come in pairs, we expect expect only odd numbers of real solutions, so we omit rows corresponding to even numbers of real solutions.

\begin{table}[tp]
 \caption{Topological lower bound 1, but observed lower bound 3.}
  \begin{center}
   \begin{tabular}{| c || c|c|c || c|c|c |}\hline
    {\bf $\#$ Real} &
    \multicolumn{3}{c||}{\rule{0pt}{12pt}$\IIIiiit \quad \IiIt \quad \It^7$} &
    \multicolumn{3}{c|}{$\IIiiIIt \quad \IIIt \quad \It^7$} \\ \cline{2-7}
    {\bf Solutions} & $r_{\Is}=5$ & $r_{\Is}=3$ & $r_{\Is}=1$ 
                  & $r_{\Is}=5$ & $r_{\Is}=3$ & $r_{\Is}=1$ \\ \hline \hline
    {\bf 1 } & {{	 	}} & {{     }} & {{  	}} & {{	 	}} & {{	 	}} & {{	 	}} \\ \hline	
    {\bf 3 } & {{	 	}} & {{16038}} & {{24070	}} & {{	 	}} & {{	16033	}} & {{	24184	}} \\ \hline	
    {\bf 5 } & {{5278	}} & {{34048}} & {{51572	}} & {{	5224	}} & {{	34096	}} & {{	51017	}} \\ \hline	
    {\bf 7 } & {{15817	}} & {{30992}} & {{28808	}} & {{	15769	}} & {{	30943	}} & {{	29449	}} \\ \hline	
    {\bf 9 } & {{41717	}} & {{34231}} & {{48405	}} & {{	41872	}} & {{	33992	}} & {{	48248	}} \\ \hline	
    {\bf 11} & {{17368	}} & {{24601}} & {{23458	}} & {{	17465	}} & {{	24839	}} & {{	23756	}} \\ \hline	
    {\bf 13} & {{15011	}} & {{14761}} & {{8559	}} & {{	14829	}} & {{	14805	}} & {{	8560	}} \\ \hline	
    {\bf 15} & {{13556	}} & {{10197}} & {{4686	}} & {{	13471	}} & {{	10478	}} & {{	4635	}} \\ \hline	
    {\bf 17} & {{7589	}} & {{6255}}  & {{2788	}} & {{	7650	}} & {{	6202	}} & {{	2816	}} \\ \hline	
    {\bf 19} & {{13462	}} & {{9744}}  & {{3329	}} & {{	13295	}} & {{	9670	}} & {{	3081	}} \\ \hline	
    {\bf 21} & {{5244	}} & {{3071}}  & {{1156	}} & {{	5337	}} & {{	3093	}} & {{	1060	}} \\ \hline	
    {\bf 23} & {{4785	}} & {{2256}}  & {{581	}} & {{	4816	}} & {{	2169	}} & {{	605	}} \\ \hline	
    {\bf 25} & {{17219	}} & {{5535}}  & {{1259	}} & {{	17335	}} & {{	5586	}} & {{	1262	}} \\ \hline	
    {\bf 27} & {{1587	}} & {{834}}   & {{176	}} & {{	1530	}} & {{	814	}} & {{	184	}} \\ \hline	
    {\bf 29} & {{3946	}} & {{1236}}  & {{235	}} & {{	4037	}} & {{	1242	}} & {{	289	}} \\ \hline	
    {\bf 31} & {{3558	}} & {{892}}   & {{159	}} & {{	3498	}} & {{	876	}} & {{	157	}} \\ \hline	
    {\bf 33} & {{711	}} & {{307}}   & {{73	}} & {{	631	}} & {{	262	}} & {{	75	}} \\ \hline	
    {\bf 35} & {{33152	}} & {{5002}}  & {{686	}} & {{	33241	}} & {{	4900	}} & {{	622	}} \\ \hline	\hline
    {\bf Total} & {{\bf 200000 }} & {{\bf 200000 }} & {{\bf 200000 }} & {{\bf 200000 }} & {{\bf 200000 }} & {{\bf 200000 }}  \\ \hline	
   \end{tabular}
  \end{center}
 \label{tab:sharpness1}
\end{table}

The problems given in Table \ref{tab:sharpness1} are dual to each other.
It is a consequence of the duality studied in Chapter \ref{chapMod4} that for a fixed set of osculation points these problems have the same number of real solutions.
This explains the remarkable similarity between the two distributions in Table \ref{tab:sharpness1}, and it implies that they have the same lower bounds.

The Schubert problems $(\IIIiiit\,,\IIiit\,,\It^6)$ and $(\IIiiIIt\,,\IIiit\,,\It^6)$ with 30 solutions in $\Gr(4,\C^8)$ are also dual to each other, and their frequency tables bear remarkable similarity.
They have topological lower bound $\Sigma = 0$, but after calculating 1.6 million instances of each we never observed less than $2$ real solutions.

\begin{table}[tp]
 \caption{Congruence modulo four.}
  \begin{center}
 \begin{tabular}{|c||c|c|c|c||c|}
\hline	{\bf $\#$ Real}	&	\multicolumn{4}{c||}{\rule{0pt}{12pt}$\It^9$} & \multirow{2}{*}{\bf Total} \\ \cline{2-5}	
{\bf Solutions}	& {\bf 	$r_{\Is}=7$	} & {\bf 	$r_{\Is}=5$	} & {\bf 	$r_{\Is}=3$	} & {\bf 	$r_{\Is}=1$	} &	\\ \hline \hline
{\bf 2  }    	 	&	1843	&	30223	&	34314	&	 	&	{\bf 66380}	\\ \hline
{\bf 6  }    	 	&	13286	&	51802	&	93732	&	151847	&	{\bf 310667}	\\ \hline
{\bf 10 }    	 	&	69319	&	57040	&	47142	&	35220	&	{\bf 208721}	\\ \hline
{\bf 14 }    	 	&	18045	&	17100	&	10213	&	6416	&	{\bf 51774}	\\ \hline
{\bf 18 }    	 	&	13998	&	12063	&	5532	&	2931	&	{\bf 34524}	\\ \hline
{\bf 22 }    	 	&	22883	&	15220	&	5492	&	2345	&	{\bf 45940}	\\ \hline
{\bf 26 }    	 	&	4592	&	2767	&	839	&	362	&	{\bf 8560}	\\ \hline
{\bf 30 }    	 	&	11603	&	4634	&	1194	&	450	&	{\bf 17881}	\\ \hline
{\bf 34 }    	 	&	3891	&	2056	&	504	&	181	&	{\bf 6632}	\\ \hline
{\bf 38 }    	 	&	473	&	211	&	65	&	22	&	{\bf 771}	\\ \hline
{\bf 42 }        	&	40067	&	6884	&	973	&	226	&  {\bf 48150}	\\ \hline \hline
{\bf Total } & {\bf 200000 } & {\bf 200000 } & {\bf 200000 } & {\bf 200000 } & {\bf 800000 } \\ \hline
 \end{tabular}
  \end{center}
 \label{tab:sharpness2}
\end{table}

Table \ref{tab:sharpness2} shows that we always observed at least two real solutions to $(\It^9)$, but $\Sigma(\It,\It)=0$, so its topological bound is apparently not sharp.

More strikingly, while the number of real solutions to a real instance of this problem must be congruent to 42 mod 2, we only observed instances with 42 mod 4 real solutions.
The stronger congruence modulo four in the number of real solutions is due to a geometric involution which we explain in Chapter \ref{chapMod4}.
Thus we will prove that $\Sigma(\It\,,\It)$ is not a sharp lower bound for the number of real solutions to a real osculating instance of $(\It^9)$.

The sixth and final topological lower bound which we did not find to be sharp is $\Sigma(\IIIiiiIIt,\It) = 0$ for $(\IIIiiiIIt\,,\It^8)$ having $90$ complex solutions in $\Gr(4,\C^8)$.
We omit the rather large frequency table but note that we observed the number of real solutions to be congruent to $90$ modulo four.
This congruence is related to that in Table \ref{tab:sharpness2}.
In \ref{chapMod4}, we see that two is the sharp lower bound for real osculating instances of this Schubert problem.

\section{Lower Bounds via Factorization}

For each Grassmannian, we describe a special Schubert problem $\balpha$, and following joint work with Hauenstein and Sottile \cite{lower}, we show that the number of real solutions to an instance of $\balpha$ has a lower bound depending only on osculation type.
In particular, we explain the inner border in Table \ref{tab:staircase} related to the Schubert problem $({\IIIIIt}\,,{\It}^7)$ in $\Gr(2,\C^8)$.

\begin{definition}\label{def:omega}
 Let $\omat:=(2,\dotsc,k,n)\in\tbinom{[n]}{k}$.
 Note that the diagram $d(\omat)$ has $n-k-1$ boxes in the $i$th row for $i<k$ and no boxes in the $k$th row.
 Equivalently, $d(\omat)$ has $k-1$ boxes in the $j$th column for $j<n-k$ and no boxes in the $(n-k)$th column.
\end{definition}
For $\Gr(2,8)$, $d(\omat) = {\IIIIIt}$, and for $\Gr(4,8)$, $d(\omat) = \IIIiiiIIIt$.
There are local coordinates similar to $\s_{\omas}$ for $X_{\omas}(\infty)$, given by matrices $M$ of the form
\[M:=
  \left(
    \begin{array}{ccccccccccc}
      c_1 & c_2a_2 & \cdots & c_{n-k}a_{n-k} & \frac{b_k}{b_{k-1}} & 0                       & 0 & 0 & \cdots & 0  \\
      0 & 0   & \cdots & 0       & \frac{1}{n-k+2}                   & \frac{b_{k-1}}{b_{k-2}} & 0 & 0 & \cdots & 0  \\
      0 & 0   & \cdots & 0       & 0                   & \frac{2}{n-k+3}                       & \frac{b_{k-2}}{b_{k-3}} & 0 & \cdots & 0 \\
      \vdots & \vdots & & \vdots & \vdots & \ddots & \ddots & \ddots & \ddots & \vdots \\
      0 & 0 & \cdots & 0 & 0 & \cdots & 0 & \frac{k-2}{n+3} & \frac{b_{2}}{b_{1}} & 0 \\
      0 & 0 & \cdots & 0 & 0 & \cdots & 0 & 0 & \frac{k-1}{n+2} & b_1 \\
    \end{array}
  \right)\,,
\]
where $a_2,\dotsc,a_{n-k},b_1,\dotsc,b_k$ are coordinates, $b_1,\dotsc,b_k$ are nonzero, and $c_i$ are constants,
\[c_i:=(-1)^n \frac{(n-k-i)!(i-1)!}{1!2!\cdots (n-k-1)!(n-k+1)!}\,.\]
The constants $c_i$ are introduced to simplify further calculations.
Consider the Schubert problem $\balpha=(\omat,{\It}^{n-1})$ and distinct points $t_1,\dotsc,t_{n}\in\p^1$ with $t_1=\infty$.
The intersection
\begin{equation}\label{eqn:LBfamily}
 X:=X_{\omas}(\infty) \cap X_{\Is}(t_2) \cap \cdots \cap X_{\Is}(t_{n})
\end{equation}
is a real osculating instance of $\balpha$.
We examine the determinantal conditions defining $X_{\Is}(t)$ for $t=t_2,\dotsc,t_{n}$ in the local coordinates $M$.
We write $M_\beta$ to denote the maximal minor of $M$ involving columns $\beta$, and we write $(F_{n-k}(t))_{\beta^c}$ to denote the maximal minor of $F_{n-k}(t)$ involving columns $\beta^c$.
Expanding along the rows of $M$ gives
\begin{equation}\label{eqn:LaPlace}
\det
  \left(
    \begin{matrix}
      M \\
      F_{n-k}(t)
    \end{matrix}
  \right) = 
  (-1)^{k(n-k)} \sum_{\beta \in \tbinom{[n]}{k}} (-1)^{|\beta|} M_\beta (F_{n-k}(t))_{\beta^c}\,.
\end{equation}
The nonzero maximal minors of $M$ involve at most one of the first $n-k$ columns, so they are indexed by $k$-tuples of the form
\[[i,\widehat{j}]:=(i,n-k+1,\dotsc,\widehat{n{-}k{+}j},\dotsc,n)\,,\]
with $i\in [n-k]$ and $j\in[k]$, or of the form $[n-k]^c:=(n-k+1,\dotsc,n)$.
Defining $a_1:=1$ and $b_0:=1$, we have
\[M_{[i,\widehat{j}]} = \frac{1}{\tbinom{n-k+j}{j-1}} c_ia_ib_{k-j}\qquad\mbox{and}\qquad M_{[n-k]^c} = b_k\,.\]
For any $\beta\in\tbinom{[n]}{n-k}$ we have
\[
  {\displaystyle{ \begin{array}{rl}
  {\displaystyle{(F_{n-k}(t))_{\beta} }}& {\displaystyle{ = \det
  \left(
    \begin{matrix}
      t^{\beta_1-1} & \cdots & t^{\beta_{n-k}-1} \\
      (\beta_1-1) t^{\beta_1-2} & \cdots & (\beta_{n-k}-1) t^{\beta_{n-k}-2} \\
      \vdots & & \vdots \\
      \frac{(\beta_1-1 )!}{(\beta_{1}-n+k)!} t^{\beta_{1}-n+k} & \cdots & \frac{(\beta_{n-k}-1) !}{(\beta_{n-k}-n+k)!} t^{\beta_{n-k}-n+k} \\
    \end{matrix}
  \right) }}\\ \\
  & {\displaystyle{ = t^{||\beta||}\det
  \left(
    \begin{matrix}
      1 & \cdots & 1 \\
      \beta_1-1 & \cdots & \beta_{n-k}-1 \\
      \vdots & & \vdots \\
      \frac{(\beta_1-1) !}{(\beta_{1}-n+k)!} & \cdots & \frac{(\beta_{n-k}-1) !}{(\beta_{n-k}-n+k)!} \\
    \end{matrix}
  \right)\,, }}
  \end{array}}}
\]
where $||\beta||:=k(n-k)-|\beta|$ is the dimension of $X_\beta(t)\subset \Gr(n - k,\C^n)$.
So $(F_{n-k}(t))_{\beta}$ is $t^{||\beta||}$ times the Van der Monde determinant,
\[
  (F_{n-k}(t))_\beta = t^{||\beta||}\prod_{i<j}((\beta_j-1)-(\beta_i-1)) = t^{||\beta||}\prod_{i<j}(\beta_j-\beta_i)\,.
\]
Thus the determinant $(F_{n-k}(t))_{[i,\widehat{j}]^c}$ is
\[t^{n-k+j-i}1!2!\cdots (i-2)! \frac{i!}{1} \frac{(i+1)!}{2}\cdots \frac{(n-k-1)!}{n-k-i}\frac{(n-k+j)!}{(j-1)!}\frac{1}{n-k+j-i}\,,\]
which implies
\[
  M_{[i,\widehat{j}]} (F_{n-k}(t))_{[i,\widehat{j}]^c} = (-1)^n t^{n-k+j-i} \frac{1}{n-k+j-i} a_ib_{k-j}\,.
\]
Referring back to (\ref{eqn:LaPlace}), we see
\[{\displaystyle{
 \begin{array}{rl}
  \det
  \left(
    \begin{matrix}
      M \\
      F_{n-k}(t)
    \end{matrix}
  \right) = & {\displaystyle{
  (-1)^{k(n-k)} (-1)^{|[n-k]^c|} M_{[n-k]^c} (F_{n-k}(t))_{[n-k]} }} \\
  & {\displaystyle{ + (-1)^{k(n-k)}
  \sum_{i=1}^{n-k} \sum_{j=1}^{k} (-1)^{|[i,\widehat{j}]|} M_{[i,\widehat{j}]} (F_{n-k}(t))_{[i,\widehat{j}]^c}}} \\
  = & {\displaystyle{ b_k + (-1)^n \sum_{i=1}^{n-k} \sum_{j=1}^{k}}} (-t)^{n-k+j-i} \frac{1}{n-k+j-i} a_ib_{k-j} =: f(t)\,.
 \end{array}}}
\]
Taking the derivative of $f(t)$ yields
\[(-1)^n \sum_{i=1}^{n-k} \sum_{j=1}^{k} (-1)(-t)^{n-k+j-i-1} a_ib_{k-j}\,,
\]
which may be factored,
\begin{equation}\label{eqn:FeqAB}
 f'(t) = (-1)^n \left(\sum_{i=1}^{n-k} (-t)^{n-k-i} a_i\right)\left(\sum_{j=1}^{k} (-t)^{j-1}b_{k-j}\right) =: A(t)B(t)\,,
\end{equation}
so that $A(t)$ and $B(t)$ are uniquely defined monic polynomials with coefficients $(-1)^{i-1}a_i$ and $(-1)^ib_i$ respectively.
The coefficients $a_2,\dotsc,a_{n-k},b_1,\dotsc,b_k$ are coordinates of a solution to the instance $X$ of $\balpha$ from Equation (\ref{eqn:LBfamily}) if and only if $f(t)$ has roots at $t_2,\dotsc,t_{n}$.
There are no other roots, because $\#\{t_2,\dotsc,t_{n}\}=k(n-k)-|\omat|=n-1=\deg f(t)$.
A solution to an instance of $X$ corresponds to a real polynomial $f(t)$ with $r_{\Is}$ real roots.
Applying Rolle's Theorem, we see that $f'(t)$ has at least $r_{\Is}-1$ real roots.

The polynomials $A(t)$ and $B(t)$ have all coefficients real if and only if $a_i,b_j\in \R$ for all $i,j$ ($b_k$ is real because it is an integer multiple of $f(0)$).
This implies the following.
\begin{theorem}\label{thm:factorLB}
 Let $X$ be the real osculating instance of $\balpha$ from Equation (\ref{eqn:LBfamily}).
 The number of real points in $X$ is equal to the number of factorizations $f'(t)=A(t)B(t)$ from Equation (\ref{eqn:FeqAB}), such that $A(t),B(t)$ are monic real polynomials of degree $n-k-1,k-1$ respectively.
\end{theorem}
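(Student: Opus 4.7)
The plan is to construct an explicit bijection between the real points of $X$ and the real factorizations $f'(t)=A(t)B(t)$ into monic polynomials of the prescribed degrees. The preceding computation already supplies most of the bijection: it shows that the coordinates $a_2,\dotsc,a_{n-k}$ and $b_1,\dotsc,b_{k-1}$ are precisely the non-leading coefficients of such polynomials $A(t)$ and $B(t)$ (the leading coefficients being forced by $a_1=b_0=1$), and that a point lies in $X$ exactly when $f(t)$ vanishes at $t_2,\dotsc,t_n$. What remains is to pin down $f(t)$ intrinsically and to check that the two natural maps are mutual inverses, respecting realness.

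The first step is to observe that $f(t)$ is \emph{uniquely} determined by the instance. Inspecting the formula for $f(t)$ derived above, the coefficient of $t^{n-1}$ comes only from the summand $(i,j)=(1,k)$ and equals $(-1)^n(-1)^{n-1}/(n-1)=-1/(n-1)$, independent of the coordinates. Since $\deg f=n-1$ and we prescribe $n-1$ roots,
\[
  f(t)\;=\;-\tfrac{1}{n-1}\prod_{i=2}^{n}(t-t_i).
\]
Corollary~\ref{cor:SVarsComeInPairs} implies that the multiset $\{t_2,\dotsc,t_n\}$ is closed under complex conjugation, so $f(t)$ and $f'(t)$ are real polynomials.

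Next I would establish the bijection. By Equation~(\ref{eqn:FeqAB}), every point $(a_2,\dotsc,a_{n-k},b_1,\dotsc,b_k)\in X$ determines $A(t),B(t)$ with $A(t)B(t)=f'(t)$, and these are real exactly when all $a_i$ and $b_j$ are real. Conversely, given a factorization $f'(t)=A(t)B(t)$ by real monic polynomials of degrees $n-k-1$ and $k-1$, I read off $a_2,\dotsc,a_{n-k}$ from $A$ and $b_1,\dotsc,b_{k-1}$ from $B$, and set $b_k:=f(0)$, which is real since $f$ is real. The polynomial one reconstructs from these coordinates by the original formula has derivative $A(t)B(t)=f'(t)$ and value $b_k=f(0)$ at the origin, hence equals the intrinsic $f(t)$ above and so vanishes at $t_2,\dotsc,t_n$. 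The two constructions are visibly inverse to one another, so the real solutions are in bijection with the real factorizations.

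The main technicality is that the coordinate chart requires $b_1,\dotsc,b_k$ to be nonzero, so strictly I only obtain a bijection between the real points of $X$ that lie in this chart and the real factorizations satisfying the corresponding nondegeneracy. I expect that for $(t_2,\dotsc,t_n)$ in general position no solution lies outside the chart; when a solution escapes the chart (some $b_j$ degenerates to zero), a parallel argument in a neighboring chart for $X_{\omas}(\infty)$, or a perturbation argument using the generic transversality of Theorem~\ref{thm:dimTrans} together with the semicontinuity of the real root count, closes the gap. Verifying this covering property is the only step that requires something beyond the bookkeeping described above.
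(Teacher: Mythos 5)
Your proposal follows the same route as the paper: the paper's proof is essentially the computation preceding the theorem statement (setting up the coordinates $a_i,b_j$, computing $f(t)$ and its derivative, factoring $f'=AB$), followed by the observation about realness of coefficients and a closing remark that the $\GL(2,\R)$-action reduces the general case to $t_1=\infty$. Your formula $f(t)=-\tfrac{1}{n-1}\prod_{i=2}^n(t-t_i)$ is a clean way of pinning down $f$ intrinsically that the paper leaves implicit, and your worry about the chart $\{b_j\neq 0\}$ is legitimate: the paper never acknowledges that the parametrization $M$ covers only a dense open subset of $X_{\omas}(\infty)$, so its asserted bijection is, strictly, between real solutions in the chart and real factorizations whose factor $B$ has no vanishing non-leading coefficient. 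For generic osculation points this is a non-issue (a monic factor built from generic roots has all elementary symmetric functions nonzero), and the paper evidently works in that generic regime; your suggested perturbation or multi-chart argument is the standard way to handle degenerate instances, and you are right that this is the one step the paper passes over without comment.
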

Applying the action of $\GL(2,\R)$ to the osculation points induces a real action on the solutions, so our discussion involving $t_1=\infty$ is general, and we have proven Theorem \ref{thm:factorLB}.
Soprunova and Sottile \cite{SS2006} discovered the use of an auxiliary factorization problem to rule out possible numbers of real solutions to geometric problems.

The polynomial $f'(t)$ in Theorem \ref{thm:factorLB} has degree $n-2$, and it has at least $r_{\Is}-1$ real roots, where $(1,r_{\Is})$ is the osculation type of $X$.
Increasing the number of real roots of a polynomial of fixed degree cannot decrease its number of real factorizations, so we have a lower bound on the number of real solutions to $X$.
\begin{corollary}\label{cor:factorLB}
 Let $X$ be the real osculating instance of $\balpha$ from Equation (\ref{eqn:LBfamily}).
 If $X$ has osculation type $(1,r_{\Is})$, then the number of real points in $X$ is at least the number of factorizations of a monic real polynomial $g(t)=a(t)b(t)$ of degree $n-2$ with $r_{\Is}-1$ real roots, such that $a(t),b(t)$ are monic real polynomials of degree $n-k-1,k-1$ respectively.
 
 Furthermore, if $k = 2p+1$ is odd and $n = 2p+2q+2$ is even, then the number of real points in $X$ is at least $\tbinom{p+q}{p}$, regardless of osculation type.
\end{corollary}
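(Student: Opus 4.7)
The plan is to reduce both claims to a short generating-function monotonicity statement which, together with Theorem \ref{thm:factorLB} and Rolle's Theorem, yields the result.

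First I would invoke Theorem \ref{thm:factorLB} to identify the number of real points in $X$ with the number of monic real factorizations $f'(t)=A(t)B(t)$ of degrees $n-k-1$ and $k-1$. The polynomial $f(t)$ is real of degree $n-1$, and osculation type $(1,r_{\Is})$ means that exactly $r_{\Is}$ of its roots $t_2,\dotsc,t_n$ are real. Rolle's Theorem then guarantees that $f'(t)$, of degree $n-2$, has at least $r_{\Is}-1$ real roots.

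Next, for an arbitrary real polynomial $g(t)$ of degree $n-2$ with $r$ real roots and $s=(n-2-r)/2$ conjugate pairs of nonreal roots, I would count monic real factorizations $g=ab$ with $\deg a = n-k-1$. Each real root may be assigned to $a$ or $b$ independently, while a conjugate pair must stay together to keep coefficients real. Tracking the degree contributed to $a$ by a variable $x$ gives
\[
 N(r,s) \;=\; [x^{n-k-1}](1+x)^{r}(1+x^{2})^{s}.
\]
The only nontrivial step is the monotonicity claim $N(r+2,s-1) \geq N(r,s)$, which I would verify via the identity
\[
 (1+x)^{r+2}(1+x^{2})^{s-1} - (1+x)^{r}(1+x^{2})^{s} \;=\; 2x(1+x)^{r}(1+x^{2})^{s-1},
\]
whose coefficients are manifestly nonnegative. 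Applying this to $f'(t)$, which has at least $r_{\Is}-1$ real roots, yields the first assertion.

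For the second assertion, when $k=2p+1$ is odd and $n=2p+2q+2$ is even, the parity of $n-1=2p+2q+1$ forces $r_{\Is}$ to be odd (since nonreal roots of $f$ come in conjugate pairs), so $r_{\Is}\geq 1$ for every admissible osculation type. Taking $r=0$ and $s=p+q$ in the first assertion gives the uniform lower bound
\[
 N(0,p+q) \;=\; [x^{2q}](1+x^{2})^{p+q} \;=\; \binom{p+q}{q} \;=\; \binom{p+q}{p}.
\]
The only substantive content is the one-line generating-function identity above; everything else is assembling results already established in the chapter, so I expect no serious obstacle.
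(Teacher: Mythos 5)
Your proof is correct and follows the same overall strategy as the paper: reduce to counting monic real factorizations of $f'$ via Theorem \ref{thm:factorLB}, use Rolle to lower-bound the number of real roots of $f'$, and observe that this count can only increase with the number of real roots. The genuine addition in your version is that you actually \emph{prove} the monotonicity step. The paper simply asserts, in the text preceding the corollary, that ``increasing the number of real roots of a polynomial of fixed degree cannot decrease its number of real factorizations,'' and then for the second assertion it counts directly in the worst case where $f'$ has no real roots and $B$ must be a product of $p$ conjugate pairs, which gives $\binom{p+q}{p}$. Your generating-function encoding
\[
 N(r,s)=[x^{n-k-1}](1+x)^{r}(1+x^{2})^{s}\,,
\]
together with the identity $(1+x)^{r+2}(1+x^{2})^{s-1}-(1+x)^{r}(1+x^{2})^{s}=2x(1+x)^{r}(1+x^{2})^{s-1}$, makes the asserted monotonicity into a one-line nonnegativity-of-coefficients check, and automatically produces the value $\binom{p+q}{p}$ by extracting $[x^{2q}](1+x^{2})^{p+q}$. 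This is strictly tighter reasoning than the paper offers; it also makes transparent why the second bound is attained at $r=0$, i.e., why the paper is justified in restricting to the case that $f'$ has no real roots. The parity observation that $r_{\Is}$ must be odd is consistent with the paper but is not actually needed: the monotonicity alone shows $N(r,s)\geq N(0,p+q)$ for every admissible pair $(r,s)$ with $r+2s=n-2$ even.

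One small caveat, which the paper shares, is that the count $N(r,s)$ presumes the roots of $f'$ (equivalently of $g$) are simple, so that a factorization is determined by a choice of which roots go to $a$. This is the generic situation, and the corollary is stated informally enough that this reading is intended, but it is worth being aware that multiple roots would require a separate (and in fact easier, by a limiting argument) treatment.
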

\begin{proof}
 We have already proven the first statement.
 For the second statement, we observe that if $f'(t)$ has no real roots, then it is a product of $p+q$ complex conjugate pairs of linear factors.
 The factorization $f'(t)=A(t)B(t)$ from Equation (\ref{eqn:FeqAB}) is real if and only if $B(t)$ is the product of $p$ complex conjugate pairs of linear factors.
 This gives the stated lower bound.
\end{proof}
If $k$ is even or $n$ is odd, then there may be no real factorizations of $f'(t)$, which implies the trivial lower bound on the number of real points in $X$.
\begin{example}
 Consider the Schubert problem $(\IIIIIt\,,\It^7)$ in $\Gr(2,\C^8)$, given in Table \ref{tab:staircase}.
 The lower bounds in the table are given by counting factorizations of a monic real degree-six polynomial $f'(t)$ into a monic real degree-five polynomial $A(t)$ and a monic real degree-one polynomial $B(t)$.
 Since $f'(t)$ has at least $r_{\Is}-1$ real factors, there are $\tbinom{r_{\Iss}-1}{1}=r_{\Is}-1$ ways to factor $f'(t)=A(t)B(t)$ with $A,B$ real.
\end{example}
\begin{example}
 Consider the Schubert problem $({\IIIiiiIIIt}\,,{\It}^7)$ in $\Gr(4,\C^8)$, given in Table \ref{tab:gap12}.
 The observed lower bounds are obtained by counting real factorizations of the degree six polynomial $f'$ into two monic degree three polynomials $A,B$.
 If $r_{\Is}=1$, then $f'$ may have no real factors and thus no real cubic factor $A$, so the lower bound is zero.

 If $r_{\Is}=3$, then $f'$ has at least two real factors $w,x$ and two pairs of complex factors $(y,\overline{y})$ and $(z,\overline{z})$.
 So $f'$ has real factorizations given by $A=wy\overline{y},wz\overline{z},xy\overline{y},xz\overline{z}$, so the lower bound is four.

 Similar arguments show that $r_{\Is}=5$ or $7$ impose lower bounds $8$ and $20$.
\end{example}
\begin{table}[tp]
 \caption{Irregular gaps.}											
  \begin{center}
 \begin{tabular}{|c||c|c|c|c||c|}											
  \hline											
  {\bf $\#$ Real} & \multicolumn{4}{c||}{\rule{0pt}{12pt}$\IIIiiiIIIt \quad \It^7$}               & \multirow{2}{*}{\bf Total} \\ \cline{2-5}
  {\bf Solutions} & {\bf 	$r_{\Is}=7$	} & {\bf 	$r_{\Is}=5$	} & {\bf 	$r_{\Is}=3$	} & {\bf 	$r_{\Is}=1$	}	&		\\ \hline \hline
{\bf 	0	} &	 	&	 	&	 	&	37074	& {\bf 	37074	} \\ \hline
{\bf 	2	} &	 	&	 	&	 	&	 	& {\bf 	0	} \\ \hline
{\bf 	4	} &	 	&	 	&	66825	&	47271	& {\bf 	114096	} \\ \hline
{\bf 	6	} &	 	&	 	&	 	&	 	& {\bf 	0	} \\ \hline
{\bf 	8	} &	 	&	85080	&	30232	&	14517	& {\bf 	129829	} \\ \hline
{\bf 	10	} &	 	&	 	&	 	&	 	& {\bf 	0	} \\ \hline
{\bf 	12	} &	 	&	 	&	 	&	 	& {\bf 	0	} \\ \hline
{\bf 	14	} &	 	&	 	&	 	&	 	& {\bf 	0	} \\ \hline
{\bf 	16	} &	 	&	 	&	 	&	 	& {\bf 	0	} \\ \hline
{\bf 	18	} &	 	&	 	&	 	&	 	& {\bf 	0	} \\ \hline
{\bf 	20	} &	100000	&	14920	&	2943	&	1138	& {\bf 	119001	} \\ \hline
{\bf Total } & {\bf 100000} & {\bf 100000} & {\bf 100000} & {\bf 100000} & {\bf 400000} \\ \hline
 \end{tabular}
  \end{center}
 \label{tab:gap12}
\end{table}
\begin{example}
 Corollary \ref{cor:factorLB} asserts that every real osculating instance of $({\IIiit}\,,{\It}^5)$ in $\Gr(3,\C^6)$ has at least two real solutions, and every real osculating instance of $({\IIIIiiiit}\,,{\It}^7)$ in $\Gr(3,\C^8)$ has at least three real solutions.
 Indeed, we observe this in Tables \ref{tab:22.1e5} and \ref{tab:44.1e7} respectively.
\end{example}
\begin{table}[tp]
 \caption{Nontrivial lower bound.}
  \begin{center}
 \begin{tabular}{|c||c|c|c||c|}											
  \hline											
  {\bf $\#$ Real} & \multicolumn{3}{c||}{\rule{0pt}{12pt}$\IIiit \quad \It^5$} & \multirow{2}{*}{\bf Total} \\ \cline{2-4}
  {\bf Solutions} & {\bf 	$r_{\Is}=5$	} & {\bf 	$r_{\Is}=3$	} & {\bf 	$r_{\Is}=1$	}	&		\\ \hline \hline
{\bf	 0    } &              &              &              & {\bf 0}      \\ \hline
{\bf	 2    } &              & 64775        & 87783        & {\bf 152558} \\ \hline
{\bf	 4    } &              &              &              & {\bf 0}      \\ \hline
{\bf	 6    } & 100000       & 35225        & 12217	    & {\bf 147442} \\ \hline \hline
{\bf	 Total} & {\bf 100000} & {\bf 100000} & {\bf 100000} & {\bf 300000} \\ \hline
 \end{tabular}
  \end{center}
 \label{tab:22.1e5}
\end{table}
\begin{table}[tp]
 \caption{Another nontrivial lower bound.}											
  \begin{center}
 \begin{tabular}{|c||c|c|c|c||c|}											
  \hline											
  {\bf $\#$ Real} & \multicolumn{4}{c||}{\rule{0pt}{12pt}$\IIIIiiiit \quad \It^7$}               & \multirow{2}{*}{\bf Total} \\ \cline{2-5}
  {\bf Solutions} & {\bf 	$r_{\Is}=7$	} & {\bf 	$r_{\Is}=5$	} & {\bf 	$r_{\Is}=3$	} & {\bf 	$r_{\Is}=1$	}	&		\\ \hline \hline
{\bf	1	} &	 	&	 	&	 	&	 	& {\bf	0	} \\ \hline
{\bf	3	} &	 	&	 	&	47274	&	76702	& {\bf	123976	} \\ \hline
{\bf	5	} &	 	&	 	&	 	&	 	& {\bf	0	} \\ \hline
{\bf	7	} &	 	&	77116	&	46912	&	21909	& {\bf	145937	} \\ \hline
{\bf	9	} &	 	&	 	&	 	&	 	& {\bf	0	} \\ \hline
{\bf	11	} &	 	&	 	&	 	&	 	& {\bf	0	} \\ \hline
{\bf	13	} &	 	&	 	&	 	&	 	& {\bf	0	} \\ \hline
{\bf	15	} &	100000	&	22884	&	5814	&	1389	& {\bf	130087	} \\ \hline \hline
{\bf	Total	} & {\bf	100000	} & {\bf	100000	} & {\bf	100000	} & {\bf	100000	} & {\bf	400000	} \\ \hline
 \end{tabular}
  \end{center}
 \label{tab:44.1e7}
\end{table}

\section{Gaps}

Several of the Schubert problems we studied had unexpected gaps in the possible numbers of real solutions.
One may see this in Tables \ref{tab:gap12}--\ref{tab:44.1e7}.
That we never have $12$ or $16$ solutions in Table \ref{tab:gap12} is particularly unexpected, as this problem satisfies a congruence modulo four, and $12\equiv 16\equiv 20 \mod 4$.
The gaps in all three of these tables may be fully explained by Theorem \ref{thm:factorLB}.
Proposition \ref{prop:mod4tech} from Chapter \ref{chapMod4} gives an alternative explanation for the congruence modulo four observed in Tables \ref{tab:gap12} and \ref{tab:22.1e5} (but not for the congruence in Table \ref{tab:44.1e7}).

A solution to an instance of the Schubert problem given in Table \ref{tab:gap12} is real if and only if its coordinates are given by a real factorization $f'(t)=A(t)B(t)$ as in Theorem \ref{thm:factorLB}.
Since the number $r$ of real factors of $f'(t)$ is at least $r_{\Is}-1$, we have $r=r_{\Is}-1,r_{\Is}+1,\dotsc,6$.
For each of these $r$-values, the number of real solutions to $X$ is exactly the lower bound of Corollary \ref{cor:factorLB} associated to the osculation type $(1,r)$.
Thus the set of lower bounds given by Corollary \ref{cor:factorLB} is the set of possible numbers of real points in $X$, given by Theorem \ref{thm:factorLB}.

Similar analysis explains the gaps found in Tables \ref{tab:22.1e5} and \ref{tab:44.1e7}.
We give an example in a Grassmannian of higher dimension.
\begin{example}
 Consider the Schubert condition $\omat$ for $\Gr(5,10)$.
 The lower bounds of Corollary \ref{cor:factorLB} corresponding to $\balpha = (\omat,\It^{n-1})$ are $6,6,14,30$, or $70$.
 Thus any real osculating instance of $\balpha$ has exactly $6,14,30$, or $70$ real solutions.
 The lower bound of $6$ which is independent of osculation type is an example of the nontrivial lower bound given by Corollary \ref{cor:factorLB}.
 This lower bound is the topological lower bound $\Sigma(\omat,\It)$.
\end{example}

Let
\[ 
  \begin{picture}(46.5,46.5)
   \put (0,0){\includegraphics{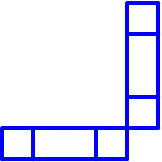}}
   \put (11.5,2.1){$\cdots$}
   \put (39.5,22.5){$\vdots$}
  \end{picture}\ \,\raisebox{24pt}{$ =: \lambda(p,q)$}
\]
be the skew-diagram with $p$ boxes in the rightmost column and $q$ boxes in the bottom row.
\begin{proposition}\label{prop:LBareEQ}
 Let $p=k-1$ and $q=n-k-1$.
 If $p\geq 2$ or $q\geq 2$, then
 \[\sum_{T\in\SYT(\lambda(p,q))}\sign(T) = \sum_{T\in\SYT(\lambda(p,q-2))}\sign(T) + \sum_{T\in\SYT(\lambda(p-2,q))}\sign(T)\,.\]
\end{proposition}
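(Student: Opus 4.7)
The plan is to prove the identity by a sign-reversing involution on $\SYT(\lambda(p,q))$ whose fixed points split into a copy of $\SYT(\lambda(p,q-2))$ and a copy of $\SYT(\lambda(p-2,q))$ with signs preserved.

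First I will parametrize the standard Young tableaux of $\lambda(p,q)$. Since the L-shape is the disjoint union of two rectangular strips---a row of $q$ boxes and a column of $p$ boxes---a tableau $T$ is determined by the set $A\subseteq\{1,\dotsc,p+q\}$ of size $q$ of entries placed in the row strip; the complement $B=\{1,\dotsc,p+q\}\setminus A$ then fills the column strip in the unique increasing order. Comparing with the standard filling, which assigns the row the entries $1,\dotsc,q$ and the column the entries $q+1,\dotsc,p+q$, the permutation from the standard filling to $T$ is the shuffle $(1,\dotsc,p+q)\mapsto(a_1,\dotsc,a_q,b_1,\dotsc,b_p)$, and a short inversion count yields
\[
\sign(T)\;=\;(-1)^{\sum_{a\in A}a-\binom{q+1}{2}}.
\]

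Next I will partition the size-$q$ subsets $A$ by $|A\cap\{p+q-1,p+q\}|\in\{0,1,2\}$. On the class where this intersection has exactly one element, the map $A\mapsto A\triangle\{p+q-1,p+q\}$ is an involution that shifts $\sum_{a\in A}a$ by $\pm1$ and hence reverses $\sign(T)$; these subsets cancel pairwise. On the class $A\supseteq\{p+q-1,p+q\}$, I will write $A=A'\cup\{p+q-1,p+q\}$ with $A'\subseteq\{1,\dotsc,p+q-2\}$ of size $q-2$; the map $A\mapsto A'$ bijects this class with $\SYT(\lambda(p,q-2))$, and sign preservation reduces to the elementary parity identity $\binom{q+1}{2}-\binom{q-1}{2}=2q-1\equiv1\pmod2$. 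On the class $A\cap\{p+q-1,p+q\}=\emptyset$, the subset $A\subseteq\{1,\dotsc,p+q-2\}$ of size $q$ already parametrizes $\SYT(\lambda(p-2,q))$, and the sign formula above is unchanged so signs are trivially preserved. Summing the contributions of the three classes establishes the identity, with the boundary cases $p\in\{0,1\}$ or $q\in\{0,1\}$ absorbed by the convention that the sign sum is zero when the shape is not defined.

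The hard part will be the parity check in the first of the two matchings, but the elementary calculation above dispatches it. If the combinatorial bookkeeping proves unwieldy, an alternative would be to recognize $\sum_T\sign(T)=\binom{p+q}{q}_{-1}$ as a Gaussian binomial coefficient specialized at $t=-1$, and to derive the recursion $\binom{n}{k}_{-1}=\binom{n-2}{k}_{-1}+\binom{n-2}{k-2}_{-1}$ by applying the $t$-Pascal identity $\binom{n}{k}_t=\binom{n-1}{k}_t+t^{n-k}\binom{n-1}{k-1}_t$ twice: the surviving cross term carries the factor $1+t$, which vanishes at $t=-1$, leaving exactly the asserted two-term recursion.
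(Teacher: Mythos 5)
Your proof is correct and follows the same approach as the paper's: partition $\SYT(\lambda(p,q))$ by whether the two largest entries $p+q-1$ and $p+q$ both lie in the row, both in the column, or one in each, cancel the mixed class under the transposition $p+q-1\leftrightarrow p+q$, and identify the remaining two classes with $\SYT(\lambda(p,q-2))$ and $\SYT(\lambda(p-2,q))$ by sign-preserving bijections. Your subset parametrization with an explicit sign formula is tidier bookkeeping than the paper's framing (which needlessly reduces to $p,q$ even and treats $p=q=2$ as a base case before giving this same argument), though note that your ``standard filling'' puts $1,\dotsc,q$ in the row while the paper's puts $1,\dotsc,p$ in the column, so your formula differs from the paper's sign by the constant $(-1)^{pq}$ --- harmless here, since that constant is the same for all three shapes and cancels from the identity. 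The Gaussian-binomial alternative you sketch, recognizing the sign imbalance as $\binom{p+q}{q}_{-1}$ and applying the $t$-Pascal rule twice so the cross term picks up the factor $1+t$, is a genuinely different and in fact the slickest route.
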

\begin{proof}
 We first observe that both sides of the equation are zero unless $p$ and $q$ are even.
 Thus we only need to prove one base case $p=q=2$ to use induction.
 For the base case, we observe
 \[\sum_{T\in\SYT(\lambda(2,0))}\sign(T) + \sum_{T\in\SYT(\lambda(0,2))}\sign(T) = 1 + 1 = 2\,.\]
 To calculate $\sum_{T\in\SYT(\lambda(2,2))}\sign(T)$, we compare the standard Young tableaux of the shape $\IIIiiiIIminusIIiit$, of which there are six.
 \[
  \begin{picture}(386,36)
   \put (0,0){\IIIiiiIIminusIIiib}
   \put (70,0){\IIIiiiIIminusIIiib}
   \put (140,0){\IIIiiiIIminusIIiib}
   \put (210,0){\IIIiiiIIminusIIiib}
   \put (280,0){\IIIiiiIIminusIIiib}
   \put (350,0){\IIIiiiIIminusIIiib}

   \put (3.5,2.5){3}
   \put (15,2.5){4}
   \put (26.5,14){2}
   \put (26.5,25.5){1}

   \put (73.5,2.5){2}
   \put (85,2.5){4}
   \put (96.5,14){3}
   \put (96.5,25.5){1}

   \put (143.5,2.5){2}
   \put (155,2.5){3}
   \put (166.5,14){4}
   \put (166.5,25.5){1}

   \put (213.5,2.5){1}
   \put (225,2.5){4}
   \put (236.5,14){3}
   \put (236.5,25.5){2}

   \put (283.5,2.5){1}
   \put (295,2.5){3}
   \put (306.5,14){4}
   \put (306.5,25.5){2}

   \put (353.5,2.5){1}
   \put (365,2.5){2}
   \put (376.5,14){4}
   \put (376.5,25.5){3}

  \end{picture}
 \]
 The first tableau pictured above has sign $+1$ by definition, and the others have signs $-1$, $+1$, $+1$, $-1$, and $+1$ respectively.
 These sum to $+2$, proving the base case.
 
 For the inductive step, we denote the two largest numbers appearing in a standard Young tableau of shape $\lambda(p,q)$ by $\mbox{Y}:=p+q-1$ and $\mbox{Z}:=p+q$.
 Observe that $\mbox{Z}$ occurs in the last box of the single row of $\lambda(p,q)$ or in the last box of the single column of $\lambda(p,q)$.
 Similarly, $\mbox{Y}$ occurs in one of the last two boxes of either the single row or the single column.
 We draw the four possible configurations of the numbers $\mbox{Y},\mbox{Z}$ is a standard Young tableau of shape $\lambda(p,q)$.
 \[
  \begin{picture}(256,36)
   \put (0,0){\IIIiiiIIminusIIiib}
   \put (70,0){\IIIiiiIIminusIIiib}
   \put (140,0){\IIIiiiIIminusIIiib}
   \put (210,0){\IIIiiiIIminusIIiib}

   \put (3.5,2.5){}
   \put (14.3,2.5){Z}
   \put (25,14){Y}
   \put (26.5,25.5){}

   \put (73.5,2.5){}
   \put (83.5,2.5){Y}
   \put (95.8,14){Z}
   \put (96.5,25.5){}

   \put (142,2.5){Y}
   \put (154.3,2.5){Z}
   \put (166.5,14){}
   \put (166.5,25.5){}

   \put (213.5,2.5){}
   \put (225,2.5){}
   \put (235.8,14){Z}
   \put (235,25.5){Y}

  \end{picture}
 \]

 Let $\mathcal{T}_1\subset\SYT(\lambda(p,q))$ be the set of standard Young tableaux of the first type ($\mbox{Y}$ appears in the single column, and $\mbox{Z}$ appears in the single row).
 Similarly, let $\mathcal{T}_2,\mathcal{T}_3,\mathcal{T}_4\subset\SYT(\lambda)$ denote the sets of tableaux of the second, third, and fourth types respectively.
 Since the tableaux of $\mathcal{T}_2$ are obtained by applying the transposition $\mbox{Y} \leftrightarrow \mbox{Z}$ to $\mathcal{T}_1$, we have
 \[\sum_{T\in\mathcal{T}_1}\sign(T) + \sum_{T\in\mathcal{T}_2}\sign(T) = 0\,.\]
 Therefore, we need only consider the parity of tableaux of the third and fourth types,
 \[\sum_{T\in \SYT(\lambda(p,q))} \sign(T) = \sum_{T\in \mathcal{T}_3} \sign(T) + \sum_{T\in \mathcal{T}_4} \sign(T)\,.\]
 Deleting the columns of tableaux in $\mathcal{T}_3$ which contain $\mbox{Y},\mbox{Z}$ gives a bijection
 \[\pi_3:\SYT(\lambda(p,q))\rightarrow\SYT(\lambda(p,q-2))\,,\]
 which one immediately verifies has the property $\sign(\pi_3(T)) = \sign(T)$ for $T\in \SYT(\lambda(p,q))$.
 Therefore,
 \[\sum_{T\in \SYT(\lambda(p,q))} \sign(T) = \sum_{T\in \SYT(\lambda(p,q-2))} \sign(T) + \sum_{T\in \mathcal{T}_4} \sign(T)\,.\]
 
 Deleting the rows of $T\in \mathcal{T}_4$ which contain $\mbox{Y},\mbox{Z}$ gives a bijection $\pi_4:\SYT(\lambda(p,q))\rightarrow\SYT(\lambda(p-2,q))$.
 The standard filling of $\lambda(p,q)$ has the same parity as the tableaux given by assigning $(1,2,\dotsc,p-2,\mbox{Y},\mbox{Z})$ to the column and $(p-1,p,\dotsc,p+q-1)$ to the row.
 Thus, $\sign(\pi_4(T)) = \sign(T)$ for $T\in \SYT(\lambda(p,q))$, and we have our result.
\end{proof}
The skew diagram $\lambda(p,q)$ as a product of two chains, one of length $p$ and one of length $q$.
Sottile and Soprunova studied products of chains and showed a connection between lower bounds obtained by factorization and topological lower bounds \cite{SS2006}.
The following is a corollary to Proposition \ref{prop:LBareEQ}.
\begin{theorem}
 Suppose $k=2p+1$ is odd, $n=2p+2q+2$ is even, and $X$ is the real osculating instance of $\balpha$ from Equation (\ref{eqn:LBfamily}).
 The lower bound $\tbinom{p+q}{p}$ on the number of real points in $X$ coincides with the topological lower bound $\Sigma(\omat,\It)$.
\end{theorem}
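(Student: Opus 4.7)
The plan is to identify the skew shape $\omat'/\It$ with (a $180^\circ$ rotation of) the diagram $\lambda(2p,2q)$ of Proposition~\ref{prop:LBareEQ}, and then solve the Pascal-type recurrence of that proposition to obtain $\tbinom{p+q}{p}$. First I would compute the Schubert conditions directly. From $\omat=(2,3,\dotsc,k,n)$, the complementary condition is $\omat'=(1,n-k+1,n-k+2,\dotsc,n-1)$, giving the hook diagram $d(\omat')=(n-k,1^{k-1})$. Since $d(\It)=(1)$ is precisely the northwest corner box of this hook, the skew shape $\omat'/\It$ consists of a top row of length $n-k-1=2q$ (shifted one column to the right) disjoint from a left column of length $k-1=2p$. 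This is the shape $\lambda(2p,2q)$ of Proposition~\ref{prop:LBareEQ} after a $180^\circ$ rotation. Since such a rotation induces a bijection on standard Young tableaux that scales every sign by a global factor depending only on the total number of boxes, we have
\[
 \Sigma(\omat,\It) \;=\; \Bigl|\sum_{T\in\SYT(\lambda(2p,2q))}\sign(T)\Bigr|.
\]

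Next I would unpack this signed sum using Proposition~\ref{prop:LBareEQ}. Writing $S(a,b)$ for the signed sum indexed by $\SYT(\lambda(a,b))$, the proposition supplies the recurrence $S(2p,2q)=S(2p,2q-2)+S(2p-2,2q)$. The base cases $\lambda(2p,0)$ and $\lambda(0,2q)$ are a single column and a single row respectively; each admits a unique standard Young tableau of sign $+1$, so $S(2p,0)=S(0,2q)=1=\tbinom{p}{p}=\tbinom{q}{0}$. A straightforward induction on $p+q$ together with Pascal's identity then gives
\[
 S(2p,2q) \;=\; \tbinom{p+q-1}{p} + \tbinom{p+q-1}{p-1} \;=\; \tbinom{p+q}{p},
\]
a strictly positive quantity, so the outer absolute value is inert and $\Sigma(\omat,\It)=\tbinom{p+q}{p}$. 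This matches the factorization lower bound of Corollary~\ref{cor:factorLB}, finishing the proof.

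The main obstacle is the combinatorial bookkeeping in the first step: correctly matching rows and columns when identifying $\omat'/\It$ with a rotation of $\lambda(2p,2q)$, and verifying that $180^\circ$ rotation of a skew shape preserves the sign imbalance in absolute value (a standard but sign-sensitive fact about bijections of standard Young tableaux). Once that alignment is secured, the remainder is a clean Pascal-style induction built on top of the recurrence already in hand from Proposition~\ref{prop:LBareEQ}.
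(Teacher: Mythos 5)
Your proposal is correct and fills in exactly what the paper leaves implicit: the paper merely remarks that the theorem is ``a corollary to Proposition~\ref{prop:LBareEQ},'' and your argument spells out why. The identification of $d(\omat'/\It)=(n-k,1^{k-1})/(1)$ with the $180^\circ$ rotation of $\lambda(2p,2q)$ is the right observation, and your caution about the sign-preservation under rotation is warranted but the fact holds (the rotation-plus-value-reversal map is a bijection on standard tableaux that multiplies all signs by a single global factor, so the sign imbalance is unchanged). From there, the Pascal induction from the recurrence of Proposition~\ref{prop:LBareEQ} with base cases $S(2p,0)=S(0,2q)=1$ gives $\binom{p+q}{p}$, which matches the factorization bound of Corollary~\ref{cor:factorLB}. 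One small point worth noting: when applying Proposition~\ref{prop:LBareEQ} as a clean recurrence you implicitly adopt the convention $S(a,b)=0$ when $a<0$ or $b<0$, and the paper's bijections $\pi_3,\pi_4$ genuinely require both the row and the column to be nonempty, so the induction should be run only for $p,q\ge 1$, with $S(2p,0)$ and $S(0,2q)$ supplied directly as base cases rather than by the recurrence --- which is precisely how you set it up.
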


There are Schubert problems $\balpha$ not of the form $(\omat,\It^{n-1})$ whose frequency tables exhibit gaps, sometimes apparently due to unexpected upper bounds on the numbers of real solutions for instances of $\balpha$ with certain osculation types.
Table \ref{tab:upperGap} reporting on the Schubert problem $(\IIIiIt\,,\IIiit\,,\IIit\,,\IIit\,,\It)$ in $\Gr(4,\C^8)$ exhibits such behavior.

There are also problems with no gaps which have apparent upper bounds lower than the number of complex solutions for certain osculation types.
Table \ref{tab:upperLower} exhibits remarkable upper bounds for $(\IIit\,,\IIit\,,\It\,,\It\,,\It)$ in $\Gr(3,\C^6)$.
Unexpected upper bounds were far less common than nontrivial lower bounds in the problems we tested.

\begin{table}[tp]
 \caption{Unexpected upper bound.}
  \begin{center}
 \begin{tabular}{|c||c|c||c|}											
  \hline											
  {\bf $\#$ Real} & \multicolumn{2}{c||}{\rule{0pt}{12pt}$\IIIiIt \quad \IIiit \quad \IIit^2 \quad \It$} & \multirow{2}{*}{\bf Total} \\ \cline{2-3}
  {\bf Solutions} 		& $r_{\IIis}=2$ & $r_{\IIis}=0$ &		\\ \hline \hline
{\bf 	0	} & 	 	&	148450	& {\bf 148450} \\ \hline
{\bf 	2	} & 	 	&	64662	&	 {\bf 64662}	\\ \hline
{\bf 	4	} & 	 	&	99465	&	 {\bf 99465}\\ \hline
{\bf 	6	} & 	 	&	59	& {\bf 59}	\\ \hline
{\bf 	8	} & 	 	&	87364	& {\bf 87364}	\\ \hline
{\bf 	10	} & 	 	&	 	& {\bf 0}	\\ \hline
{\bf 	12	} & 	 	&	 	& {\bf 0}	\\ \hline
{\bf 	14	} & 	 	&	 	& {\bf 0}\\ \hline
{\bf 	16	} & 	400000	&	 	&	 {\bf 400000} \\ \hline \hline
{\bf 	Total	} & {\bf 	400000	} & {\bf 	400000	} & {\bf 800000}\\ \hline
 \end{tabular}											
  \end{center}
 \label{tab:upperGap}
\end{table}
\begin{table}[tp]
 \caption{Unexpected lower bounds and upper bounds.}
  \begin{center}
 \begin{tabular}{|c||c|c|c|c||c|}											
  \hline											
  \multirow{2}{*}{\bf $\#$ Real} & \multicolumn{4}{c||}{\rule{0pt}{12pt}$\IIit^2 \quad \It^3$} &  \\ \cline{2-5}
  \multirow{2}{*}{\bf Solutions} 		& $r_{\IIis}=2$& $r_{\IIis}=2$& $r_{\IIis}=0$& $r_{\IIis}=0$ &  {\bf Total} \\ \cline{2-5}
 		& $r_{\Is}=3$	& $r_{\Is}=1$	& $r_{\Is}=3$	& $r_{\Is}=1$ &  \\ \hline \hline
{\bf 	0	} &	 	&	27855	&	17424	&	 	& {\bf 	45279	} \\ \hline
{\bf 	2	} &	 	&	11739	&	82576	&	100000	& {\bf 	194315	} \\ \hline
{\bf 	4	} &	 	&	22935	&	 	&	 	& {\bf 	22935	} \\ \hline
{\bf 	6	} &	100000	&	37471	&	 	&	 	& {\bf 	137471	} \\ \hline \hline
{\bf 	Total	} & {\bf 100000 } & {\bf 100000 } & {\bf 100000 } & {\bf 100000 } & {\bf 	400000	} \\ \hline
 \end{tabular}											
  \end{center}
 \label{tab:upperLower}
\end{table}

\chapter{\uppercase{A Congruence Modulo Four}}
  \label{chapMod4}
  Our report in Chapter \ref{chapLower} includes Schubert problems for which the number of real solutions is fixed modulo four.
Table \ref{tab:sharpness2} representing the Schubert problem $(\It^9)$ in $\Gr(3,\C^6)$ and Table \ref{tab:gap12} representing $(\IIIiiiIIIt,\It^7)$ in $\Gr(4,\C^8)$ exhibit this congruence.
We observed this phenomenon in several other problems sharing the property that each defining Schubert condition $\alpha$ satisfies $\alpha = \alpha^\perp$.
We prove this congruence modulo four and thereby find new invariants in enumerative real algebraic geometry.
The proof uses a geometric involution that fixes Schubert varieties $X_\alpha (t)\subset \Gr(k,2k)$ with $\alpha = \alpha^\perp$.
This chapter follows joint work with Sottile and Zelenko \cite{mod4}.

\section{The Lagrangian Grassmannian}

We retain the notation of Chapters \ref{chapSchubert} and \ref{chapLower}.
Throughout this chapter, $n=2k$.
We denote the real points of a variety $X$ by $X(\R)$.

Let $J$ be a non-degenerate skew-symmetric $2k\times 2k$ matrix with determinant $1$.
The matrix $J$ gives an isomorphism $J:V^*\rightarrow V$ defined by $v\mapsto (Jv)^T$.
The \emph{symplectic group} $\Sp(V)_{J}$ is the set of all elements $h$ of $\SL(V)$ which satisfy $J = hJh^T$.
Let $\langle\cdot\,,\cdot\rangle_{J}$ denote a nondegenerate alternating form on $V$, called a \emph{symplectic form},
\[\langle u,v\rangle_{J} := uJv^T \quad \mbox{for} \quad u,v\in V\,.\]

Given an $l$-plane $H\in\Gr(l,\C^{2k})$, let $H^\angle\in \Gr(2k-l,V)$ denote its \emph{skew-orthogonal complement (with respect to $J$)} in $V$,
\[H^\angle := JH^\perp\,.\]
Since $\langle\cdot\,,\cdot\rangle$ is nondegenerate, $\dim(H)+\dim(H^\angle)=2k$ and $(H^\angle)^\angle=H$ for any linear subspace $H\subset V$.
We call a flag $\Fdot$ in $V$ \emph{isotropic (with respect to $J$)} if for $F_i^\angle = F_{2k-i}^\angle$ for $i < 2k$.

Since $n=2k$, there is an involution
\[\angle:\Gr(k,V)\longrightarrow \Gr(k,V)\,,\]
given by $H\mapsto H^\angle$, i.e.\ $\angle = J \circ {\perp}$.
A $k$-plane $H\in\Gr(k,V)$ is \emph{Lagrangian (with respect to $J$)} if $H=H^\angle$.
We note that if $\Fdot$ is an isotropic flag in $V$, then $F_k$ is Lagrangian.

Consider skew-symmetric $2k\times 2k$ matrix
\[\widetilde{J} =
  \left(
    \begin{matrix}
      0      & \cdots  & 0      & 0     &         & -1 \\
      \vdots &         & \vdots &       & \iddots &     \\
      0      & \cdots  & 0      & -1    &         & 0    \\
      0      &         & 1      & 0     & \cdots  & 0     \\
             & \iddots &        &\vdots &         & \vdots \\
      1      &         & 0      & 0     & \cdots  & 0       \\
    \end{matrix}
  \right)
\]
with determinant $1$ and the real parametrized rational normal curve
\begin{equation}\label{eqn:normalGamma}
 \widetilde{\gamma}(t)=\left(1,t,\frac{t^2}{2!},\dotsc,\frac{t^k}{k!},\frac{(-1)^1t^{k+1}}{(k+1)!},\dotsc,\frac{(-1)^{k-1}t^{2k-1}}{(2k-1)!}\right)\,.
\end{equation}
The flag $\Fdot(t)$ osculating $\widetilde{\gamma}$ at $\widetilde{\gamma}(t)$ has basis $(\widetilde{\gamma}(t),\widetilde{\gamma}'(t),\dotsc,\widetilde{\gamma}^{(2k-1)})$.
A calculation using Equation (\ref{eqn:normalGamma}) shows that if $i<j$, then 
\[\langle \widetilde{\gamma}^{(i-1)}(t) , \widetilde{\gamma}^{(j-1)}(t)\rangle_{\widetilde{J}} = \sum_{l=0}^{2k+1-i-j} \frac{(-1)^{i+j+l}}{(2k+1-i-j)!}\tbinom{2k+1-i-j}{l}\,.\]
If $i+j\neq 2k+1$ then the dot product is zero.
Thus $\Fdot(t)$ is isotropic with respect to $\widetilde{J}$.

One may obtain any parametrized rational normal curve $\gamma(t)$ by applying the right action of $g\in\SL(V)$ to $\widetilde{\gamma}(t)$, given by $\gamma(t) = \widetilde{\gamma}(t)g$.
If $\gamma(t)$ is real, then $g$ may be chosen to be real.
The skew-symmetric matrix $J:=g^{-1}\widetilde{J}(g^{-1})^T$ with determinant $1$ gives an isomorphism $J:V^*\rightarrow V$.
All symplectic groups are conjugate by this action of $\SL(V)$.

Henceforth, we fix a real $g\in \SL(V)$, thus fixing a real curve $\gamma(t)$ and a real matrix $J$ which identifies $V^*$ with $V$.
Thus we omit subscripts of $J$, writing $\langle\cdot\,,\cdot\rangle$ and $\Sp(V)$.
To facilitate proofs, we may use the real action of $\SL(V)$ to give a particular curve $\gamma(t)$ and a corresponding matrix $J$.
\begin{proposition}\label{prop:isotropicOsculatingFlags}
 Osculating flags are isotropic.
\end{proposition}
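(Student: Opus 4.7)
The plan is to reduce to the normalized case $(\widetilde{\gamma},\widetilde{J})$ that was already analyzed in the paragraph immediately preceding the statement, and then transport the conclusion along the $\SL(V)$ action fixed at the end of that paragraph.

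First I would spell out what the preceding calculation actually gives. The displayed formula shows that $\langle \widetilde{\gamma}^{(i-1)}(t),\widetilde{\gamma}^{(j-1)}(t)\rangle_{\widetilde{J}}$ vanishes whenever $i+j\neq 2k+1$. Taking any $i\in[2k-1]$ and any $j\in[2k-i]$, the indices satisfy $i+j\leq 2k<2k+1$, so every pairing between a basis vector of $\widetilde{F}_i(t)$ and a basis vector of $\widetilde{F}_{2k-i}(t)$ vanishes. Hence $\widetilde{F}_i(t)\subseteq \widetilde{F}_{2k-i}(t)^\angle$, and because the skew-orthogonal complement of a $(2k-i)$-plane is an $i$-plane (since $\langle\cdot,\cdot\rangle_{\widetilde{J}}$ is nondegenerate), a dimension count forces equality $\widetilde{F}_i(t)^\angle = \widetilde{F}_{2k-i}(t)$. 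This is exactly the definition of $\widetilde{\Fdot}(t)$ being isotropic with respect to $\widetilde{J}$.

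Next I would transport this to a general real parametrized rational normal curve $\gamma$. By the preceding discussion there is a real $g\in\SL(V)$ with $\gamma(t)=\widetilde{\gamma}(t)g$ and $J=g^{-1}\widetilde{J}(g^{-1})^T$. Differentiating in $t$ commutes with right multiplication by the constant matrix $g$, so $\gamma^{(i-1)}(t)=\widetilde{\gamma}^{(i-1)}(t)g$ and consequently $F_i(t)=\widetilde{F}_i(t)\cdot g$ as subspaces of $V$. The key compatibility is the one-line computation
\[
 \langle ug,vg\rangle_J \;=\; (ug)\,J\,(vg)^T \;=\; u\,(gJg^T)\,v^T \;=\; u\,\widetilde{J}\,v^T \;=\; \langle u,v\rangle_{\widetilde{J}}\,,
\]
using $gJg^T=\widetilde{J}$ from the definition of $J$. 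This shows the pairing on $F_i(t)\times F_{2k-i}(t)$ with respect to $\langle\cdot,\cdot\rangle_J$ coincides with the pairing on $\widetilde{F}_i(t)\times\widetilde{F}_{2k-i}(t)$ with respect to $\langle\cdot,\cdot\rangle_{\widetilde{J}}$, and hence vanishes. The same dimension argument as before upgrades containment to the equality $F_i(t)^\angle=F_{2k-i}(t)$, so $\Fdot(t)$ is isotropic with respect to $J$.

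There is no genuine obstacle here; the proposition is essentially a repackaging of the computation already carried out, and the only substantive step is the observation that right multiplication by $g$ is an isometry from $\langle\cdot,\cdot\rangle_J$ back to $\langle\cdot,\cdot\rangle_{\widetilde{J}}$. The one place to be careful is the $\SL(V)$-equivariance of the setup: one must verify that the particular $J$ chosen (namely $g^{-1}\widetilde{J}(g^{-1})^T$) is exactly the form under which $\gamma=\widetilde{\gamma}g$ has its osculating flags isotropic, which is precisely what the identity $gJg^T=\widetilde{J}$ encodes.
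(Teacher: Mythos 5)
Your proof is correct and takes essentially the same route as the paper's: the displayed isometry computation $\langle ug,vg\rangle_J = \langle u,v\rangle_{\widetilde{J}}$ is literally the paper's one-line proof, and the paper then simply appeals to ``our discussion above'' for the fact that $\widetilde{\Fdot}(t)$ is isotropic with respect to $\widetilde{J}$, which you have just spelled out in more detail (the containment $\widetilde{F}_i(t)\subseteq\widetilde{F}_{2k-i}(t)^{\angle}$ plus a dimension count).
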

\begin{proof}
 Flags osculating the rational normal curve $\gamma(t)$ are isotropic with respect to $J$, because
 \[\langle ug , vg \rangle = ugJ(vg)^T = u\widetilde{J}v^T = \langle u,v \rangle_{\widetilde{J}}.\]
 The rest follows from our discussion above.
\end{proof}

For $X\subset \Gr(k,V)$, let $X_\angle$ denote the Lagrangian points of $X$, that is, the points fixed by $\angle$.
The \emph{Lagrangian Grassmannian $\LG(V)$} is the subset of the Grassmannian consisting of Lagrangian $k$-planes,
\[\LG(V):=\Gr(k,V)_\angle = \left\{H\in\Gr\left(k,V\right)\ |\ H=H^\angle \right\}\,.\]
We have observed that $\Gr(k,V)$ is a homogeneous space for $\GL(V)$.
Since scaling generators does not affect their span, $\Gr(k,V)$ is a homogeneous space for $\SL(V)$ as well.
The Lagrangian Grassmannian is a homogeneous space of $\Sp(V)$.

\begin{proposition}\label{prop:LGsubvar}
 The Lagrangian Grassmannian $\LG(V)\subset \Gr(k,V)$ is a subvariety of dimension $\tbinom{k+1}{2}$.
\end{proposition}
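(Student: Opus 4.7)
The plan has two parts: first show that $\LG(V)$ is Zariski closed in $\Gr(k,V)$, then compute its dimension in local coordinates around a Lagrangian point.

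For closedness, I would use the characterization that $H\in\LG(V)$ if and only if the symplectic form $\langle\cdot,\cdot\rangle$ vanishes identically on $H$. For any full-rank matrix $M\in\St(k,2k)$ with row space $H$, this condition is exactly the matrix equation $MJM^T=0$. Since $MJM^T$ is automatically skew-symmetric, this amounts to $\binom{k}{2}$ polynomial equations (the strict upper-triangular entries). Each such entry is a bilinear combination of $2\times 2$ minors of $M$ (weighted by entries of $J$), so these conditions descend to homogeneous polynomial conditions on the Pl\"ucker coordinates. Thus $\LG(V)$ is a closed subvariety of $\Gr(k,V)$.

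For the dimension, I would work in one affine chart. Using the $\SL(V)$-action to conjugate $J$, I may assume $J=\bigl(\begin{smallmatrix} 0 & I_k \\ -I_k & 0\end{smallmatrix}\bigr)$, so that the standard Lagrangian $H_0=\Span(e_1,\dotsc,e_k)$ lies in the chart $G_{[k]}$ whose points are parametrized by $M=[\,I_k\mid B\,]$ with $B\in\Mat_{k\times k}$ arbitrary. A direct calculation gives
\[
 MJM^T \;=\; B^T - B\,,
\]
so $H\in\LG(V)\cap G_{[k]}$ precisely when $B$ is symmetric. Symmetric $k\times k$ matrices form an affine space of dimension $\binom{k+1}{2}$, giving a neighborhood of $H_0$ in $\LG(V)$ of exactly this dimension. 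Note this matches the expected codimension computation $k^2-\binom{k}{2}=\binom{k+1}{2}$.

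To promote this local calculation to a global dimension statement, I would invoke that $\Sp(V)$ acts transitively on $\LG(V)$, a standard consequence of Witt's extension theorem (any two Lagrangian subspaces of a symplectic space are related by a symplectic automorphism). Since $\Sp(V)$ acts on $\Gr(k,V)$ by automorphisms preserving $\LG(V)$, every point of $\LG(V)$ has an open neighborhood isomorphic to the symmetric-matrix chart above; in particular $\LG(V)$ is a smooth irreducible variety of dimension $\binom{k+1}{2}$. The main obstacle is purely bookkeeping: the paper has fixed the antidiagonal form $\widetilde{J}$ attached to the curve $\widetilde{\gamma}$ of (\ref{eqn:normalGamma}), so I would need to either redo the local calculation with $\widetilde{J}$ in place of the standard block form, or explicitly apply the real $\SL(V)$-change of basis provided by the $g$ chosen immediately before Proposition \ref{prop:isotropicOsculatingFlags}; since all symplectic forms on $V$ are conjugate, neither changes the count.
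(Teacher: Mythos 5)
Your argument is essentially the same as the paper's: both fix a block form for $J$ by the $\SL(V)$-action, observe that in a Stiefel chart $\s(\alpha)$ the Lagrangian condition is $MJM^T = 0$, and then specialize to the chart $\alpha=[k]$ where this becomes the linear condition that $B$ is symmetric, yielding dimension $k^2 - \tbinom{k}{2} = \tbinom{k+1}{2}$.

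Two small remarks. First, the intermediate claim that the entries of $MJM^T$ are built from $2\times 2$ minors of $M$ and hence ``descend to homogeneous polynomial conditions on the Pl\"ucker coordinates'' is imprecise as stated: Pl\"ucker coordinates are the $k\times k$ maximal minors, not $2\times 2$ minors, so for $k>2$ there is no direct descent along those lines. The cleanest global statement is that contraction with the symplectic $2$-form gives a linear map $\bigwedge^k V \to \bigwedge^{k-2}V$ whose kernel cuts out $\LG(V)$ as a linear section of $\Gr(k,V)$; alternatively, the chart-by-chart argument you (and the paper) give already establishes closedness without needing the Pl\"ucker claim. Second, your explicit invocation of $\Sp(V)$-transitivity to promote the single-chart dimension count to a global statement is a genuine refinement: the paper's proof computes dimension only in the chart $G_{[k]}$ and leaves implicit the fact that $\LG(V)$ has no components outside that chart. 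Your transitivity step (or equivalently, the observation that $\LG(V)$ is a homogeneous space for $\Sp(V)$, stated in the paper just after the proposition but not used in the proof) closes that small gap.
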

\begin{proof}
Without loss of generality, suppose
\begin{equation}\label{def:sympMat}
 J = 
  \left(\begin{matrix}
   0      & -\Id_k \\
   \Id_k & 0     \\
  \end{matrix}\right)\,.
\end{equation}
Recall definition (\ref{def:Galpha}) of the open cover $\mathcal G$ of the Grassmannian.
For any $\alpha\in\tbinom{[2k]}{k}$, the matrices of $\s(\alpha)$ give coordinates for the dense open set $G_\alpha\subset \Gr(k,V)$.
A matrix $M\in\s(\alpha)$ gives coordinates for a point in $\LG(V)$ if an only if for any two rows $u,v$ of $M$ we have $\langle u,v\rangle = 0$.
This gives $k^2$ polynomial equations which establish $\LG(V)\subset \Gr(k,V)$ as a subvariety.

One may choose $\alpha$ so that all of the equations given above are linear ($\alpha=[k]$, for example).
Since $\langle u,v\rangle = - \langle v , u \rangle$ there are $\tbinom{k}{2}$ linearly independent equations defining $\LG(V)$.
Since $\dim(\s(\alpha))=k^2$, we have $\dim(\LG(V)) = k^2-\tbinom{k}{2} = \tbinom{k+1}{2}$.
%
\end{proof}
\begin{remark}
 One may choose a standard basis of $V$ in such a way that the $k\times 2k$ matrix $[\Id_k|M]$ of parameters with $M$ symmetric give local coordinates for $\LG(V)$.
\end{remark}

Recall the definition of $\alpha^\perp$, and that the rows of $d(\alpha)$ are the columns of $d(\alpha^\perp)$.
Noting that $\angle = J\circ {\perp}$, Proposition \ref{prop:dualSV} implies the following.
\begin{proposition}\label{prop:skewDualSV}
 Suppose $\Fdot$ is isotropic and $\alpha\in\tbinom{[2k]}{k}$.
 Then $\angle(X_\alpha \Fdot) = X_{\alpha^\perp} \Fdot$.
\end{proposition}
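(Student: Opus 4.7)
The plan is to decompose $\angle$ as $J \circ {\perp}$ and apply Proposition \ref{prop:dualSV}, then show that the isotropy of $\Fdot$ converts the dual flag $\Fdual$ back into $\Fdot$ under the identification $J\colon V^*\to V$. Concretely, Proposition \ref{prop:dualSV} gives
\[
\angle(X_\alpha \Fdot) \;=\; J\bigl(\,{\perp}(X_\alpha\Fdot)\bigr) \;=\; J\bigl(X_{\alpha^\perp}\Fdual\bigr),
\]
so it suffices to identify $J(X_{\alpha^\perp}\Fdual)$ with $X_{\alpha^\perp}\Fdot$.

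First I would record the easy transfer principle: for any flag $\Gdot$ in $V^*$ and any Schubert condition $\beta$, the image under $J$ is again a Schubert variety,
\[
J\bigl(X_\beta\Gdot\bigr) \;=\; X_\beta(J\Gdot),
\]
where $J\Gdot$ denotes the flag in $V$ whose $i$-dimensional part is $J(G_i)$. This follows immediately from the definition of a Schubert variety, since $J$ is a linear isomorphism and hence preserves dimensions of intersections. So the statement reduces to showing $J\Fdual = \Fdot$ as flags in $V$.

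The core computation is the flag-level identity: for each $i$, the $i$-dimensional subspace of $\Fdual$ is $F_i^\perp := \mathrm{ann}(F_{2k-i})$, and therefore
\[
J(F_i^\perp) \;=\; J\bigl(\mathrm{ann}(F_{2k-i})\bigr) \;=\; (F_{2k-i})^\angle.
\]
By the definition of an isotropic flag (with respect to $J$), $(F_{2k-i})^\angle = F_i$. Hence the $i$-dimensional subspace of $J\Fdual$ coincides with $F_i$, i.e.\ $J\Fdual = \Fdot$. Feeding this back, $J(X_{\alpha^\perp}\Fdual) = X_{\alpha^\perp}(J\Fdual) = X_{\alpha^\perp}\Fdot$, which completes the argument.

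The only real point requiring care is the flag identity $J\Fdual = \Fdot$; everything else is formal bookkeeping. I would therefore spend the bulk of the write-up on making explicit the annihilator/skew-orthogonal complement translation, ensuring that the definition $H^\angle := JH^\perp$ is used consistently and that the isotropy condition $F_i^\angle = F_{2k-i}$ is invoked in the correct direction. Once that identity is in hand, the proposition follows in one line by combining Proposition \ref{prop:dualSV} with the transfer principle for $J$.
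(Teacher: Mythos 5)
Your proof is correct and follows the same route as the paper, which simply notes $\angle = J\circ{\perp}$ and cites Proposition \ref{prop:dualSV}; you have merely made explicit the intermediate steps (the transfer principle $J(X_\beta\Gdot)=X_\beta(J\Gdot)$ and the flag identity $J\Fdual=\Fdot$ via isotropy) that the paper leaves implicit.
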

\begin{definition}
 The Schubert condition $\alpha\in\tbinom{[2k]}{k}$ is \emph{symmetric} if $\alpha=\alpha^\perp$.
 A Schubert problem $\balpha$ is \emph{symmetric} if each Schubert condition in $\balpha$ is symmetric.
\end{definition}
For $\Gr(3,\C^6)$, we give diagrams of some symmetric Schubert conditions
 \[d(3,5,6) = \I\,,\qquad d(2,4,6) = \IIi\,,\qquad d(2,3,6) = \IIii\,,\]
and some non-symmetric Schubert conditions
 \[d(1,5,6) = \III \neq \IiI\,,\qquad d(2,4,5) = \IIiI \neq \IIIi\,.\]
We give the key to proving the main theorems of this chapter.
\begin{corollary}\label{cor:SPfixedbyLag}
 If $\balpha$ is a symmetric Schubert problem, and $X$ is an osculating instance of $\balpha$, then $X$ is stable under the Lagrangian involution, $X^\angle=X$.
\end{corollary}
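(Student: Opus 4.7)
The plan is to unpack the Lagrangian involution termwise on the intersection defining $X$ and apply the two previously established facts: osculating flags are isotropic (Proposition \ref{prop:isotropicOsculatingFlags}), and the involution $\angle$ sends a Schubert variety with respect to an isotropic flag to the Schubert variety for the perpendicular condition with respect to the same flag (Proposition \ref{prop:skewDualSV}).

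Write the instance as
\[
 X \;=\; X_{\alpha^1}(t_1) \cap \cdots \cap X_{\alpha^m}(t_m)\,,
\]
where each $\alpha^i$ is a symmetric condition, i.e.\ $\alpha^i = (\alpha^i)^\perp$. First I would observe that $\angle \colon \Gr(k,V) \to \Gr(k,V)$ is a bijection of sets (indeed, an involution, since $(H^\angle)^\angle = H$), so it commutes with arbitrary intersections of subsets:
\[
 \angle(X) \;=\; \bigcap_{i=1}^m \angle\bigl(X_{\alpha^i}(t_i)\bigr)\,.
\]
Next, for each $i$, the flag $\Fdot(t_i)$ osculating $\gamma$ at $\gamma(t_i)$ is isotropic by Proposition \ref{prop:isotropicOsculatingFlags}, so Proposition \ref{prop:skewDualSV} gives
\[
 \angle\bigl(X_{\alpha^i}(t_i)\bigr) \;=\; X_{(\alpha^i)^\perp}(t_i) \;=\; X_{\alpha^i}(t_i)\,,
\]
where the last equality is the symmetry hypothesis $\alpha^i = (\alpha^i)^\perp$. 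Substituting back yields $\angle(X) = X$.

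The main obstacle here is essentially nonexistent once the preceding machinery is in place: the proof is a one-line chain of equalities. The only subtle point worth mentioning is that $\angle$ acts on subsets (not merely on points) so that taking images commutes with intersection; this is why the argument reduces to the symmetric Schubert conditions being individually preserved. No passage to a dense subvariety or transversality argument is needed.
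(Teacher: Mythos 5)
Your proof is correct and follows the same approach as the paper: invoke Proposition \ref{prop:isotropicOsculatingFlags} to see that the osculating flags are isotropic, then apply Proposition \ref{prop:skewDualSV} together with the symmetry $\alpha^i=(\alpha^i)^\perp$ to conclude each factor, hence the intersection, is $\angle$-stable. You simply spell out the bijectivity/intersection bookkeeping that the paper's terser proof leaves implicit.
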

\begin{proof}
 Proposition \ref{prop:isotropicOsculatingFlags} asserts that the flags giving the instance $X$ are isotropic.
 Thus Proposition \ref{prop:skewDualSV} establishes $X$ as an intersection of Schubert varieties which are stable under $\angle$.
 Therefore, $X$ is stable under $\angle$.
\end{proof}
Proposition \ref{prop:skewDualSV} allows us to define Schubert varieties for the Lagrangian Grassmannian.
\begin{definition}
 Suppose $\Fdot$ is isotropic and $\alpha\in\tbinom{[2k]}{k}$ is symmetric.
 Then $Y_\alpha \Fdot:= X_\alpha \Fdot \cap \LG(V)$ is a \emph{Lagrangian Schubert variety}.
\end{definition}
The \emph{length} $\ell(\alpha)$ of a Schubert condition $\alpha\in\tbinom{[2k]}{k}$ is the number of entries in $\alpha$ no greater than $k$,
\[\ell(\alpha):=\#\{\alpha_i \in \alpha\ |\ \alpha_i\leq k\}\,.\]
The length $\ell(\alpha)$ is the number of boxes in the main diagonal of the Young diagram $d(\alpha)$, and we may write $\ell(d(\alpha))$ to denote $\ell(\alpha)$.
We illustrate this by giving Young diagrams with their main diagonals shaded:
\[\ell\left(\lenIIi\right) = 1\,,\qquad \ell\left(\lenIIii\right)=2\,,\qquad \ell\left(\lenIIIIiiiI\right)=2\,.\]
\begin{proposition}\label{prop:LGSVdim}
 Let $\alpha$ be a symmetric Schubert condition and $\Fdot$ an isotropic flag.
 The codimension of $Y_\alpha \Fdot$ in $\LG(V)$ is
 \[\| \alpha \| := \frac{|\alpha| + \ell(\alpha)}{2}\,.\]
\end{proposition}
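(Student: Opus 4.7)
The plan is to realize $Y_\alpha \Fdot$ locally as the solution set of the Lagrangian equations inside the Stiefel chart $\s_\alpha$ for $X_\alpha \Fdot$, and to count how many of these equations impose independent conditions. Using the $\SL(V)$ action to reduce to $\gamma = \widetilde{\gamma}$ and $J = \widetilde{J}$ makes the standard flag $\Edot$ isotropic (Proposition \ref{prop:isotropicOsculatingFlags}); transitivity of the $\Sp(V)$-action on isotropic flags then reduces the problem to computing $\codim Y_\alpha \Edot$. Since $\s_\alpha$ parametrizes $X_\alpha \Edot \cap U_\alpha$ and has dimension $k^2 - |\alpha|$, and since $\dim \LG(V) = \binom{k+1}{2}$, the desired formula is equivalent to showing that the $\binom{k}{2}$ Lagrangian equations $\langle M_i, M_j\rangle = 0$ for $i < j$ cut the dimension of $\s_\alpha$ by exactly $\binom{k}{2} - u$, where $u := (|\alpha| - \ell(\alpha))/2$ is the number of boxes of $d(\alpha)$ strictly above its main diagonal.

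Expanding $\langle M_i, M_j\rangle$ against $\widetilde{J}$, only indices $l \in [2k+1-\alpha_j, \alpha_i]$ contribute nonzero terms, so the equation is identically zero exactly when $\alpha_i + \alpha_j \leq 2k$. Writing $r_i = k + i - \alpha_i$ for the $i$-th row length of $d(\alpha)$, this becomes $r_i + r_j \geq i + j$. A short combinatorial argument using symmetry of $d(\alpha)$ gives a bijection between pairs $(i,j)$ with $i < j$ satisfying this inequality and boxes of $d(\alpha)$ strictly above the diagonal, yielding exactly $u$ trivial equations.

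For the remaining $\binom{k}{2} - u$ pairs, symmetry of $\alpha$ gives $\alpha \cap (2k+1-\alpha) = \emptyset$, ruling out $\alpha_i + \alpha_j = 2k+1$ and hence forcing $\alpha_i + \alpha_j \geq 2k+2$. For each such pair, the two pivot-weighted contributions to $\langle M_i, M_j\rangle$ produce a nontrivial linear part $\pm M_{j,2k+1-\alpha_i} \pm M_{i,2k+1-\alpha_j}$ whose two variables are genuine free parameters of $\s_\alpha$ because $2k+1-\alpha_i \in \alpha^c$. A direct bookkeeping check shows different pairs produce disjoint pairs of variables, so these $\binom{k}{2} - u$ linear parts are independent at the origin of $\s_\alpha$. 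The origin corresponds to the coordinate Lagrangian $\langle e_{\alpha_1},\dotsc,e_{\alpha_k}\rangle$, which thus lies in $Y_\alpha \Edot$ as a smooth point on a component of codimension $\|\alpha\|$; combined with the standard irreducibility of Lagrangian Schubert varieties, this yields the stated codimension.

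The main obstacle is the independence step: tracking which matrix entries appear in the linear part of each Lagrangian equation, and verifying disjointness across pairs, is where the symmetry hypothesis $\alpha = \alpha^\perp$ is used most heavily. The combinatorial identification of trivial equations, by contrast, is elementary once one rewrites $\alpha_i + \alpha_j \leq 2k$ as $r_i + r_j \geq i + j$ and applies the symmetry of $d(\alpha)$.
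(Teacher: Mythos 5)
Your proof is correct, but it follows a genuinely different route than the paper's. The paper chooses a bespoke isotropic flag $\Fdot$ adapted to $\alpha$ by labeling the vertical and horizontal edges of the lattice path $p(\alpha)$ with the basis vectors $e_1,\dotsc,e_k$ and $e_{k+1},\dotsc,e_{2k}$; with that choice, $Y_\alpha\Fdot$ is parametrized in the chart $[\Id_k\,|\,M]$ by symmetric matrices $M$ with $m_{ij}=0$ whenever $j\le d(\alpha)_i$, and the codimension is read off immediately by counting free entries on and above the diagonal of $M$. You instead keep the standard flag $\Edot$ (isotropic for $\widetilde{J}$), work in the Stiefel chart $\s_\alpha$ of dimension $k^2-|\alpha|$, and impose the $\binom{k}{2}$ Lagrangian equations $\langle M_i,M_j\rangle=0$ directly. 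Your identification of the trivial equations is clean: $\langle M_i,M_j\rangle\equiv 0$ iff $\alpha_i+\alpha_j\le 2k$ iff $r_i+r_j\ge i+j$, and for a self-conjugate diagram this is equivalent (via $j\le r_i\Leftrightarrow i\le r_j$, and $j>r_i\Leftrightarrow i>r_j$) to $(i,j)$ being a box strictly above the diagonal, so the bijection is literally the identity. Your independence argument for the remaining equations — reading off the linear part $\pm M_{j,\,2k+1-\alpha_i}\pm M_{i,\,2k+1-\alpha_j}$ from the two pivot terms, noting $2k+1-\alpha_i\in\alpha^c$ and $2k+1-\alpha_i<\alpha_j$ force these to be honest coordinates of $\s_\alpha$, and checking disjointness of the variable pairs across $(i,j)$ — is sound, and the final appeal to irreducibility of the Schubert variety to upgrade the local codimension at the coordinate Lagrangian to a global codimension is legitimate (the paper uses this implicitly as well). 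The paper's construction buys an explicit smooth parametrization of a dense chart with no Jacobian argument; your approach is longer but avoids the ad hoc flag and makes visible exactly where symmetry enters (ruling out $\alpha_i+\alpha_j=2k+1$ and guaranteeing the linear terms survive). For a polished write-up you would want to spell out the two sentences of combinatorics behind the bijection and the one-line injectivity check on variable positions, but the argument as sketched has no gap.
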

\begin{proof}
 Without loss of generality, we use the symplectic form defined by
 \begin{equation}\label{def:sympMat}
  J = 
  \left(
   \begin{matrix}
    0      & -\Id_k \\
    \Id_k & 0     \\
   \end{matrix}
  \right)\,,
 \end{equation}
 and a corresponding isotropic flag $\Fdot$ which makes the codimension of $Y_\alpha \Fdot$ apparent.
 Consider the path $p(\alpha)$ defined in Proposition \ref{prop:perpSameCodim} (the northeast to southwest path defining the lower border of the Young diagram $d(\alpha)$).
 We label the vertical edges of $p(\alpha)$ with elements of the standard basis $e_1,\dotsc,e_k$ from top to bottom, and we label the horizontal edges with $e_{k+1},\dotsc,e_{2k}$ from left to right.
 Reading the labels along the path $p(\alpha)$ gives a basis $\bff$ for a flag $\Fdot$.

 As an example, we draw the labeled path associated to the symmetric Schubert condition $(2,5,6,8)\in\tbinom{[8]}{4}$.
 \[
  \begin{picture}(93,82)
   \put (11,0){\IIIiIp}

   \put (0,8){$e_4$}
   \put (17.5,13.5){$e_5$}
   \put (20.5,28){$e_3$}
   \put (20.5,48){$e_2$}
   \put (37.5,54){$e_6$}
   \put (58,54){$e_7$}
   \put (61,68){$e_1$}
   \put (78.5,74){$e_8$}

  \end{picture}
 \]
 Since $\alpha$ is symmetric, the $i$th vertical edge counting from the top is transposed with the $i$th horizontal edge counting from the left, so reflecting the path along the antidiagonal transposes $e_i$ with $e_{i+k}$ for $i\in[k]$.
 Equivalently, if $f_i=e_j$ and $f_{2k-i+1}=e_l$ then $|j-l|=k$.
 Thus $\langle f_i\,,f_j\rangle = \delta_{j,2k-i+1}$, and $\Fdot$ is isotropic.

 Furthermore, $Y_\alpha \Fdot$ has local coordinates like the Stiefel coordinates given by the matrix $[\Id_k|M]$ of parameters with $M$ symmetric, such that the entries of $M$ satisfy the equations
 \[m_{ij} = 0\mbox{ if } j\leq d(\alpha)_i\,,\]
 where $d(\alpha)_i$ is the number of boxes in the $i$th row of the diagram $d(\alpha)$.
 Since $M$ is symmetric, we have $M_{ij}=M_{ji}$ for all $i$ and $j$.
 A calculation shows that these Stiefel-like coordinates have $\tbinom{k+1}{2}-\frac{|\alpha| + \ell(\alpha)}{2}$ independent parameters.
 Since $\dim(LG(V))=\tbinom{k+1}{2}$, we have $\| \alpha \| = \frac{|\alpha| + \ell(\alpha)}{2}$.
\end{proof}
We illustrate the coordinates defined in the proof of Proposition \ref{prop:LGSVdim}.
The Schubert condition $\alpha=(2,5,6,8)$ has Young diagram
\[d(\alpha) = \IIIiI\,,\]
and the Lagrangian Schubert variety $Y_\alpha \Fdot$ has local coordinates
\[
  \left(
   \begin{matrix}
     1 & 0 & 0 & 0 & \I     & \I     & \I     & m_{14} \\
     0 & 1 & 0 & 0 & \I     & m_{22} & m_{23} & m_{24} \\
     0 & 0 & 1 & 0 & \I     & m_{23} & m_{33} & m_{34} \\
     0 & 0 & 0 & 1 & m_{14} & m_{24} & m_{34} & m_{44} \\
   \end{matrix}
  \right)\,,
\]
where ${\It}$ denotes a coordinate in $\s(1,2,3,4)$ on $\LG(V)$ which is identically zero on $Y_\alpha\Fdot$.
 With these coordinates, one may see that $\dim(Y_\alpha \Fdot) = 7$ and $\| \alpha \| = 3$.
 In general, $\|\alpha\|$ is the number of boxes in and above the diagonal of $\d(\alpha)$.

Given a list $\balpha = (\alpha^1,\dotsc,\alpha^m)$ of symmetric Schubert conditions for $\LG(V)$, we define $\|\balpha\| := \| \alpha^1 \|+\cdots+\| \alpha^m \|$.
Kleiman's Theorem (Proposition \ref{prop:genTrans}) applies to Lagrangian Schubert problems \cite{Kleiman}.
\begin{proposition}[Lagrangian General Transversality]\label{prop:LGgenTrans}
 Let $\balpha = (\alpha^1,\dotsc,\alpha^m)$ be a list of symmetric Schubert conditions for $\LG(V)$.
 If $\Fdot^1,\dotsc,\Fdot^m$ are general isotropic flags, then the intersection
 \begin{equation}\label{eqn:LGGeneralIntersection}
  Y:=Y_{\alpha^1} \Fdot^1 \cap \cdots \cap Y_{\alpha^m} \Fdot^m
 \end{equation}
 in $\LG(V)$ is generically transverse.
 In particular, if $Y$ is nonempty, then $\codim (Y) = \|\balpha\|$.
\end{proposition}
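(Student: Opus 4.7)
The plan is to deduce this from Kleiman's transversality theorem applied to the connected algebraic group $\Sp(V)$ acting on $\LG(V)$, in direct analogy with the proof of Proposition \ref{prop:genTrans} which used $\GL(V)$ acting on $\Gr(k,V)$. First I would verify the hypotheses of Kleiman's theorem: $\LG(V)$ is a homogeneous space for $\Sp(V)$ (as noted after Proposition \ref{prop:LGsubvar}), and $\Sp(V)$ is a connected algebraic group in characteristic zero. Kleiman's theorem then yields: for subvarieties $A_1,\dotsc,A_m \subset \LG(V)$ and general $g_1,\dotsc,g_m \in \Sp(V)$, the intersection $g_1.A_1 \cap \cdots \cap g_m.A_m$ is generically transverse in $\LG(V)$.

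The key observation connecting this to the statement is that the construction $\Fdot \mapsto Y_\alpha \Fdot$ is $\Sp(V)$-equivariant. Indeed, for $g\in\Sp(V)$ the flag $g.\Fdot$ is still isotropic since $g$ preserves $\langle\cdot,\cdot\rangle$, and $g.Y_\alpha \Fdot = Y_\alpha(g.\Fdot)$ because the incidence conditions $\dim(H\cap F_{\alpha_i})\geq i$ of Definition \ref{Def:SchubertVariety} are preserved by any linear automorphism of $V$. Furthermore, $\Sp(V)$ acts transitively on the space of isotropic flags (a consequence of Witt's theorem), so for any fixed reference flags $\Fdot_0^1,\dotsc,\Fdot_0^m$, the image of $\Sp(V)^m$ under $(g_1,\dotsc,g_m)\mapsto (g_1.\Fdot_0^1,\dotsc,g_m.\Fdot_0^m)$ is the full parameter space of $m$-tuples of isotropic flags.

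Applying Kleiman's theorem with $A_i := Y_{\alpha^i}\Fdot_0^i$ and transporting via the equivariance, I would conclude that for general isotropic flags $\Fdot^1,\dotsc,\Fdot^m$ the intersection $Y = Y_{\alpha^1}\Fdot^1\cap\cdots\cap Y_{\alpha^m}\Fdot^m$ is generically transverse in $\LG(V)$. When $Y$ is nonempty, generic transversality at a smooth point of any component yields
\[\codim(Y) \;=\; \sum_{i=1}^m \codim_{\LG(V)}\!Y_{\alpha^i}\Fdot^i \;=\; \sum_{i=1}^m \|\alpha^i\| \;=\; \|\balpha\|\,,\]
using Proposition \ref{prop:LGSVdim} for the second equality.

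The main conceptual point, rather than obstacle, is the verification that Lagrangian Schubert varieties behave equivariantly under the symplectic group and that Kleiman's hypotheses survive the passage from $\GL(V)$ to $\Sp(V)$; both are routine once one confirms that $\Sp(V)$ is connected and acts transitively both on $\LG(V)$ and on isotropic flags. In positive characteristic one would instead need to invoke Vakil's generic transversality result \cite{Va06b} adapted to the symplectic setting, but this is not needed here since we work over $\C$.
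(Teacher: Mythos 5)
Your argument is correct and takes the same approach as the paper, which simply asserts that Kleiman's Theorem applies to Lagrangian Schubert problems and cites \cite{Kleiman}. Your proposal merely fills in the verification that the paper omits: connectedness of $\Sp(V)$, transitivity of $\Sp(V)$ on $\LG(V)$ and on the space of isotropic flags, $\Sp(V)$-equivariance of $\Fdot\mapsto Y_\alpha\Fdot$, and the codimension count via Proposition~\ref{prop:LGSVdim}.
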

If $\balpha$ is a list of symmetric Schubert problems with $\| \balpha \| = \tbinom{k+1}{2}$, then $\balpha$ is called a \emph{Lagrangian Schubert problem}.

\section{Congruence Modulo Four via Independent Involutions}

We find it useful to discuss sets of fixed points of the Grassmannian under different involutions.
The set of points fixed by complex conjugation in $\Gr(k,V)$ is the \emph{real Grassmannian}
\[\RGr(k,V) := \Gr(k,V)(\R) = \Gr(k,V(\R))\,.\]
Using the local coordinates of Proposition \ref{prop:LGSVdim}, $\RGr(k,V)$ is given by $k\times k$ matrices $M$ with the restriction that all entries are real.

Composing complex conjugation with $\angle$ gives another involution on $\Gr(k,V)$, 
and we call its set of fixed points the \emph{Hermitian Grassmannian},
\[\HG(V):=\left\lbrace H\in \Gr(k,V)\ |\ H=\overline{H}^\angle\right\rbrace\,.\]
We could alternatively define the Hermitian Grassmannian as the set of $k$-planes $H$ with $H=\overline{H^\angle}$, since $\angle$ commutes with complex conjugation.
The Hermitian Grassmannian has local coordinates given by $k\times k$ Hermitian matrices.

The real Lagrangian $k$-planes are fixed by both complex conjugation and the Lagrangian involution.
They form the \emph{real Lagrangian Grassmannian}
\[\RLG(V) = \RGr(k,V)_\angle\,,\]
which has local coordinates given by $k\times k$ real symmetric matrices.
We observe that the real Lagrangian Grassmannian may be defined in several equivalent ways,
\[\RLG(V) = \RGr(k,V) \cap \LG(V) = \RGr(k,V) \cap \HG(V) = \LG(V) \cap \HG(V)\,.\]

Suppose $X$ and $Z$ are irreducible varieties of the same dimension, and $f:X\rightarrow Z$ is a dominant map of degree $d$.
The number of complex points in the fiber $f^{-1}(z)$ over a general point $z\in Z$ is $d$.
Furthermore, if $X$ and $Z$ are real varieties and $f$ is real, then the fiber $f^{-1}(z)$ over a real point $z\in Z(\R)$ is a real variety, and for general $z\in Z(\R)$ the fiber $f^{-1}(z)(\R)$ satisfies the congruence
\[\#(f^{-1}(z)(\R))\equiv \#(f^{-1}(z)) \mod 2\,,\]
since nonreal points come in conjugate pairs.
By degenerating to special fibers and counting multiplicities, we see that this congruence holds for all $z\in Z(\R)$.

If $X$ is equipped with an involution $\angle$ such that $f\circ \angle = f$, then the points of $f^{-1}(z)$ not fixed by $\angle$ satisfy another congruence modulo two.
We give a nondegeneracy condition which implies that these two involutions are independent, giving a stronger congruence modulo four.
\begin{proposition}\label{prop:simpleLemma}
 Suppose $X$ is an irreducible real variety with a real involution $\angle$, $Z$ is a real variety of the same dimension, and $f:X\rightarrow Z$ is a dominant real map such that $f\circ\angle = f$ and $\codim_Z f(X_\angle)\geq 2$.
 If $y,z\in Z(\R)$ are general points in the same connected component of $Z(\R)$, then
 \[\#(f^{-1}(y)(\R))\equiv \#(f^{-1}(z)(\R)) \mod 4\,.\]
\end{proposition}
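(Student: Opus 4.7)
My plan is to connect $y$ and $z$ by a real arc in $Z(\R)$ that avoids $f(X_\angle)$ and crosses the branch locus of $f$ only transversally, then show that at each such crossing the real fiber count changes by a multiple of four, exploiting the $\angle$-equivariance of $f$.

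For any $y \in Z(\R)$, the fiber $f^{-1}(y)$ carries two commuting real involutions: complex conjugation (since $y$ is real) and $\angle$ (since $f \circ \angle = f$). These generate a group $G \cong \Z/2 \times \Z/2$, and the real points of $f^{-1}(y)$ are those fixed by conjugation. If in addition $y \notin f(X_\angle)$, then no point of $f^{-1}(y)$ is $\angle$-fixed, so each real point has a distinct real $\angle$-partner and the real count is automatically even, say $2b(y)$. The assertion becomes that $b(y)$ is locally constant modulo $2$ on $Z(\R) \setminus f(X_\angle)(\R)$.

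The hypothesis $\codim_Z f(X_\angle) \geq 2$ implies that $f(X_\angle)(\R)$ has real codimension at least $2$ in $Z(\R)$ and therefore does not disconnect any component. Using standard semi-algebraic transversality, I would choose a smooth arc $\gamma : [0,1] \to Z(\R)$ from $y$ to $z$ that avoids $f(X_\angle)(\R)$ entirely and meets the branch locus $B \subset Z$ only transversally at finitely many generic smooth real points of $B$. At such a crossing $z_0 = \gamma(t_0)$, the fiber $f^{-1}(z_0)$ contains a ramification point $r$ at which two sheets of $f$ come together; since $z_0 \notin f(X_\angle)$, the $\angle$-partner $\angle(r) \ne r$ is also a ramification point in $f^{-1}(z_0)$ hosting the two $\angle$-images of those sheets. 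Thus four sheets of $f$ collapse synchronously at $\{r,\angle(r)\}$, and by $\angle$-equivariance they cross the wall together: on one side they give two real $\angle$-orbits $\{p_1, p_1^\angle\}$ and $\{p_2, p_2^\angle\}$ (contributing $4$ real points), while on the other side they give complex conjugate pairs $\{q, \bar q\}$ and $\{q^\angle, \overline{q^\angle}\}$ (contributing $0$ real points). If $r$ is itself non-real, then the whole $G$-orbit $\{r, \bar r, \angle(r), \overline{\angle(r)}\}$ consists of non-real ramification points whose eight collapsing sheets contribute nothing to the real count on either side. Either way the real count shifts by $\pm 4$ or $0$ across each crossing, which yields the claimed congruence.

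The main obstacle I anticipate is the transversality argument: I need $\gamma$ to simultaneously avoid $f(X_\angle)$, meet $B$ only at generic smooth real points, and avoid fibers containing two coincident $\angle$-orbits of ramification points. Each of these conditions is generic---the bad locus has real codimension $\geq 1$, or $\geq 2$ for the avoidance of $f(X_\angle)$---so a Whitney stratification of the real algebraic set $Z(\R)$ combined with a version of Sard's theorem over $\R$ should produce such a $\gamma$. Writing this out carefully, together with verifying the local normal form of a simple branch point in the $\angle$-equivariant setting, is the most delicate step of the proof.
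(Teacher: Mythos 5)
Your proposal follows the same route as the paper's proof: connect $y$ and $z$ by a real arc in $Z(\R)$ avoiding $f(X_\angle)$ (possible because the image has complex, hence real, codimension $\geq 2$), then track the fibers along the arc and use the two commuting real involutions --- conjugation and $\angle$, both acting without fixed points over the arc --- to show that changes in the real count occur in multiples of four. The difference is in the local analysis at critical values. You use transversality to reduce to simple ramification, so the only moves are ``four sheets turn real / turn complex together'' or ``eight nonreal sheets reshuffle.'' The paper instead counts, for each real fiber point $x$ in a critical fiber, the numbers $p,q$ of nonreal points colliding into $x$ from each side (both even since nonreal points come in conjugate pairs), then doubles by $\angle$-pairing with $x^\angle$ to get a total change $2(q-p)\equiv 0\pmod 4$; this bookkeeping is indifferent to the multiplicity structure, so the paper does not need to enforce simple branching. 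One small slip in your argument: when the ramification point $r$ is nonreal its $G$-orbit $\{r,\bar r, r^\angle, \bar r^\angle\}$ has four elements only in the generic case; if $r$ is Hermitian ($r=\bar r^\angle$) the orbit is $\{r,\bar r\}$ with four, not eight, merging sheets. The conclusion (no change to the real count) still holds, but the paper handles this explicitly by observing that a Hermitian pair colliding to a real point would produce a Lagrangian point, which the path is constructed to avoid; you may want to add that observation rather than lump it into the ``nonreal $r$, orbit size four'' case.
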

\begin{proof}
 We prove this for sufficiently general points $y,z\in Z(\R)$.
 By degenerating to special fibers and counting multiplicities, this congruence holds for all real points $y,z\in Z(\R)$ in the same connected component.

 Since $\codim_Z f(X_\angle)\geq 2$, there is a path $\Gamma:[0,1]\rightarrow Z(\R)$ with $\Gamma(0)=y$ and $\Gamma(1)=z$ having a finite set of critical values $\{c_1,\dotsc,c_r\}=:C\subset (0,1)$, such that $\Gamma$ does not meet $f(X_\angle)$.
 Taking the closure, $X_\Gamma := \closure(f^{-1}(\Gamma([0,1]\setminus C)))$, we obtain a map $f_\Gamma : X_\Gamma \rightarrow [0,1]$ having all fibers finite and stable under conjugation.
 Let $w\in \Gamma([0,1])$ and $x\in f_\Gamma^{-1}(w)$.
 Since the fiber $f_\Gamma^{-1}(w)$ is real and stable under $\angle$, we have that $A_x:=\{x,\overline{x},x^\angle,\overline{x}^\angle\}\subset f^{-1}(w)$.
 Thus $x$ may be grouped in one of the following ways:
 \begin{itemize}
  \item[(1)] $A_x=\{x,\overline{x},x^\angle,\overline{x}^\angle\}$ contains four distinct points,
  \item[(2)] $A_x=\{x = \overline{x},x^\angle = \overline{x}^\angle\}$ contains two distinct real points,
  \item[(3)] $A_x=\{x = \overline{x}^\angle,\overline{x} = x^\angle\}$ contains two distinct Hermitian points,
  \item[(4)] $A_x=\{x = x^\angle,\overline{x} = \overline{x}^\angle\}$ contains two distinct Lagrangian points, or
  \item[(5)] $A_x=\{x = \overline{x} = x^\angle = \overline{x}^\angle\}$ contains one real Lagrangian point.
 \end{itemize}  
 Since the fibers over $\Gamma$ contain no Lagrangian points, types (4) and (5) do not occur.
 The number of points $x$ of type (1), (2), or (3) respectively is locally constant on $\Gamma([0,1]\setminus C)$.
 So as we vary $w$ continuously, the number of real points in the fiber $f^{-1}(w)$ may only change at a critical value $c$.
 It is forbidden that a Hermitian pair collides to a real point $x_0$ with multiplicity 2 at $c_i$, because such an $x_0$ would be Lagrangian, and $\Gamma([0,1])$ contains no Lagrangian points.
 Thus points in the fiber may not pass from type (3) to type (2) or vice versa, and we see the only way for the number of real points to change at $c$ is for points in the fiber to change from type (1) to type (2) or vice versa.

 Suppose $x\in f^{-1}(c)$ is a real point in the fiber of the critical value $c$.
 Let $a_1(t),\dotsc,$ $a_p(t)$ be the nonreal points of $f^{-1}(t)$ which collide to $x$ as $t$ approaches $c$ from below.
 Let $b_1(t),\dotsc,b_q(t)$ be the nonreal points of $f^{-1}(t)$ which collide to $x$ as $t$ approaches $c$ from above.
 Since the points $a_i(t)$ for $i\in[p]$ come in pairs, $p$ is even.
 Similarly $q$ is even, so $q-p$ is even.
 
 On the other hand, $x^\angle = x$ is in $f^{-1}(c)$.
 We have that $a^\angle_1(t),\dotsc,a^\angle_p(t)$ are the nonreal points approaching $x$ as $t$ increases to $c$, and $b^\angle_1(t),\dotsc,b^\angle_q(t)$ are the nonreal points approaching $x$ as $t$ decreases to $c$.
 Since $2(q-p)$ is a multiple of four, we have that the number of points changing from type (1) to type (2) or vice versa is a multiple of four.
\end{proof}

\section{A Congruence Modulo Four in Real Schubert Calculus}

Consider the Schubert problem $\balpha = \ik$ in $\Gr(k,V)$ involving the intersection of $k^2$ hypersurface Schubert varieties.
Schubert \cite{Schubert1886} calculated the number of complex points in an instance of $\ik$,
\[\#_k := \frac{(k^2)!1!\cdots(k-1)!}{k!(k+1)!\cdots(2k-1)!}\,,\]
and Theorem \ref{thm:dimTrans} implies that this is the number of complex points counting multiplicity in a instance of $\ik$ if the flags involved osculate a rational normal curve at distinct points.

More generally, we write $\#(\balpha)$ to denote the number of complex points in an instance of a Schubert problem $\balpha$.
We give one of the main results of this chapter.
\begin{theorem}\label{thm:WrMod4}
 Suppose $k\geq 3$ and $n=2k$.
 Given a set of distinct points $(t_1,\dotsc,t_{k^2})$ in $\p^1$, stable under complex conjugation, the number of real points of the instance
 \[X:=X_{\Is}(t_1) \cap \cdots \cap X_{\Is}(t_{k^2})\]
 of the Schubert problem $\ik$ is congruent to the number of complex points modulo four.
\end{theorem}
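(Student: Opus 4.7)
The plan is to apply Proposition \ref{prop:simpleLemma} to the Wronski map $\Wr : \Gr(k,V) \to \p^{k^2}$ together with the Lagrangian involution $\angle$, and anchor the resulting mod-$4$ congruence at a totally real configuration via the Mukhin-Tarasov-Varchenko Theorem.

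First I verify the symmetry and invariance. The Young diagram $d(\Is)$ is a single box, hence self-transpose, so $\Is^\perp = \Is$ and $\ik$ is symmetric. By Proposition \ref{prop:isotropicOsculatingFlags} and Corollary \ref{cor:SPfixedbyLag}, each osculating instance $X = \bigcap_i X_{\Is}(t_i)$ is $\angle$-stable. I claim moreover that $\Wr \circ \angle = \Wr$ on $\Gr(k,V)$: by Proposition \ref{prop:skewDualSV}, $H \in X_{\Is}(t)$ iff $H^\angle \in X_{\Is}(t)$, so $\Wr(H)$ and $\Wr(H^\angle)$ vanish at the same points, with matching orders of vanishing by Corollary \ref{cor:WrHigherOrder} together with Proposition \ref{prop:perpSameCodim}. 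Since both are degree-$k^2$ polynomials, they agree as points of $\p^{k^2}$.

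Second, I check the codimension hypothesis of Proposition \ref{prop:simpleLemma}. The fixed locus of $\angle$ on $\Gr(k,V)$ is $\LG(V)$, of complex dimension $\binom{k+1}{2}$ by Proposition \ref{prop:LGsubvar}, so $\Wr(\LG(V))$ has complex codimension at least $k^2 - \binom{k+1}{2} = \binom{k}{2}$ in $\p^{k^2}$. For $k \geq 3$ this is at least $3$, well above the required $2$. Consequently the real locus $\Wr(\LG(V)) \cap \R\p^{k^2}$, being the real points of a real-defined complex subvariety, has real codimension at least $\binom{k}{2} \geq 3$ in $\R\p^{k^2}$, so its complement is path-connected, and any two real-stable configurations are joined by a path in $\R\p^{k^2}$ avoiding $\Wr(\LG(V))$.

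Finally I invoke Proposition \ref{prop:simpleLemma}: the number of real points in the fiber $\Wr^{-1}(f)$ is constant modulo $4$ as $f$ varies over a connected component of $\R\p^{k^2} \setminus \Wr(\LG(V))$. At a configuration $(t_1,\dotsc,t_{k^2}) \in \R^{k^2}$ of distinct real osculation points, Theorem \ref{Th:MTV} guarantees all $\#_k$ solutions are real, so $\#(X(\R)) = \#_k$ there. Transporting the congruence along a path then yields $\#(X(\R)) \equiv \#_k \bmod 4$ for every real-stable configuration with distinct $t_i$, which is the assertion. The main obstacle is precisely the codimension check: it is what forces the restriction $k \geq 3$, since for $k = 2$ the codimension drops to $1$ and the complement may disconnect, consistent with the fact that no mod-$4$ congruence is expected in $\Gr(2,4)$. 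A minor remaining subtlety, extending from the generic fiber to all distinct-$t_i$ configurations, is absorbed by the multiplicity-counting degeneration built into the statement of Proposition \ref{prop:simpleLemma}.
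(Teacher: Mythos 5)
Your proof is correct and follows essentially the same route as the paper's: both apply Proposition \ref{prop:simpleLemma} to the Wronski map with the Lagrangian involution, verify $\Wr\circ\angle=\Wr$ via the isotropy of osculating flags, check $\codim \Wr(\LG(V))\geq 2$ using $\dim\LG(V)=\binom{k+1}{2}$ (your computation $k^2-\binom{k+1}{2}=\binom{k}{2}\geq 3$ is the same estimate the paper writes as $\binom{k+1}{2}\leq k^2-2$), and anchor at a totally real fiber. The only cosmetic differences are that you anchor via Theorem \ref{Th:MTV} where the paper cites \cite{Sot99}, you re-derive the path-avoidance argument that is already internal to Proposition \ref{prop:simpleLemma}, and you do not explicitly note the irreducibility of $\Gr(k,\C_{2k}[t])$ or the finiteness of $\Wr$ among the hypotheses you are checking.
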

\begin{proof}
 We make the assumption that the list $t = (t_1,\dotsc,t_{k^2})$ is sufficiently general so that the points of $X$ are distinct.
 We observe that since the map has finite fibers the theorem holds for all lists $t$ counting multiplicities by a limiting argument.

 We use the interpretation of $X$ as an inverse Wronski problem, described in Section \ref{sec:IWP}, and we show that $f=\Wr : \Gr(k,\C_{2k}[t]) \longrightarrow \p\C_{k^2+1}[t]$ satisfies the hypotheses of Proposition \ref{prop:simpleLemma}.
 Since $\Gr(k,\C_{2k}[t])$ is smooth and connected, it is irreducible.
 The isomorphism $J$ giving $\angle\circ{\perp}$ is real, so $\angle$ is a real map.
 Complex conjugation on $\p\C_{k^2+1}[t]$ is the usual complex conjugation of coefficients.
 We have $\dim(\Gr(k,\C_{2k}[t])) = k^2 = \dim(\p\C_{k^2+1}[t])$.

 By Theorem \ref{thm:dimTrans}, $\Wr$ is finite.
 Let $H\in \Gr(k,\C_{2k}[t])$.
 By identifying the inverse Wronski problem with intersections of osculating hypersurface Schubert varieties, we apply the key fact of Corollary \ref{cor:SPfixedbyLag}, and we have $\Wr\circ\angle (H) = \Wr(H)$.
 Since $k\geq 3$, we have 
 \[\dim(\LG(\C_{2k}[t]))=\tbinom{k+1}{2}\leq k^2-2 = \dim(\p\C_{k^2+1}[t])-2\,,\]
 so $\codim_{\p\C_{k^2+1}[t]}\Wr(\Gr(k,\C_{2k}[t])_{\angle})\geq 2$.
 The points of $\p\R_{k^2+1}[t]$ are connected, since they make up the projective space of real polynomials.

 Since $\Wr$ satisfies the hypotheses of Proposition \ref{prop:simpleLemma}, we have that the number of real points in a fiber $\Wr^{-1}(z)$ over $z\in \p\R_{k^2+1}[t]$ is fixed modulo four.
 Sottile proved that there is a point $z\in \p\R_{k^2+1}[t]$ whose fiber $\Wr^{-1}(z)$ has all $\#_k$ points real \cite{Sot99}.
 Applying Proposition \ref{prop:simpleLemma} we have $\#(\Wr^{-1}(y)) \equiv \#_k \mod 4$ for any real $y$.
 Interpreting this as an intersection of Schubert varieties, we have the congruence $\#(X(\R))\equiv \#_k \mod 4$.
\end{proof}
Theorem \ref{thm:WrMod4} explains the congruence modulo four found in Table \ref{tab:sharpness2}, which presents data for $({\It}^9)$ in $\Gr(3,\mathbb C^6)$.
Eventually, we prove a congruence for more general Schubert problems, such as $(\IIIiiiIIIt,\It^7)$ in $\Gr(4,\C^8)$ presented in Table \ref{tab:gap12}.

\begin{corollary}
 Let $k=3$ and $n=6$.
 Given a set of distinct points $(t_1,\dotsc,t_{9})$ in $\p^1$, stable under complex conjugation, the number of real points of the real osculating instance
 \[X:=X_{\Is}(t_1) \cap \cdots \cap X_{\Is}(t_{9})\]
 of the Schubert problem $(\It^9)$ is at least $2$.
\end{corollary}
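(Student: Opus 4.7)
The plan is to apply Theorem \ref{thm:WrMod4} directly and compute the relevant residue modulo four. With $k=3$ and $n=6$, Schubert's formula gives
\[\#_3 \;=\; \frac{9!\cdot 1!\cdot 2!}{3!\cdot 4!\cdot 5!} \;=\; 42\,,\]
which is the number of complex points of the generic instance of $({\It}^9)$ in $\Gr(3,\C^6)$ (and the number of complex points of $X$, counted with multiplicity, by Theorem \ref{thm:dimTrans}).

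Next I would verify that the hypotheses of Theorem \ref{thm:WrMod4} are satisfied for the given data: the points $t_1,\dotsc,t_9$ are assumed distinct and the list is stable under complex conjugation, and $k=3\geq 3$ as required. Theorem \ref{thm:WrMod4} then yields
\[\#(X(\R)) \;\equiv\; \#_3 \;=\; 42 \;\equiv\; 2 \pmod 4\,.\]

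Since $\#(X(\R))$ is a nonnegative integer and the smallest nonnegative integer congruent to $2$ modulo $4$ is $2$ itself, we conclude $\#(X(\R))\geq 2$, as desired. The only real content is the modular arithmetic and the arithmetic evaluation of $\#_3$; there is no genuine obstacle, and no step beyond quoting Theorem \ref{thm:WrMod4} is required. (One could also note that this matches the empirical lower bound of $2$ observed in Table \ref{tab:sharpness2}, confirming sharpness of the bound produced by the congruence.)
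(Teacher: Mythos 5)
Your proposal is correct and follows the same route as the paper: both invoke the congruence modulo four for $({\It}^9)$ in $\Gr(3,\C^6)$ and conclude that the smallest nonnegative integer congruent to $\#_3 = 42 \pmod 4$ is $2$. The paper's version merely adds the remark that the previously known Eremenko--Gabrielov topological lower bound $\Sigma(\It,\It)$ vanishes here (since $n=2k$ is even), so the congruence genuinely improves on it; your proof is otherwise complete.
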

\begin{proof}
 The number of complex points in $X$ is $\#_3 = 42$.
 The corresponding topological lower bound of Theorem \ref{Th:EG} on the number of real points is 0, because $n=2k$ is even.
 Since $2 \equiv 42 \mod 4$ is the least non-negative integer congruent to $\#_3$, it is a lower bound on the number of real points in $X$.
 The data presented in Table \ref{tab:sharpness2} found using symbolic means verify that the lower bound 2 is sharp.
\end{proof}

To generalize Theorem \ref{thm:WrMod4}, we introduce the variety $(\p^1)^m_{\neq}$ consisting of $m$-tuples of distinct points in $\p^1$.
Let $\balpha = (\alpha^1,\dotsc,\alpha^m)$ be a Schubert problem and define $X_\balpha \subset \Gr(k,V) \times (\p^1)^m$ to be the closure of the variety
\[X^\circ := \left\{ (H,t)\ |\ t\in (\p^1)^m_{\neq}\,,\mbox{ and }H\in X_{\alpha^i}(t_i)\ \mbox{for}\ i\in[m] \right\}\,.\]

By Theorem \ref{thm:dimTrans}, the fibers of the projection $X^\circ\rightarrow (\p^1)^m_{\neq}$ are finite, so $\dim(X^\circ)=m$.
The projection $X^\circ \rightarrow (\p^1)^m_{\neq}$ induces a projection $X_\balpha \rightarrow (\p^1)^m$, and work of Purbhoo \cite{Purbhoo2010} shows that every fiber of the induced projection contains $\#(\balpha)$ points, counting multiplicities.

The variety $X_\balpha$ turns out to have the wrong real structure for our study of symmetrically defined Schubert problems.
When distinct osculation points $t_i\neq t_j$ are associated to a common Schubert condition $\alpha_i=\alpha_j$, we may have $X:=X_{\alpha^1}(t_1) \cap \cdots \cap X_{\alpha^m}(t_m)$ and $H\in X$ both real, but $(H,t_1,\dotsc,t_m)\in X_\balpha$ not real.
To rectify this, we will a variety related to $X_\balpha$ by projecting it to an auxiliary variety which forgets some of the order of the list $(t_1,\dotsc,t_m)$.

Recall the exponential notation $\widehat{\balpha}^\ba$ for a Schubert problem, introduced in Chapter \ref{chapLower}.
Given an exponent vector $\ba$ whose entries sum to $m$, and setting the convention $a_0:=0$, we give an equivalence relation $\sim$ which separates $t$ into blocks of size $a_1,\dotsc,a_p$, forgetting the order of the points $t_i\in\p^1$ within each block.
Formally, we define $\sim$ on $(\p^1)^m_{\neq}$ by $t=(t_1,\dotsc,t_m)\sim (s_1,\dotsc,s_m)=:s$ if
\[\{t_{a_0+\cdots+a_{i-1}+1},\dotsc,t_{a_0+\cdots+a_{i}}\} = \{s_{a_0+\cdots+a_{i-1}+1},\dotsc,s_{a_0+\cdots+a_{i}}\}\,,\mbox{ for }i\in[p]\,,\]
as sets.
\begin{example}
 Let $\ba = (1,2,2)$.
 We give a maximal set of equivalent points in $(\p^1)^5_{\neq}$, using vertical lines to separate the blocks given by $\ba$:
 \[(0\,|\,1,\infty\,|\,2,5) \sim (0\,|\,\infty,1\,|\,2,5) \sim (0\,|\,1,\infty\,|\,5,2) \sim (0\,|\,\infty,1\,|\,5,2)\,.\]
\end{example}
\begin{definition}\label{def:regardPoly}
 By realizing the entries in the $i$th block of $(\p^1)^m_{\neq}$ as roots of a polynomial  $f_i$ of degree $a_i$ we have
 \[\p^\ba := \frac{(\p^1)^m_{\neq}}{\sim} \subset \prod_{i=1}^p \p^{a_i} \,,\]
where the usual coordinates in $\p^{a_i}$ are the coefficients of $f_i$.
 The inclusion is as a dense open subset.
\end{definition}

 Suppose $\balpha$ contains $m$ Schubert conditions, and $\widehat{\balpha}^\ba$ contains $p$ distinct Schubert conditions, and assume they give the same Schubert problem.
 We say $\balpha$ is \emph{sorted with respect to $\widehat{\balpha}$} if for $1\leq i < j \leq p$, each occurrence of $\widehat{\alpha}^i$ precedes each occurrence of $\widehat{\alpha}^j$ in $\balpha$.
\begin{definition}\label{def:rightFamily}
 Let $\widehat{\balpha}^\ba$ be the exponential representation of a Schubert problem $\balpha = (\alpha^1,\dotsc,\alpha^m)$, and assume $\balpha$ is sorted with respect to $\widehat{\balpha}$.
 We define $\widehat{X_\balpha}\subset \Gr(k,V)\times \p^\ba$ to be the closure of the variety
 \[\left\{ (H,t)\ |\ t\in \p^\ba\,,\mbox{ and }H\in X_{\alpha^i}(t_i)\ \mbox{for}\ i \in [m]\right\}\,.\]
 This is well defined, because $\balpha$ is stable under the permutation mapping $t$ to $s\sim t$.
\end{definition}
We may define $\widehat{X_\balpha}$ when $\balpha$ is not sorted with respect to $\widehat{\balpha}$.
We use this more general definition, but we do not give the technical details since they are straightforward but unenlightening.

As a direct consequence of Corollary \ref{cor:SVarsComeInPairs}, $X := X_{\alpha^1}(t_1) \cap \cdots \cap X_{\alpha^m}(t_m)$ is a real variety if and only if $\overline{(t_1,\dotsc,t_m)}\sim (t_1,\dotsc,t_m)$, that is, if and only if $t$ is real in $\p^\ba$.
Thus we have the desired property that $X$ and $H\in X$ are simultaneously real if and only if $(H,t_1,\dotsc,t_m)\in \widehat{X_\balpha}$ is real.
Theorem \ref{thm:dimTrans} implies that each fiber of
\[\pi : \widehat{X_\balpha} \longrightarrow \p^\ba\]
has $\#(\balpha)$ points.
Thus we generalize Theorem \ref{thm:WrMod4}.
\begin{theorem}\label{thm:mod4}
 Suppose $\balpha = (\alpha^1,\dotsc,\alpha^m)$ is a symmetric Schubert problem, and $t\in (\p^1)^m_{\neq}$.
 If $\codim(\pi((\widehat{X_{\balpha}})_\angle))\geq 2$ and the instance 
 \[X:=X_{\alpha^1}(t_1) \cap \cdots \cap X_{\alpha^m}(t_{m})\]
 of $\balpha$ is real, then the number of real points in $X$ is congruent to $\#(\balpha)$ modulo four, counting multiplicities.
\end{theorem}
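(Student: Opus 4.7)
The plan is to apply Proposition~\ref{prop:simpleLemma} to the projection $\pi\colon\widehat{X_\balpha}\to\p^\ba$, with the Lagrangian involution $\angle$ acting on the first factor $\Gr(k,V)$. Once the hypotheses are verified and a reference real base point with all-real fiber is supplied via Mukhin--Tarasov--Varchenko, the congruence modulo four will drop out. The argument is structurally parallel to that of Theorem~\ref{thm:WrMod4}, which treated the special case $\balpha=(\It^{k^2})$ with base $\p\R_{k^2+1}[t]$.

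To check the hypotheses: symmetry of $\balpha$ together with Corollary~\ref{cor:SPfixedbyLag} gives that every osculating instance of $\balpha$ is stable under $\angle$, so $\pi\circ\angle=\pi$; both maps are real because the symplectic form $J$ was chosen real. Theorem~\ref{thm:dimTrans} together with Purbhoo's fiber-length result invoked after Definition~\ref{def:rightFamily} ensures that $\pi$ has finite fibers of constant length $\#(\balpha)$, hence $\dim\widehat{X_\balpha}=\dim\p^\ba$. Irreducibility is arranged by passing to the component of $\widehat{X_\balpha}$ containing $t$, and the bound $\codim\pi((\widehat{X_\balpha})_\angle)\geq 2$ is the standing hypothesis.

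For the reference fiber, take $z\in\p^\ba(\R)$ realised by $m$ distinct real osculation points (distributed across the blocks of $\ba$ in the required way); Theorem~\ref{Th:MTV} then forces $\pi^{-1}(z)$ to consist of $\#(\balpha)$ distinct real points. The congruence $\#(\pi^{-1}(t)(\R))\equiv\#(\balpha)\pmod 4$ follows from Proposition~\ref{prop:simpleLemma} as soon as $t$ and $z$ are joined in the real base by a path avoiding the image of the Lagrangian locus.

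The main obstacle is the connectedness of the real base. Unlike $\p\R_{k^2+1}[t]$ in Theorem~\ref{thm:WrMod4}, the space $\p^\ba(\R)$ is only an open subset of the connected real projective product $\prod_i\p^{a_i}(\R)$, separated by the real discriminant walls where two osculation points collide. I would circumvent this by enlarging the target: take the closure of $\widehat{X_\balpha}$ inside $\Gr(k,V)\times\prod_i\p^{a_i}$, use connectedness of the ambient real base to join $t$ and $z$ by a generic real path, and extend the collision analysis of Proposition~\ref{prop:simpleLemma} to show that crossing a discriminant wall changes the real fibre count only by multiples of four. The codimension-two hypothesis lets the path avoid $\pi((\widehat{X_\balpha})_\angle)$, so types (4) and (5) collisions never occur; the remaining discriminant-wall crossings are controlled by the same quadruple-orbit argument under the joint action of complex conjugation and $\angle$, with each merger and re-emission of fiber points happening in $\angle$-orbits whose real contribution is congruent modulo four on both sides of the wall.
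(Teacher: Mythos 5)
Your plan is the paper's intended one: apply Proposition~\ref{prop:simpleLemma} to $\pi\colon\widehat{X_\balpha}\to\p^\ba$ with the Lagrangian involution, anchor with an all-real fiber from Theorem~\ref{Th:MTV}, and the congruence follows. The paper does not print a separate proof for Theorem~\ref{thm:mod4}; the discussion before it together with Proposition~\ref{prop:simpleLemma} is the argument, and your verification of the hypotheses --- $\pi\circ\angle=\pi$ from Corollary~\ref{cor:SPfixedbyLag}, finite fibers from Theorem~\ref{thm:dimTrans} and the cited result of Purbhoo, the given codimension bound --- is exactly what the paper leaves to the reader. Your observation that $\p^\ba(\R)$ is disconnected by discriminant walls is genuine, and you are right that one should pass to the closure over the compact base $\prod_i\p^{a_i}$, whose real points are connected.

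However, your closing step --- ``the codimension-two hypothesis lets the path avoid $\pi((\widehat{X_\balpha})_\angle)$'' --- overreaches. The hypothesis of Theorem~\ref{thm:mod4} bounds the Lagrangian locus only over $\p^\ba$; after compactification, the Lagrangian locus of the closed family can acquire new components supported over the codimension-one boundary $\prod_i\p^{a_i}\setminus\p^\ba$. Concretely, a conjugate pair $\{H,H^\angle\}$ of interior fiber points can degenerate, as two osculation points collide, to a single Lagrangian boundary point of multiplicity two, and if that happens generically along a wall, no real path from $t$ to the all-real Mukhin--Tarasov--Varchenko point avoids the Lagrangian image, and Proposition~\ref{prop:simpleLemma} does not apply. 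You never rule this out, so the wall-crossing part of your argument is not closed. This is precisely the subtlety the paper addresses in Proposition~\ref{prop:mod4tech}, where the Lagrangian locus is bounded over the entire slice $\{r_i\neq r_j\}\subset(\p^1)^m$ (which does include the other discriminant walls), and then a real path is constructed within that slice. A second, smaller loose end: ``passing to the component of $\widehat{X_\balpha}$ containing $t$'' is not well-posed, since $t$ lies in the base and its fiber may meet several components, which need not individually be $\angle$-stable or real; this is repairable (conjugation-swapped pairs of components contribute generically no real points, while an $\angle$-swapped pair contributes twice a count of fixed parity, hence a quantity rigid mod~$4$), but as written it is a gap.
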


Since proving the subtle relation $\codim(\pi((X_{\balpha})_\angle))\geq 2$ may be difficult, we give a weaker statement which arises from calculating a lower bound on $\codim(\pi((X_{\balpha})_\angle))$.
\begin{proposition}\label{prop:mod4tech}
 Suppose $\balpha = (\alpha^1,\dotsc,\alpha^m)$ is a symmetric Schubert problem containing no trivial Schubert condition $\alpha$ with $|\alpha|=0$, $t\in (\p^1)^m_{\neq}$, and for some distinct $i,j,l\in [m]$ either $\alpha^i=\alpha^j=\alpha^l$ or $\alpha^i\neq \alpha^j$.
 If
 \begin{equation}\label{eqn:criterion}
  m - \tbinom{k+1}{2} + \|\alpha^i\| + \|\alpha^j\| - 2\geq 2\,,
 \end{equation}
 and the instance
 \[X:=X_{\alpha^1}(t_1) \cap \cdots \cap X_{\alpha^m}(t_{m})\]
 of $\balpha$ is real, then the number of real points in $X$ is congruent to $\#(\balpha)$ modulo four, counting multiplicities.
\end{proposition}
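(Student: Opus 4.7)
The strategy is to apply Theorem \ref{thm:mod4}, reducing the claim to establishing $\codim_{\p^\ba}\pi((\widehat{X_\balpha})_\angle)\geq 2$. Because every $\alpha^l$ is symmetric, Proposition \ref{prop:skewDualSV} implies that $\angle$ preserves each Schubert variety $X_{\alpha^l}(t_l)$, so $(\widehat{X_\balpha})_\angle$ is exactly the set of pairs $(H,t)\in\LG(V)\times\p^\ba$ with $H\in Y_{\alpha^l}(t_l)$ for every $l$. I will bound the dimension of this fixed locus by projecting onto $\LG(V)$ and leveraging only the two distinguished Schubert conditions $\alpha^i$ and $\alpha^j$.

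The key auxiliary object is the bi-incidence
\[\mathcal{B}\,:=\,\{(H,s,u)\in\LG(V)\times\p^1\times\p^1 \,:\, s\neq u,\ H\in Y_{\alpha^i}(s)\cap Y_{\alpha^j}(u)\}.\]
Projecting $\mathcal{B}$ onto $\p^1\times\p^1$ and invoking an osculating analog of Lagrangian general transversality (Proposition \ref{prop:LGgenTrans}), each generic fiber has Lagrangian codimension $\|\alpha^i\|+\|\alpha^j\|$ in $\LG(V)$, yielding
\[\dim\mathcal{B}\,\leq\,2+\binom{k+1}{2}-\|\alpha^i\|-\|\alpha^j\|.\]
The projection $\rho\colon(\widehat{X_\balpha})_\angle\to\LG(V)$, $(H,t)\mapsto H$, has finite fibers: by Corollary \ref{cor:WrHigherOrder}, for any $H\in\LG(V)$ and any nontrivial $\alpha^l$, the set $\{t_l\in\p^1:H\in X_{\alpha^l}(t_l)\}$ lies in the zero set of $\Wr(H^\perp)$ and is thus finite. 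Because $t\in(\p^1)^m_{\neq}$ and $i\neq j$, the image of $\rho$ is contained in the $\LG(V)$-projection of $\mathcal{B}$, so
\[\dim(\widehat{X_\balpha})_\angle\,=\,\dim\rho((\widehat{X_\balpha})_\angle)\,\leq\,2+\binom{k+1}{2}-\|\alpha^i\|-\|\alpha^j\|.\]

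Since $\pi$ has finite generic fibers (Lagrangian solutions form a subset of the finite set of all solutions), $\dim\pi((\widehat{X_\balpha})_\angle)$ is bounded by the same quantity, whence
\[\codim_{\p^\ba}\pi((\widehat{X_\balpha})_\angle)\,\geq\,m-2-\binom{k+1}{2}+\|\alpha^i\|+\|\alpha^j\|\,\geq\,2\]
by the hypothesis (\ref{eqn:criterion}), and Theorem \ref{thm:mod4} then delivers the congruence modulo four. The combinatorial condition on the distinct indices $i,j,l$ enters to guarantee a legitimate choice of $i,j$: either $\alpha^i\neq\alpha^j$, so $t_i$ and $t_j$ sit in different blocks of $\p^\ba$, or $\alpha^i=\alpha^j=\alpha^l$, so their common block contains at least three osculation points. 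Both scenarios preserve the structural meaning of the codimension bound and rule out degenerate small-$m$ configurations in which the bi-incidence argument cannot contribute codimension~$2$.

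The main obstacle will be the dimension bound on $\mathcal{B}$. Proposition \ref{prop:LGgenTrans} proves the expected codimension only for generic isotropic flags, but osculating isotropic flags form a very special one-parameter family, where a priori the Lagrangian intersection can exceed the expected dimension. An osculating Lagrangian analog of the Eisenbud--Harris dimensional transversality of Theorem \ref{thm:dimTrans} is therefore needed. I would attempt this either by exploiting $\angle$-stability inside the Grassmannian intersection $X_{\alpha^i}(s)\cap X_{\alpha^j}(u)$, whose dimension is controlled by Theorem \ref{thm:dimTrans}, and descending transversality to $\LG(V)$ through the symmetry and the Wronskian bound of Corollary \ref{cor:WrHigherOrder}; or by adapting Purbhoo's osculating transversality machinery to the Lagrangian setting.
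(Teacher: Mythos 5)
Your bi-incidence set $\mathcal{B}$, the Wronskian-finiteness argument showing $\rho$ has finite fibers, and the resulting dimension bound $\dim(\widehat{X_\balpha})_\angle\leq 2+\tbinom{k+1}{2}-\|\alpha^i\|-\|\alpha^j\|$ are all essentially the same as the paper's argument (which builds the analogous family $Y$ over $(\p^1)^2_{\neq}$). Two of your concerns deserve comment. First, the dimension of $Y_{\alpha^i}(s)\cap Y_{\alpha^j}(u)$ for $s\neq u$ does not require an osculating analogue of Eisenbud--Harris transversality for arbitrary $m$: what is actually used is the fact that two \emph{distinct} osculating isotropic flags are opposite (linear general position, cf.\ Proposition \ref{prop:linGenFlagBases}), so that $Y_{\alpha^i}(s)\cap Y_{\alpha^j}(u)$ is a Lagrangian Richardson variety; Richardson varieties for a pair of opposite Borels in a reductive group always have expected dimension, which is much more elementary than Kleiman or Eisenbud--Harris. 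The paper uses this fact implicitly without citing it, so your caution is fair, but the remedy is standard.

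The genuine gap is in the last step and in your reading of the combinatorial hypothesis. You appeal to Theorem \ref{thm:mod4} as a black box, but that statement is given without proof, and the entire point of Proposition \ref{prop:mod4tech} is to carry out the step that Theorem \ref{thm:mod4} elides: one must apply Proposition \ref{prop:simpleLemma} directly, which requires a \emph{real path} in the base connecting a generic real $t$ to a point with all solutions real (so that MTV pins down the value mod 4). The space $\p^\ba(\R)$ is generically disconnected (its components correspond to osculation types), so such a path must be constructed in $\prod_i\p^{a_i}(\R)$, and the subtlety is arranging it to avoid the locus $r_i=r_j$ where the codimension bound no longer applies. This is precisely where the hypothesis ``$\alpha^i=\alpha^j=\alpha^l$ or $\alpha^i\neq\alpha^j$'' is used: if $\alpha^i\neq\alpha^j$ the two coordinates lie in different blocks, and if $\alpha^i=\alpha^j=\alpha^l$ the common block has degree at least three, which gives enough freedom to keep two designated roots of $f_x$ distinct along a real path. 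Your explanation (``guarantee a legitimate choice of $i,j$'', ``rule out degenerate small-$m$ configurations'') is not what is happening, and your proof uses the condition nowhere --- a sign that something is missing, since the proposition is not true without it (e.g.\ $m=2$ symmetric problems with $\alpha^1=\alpha^2$ can satisfy inequality (\ref{eqn:criterion}) but have no third index). You need to build the path $\Gamma$ and verify that avoiding $r_i(x)=r_j(x)$ is possible; the codimension argument alone does not substitute for this.
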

\begin{proof}
 Our goal is to apply Proposition \ref{prop:simpleLemma}.
 To do this, we describe a dense open subset $\widehat{X_\balpha}^\circ\subset\widehat{X_\balpha}$ for which $\pi((\widehat{X_\balpha}^\circ)_\angle)$ has at least codimension two, and $\pi((\widehat{X_\balpha}^\circ))(\R)$ is connected.

 Suppose the Schubert problem $\balpha$ has exponential representation $\widehat{\balpha}^\ba$ with $p$ distinct Schubert conditions.
 We have a commuting diagram of maps
 \[
   \xymatrix{
    & X_\balpha^\circ \ar[dr]_\rho \ar[rr]^\phi \ar[dl]_{\tilde{\rho}} & & \widehat{X_\balpha}\ar[d]^\pi\\
    (\p^1)^2_{\neq} \times (\p^1)^{m-2} \ar[rr]_\iota & & (\p^1)^m \ar[r]_\psi & \p^\ba
   }  \,, 
 \]
 where $\tilde{\rho}$ is given by $(H,s_1,\dotsc,s_m)\mapsto (s_i,s_j,s_1,\dotsc,\widehat{s_i},\dotsc,\widehat{s_j},\dotsc,s_m)$, and $X_\balpha^\circ$ is the dense open subset of $X_\balpha$ consisting of points $\{s\ |\ s_i\neq s_j\}$.
 The maps $\tilde{\rho},\rho,\pi$ have degree $\#(\balpha)$, the maps $\phi,\psi$ have degree $\prod_{i=1}^p a_i!$, and $\iota$ is injective.
 Thus, each map has finite fibers.
 
 We claim that $\widehat{X_\balpha}^\circ := \phi(X_\balpha^\circ)$ is a dense open subset of $\widehat{X_\balpha}$ with the codimension condition given above and that $\widehat{X_\balpha}^\circ(\R)$ is connected.
 To see this, we observe that the projection $\pi':\widehat{X_\balpha}^\circ \rightarrow \Gr(k,V)$ lifts along $\phi$ to a projection $\rho':X_\balpha^\circ \rightarrow \Gr(k,V)$ with finite fibers.
 These maps commute with the Lagrangian involution, so we have a commuting diagram of maps between the sets of fixed points,
 \[
   \xymatrix{
     (X_\balpha^\circ)_\angle \ar[dr]_{\rho'} \ar[rr]^{\phi}  & & (\widehat{X_\balpha}^\circ)_\angle\ar[dl]^{\pi'}\\
     & \LG(V)
   }\,.
 \]
 These maps have finite fibers.
 For each $r\in (\p^1)^2_{\neq} \times (\p^1)^{m-2}$, we have
 \[\rho'(\tilde{\rho}^{-1}(r))_\angle \subset Y_{\alpha^i}(r_i) \cap Y_{\alpha^j}(r_j)\,.\]
 Since $r_i\neq r_j$, we have
 \[\dim (Y_{\alpha^i}(r_i) \cap Y_{\alpha^j}(r_j)) = \tbinom{k+1}{2} - \|\alpha^1\| - \|\alpha^2\| =: C\,.\]
 The variety
 \[
  Y:=\{(H,r)\ |\ r\in (\p^1)^2_{\neq}\,,\ H\in Y_{\alpha^i}(r_i)\cap Y_{\alpha^j}(r_j) \}
 \]
 has dimension $\dim(Y) = C+2$, which implies $(\widehat{X_\balpha}^\circ)_\angle \subset (\pi')^{-1}(Y)$ has dimension at most $C+2$.
 Therefore, $\dim ( \pi((\widehat{X_\balpha}^\circ)_\angle) ) \leq C+2$.
 By Inequality (\ref{eqn:criterion}),
 \[\codim ( \pi((\widehat{X_\balpha}^\circ)_\angle) ) \geq m - C - 2 \geq 2\,.\]
 
 Having established the codimension hypothesis of Proposition \ref{prop:simpleLemma}, it is enough to prove that given two points $y,z\in \pi(\widehat{X_\balpha}^\circ)(\R)$, there is a real path connecting them.
 Thus we take a path $\Gamma:[0,1]\rightarrow (\p^1)^m$ such that $\psi\circ \Gamma$ is a real path connecting $y$ and $z$, and we show that we may require $r_i(x) \neq r_j(x)$ for $x\in [0,1]$.

 If we assume $\alpha^i=\alpha^j=\alpha^l$, then the projections $r_i(x),r_j(x),r_l(x)$ of $x\in[0,1]$ under $\Gamma$ are roots of a single polynomial $f_x$ given in Definition \ref{def:regardPoly}.
 Since $\deg(f_x)\geq 3$, we may choose $\Gamma$ so that $r_i(x) \neq r_j(x)$ for $x\in [0,1]$ with $\psi\circ \Gamma$ real.
 On the other hand, if $\alpha^i\neq \alpha^j$, then $r_i(x),r_j(x)$ are roots of different polynomials.
 Again, we may choose $\Gamma$ so that $r_i(x) \neq r_j(x)$ for $x\in [0,1]$ with $\psi\circ \Gamma$ real.
 
 Applying Proposition \ref{prop:simpleLemma}, we see the number of real points in $X(s)$ is fixed modulo four.
 The Mukhin-Tarasov-Varchenko Theorem \ref{Th:MTV} gives $s$ such that $X(s)$ has all $\#(\balpha)$ solutions real.
 Thus we have $\#(X(\R))\equiv \#(\balpha)\mod 4$.
\end{proof}

Proposition \ref{prop:mod4tech} proves the congruence modulo four for some of the problems we studied computationally, reported in Chapter \ref{chapLower}.

\begin{example}
 In Chapter \ref{chapLower}, we proved that real osculating instances of $\left(\IIIiiiIIIt,\It^7\right)$ in $\Gr(4,\C^8)$ have $20 \mod 4$ real solutions by counting real factorizations of a real polynomial.
 Table \ref{tab:gap12} gives data for this problem.

 Proposition \ref{prop:mod4tech} gives another proof for this congruence modulo four, since
 \[8-\binom{5}{2}+\left\|\IIIiiiIII\right\|+\left\|\I\right\|-2=8-10+6+1-2=3\geq 2\,.\]
\end{example}
\begin{example}
 Consider the Schubert problem $\left(\IIIiiiIIt,\It^8\right)$ in $\Gr(4,\C^8)$ with $90$ complex solutions.
 We solved $100,000$ instances of $\left(\IIIiiiIIt,\It^8\right)$, and in each instance we observed $90\mod 4$ real solutions.
 Proposition \ref{prop:mod4tech} proves this congruence modulo four, since
 \[9-\binom{5}{2}+\left\|\IIIiiiII\right\|+\left\|\I\right\|-2=9-10+5+1-2=3\geq 2\,.\]

 Since two is the least positive integer congruent to $90$ modulo four, two is a lower bound for the number of real solutions to a real osculating instance of $\left(\IIIiiiIIt,\It^8\right)$.
 In our computations, we have found $5853$ real osculating instances of $\left(\IIIiiiIIt,\It^8\right)$ with exactly two real solutions, so the lower bound of two is sharp.
 The previously known topological lower bound from Theorem \ref{Th:EG} was zero.
\end{example}
\begin{example}
 Consider the Schubert problem $\left(\IIIiIt,\IIit,\It^8\right)$ in $\Gr(4,\C^8)$ with $426$ complex solutions.
 Every real osculating instance of $\left(\IIIiIt,\IIit,\It^8\right)$ has $426\mod 4$ real solutions by Proposition \ref{prop:mod4tech}, since
 \[10-\binom{5}{2}+\left\|\IIIiI\right\|+\left\|\IIi\right\|-2=10-10+3+2-2=3\geq 2\,.\]

 Since two is the least positive integer congruent to $426$ modulo four, two is a lower bound for the number of real solutions to a real osculating instance of $\left(\IIIiIt,\IIit,\It^8\right)$.
 The previously known topological lower bound from Theorem \ref{Th:EG} was zero.
 We do not yet know if two is the sharp lower bound.
\end{example}
\begin{corollary}\label{cor:improveEGSS}
 Let $\balpha = (\alpha,\beta,\It^m)$ be a symmetric Schubert problem with $\#(\balpha)\not \equiv 0 \mod 4$ and $X$ be a real osculating instance of $\balpha$.
 Suppose the hypotheses of Proposition \ref{prop:mod4tech} are satisfied by $\balpha$.
 If the number of boxes above the main diagonal of the skew Young diagram $d((\beta)'/\alpha)$ is odd, then two is a lower bound for the number of real solutions to $X$.
 The previously known topological lower bound for such a problem was $\Sigma(\alpha,\beta)=0$.
\end{corollary}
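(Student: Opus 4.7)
The plan is to combine the mod-four congruence of Proposition \ref{prop:mod4tech} with a combinatorial involution argument forcing $\#(\balpha)$ to be even. Since the hypotheses of Proposition \ref{prop:mod4tech} hold, every real osculating instance $X$ satisfies $\#(X(\R)) \equiv \#(\balpha) \pmod{4}$; given $\#(\balpha) \not\equiv 0 \pmod{4}$, it suffices to show $\#(\balpha)$ is even, for then $\#(\balpha) \equiv 2 \pmod{4}$ and the non-negativity of $\#(X(\R))$ forces $\#(X(\R)) \geq 2$.

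I would first identify $\#(\balpha)$ with $\#\SYT(d(\beta'/\alpha))$ by iterating the Pieri rule on the $m$ hypersurface conditions in $\balpha=(\alpha,\beta,\It^m)$. The symmetry hypotheses on $\alpha$ and $\beta$ make both $d(\alpha)$ and $d(\beta')$ self-conjugate inside the $k\times k$ square (complementation and $180^\circ$ rotation in a square each preserve self-conjugacy), so the skew shape $d(\beta'/\alpha)$ is itself self-conjugate. Transposition across the main diagonal therefore descends to an involution $\tau\colon T \mapsto T^t$ on $\SYT(d(\beta'/\alpha))$.

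This involution is fixed-point-free: a fixed tableau $T$ would satisfy $T(c)=T(c^t)$ for every cell $c$, and since $T$ is injective this forces $c=c^t$, i.e., every cell on the main diagonal, contradicting $b\geq 1$ (which follows from $b$ being odd). A cell-by-cell sign computation then yields $\sign(\tau(T)) = (-1)^b\,\sign(T)$: reading cells in any fixed order, $\sign(\tau(T)) = \sign(\sigma)\cdot\sign(T)$ where $\sigma$ is the cell-transposition permutation, and $\sigma$ decomposes into $b$ two-cycles (the pairs $\{(i,j),(j,i)\}$ with $i<j$) together with the diagonal fixed points, giving $\sign(\sigma) = (-1)^b$.

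When $b$ is odd, $\tau$ is thus a fixed-point-free sign-reversing involution on $\SYT(d(\beta'/\alpha))$, so tableaux pair off with opposite signs and $\#\SYT(d(\beta'/\alpha))$ is even; this also independently recovers $\Sigma(\alpha,\beta)=0$, consistent with the paper's remark on the previously known topological bound. Combined with the hypothesis $\#(\balpha)\not\equiv 0\pmod{4}$, this forces $\#(\balpha) \equiv 2 \pmod{4}$, and Proposition \ref{prop:mod4tech} then gives $\#(X(\R)) \geq 2$. The main obstacle is the sign calculation: while the pairing structure of $\tau$ is transparent, verifying that $\sign(\tau(T))/\sign(T)$ equals the shape-invariant $(-1)^b$ independent of the chosen reading order requires careful bookkeeping of the permutation identifying the row-reading of $T$ with that of $T^t$.
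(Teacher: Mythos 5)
Your proposal is correct and takes essentially the same route as the paper: both exhibit transposition across the main diagonal as a sign-reversing (hence fixed-point-free) involution on $\SYT(d(\beta'/\alpha))$, giving $\Sigma(\alpha,\beta)=0$ and $\#(\balpha)$ even, whence $\#(\balpha)\equiv 2 \pmod 4$ and Proposition \ref{prop:mod4tech} force at least two real solutions. The sign computation you flag as the main obstacle is in fact immediate: with signs measured against the standard filling $T_0$, the permutation taking $T\circ T_0^{-1}$ to $\tau(T)\circ T_0^{-1}$ is a conjugate of the cell transposition $t$, so $\sign(\tau(T))=\sign(T)\cdot\sign(t)=(-1)^b\sign(T)$ with no dependence on any chosen reading order.
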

\begin{proof}
 The only parts of Corollary \ref{cor:improveEGSS} that do not follow immediately from Proposition \ref{prop:mod4tech} are the assertion $\Sigma(\alpha,\beta)=0$ and the implicit assertion that $\#(\balpha)$ is even.

 The sign imbalance $\Sigma(\alpha,\beta)$ as defined in Proposition \ref{prop:EGSS} may be calculated by observing that every standard Young tableau of shape $d((\beta)'/\alpha)$ may be uniquely paired with another standard Young tableau of the same shape by reflecting the tableau along the main diagonal.
 We give an example of paired tableaux with opposite signs.
 \[
  \begin{picture}(127.5,36)
   \put (0,0){\IIIiiiIIminusIb}
   \put (100,0){\IIIiiiIIminusIb}

   \put (15.5,25.5){1}
   \put (26.5,25.5){2}
   \put (3.5,14){3}
   \put (15,14){4}
   \put (26.5,14){5}
   \put (3.5,2.5){6}
   \put (15,2.5){7}

   \put (57,15){$\longleftrightarrow$}

   \put (115.5,25.5){3}
   \put (126.5,25.5){6}
   \put (103.5,14){1}
   \put (115,14){4}
   \put (126.5,14){7}
   \put (103.5,2.5){2}
   \put (115,2.5){5}
  \end{picture}
\]
 This operation is an odd permutation, since there are an odd number of boxes above the diagonal, so the paired tableaux have opposite signs.
 This implies that $\Sigma(\alpha,\beta)=0$.
 
 Since $\#(\balpha)$ is the number of tableaux of shape $d((\beta)'/\alpha)$, and since the number tableaux with sign $+1$ equals the number of tableaux with sign $-1$, $\#(\balpha)$ is even.
\end{proof}
Proposition \ref{prop:mod4tech} is highly technical, and we believe a stronger, simpler statement is true.
Assuming one may generalize the dimensional transversality theorem of Eisenbud and Harris, Theorem \ref{thm:dimTrans}, to intersections of Schubert varieties in a Lagrangian Grassmannian, one could easily calculate the codimension involved in Theorem \ref{thm:mod4} using combinatorial data.
This would give the following result.
\begin{conjecture}\label{conjecture}
 Let $X$ be a real osculating instance of a symmetric Schubert problem $\balpha$ in $\Gr(k,V)$.
 If
 \begin{equation}\label{eqn:conj}
  \| \balpha \| - \tbinom{k+1}{2} \geq 2\,,
 \end{equation}
 then the number of real points in $X$ satisfies the congruence $\#(X(\R))\equiv \#(\balpha)$ mod $4$.
\end{conjecture}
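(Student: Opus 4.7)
The plan is to apply Theorem~\ref{thm:mod4} directly by establishing the sharp codimension estimate
\[
 \codim_{\p^{\ba}}\bigl(\pi((\widehat{X_{\balpha}})_\angle)\bigr) \;\geq\; \|\balpha\| - \tbinom{k+1}{2}\,,
\]
which is at least $2$ by hypothesis. This generalizes the ad hoc two-condition estimate used in Proposition~\ref{prop:mod4tech} to use all $m$ of the Schubert conditions simultaneously, along the lines suggested in the remarks preceding the conjecture.

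The central analytic tool to develop is a Lagrangian Wronskian map
\[
 \Wr_{\LG}\colon \LG(V) \longrightarrow \p\C_{\binom{k+1}{2}+1}[t]\,,
\]
whose target dimension $\binom{k+1}{2}$ matches $\dim\LG(V)$. The three properties to establish, mirroring Proposition~\ref{prop:degreeOfWronskian} and Corollary~\ref{cor:WrHigherOrder}, are: (i) $\deg(\Wr_{\LG}(H)) \leq \tbinom{k+1}{2}$ for every $H \in \LG(V)$; (ii) if $H \in Y_\alpha(t)$ for a symmetric Schubert condition $\alpha$, then $\Wr_{\LG}(H)$ vanishes at $x=t$ to order at least $\|\alpha\|$; and (iii) equality holds in (ii) on a dense subset of $Y_\alpha(t)$. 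The hyperplane-intersection contradiction argument from the proof of Theorem~\ref{thm:dimTrans} then yields the Lagrangian Eisenbud-Harris statement: if $\bigcap_i Y_{\alpha^i}(a_i) \neq \emptyset$ for distinct $a_i \in \p^1$ and symmetric $\alpha^i$, then its codimension in $\LG(V)$ is exactly $\|\balpha\|$; in particular, the intersection is empty whenever $\|\balpha\| > \tbinom{k+1}{2}$.

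With these tools in hand the codimension bound follows from a dimension count on $(\widehat{X_{\balpha}})_\angle \subset \LG(V) \times \p^{\ba}$ via the projection $p_1$ to $\LG(V)$. A point $H \in \LG(V)$ lies in the image of $p_1$ only if $\Wr_{\LG}(H)$ has roots of multiplicity at least $\|\alpha^1\|,\dotsc,\|\alpha^m\|$ at $m$ distinct points; the locus of such polynomials in $\p\C_{\binom{k+1}{2}+1}[t]$ has dimension $m + \tbinom{k+1}{2} - \|\balpha\|$, being empty when $\|\balpha\| > \tbinom{k+1}{2}$ (in which case the codimension is infinite and the conjecture is immediate). Since $\Wr_{\LG}$ is generically finite (its source and target have the same dimension), the image of $p_1$ has dimension at most $m + \tbinom{k+1}{2} - \|\balpha\|$, and each fiber of $p_1$ is finite since the $t_i$'s are determined up to the equivalence defining $\p^{\ba}$ by the roots of $\Wr_{\LG}(H)$. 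Thus
\[
 \dim\pi((\widehat{X_{\balpha}})_\angle) \;\leq\; m + \tbinom{k+1}{2} - \|\balpha\|\,,
\]
and since $\dim\p^{\ba} = m$ the desired codimension bound follows; Theorem~\ref{thm:mod4} then concludes the proof.

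The main obstacle is the construction of $\Wr_{\LG}$ and the verification of properties (i)--(iii), especially (ii) with the integer $\|\alpha\|$ rather than the naive $|\alpha|$ inherited from the ambient Grassmannian. This Lagrangian Wronskian should be the type~$C$ analogue of the classical map; it is closely related to the symplectic or Pfaffian Wronskian used by Mukhin, Tarasov, and Varchenko in their proof of the type~$C$ Shapiro conjecture, and one would likely either extract the needed vanishing bound from their work or re-derive it intrinsically by adapting the Castelnuovo-style identity of Proposition~\ref{prop:WrCorresp} and the expansion-of-determinants argument of Proposition~\ref{prop:WrOrdAt0} to the Lagrangian setting.
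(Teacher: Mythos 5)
The statement you are attempting to prove is explicitly labeled a \emph{conjecture} in the paper. The paper does not prove it; the paragraph preceding it says only that the congruence \emph{would} follow ``assuming one may generalize the dimensional transversality theorem of Eisenbud and Harris \ldots to intersections of Schubert varieties in a Lagrangian Grassmannian,'' and the paper never establishes that generalization. Your proposal follows exactly the route the paper gestures at and is upfront that the key ingredient --- a Lagrangian Wronskian $\Wr_{\LG}$ with properties (i)--(iii), or equivalently a Lagrangian Eisenbud--Harris theorem for osculating flags --- is left unconstructed. So on its face the status of your argument matches that of the paper's heuristic: a conditional reduction, not a proof.

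However, there is a more serious problem than ``the construction of $\Wr_{\LG}$ remains to be done.'' The properties (i) and (ii) you require of $\Wr_{\LG}$ are mutually inconsistent, and the Lagrangian Eisenbud--Harris statement you extract from them (``$\bigcap_i Y_{\alpha^i}(a_i) \neq \emptyset$ with distinct $a_i$ forces $\codim = \|\balpha\|$; in particular empty whenever $\|\balpha\| > \tbinom{k+1}{2}$'') is false. To see this, take a \emph{generic} Lagrangian $H \in \LG(V) \subset \Gr(k,\C^{2k})$. The set $\{t : H \in X_{\Is}(t)\}$ is the zero locus of a polynomial of degree $k(n-k) = k^2$ (the one appearing in Proposition~\ref{prop:detConds} / the Wronskian correspondence of Proposition~\ref{prop:WrCorresp}), and a direct computation --- e.g.\ for $k=2$ with the isotropic curve $\widetilde\gamma$ and matrix $\widetilde{J}$ of the paper --- shows that the leading coefficient does not vanish identically on $\LG(V)$, so for generic Lagrangian $H$ this polynomial has $k^2$ simple roots $t_1,\dotsc,t_{k^2}$. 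Since $H$ is Lagrangian, $H \in Y_{\Is}(t_i)$ for all $i$; hence $\bigcap_{i=1}^{k^2} Y_{\Is}(t_i) \neq \emptyset$ for distinct osculation points, even though $\|(\It^{k^2})\| = k^2 > \tbinom{k+1}{2}$. Consequently a polynomial $\Wr_{\LG}(H)$ of degree at most $\tbinom{k+1}{2}$ vanishing to order $\geq \|\It\| = 1$ at each of those $k^2$ points would have to be identically zero: property (ii) forces $\Wr_{\LG}(H)$ to have at least $k^2 > \tbinom{k+1}{2}$ roots. So no such map exists, the hyperplane-contradiction argument of Theorem~\ref{thm:dimTrans} cannot transfer, and the downstream dimension count does not go through as written.

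Note this does not refute the conjecture. The paper does prove the congruence for $\It^{k^2}$ (Theorem~\ref{thm:WrMod4}) by a cruder and more robust estimate, $\dim \Wr(\LG(V)) \le \dim \LG(V) = \tbinom{k+1}{2} \le k^2 - 2$, which bounds the image of the Lagrangian locus in $\p^{\ba}$ without claiming that $\bigcap Y_{\Is}(t_i)$ is empty for all distinct $t_i$; it only shows the \emph{locus of bad $t$'s} has codimension $\ge 2$. The correct ``Lagrangian Eisenbud--Harris'' for this application must therefore be a statement about the dimension of the universal incidence variety (or its image in $\p^\ba$), not about the emptiness of individual fibers, and must account for the fact that the Lagrangian intersection can be nonempty for distinct points on a positive-dimensional but small locus. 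Proposition~\ref{prop:mod4tech} is the paper's partial surrogate for exactly this, using only two of the Schubert conditions. What your proposal needs --- and what the paper acknowledges it does not have --- is a sharp bound on that universal dimension using all $m$ conditions. Your plan to obtain it via a Lagrangian Wronskian with properties (i)--(iii) cannot succeed, so this step has to be replaced entirely, not merely filled in.
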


By permuting the entries of $\balpha$, we may assume $i=1$ and $j=2$ in Inequality \ref{eqn:criterion}.
Since none of the Schubert conditions in $\balpha$ is trivial, we have
\[\|\alpha^3\|+\dotsb+\|\alpha^m\| \geq m-2\,.\]
This implies $\| \balpha \| - \tbinom{k+1}{2} \geq m - \tbinom{k+1}{2} + \|\alpha^1\| + \|\alpha^2\| - 2$.
Therefore, assuming Inequality (\ref{eqn:criterion}) gives Inequality (\ref{eqn:conj}).
Thus Conjecture \ref{conjecture} implies Proposition \ref{prop:mod4tech}.

\section{Support for Conjecture \ref{conjecture}}

We used supercomputers to study all 44 nontrivial symmetric Schubert problem $\balpha$ on $\Gr(k,V)$ with $k\leq 4$ and $\#(\balpha)\leq 96$.
Ten of these Schubert problems satisfy the hypotheses of Proposition \ref{prop:mod4tech} (and thus the hypotheses of Conjecture \ref{conjecture}), and we observed the expected congruence modulo four.
We gave the data for two of these problems in Tables \ref{tab:sharpness2} and \ref{tab:gap12}.

We studied 11 symmetric Schubert problems which satisfy the hypotheses of Conjecture \ref{conjecture} but not those of Proposition \ref{prop:mod4tech}.
In each of these problems, the conjectured congruence was observed.
\begin{example}
 Consider the symmetric Schubert problem $(\IIiit^4)$ for $k=4$.
 This problem does not satisfy Inequality (\ref{eqn:criterion}) of Proposition \ref{prop:mod4tech},
 \[4-\binom{5}{2}+\left\|\IIii\right\|+\left\|\IIii\right\|-2=4-10+3+3-2=-2\not\geq 2\,.\]
 However, we see that Inequality (\ref{eqn:conj}) is satisfied,
 \[4\cdot \left\|\IIii\right\| - \binom{4+1}{2} = 4\cdot 3-10 = 2\geq 2\,.\]
 Thus Conjecture \ref{conjecture} claims that the number of real solutions is fixed modulo four.
 We verified this claim for $3,000$ examples, giving the data in Table \ref{tab:22e4}.
 These data consumed $1.486$ GHz-years of processing power.

 Indeed this Schubert problem cannot be a counter example to Conjecture \ref{conjecture}.
 The computational study \cite{secant} of Schubert problems given by secant flags (a generalization of osculating flags) uncovered the congruence modulo four for real instances of $(\IIiit^4)$.
 This problem was analyzed the congruence we observe for this problem was proven for all real instances of $(\IIiit^4)$, including those which are not osculating instances.
\end{example}
\begin{table}[tp]
 \caption{Support for Conjecture \ref{conjecture}.}
  \begin{center}
 \begin{tabular}{|c||c|c|c||c|}											
  \hline											
  {\bf $\#$ Real} & \multicolumn{3}{c||}{\rule{0pt}{12pt}$\IIiit^4$} & \multirow{2}{*}{\bf Total} \\ \cline{2-4}
  {\bf Solutions} & $r_{\IIiis}=	4	$ & $r_{\IIiis}=	2	$ & $r_{\IIiis}=	0	$ & \\ \hline \hline
{\bf 	0	} &	 	&	 	&	 	& {\bf 	0	} \\ \hline
{\bf 	2	} &	 	&	687	&	 	& {\bf 	687	} \\ \hline
{\bf 	4	} &	 	&	 	&	 	& {\bf 	0	} \\ \hline
{\bf 	6	} &	1000	&	313	&	1000	& {\bf 	2313	} \\ \hline \hline
{\bf 	Total	} & {\bf	1000	} & {\bf	1000	} & {\bf	1000	} & {\bf	3000	} \\ \hline
 \end{tabular}											
  \end{center}
 \label{tab:22e4}
\end{table}
We tested 23 symmetric Schubert problems which do not satisfy the hypotheses of Conjecture \ref{conjecture}.
Nineteen of these problems, including the problem of four lines $(\I^4)$ with $k=2$, did not exhibit a congruence modulo four.
\begin{example}
 The symmetric Schubert problem $(\IIit^2,\It^3)$ in $\Gr(3,\C^6)$ does not satisfy Inequality (\ref{eqn:conj}),
 \[2\cdot \left\|\IIi\right\|+3\cdot \left\|\I\right\| - \binom{3+1}{2} = 2\cdot 2+3\cdot 1-6 = 1\not\geq 2\,,\]
 so it cannot satisfy the more restrictive Inequality (\ref{eqn:criterion}).
 The results of symbolic computations displayed in Table \ref{tab:upperLower} show that the number of real solutions to real instances of this problem is not fixed modulo four.
\end{example}
Four of the 44 symmetric Schubert problems tested do not satisfy the hypotheses of Conjecture \ref{conjecture}, but exhibit a congruence modulo four on the number of real solutions.
Tables \ref{tab:311e2.21e2}--\ref{tab:321e2.22} present data collected for these four problems.
The values of $\|\balpha\|$ for these problems are 10, 11, 11, and 11 respectively, but a symmetric problem $\balpha$ in $\Gr(4,\C^8)$ must have $\|\balpha\|\geq 12$ to satisfy the hypotheses of Conjecture \ref{conjecture}.
\begin{table}[tp]
 \caption{Congruence not implied by Conjecture \ref{conjecture}.}
  \begin{center}
 \begin{tabular}{|c||c|c|c|c||c|}											
  \hline											
  \multirow{2}{*}{\bf $\#$ Real} & \multicolumn{4}{c||}{\rule{0pt}{12pt}$\IIIiIt^2\quad \IIit^2$} & \multirow{3}{*}{\bf Total} \\ \cline{2-5}
  \multirow{2}{*}{\bf Solutions} 		& $ r_{\IIIiIs} = 	2	$ & $ r_{\IIIiIs} = 	2	$ & $ r_{\IIIiIs} = 	0	$ & $ r_{\IIIiIs} = 	0	$ & \\ \cline{2-5}
		& $ r_{\IIis} = 	2	$ & $ r_{\IIis} = 	0	$ & $ r_{\IIis} = 	2	$ & $ r_{\IIis} = 	0	$ &		\\ \hline \hline
{\bf 	0	} &	 	&	73716	&	73895	&	 	& {\bf 	147611	} \\ \hline
{\bf 	2	} &	 	&	 	&	 	&	 	& {\bf 	0	} \\ \hline
{\bf 	4	} &	 	&	26284	&	26105	&	100000	& {\bf 	152389	} \\ \hline
{\bf 	6	} &	 	&	 	&	 	&	 	& {\bf 	0	} \\ \hline
{\bf 	8	} &	100000	&	 	&	 	&	 	& {\bf 	100000	} \\ \hline \hline 
{\bf 	Total	} & {\bf 	100000	} & {\bf 	100000	} & {\bf 	100000	} & {\bf 	100000	} & {\bf 	400000	} \\ \hline
 \end{tabular}											
  \end{center}
 \label{tab:311e2.21e2}
\end{table}
\begin{table}[tp]
 \caption{Another congruence not implied by Conjecture \ref{conjecture}.}
  \begin{center}
 \begin{tabular}{|c||c|c||c|}
  \hline											
  {\bf $\#$ Real} & \multicolumn{2}{c||}{\rule{0pt}{12pt}$\IIIiiIt^2\quad \IIit\quad\It$} & \multirow{2}{*}{\bf Total} \\ \cline{2-3}
  {\bf Solutions} 		& $ r_{\IIIiiIs} = 	2	$ & $ r_{\IIIiiIs} = 0$ & \\ \hline \hline
{\bf	0}	&	 	&	160337	& {\bf	160337	} \\ \hline	
{\bf	2}	&	 	&	 	& {\bf	0	} \\ \hline	
{\bf	4}	&	 	&	39663	& {\bf	39663	} \\ \hline	
{\bf	6}	&	 	&	 	& {\bf	0	} \\ \hline	
{\bf	8}	&	200000	&	 	& {\bf	200000	} \\ \hline		\hline
{\bf	Total}	& {\bf	200000	} & {\bf	200000	} & {\bf	400000	} \\ \hline
 \end{tabular}											
  \end{center}
 \label{tab:321e2.21.1}
\end{table}
\begin{table}[tp]
 \caption{A third congruence not implied by Conjecture \ref{conjecture}.}
  \begin{center}
 \begin{tabular}{|c||c|c||c|}
  \hline											
  {\bf $\#$ Real} & \multicolumn{2}{c||}{\rule{0pt}{12pt}$\IIIiIt\quad \IIiit^2\quad\IIit$} & \multirow{2}{*}{\bf Total} \\ \cline{2-3}
  {\bf Solutions} 		& $ r_{\IIiis} = 	2	$ & $ r_{\IIiis} = 0$ & \\ \hline \hline
{\bf	0}	&	 	&	142275	& {\bf	142275	} \\ \hline	
{\bf	2}	&	 	&	 	& {\bf	0	} \\ \hline	
{\bf	4}	&	200000	&	57725	& {\bf	257725	} \\ \hline	\hline
{\bf	Total}	& {\bf	200000	} & {\bf	200000	} & {\bf	400000	} \\ \hline
 \end{tabular}											
  \end{center}
 \label{tab:311.22e2.21}
\end{table}
\begin{table}[tp]
 \caption{A fourth congruence not implied by Conjecture \ref{conjecture}.}
  \begin{center}
 \begin{tabular}{|c||c|c||c|}
  \hline											
  {\bf $\#$ Real} & \multicolumn{2}{c||}{\rule{0pt}{12pt}$\IIIiiIt^2\quad \IIiit$} & \multirow{2}{*}{\bf Total} \\ \cline{2-3}
{\bf Solutions} 		& $ r_{\IIIiiIs} = 	2	$ & $ r_{\IIIiiIs} = 0$ & \\ \hline \hline
{\bf	0}	&        &        & {\bf      0} \\ \hline	
{\bf	2}	& 200000 & 200000 & {\bf 400000} \\ \hline \hline
{\bf	Total}	& {\bf	200000	} & {\bf	 200000	} & {\bf	400000	} \\ \hline
 \end{tabular}											
  \end{center}
 \label{tab:321e2.22}
\end{table}

\chapter{\uppercase{A Square Formulation via Duality}}
  \label{chapSquare}
  We say a system of equations is \emph{square} if it has the same number of equations as variables, and \emph{overdetermined} if it has more equations than variables.
The classical determinantal formulation of an instance of a Schubert problem given by Proposition \ref{prop:detConds} is overdetermined if more than two of the Schubert varieties involved are given by Schubert conditions other than $\It$.
Following joint work with Hauenstein and Sottile \cite{square}, we realize an intersection $X$ of Schubert varieties in a larger space so that it is the solution set to a system of polynomial equations, and the number of equations is equal to the codimension of $X$ in the larger space.
If $X$ is an instance of a Schubert problem, this gives a square system and allows one to use algorithms from Smale's $\alpha$-theory to verify approximate solutions obtained by numerical methods \cite{S86}.
This procedure replaces determinantal equations of degree $\min(k,n-k)$ by bilinear equations.

\section{Background}

Computational studies have used Gr\"obner bases to produce compelling conjectures in Schubert calculus \cite{secant,monotone,RS1998,RSSS2006,Sottile2000}, some of which have been proven \cite{EG2002b,mod4,MTV2009a,MTV2009b}.
The use of Gr\"obner bases in these computational studies has the advantage that it produces exact information, and the steps taken to produce that information are inherently a proof of correctness.
This rigidity is partially responsible for the complexity of calculating a Gr\"obner
basis \cite{MM1982}, which is limiting even for zero-dimensional ideals \cite{HL2011}.
Gr\"obner basis calculations do not not appear to scale well when parallelized \cite{Leykin04}, and this makes it difficult to efficiently use modern parallel computing to mitigate their computational complexity.
Calculating the Gr\"obner basis of an instance of a typical Schubert problem with more than 100 solutions or involving more than 16 variables is infeasible in characteristic zero.

Numerical and symbolic methods are subject to different computational bottlenecks, so parallel numerical methods, such as those using a parameter homotopy \cite{SW05}, offer an alternative to symbolic methods for solving Schubert problems beyond the scope of symbolic computation.
There are optimized numerical algorithms for Schubert problems, such as the Pieri homotopy algorithm \cite{HSS98}, which has successfully solved instances of a Schubert problem with 17,589 solutions \cite{LS}.
There is work being done to develop a more general Littlewood-Richardson homotopy \cite{LMSVV,svv} based on Vakil's geometric Littlewood-Richardson rule \cite{Va06a}.
While not optimized for Schubert calculus, regeneration \cite{HSW10} offers a numerical approach for Schubert problems that extends to flag varieties, natural generalizations of the Grassmannian.

Numerical methods generally do not give exact solutions, and the approximations given are not guaranteed to be correct.
When a computer verifies the correctness of numerical output, we say that the output has a certificate of validity.
We say that an approximate solution with a certificate is \emph{certified}.

Newton's method for expressing a root of a univariate polynomial as the limit of a sequence of approximations has a generalization giving a solution to a square system of polynomial equations as a limit.
Let $E=(E_1,\dotsc,E_p)$ be a vector of polynomials in the variables $v=(v_1,\dotsc,v_p)$, and consider $x\in\C^p$ as a vector.
We define the \emph{Jacobian} of $E$ at $x$,
\[\Jac_E(x):=
  \left(
    \begin{matrix}
      \frac{\partial E_1}{\partial x_1} & \cdots & \frac{\partial E_1}{\partial x_p} \\
      \vdots            &        & \vdots            \\
      \frac{\partial E_p}{\partial x_1} & \cdots & \frac{\partial E_p}{\partial x_p} \\
    \end{matrix}
  \right)\,.
\]
We set $N_0(x):=x$ and define the \emph{$i$th Newton iteration} $N_i(x)\in\C^p$ for $i>0$,
\[N_i(x) := N_{i-1}-\Jac_E(N_{i-1}(x))^{-1}E(N_{i-1}(x))\,.\]
\begin{definition}
 Let $N_\infty(x):=\lim_{i\rightarrow\infty}N_i(x)$.
 The sequence of Newton iterations $\{N_i(x)\}$ of $x\in\C^p$ \emph{converges quadratically} to a solution of $E$ if for every $i>0$,
 \[|N_{i+1}(x) - N_\infty(x)|\leq \frac{1}{2^{2^{i}-1}}|x - N_\infty(x)|\,,\]
 where $|\cdot|$ denotes the distance norm in $\C^p$.
 The sequence of Newton iterations converges quadratically if the number of significant digits doubles with each step.
 In this case, $x$ is called an \emph{approximate solution} to $E$ with \emph{associated solution} $N_\infty(x)$.
\end{definition}

There is a positive number $\alpha(x,E)>0$ depending on a point and system of equations so that if
\[\alpha(x,E) < \frac{13-3\sqrt{17}}{4}\]
then $x$ is an approximate solution to $E$ \cite[Ch. 8]{BCSS}.
Smale studied convergence of Newton iterations and established $\alpha$-theory to certify quadratic convergence and thus approximate solutions.
Sottile and Hauenstein showed that given an approximate solution $x$, algorithms from $\alpha$-theory may be used to determine whether its associated solution is real \cite{alphaCertified}.
Given two approximate solutions, one may also determine whether their associated solutions are distinct.
These applications require that $E$ be a square system \cite{DS00}.
Schubert problems are famously overdetermined, and the main goal of this chapter is to formulate them locally using square systems.

\section{Primal-Dual Formulation}\label{sec:primal-dual}

We present a way to formulate an instance of a Schubert problem in a Grassmannian as a square system of equations.
Recall the Stiefel coordinates $\hats(\alpha)$ dual to the local coordinates $\s(\alpha^\perp)$ on $\Gr(k,V)$ and $\hats\tbox_{\alpha}$ from Definition 
\ref{def:HatSBeta} dual to the local coordinates $\s_{\alpha^\perp}$ for $X_{\alpha{\perp}}\Fdot$.
There are also coordinates for an intersection of Schubert varieties, dual to $\s_\alpha^\beta$.
\begin{definition}
 Let $\hats\tbox_\alpha^\beta \subset\Mat_{n\times (n-k)}$ be the set of matrices with entries $m_{i,j}$ satisfying
 \[m_{i,j} = 1 \mbox{ if } i=n+1-\alpha_j\,,\quad\mbox{and}\quad m_{i,j} = 0 \mbox{ if } i < n+1-\alpha_j \mbox{ or } i > \beta_{n-k-j+1}\,.\]
 The matrices $\hats\tbox_\alpha^\beta$ give \emph{Stiefel coordinates} for $X_{\alpha}\Fdot^1 \cap X_\beta \Fdot^2\subset \Gr(n-k,V^*)$.
\end{definition}
\begin{example}
 Let $n=7$, and consider the Grassmannian $\Gr(4,V^*)$ and the Schubert conditions $\alpha=(2,4,5,7)$ and $\beta=(3,4,6,7)$.
 The coordinates $\hats\tbox_\alpha^\beta$ are given by matrices of the form
 \[
   \left(
     \begin{matrix}
       0      & 0      & 0      & 1      \\
       0      & 0      & 0      & m_{24} \\
       0      & 0      & 1      & m_{34} \\
       0      & 1      & m_{43} & 0      \\
       0      & m_{52} & 0      & 0      \\
       1      & m_{62} & 0      & 0      \\
       m_{71} & 0      & 0      & 0      \\
     \end{matrix}
   \right)\,.
 \]
\end{example}
By Corollary \ref{cor:numGens}, the classical determinantal formulation of $X_\alpha\Fdot$ requires more than $|\alpha|$ equations, unless it is given by the Schubert condition $\It$.
Thus an instance of a Schubert problem $\balpha=(\alpha^1,\dotsc,\alpha^m)$ such that $\alpha^i\neq \It$ for some $i\in[m]$ is the solution set to an overdetermined system in its classical determinantal formulation in local coordinates $\s(\alpha)$ for $\Gr(k,V)$.
However, the coordinates $\s_\alpha$ give $X_\alpha \Fdot \subset \Gr(k,V)$ by setting $|\alpha|$ variables equal to zero in the coordinates $\s(\alpha)$.
\begin{example}
 Consider the Grassmannian $\Gr(3,\C^7)$ and $d(2,5,7)=\IIIit$.
 We give coordinates $\s({\IIIit})$ of $\Gr(3,V)$ and $\s_{\IIIis}$ of $X_{\IIIis}\Fdot$ respectively,
 \[
   \left(
     \begin{matrix}
       m_{11} & 1 & m_{13} & m_{14} & 0 & m_{16} & 0 \\
       m_{11} & 0 & m_{13} & m_{14} & 1 & m_{16} & 0 \\
       m_{11} & 0 & m_{13} & m_{14} & 0 & m_{16} & 1 \\
     \end{matrix}
   \right)\quad\mbox{and}\quad
   \left(
     \begin{matrix}
       m_{11} & 1 & {\It}  & {\It}  & 0 & {\It}  & 0 \\
       m_{11} & 0 & m_{13} & m_{14} & 1 & {\It}  & 0 \\
       m_{11} & 0 & m_{13} & m_{14} & 0 & m_{16} & 1 \\
     \end{matrix}
   \right)\,,
 \]
 where $\It$ denotes a coordinate which is identically zero.
\end{example}
Similarly, if $\Fdot^1$ and $\Fdot^2$ are in linear general position, $\s_\alpha^\beta$ parametrizes a dense open subset of $X_\alpha\Fdot^1\cap X_\beta\Fdot^2$ using $\dim(X_\alpha\Fdot^1\cap X_\beta\Fdot^2)$ coordinates.

Recall the map ${\perp}:\Gr(k,V)\rightarrow \Gr(n-k,V^*)$ given by mapping a $k$-plane $H$ to its annihilator $H\mapsto H^\perp$.
\begin{definition}
 Let $\Delta\colon\Gr(k,V)\rightarrow\Gr(k,V)\times\Gr(n - k,V^*)$ be the \emph{dual diagonal map} given by $H\mapsto (H,H^\perp)$.
\end{definition}
\begin{proposition}\label{prop:dualDiagonal1}
 Let $A,B\subset\Gr(k,V)$ be subsets.
 Then we have the equality of sets
 \[\Delta(A\cap B) = (A\times{\perp}(B))\cap\Delta(\Gr(k,V))\,.\]
\end{proposition}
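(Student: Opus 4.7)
The plan is to verify the equality by double inclusion, chasing elements and exploiting the fact that $\perp$ is a bijection (which follows from $(H^\perp)^\perp = H$, as recorded in the paper's definition of the dual map).

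First I would establish the forward inclusion $\Delta(A\cap B)\subset(A\times\perp(B))\cap\Delta(\Gr(k,V))$. Given $H\in A\cap B$, the element $\Delta(H)=(H,H^\perp)$ lies in $\Delta(\Gr(k,V))$ by definition, and it lies in $A\times\perp(B)$ since $H\in A$ and $H^\perp\in\perp(B)$.

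Next I would establish the reverse inclusion. An arbitrary element of $(A\times\perp(B))\cap\Delta(\Gr(k,V))$ has the form $(H,H^\perp)$ for some $H\in\Gr(k,V)$ (because it lies in $\Delta(\Gr(k,V))$), together with the two conditions $H\in A$ and $H^\perp\in\perp(B)$. Here is the one genuine step: since $\perp$ is an involution on Grassmannians, the condition $H^\perp\in\perp(B)$ is equivalent to $H\in B$. Thus $H\in A\cap B$, and $(H,H^\perp)=\Delta(H)\in\Delta(A\cap B)$.

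There is no real obstacle; the statement is essentially a tautology once one observes that $\perp$ is bijective, and no algebraic-geometric input beyond the existing definition is needed. The only thing to be careful about is not to assume any topological or scheme-theoretic structure on $A$ and $B$, since the statement is purely set-theoretic and the proof should treat it as such.
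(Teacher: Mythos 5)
Your proof is correct and essentially the same as the paper's: both amount to unwinding the definitions and using the fact that $\perp$ is a bijection with $(H^\perp)^\perp = H$. The paper phrases this as a ``reduction to the diagonal'' via $\Delta(A\cap B)=\Delta(A)\cap\Delta(B)$, rewriting $\Delta(A)$ and $\Delta(B)$ each with only one constraint active, whereas you argue by a direct double inclusion — a cosmetic difference only.
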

\begin{proof}
 This is a dual version of the classical argument of reduction to the diagonal.
 Abbreviating $\Delta_G:=\Delta(\Gr(k,V))$, we observe
 \[\Delta(A) = (A\times {\perp}(A)) \cap \Delta_G = \left(A\times {\perp}(\Gr(n-k,V^*))\right) \cap \Delta_G\,.\]
 Similarly, we have
 \[\Delta(B) = (B\times {\perp}(B)) \cap \Delta_G = (\Gr(k,V)\times {\perp}(B)) \cap \Delta_G\,.\]
 Together, these give
 \[\Delta(A\cap B) = \Delta(A) \cap \Delta(B) = (A\times {\perp}(B)) \cap \Delta_G\,.\]
\end{proof}
We call $\Delta(A\cap B)$ the \emph{primal-dual} formulation of $A\cap B$.
We call the first factor of $\Delta(A\cap B)$ the \emph{primal factor} and the second factor of $\Delta(A\cap B)$ the \emph{dual factor}.

This gives us a new way to exhibit an intersection $X$ of two Schubert varieties.
Let $M$ be a $k\times n$ matrix of $kn$ indeterminates, giving global Stiefel coordinates for $\Gr(k,V)$ with respect to the standard basis $\be$ of $V$, and let $N$ be a $n\times (n-k)$ matrix of $n(n-k)$ indeterminates, giving global Stiefel coordinates for $\Gr(n-k,V^*)$ with respect to the dual basis $\be^*$ of $V^*$.
Then the rows of $M$ span a point in $\Gr(k,V)$, the columns of $N$ span a point in $\Gr(n-k,V^*)$, and $\Delta(\Gr(k,V))$ is the solution set to the matrix equation
\[MN=0_{k\times (n-k)}\,,\]
where $0_{k\times (n-k)}$ denotes the $k\times (n-k)$ zero matrix.
This equation consists of $k(n-k)$ equations which are bilinear in the entries of $M$ and $N$.

Suppose $\Fdot^1$ and $\Fdot^2$ are flags, and $\alpha,\beta\in\tbinom{[n]}{k}$.
By Proposition \ref{prop:dualDiagonal1}, we have
\[\Delta(X_\alpha \Fdot^1\cap X_\beta \Fdot^2) = (X_\alpha \Fdot^1\times X_{\beta{\perp}} \Fdot^{2{\perp}}) \cap\Delta(\Gr(k,V))\,.\]
Let $\Fdot^1$ be a matrix that is a basis for the flag $\Fdot^1$.
Recall that $\s_\alpha$ gives coordinates for $X_\alpha \Fdot^1$ with respect to some basis $\bff$ of $V$.
Let $M_\alpha$ be a $k\times n$ matrix of indeterminates in $\s_\alpha$.
A change of basis for $V$ from $\bff$ to $\be$ induces a dual action on Stiefel coordinates, so the matrix product $M_\alpha\Fdot^1$ locally parametrizes $X_\alpha \Fdot^1$ with respect to the standard basis $\be$.

Let $\widehat{\Fdot^{2{\perp}}}$ be a basis for the flag $\Fdot^{2{\perp}}$, that is, the first $i$ columns of the matrix $\widehat{\Fdot^{2{\perp}}}$ span $\Fdot^{2{\perp}}$ for all $i$.
Let $\widehat{M}_{\beta{\perp}}$ be a matrix of indeterminates in $\hats_{\beta{\perp}}$ giving Stiefel coordinates for $X_{\beta{\perp}} \Fdot^{2{\perp}}$.
By the argument above, the product $\widehat{\Fdot^{2{\perp}}}\widehat{M}_{\beta{\perp}}$ locally parametrizes $X_{\beta{\perp}} \Fdot^{2{\perp}}$ with respect to the standard dual basis $\be^*$.
It follows that the matrix equation
\begin{equation}\label{eqn:bilin2SV}
 M_\alpha\Fdot^1 \widehat{\Fdot^{2{\perp}}}\widehat{M}_{\beta{\perp}} =0_{k\times (n-k)}
\end{equation}
defines the dual diagonal $\Delta(X_\alpha \Fdot^1\cap X_\beta \Fdot^2)$ in $X_\alpha \Fdot^1\times X_{\beta{\perp}} \Fdot^{2{\perp}}$.

Equation (\ref{eqn:bilin2SV}) gives $k(n-k)$ equations defining the restriction of the dual diagonal $\Delta(\Gr(k,V))$ to a dense open subset of $X_\alpha \Fdot^1\times X_{\beta{\perp}} \Fdot^{2{\perp}}$.
The equations are bilinear in $k(n-k)-|\alpha|$ variables from $M_\alpha$ and $n(n-k)$ variables from $\widehat{M}_{\beta{\perp}}$.
We describe the dense subset of $X_\alpha \Fdot^1\times X_{\beta{\perp}} \Fdot^{2{\perp}}$ involved.

We used the action of $\GL(V)$ to adapt the open cover $\mathcal{G}$ of the Grassmannian to the cover $\mathcal{G}(t)$ from Definition \ref{def:UalphaT} so that $X_\alpha(t)\cap G_\alpha(t)$ is the dense open set parametrized by Stiefel coordinates $\s_\alpha$.
Given a flag $\Fdot$ whose basis is the matrix $\Fdot$, we similarly adapt $\mathcal{G}$ to an open cover
\[\mathcal{G}\Fdot := \{G_\alpha\Fdot^{-1}\ |\ \alpha\in\tbinom{[n]}{k}\}\,.\]
This has the feature that $X_\alpha\Fdot \cap G_\alpha \Fdot^{-1}$ is the dense open set parametrized by the Stiefel coordinates $\s_\alpha$.
Throughout this chapter, we use $X^\circ_\alpha\Fdot$ to denote this dense open set.
Similarly, we write $(X_\alpha\Fdot \cap X_\beta\Gdot)^\circ$, $X^\circ_{\alpha^{\perp}}\Fdot^{\perp}$, and $(X_{\alpha^{\perp}}\Fdot^{\perp} \cap X_{\beta^{\perp}}\Gdot^{\perp})^\circ$ to denote the open dense sets parametrized by $\s_\alpha^\beta$, $\hats\tbox_{\alpha^{\perp}}$, and $\hats\tbox_{\alpha^{\perp}}^{\beta^{\perp}}$ respectively.
The coordinates $\hats\tbox_{\alpha^{\perp}}$ and $\hats\tbox_{\alpha^{\perp}}^{\beta^{\perp}}$ were chosen in a way that yields
\[{\perp}(X_\alpha^\circ \Fdot ) = X^\circ_{\alpha^{\perp}} \Fdot^{\perp} \quad \mbox{and} \quad {\perp} ( X_\alpha \Fdot \cap X_\beta\Gdot)^\circ = (X_{\alpha^{\perp}}\Fdot^{\perp} \cap X_{\beta^{\perp}}\Gdot^{\perp})^\circ\,.\]
We have shown the following.
\begin{proposition}\label{prop:SchubIntCompInt}
 If $X_\alpha \Fdot^1,X_\beta \Fdot^2$ are Schubert varieties in $\Gr(k,V)$, then we have the equality
 \[\Delta(X_\alpha \Fdot^1 \cap X_\beta \Fdot^2) = (X_\alpha \Fdot^1 \times X_{\beta^\perp} \Fdot^{2{\perp}})\cap\Delta(\Gr(k,V))\,,\]
 as sets.
 Furthermore, $\Delta(X_\alpha^\circ \Fdot^1 \cap X_\beta^\circ \Fdot^2)$ is the solution set to the $k(n-k)$ bilinear equations given by Equation $(\ref{eqn:bilin2SV})$ in the coordinates $(\s_\alpha,\hats\tbox_{\beta^\perp})$.
\end{proposition}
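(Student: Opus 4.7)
The plan is to derive the set-theoretic identity directly from Proposition \ref{prop:dualDiagonal1} and then verify that Equation (\ref{eqn:bilin2SV}) cuts out exactly the dual diagonal inside the local chart on $X_\alpha \Fdot^1 \times X_{\beta^\perp} \Fdot^{2\perp}$. Since the body of the excerpt has already done most of the heavy lifting, the argument is really an assembly of earlier results rather than a fresh computation.

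For the first assertion, I would apply Proposition \ref{prop:dualDiagonal1} with $A := X_\alpha \Fdot^1$ and $B := X_\beta \Fdot^2$ to obtain $\Delta(X_\alpha\Fdot^1 \cap X_\beta\Fdot^2) = (X_\alpha\Fdot^1 \times {\perp}(X_\beta\Fdot^2)) \cap \Delta(\Gr(k,V))$, and then substitute the identification ${\perp}(X_\beta\Fdot^2) = X_{\beta^\perp}\Fdot^{2\perp}$ furnished by Proposition \ref{prop:dualSV}. This is a one-line deduction from already established results.

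For the second assertion, I would first recall from the paragraph preceding Equation (\ref{eqn:bilin2signedSV}) that $M_\alpha \Fdot^1$ parametrizes $X_\alpha^\circ \Fdot^1$ in the standard basis $\be$ and $\widehat{\Fdot^{2\perp}}\widehat{M}_{\beta^\perp}$ parametrizes $X_{\beta^\perp}^\circ \Fdot^{2\perp}$ in the dual basis $\be^*$; this uses the change-of-basis action of $\GL(V)$ on Stiefel coordinates, combined with the fact that $\hats_{\beta^\perp}$ is dual to $\s_{\beta^\perp}$. The main step is then to argue that the matrix equation $MN = 0_{k\times(n-k)}$ in global Stiefel coordinates defines $\Delta(\Gr(k,V))$ inside $\Gr(k,V) \times \Gr(n{-}k,V^*)$: indeed, the $(i,j)$-entry of $MN$ is precisely the natural pairing of the $i$th row of $M$ with the $j$th column of $N$, so $MN = 0$ is equivalent to $\rowspace(M) \subseteq (\colspace(N))^\perp$, and since $\dim(\rowspace(M)) + \dim(\colspace(N)) = k + (n-k) = n$, the inclusion forces the equality $\colspace(N) = (\rowspace(M))^\perp$ that characterizes the dual diagonal. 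Restricting to the affine chart $(\s_\alpha, \hats\tbox_{\beta^\perp})$ by substituting $M = M_\alpha \Fdot^1$ and $N = \widehat{\Fdot^{2\perp}}\widehat{M}_{\beta^\perp}$ turns this into Equation (\ref{eqn:bilin2SV}), producing exactly $k(n-k)$ bilinear equations whose common zero locus is $\Delta(X_\alpha^\circ \Fdot^1 \cap X_\beta^\circ \Fdot^2)$.

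The main obstacle is not mathematical depth but careful bookkeeping: one must keep straight which bases are being used on $V$ versus $V^*$, and verify that the chosen parametrizations $M_\alpha \Fdot^1$ and $\widehat{\Fdot^{2\perp}}\widehat{M}_{\beta^\perp}$ really are dual so that matrix multiplication computes the correct pairing. Once the conventions are aligned, the dimension count collapses the argument to a single observation. I would therefore lead with a brief remark pinning down these conventions before invoking the reduction to the diagonal, to avoid any ambiguity in the passage between $\Gr(k,V)$ and $\Gr(n{-}k,V^*)$.
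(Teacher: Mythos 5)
Your proposal is correct and follows essentially the same route the paper takes: the paper has no separate proof environment for this proposition (it concludes the preceding discussion with ``We have shown the following''), and that discussion assembles exactly the pieces you identify --- Proposition \ref{prop:dualDiagonal1} for the set-theoretic reduction to the dual diagonal, Proposition \ref{prop:dualSV} for $\perp(X_\beta\Fdot^2)=X_{\beta^\perp}\Fdot^{2\perp}$, the observation that $MN=0_{k\times(n-k)}$ cuts out $\Delta(\Gr(k,V))$ globally, and the change-of-basis substitution $M = M_\alpha\Fdot^1$, $N=\widehat{\Fdot^{2\perp}}\widehat{M}_{\beta^\perp}$ to land on Equation (\ref{eqn:bilin2SV}). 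Your justification of the global step, via the pairing interpretation $\rowspace(M)\subseteq(\colspace(N))^\perp$ together with the dimension count $k+(n-k)=n$ forcing equality of annihilators, is a correct and slightly more explicit unpacking of the paper's terse ``the rows of $M$ span a point in $\Gr(k,V)$, the columns of $N$ span a point in $\Gr(n-k,V^*)$''. Two small slips worth noting: you cite a nonexistent label (``eqn:bilin2signedSV'') where you mean (\ref{eqn:bilin2SV}), and the canonical duality pairs the coordinates $\s_\beta$ on $X_\beta\Fdot^2$ with $\hats\tbox_{\beta^\perp}$ on $X_{\beta^\perp}\Fdot^{2\perp}$ (one parametrizing the null space of the other), not $\s_{\beta^\perp}$ with $\hats\tbox_{\beta^\perp}$, which is merely a transpose/reordering. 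Neither slip affects the argument.
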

More may be said with a straightforward dimension calculation.
\begin{corollary}\label{cor:SchubIntCompInt}
 If $\Fdot^1,\Fdot^2$ are flags in $\Gr(k,V)$ in linear general position and $\alpha,\beta\in\tbinom{[n]}{k}$ satisfy $\alpha_i+\beta_{k-i+1}\leq n+1$ for $i\in[k]$, then the equations of Proposition \ref{prop:SchubIntCompInt} define $\Delta(X_\alpha^\circ \Fdot^1\cap X_\beta^\circ \Fdot^2)$ in $X:=X_\alpha^\circ \Fdot^1 \times X_{\beta^{\perp}}^\circ \Fdot^{2{\perp}}$ as the solution set to a system of $\codim_X\Delta(X_\alpha^\circ \Fdot^1\cap X_\beta^\circ \Fdot^2)$ equations, and the projection to the primal factor
 \[\Delta(X_\alpha^\circ \Fdot^1\cap X_\beta^\circ \Fdot^2)\longrightarrow X_\alpha^\circ \Fdot^1\cap X_\beta^\circ \Fdot^2\]
 gives a bijection of sets.
\end{corollary}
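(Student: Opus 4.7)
The plan is to combine Proposition \ref{prop:SchubIntCompInt} with a straightforward dimension count and an explicit inverse for the projection. Proposition \ref{prop:SchubIntCompInt} already exhibits $\Delta(X_\alpha^\circ\Fdot^1\cap X_\beta^\circ\Fdot^2)$ as the zero locus inside $X := X_\alpha^\circ\Fdot^1 \times X_{\beta^{\perp}}^\circ\Fdot^{2{\perp}}$ of the $k(n-k)$ bilinear equations arising from $MN = 0_{k\times(n-k)}$. So the only work left is to verify that $\codim_X \Delta(X_\alpha^\circ\Fdot^1\cap X_\beta^\circ\Fdot^2) = k(n-k)$ and to produce the inverse of the projection to the primal factor.

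For the codimension, first compute $\dim X$. By Corollary \ref{cor:SVdimAux}, $\dim X_\alpha^\circ\Fdot^1 = k(n-k) - |\alpha|$; by Proposition \ref{prop:perpSameCodim}, $|\beta^{\perp}| = |\beta|$, so $\dim X_{\beta^{\perp}}^\circ\Fdot^{2{\perp}} = k(n-k) - |\beta|$, giving $\dim X = 2k(n-k) - |\alpha| - |\beta|$. Under the linear-general-position hypothesis on $\Fdot^1,\Fdot^2$ together with the stated inequalities on $\alpha$ and $\beta$, the coordinates $\s_\alpha^\beta$ of Definition \ref{def:local2vars} parametrize a dense open subset of $X_\alpha\Fdot^1\cap X_\beta\Fdot^2$ of dimension $k(n-k) - |\alpha| - |\beta|$, as noted in the remark immediately after Proposition \ref{prop:genTrans}. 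Since $\Delta$ restricts to an isomorphism onto its image, $\dim\Delta(X_\alpha^\circ\Fdot^1\cap X_\beta^\circ\Fdot^2) = k(n-k) - |\alpha| - |\beta|$; subtracting yields codimension $k(n-k)$, matching the equation count.

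The bijection statement is then formal: the projection $X \to X_\alpha^\circ\Fdot^1$ restricted to $\Delta(X_\alpha^\circ\Fdot^1\cap X_\beta^\circ\Fdot^2)$ sends $(H,L)\mapsto H$, and has inverse $H\mapsto (H, H^{\perp})$. This inverse is well-defined on the claimed image because Proposition \ref{prop:dualSV} places $H^{\perp}$ in $X_{\beta^{\perp}}\Fdot^{2{\perp}}$ whenever $H \in X_\beta\Fdot^2$, and the coordinate patches were arranged so that $H^{\perp}$ lands in $X_{\beta^{\perp}}^\circ\Fdot^{2{\perp}}$ when $H$ lies in $X_\beta^\circ\Fdot^2$. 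There is no substantive obstacle to this argument; the corollary essentially packages Proposition \ref{prop:SchubIntCompInt} with a dimension tally, and the only delicate point is recognizing that the local parametrization $\s_\alpha^\beta$ already delivers the expected dimension of the intersection, without needing to invoke Kleiman's theorem for arbitrary general flags.
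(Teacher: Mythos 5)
Your proof is correct and is precisely the ``straightforward dimension calculation'' the paper alludes to before stating the corollary: Proposition \ref{prop:SchubIntCompInt} supplies the $k(n-k)$ bilinear equations, the dimension tally gives $\codim_X = k(n-k)$, and the inverse $H\mapsto(H,H^\perp)$ settles the bijection. One small remark worth recording: the inequality in the corollary's statement is a typo --- the nonemptiness condition used earlier (when $\s_\alpha^\beta$ is introduced as parametrizing a dense subset of $X_\alpha\Fdot^1\cap X_\beta\Fdot^2$) is $\alpha_i+\beta_{k-i+1}\geq n+1$, and your citation of that remark silently assumes the correct direction, which is the one under which the codimension count goes through.
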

We note that $\alpha_i+\beta_{k-i+1}\leq n+1$ for $i\in[k]$ is the condition that $X_\alpha\Fdot^1\cap X_\beta\Fdot^2\neq \emptyset$.
We may extend this method of obtaining a system of codimension-many equations to intersections of more than two Schubert varieties.
\begin{definition}
 Let $\Delta^m\colon\Gr(k,V)\rightarrow\Gr(k,V)\times\Gr(n - k,V^*)\times \cdots \times \Gr(n - k,V^*)$ be the map given by $H\mapsto (H,H_2,\dotsc,H_m)$ such that $H_i = H^\perp$ for $2\leq i\leq m$.
We call $\Delta^m$ the \emph{dual diagonal map} and observe that $\Delta = \Delta^2$.
\end{definition}
\begin{proposition}\label{prop:dualDiagonalm}
 Let $A_1,\dotsc,A_m\subset\Gr(k,V)$ be subsets.
 Then
 \[\Delta^m(A_1\cap \cdots \cap A_m)= (A_1\times{\perp}(A_2)\times \cdots \times{\perp}(A_m) )\cap\Delta^m(\Gr(k,V))\,.\]
\end{proposition}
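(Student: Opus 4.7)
The plan is to mimic the reduction-to-the-diagonal argument used to prove Proposition \ref{prop:dualDiagonal1}, only now with $m$ factors instead of two. Setting $\Delta_G^m := \Delta^m(\Gr(k,V))$ for brevity, the key observation is that for each individual set $A_i$, the image $\Delta^m(A_i)$ can be expressed as an intersection with $\Delta_G^m$ in two equivalent ways: one that constrains only the $i$th factor, and one that constrains all factors. Specifically, for $i=1$ we have
\[\Delta^m(A_1) = (A_1 \times \Gr(n{-}k,V^*)^{m-1}) \cap \Delta_G^m\,,\]
and for $2 \le i \le m$ we have
\[\Delta^m(A_i) = (\Gr(k,V) \times \Gr(n{-}k,V^*)^{i-2} \times {\perp}(A_i) \times \Gr(n{-}k,V^*)^{m-i}) \cap \Delta_G^m\,,\]
since a point $(H,H^\perp,\dotsc,H^\perp) \in \Delta_G^m$ lies in $\Delta^m(A_i)$ if and only if $H \in A_i$, which (using that $\perp$ is a bijection with $\perp\circ\perp = \mathrm{Id}$) is equivalent to $H^\perp \in {\perp}(A_i)$.

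Next I would combine these $m$ descriptions. Since $\Delta^m$ is injective (it is a graph over its first factor), it commutes with intersections, giving
\[\Delta^m(A_1 \cap \cdots \cap A_m) = \Delta^m(A_1) \cap \cdots \cap \Delta^m(A_m)\,.\]
Intersecting the $m$ set-theoretic descriptions above then yields
\[\Delta^m(A_1 \cap \cdots \cap A_m) = (A_1 \times {\perp}(A_2) \times \cdots \times {\perp}(A_m)) \cap \Delta_G^m\,,\]
which is the desired identity.

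There is no real obstacle: the argument is a direct and essentially formal generalization of the $m=2$ case already proved as Proposition \ref{prop:dualDiagonal1}, with the only conceptual ingredient being that $\perp$ is an involution so that the membership conditions $H \in A_i$ and $H^\perp \in {\perp}(A_i)$ are equivalent. The only small point to be careful about is the asymmetry between the primal factor (indexed by $1$) and the dual factors (indexed by $2,\dotsc,m$), which is dictated by the definition of $\Delta^m$; once this is kept straight, the displayed intersection collapses exactly as written in the statement.
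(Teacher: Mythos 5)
Your proof is correct and matches the approach the paper intends: the paper omits the proof, stating only that it is obtained by iterating the argument of Proposition~\ref{prop:dualDiagonal1}, and your argument carries that iteration out explicitly, describing each $\Delta^m(A_i)$ as a box constrained in a single factor intersected with $\Delta^m(\Gr(k,V))$ and then intersecting over $i$. The only ingredient beyond the $m=2$ case, as you correctly note, is that $\perp$ is an involution so that $H \in A_i$ is equivalent to $H^\perp \in \perp(A_i)$.
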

The proof is omitted, since it is given by iterating the proof of Proposition \ref{prop:dualDiagonal1}.
This gives a straightforward generalization of \ref{prop:SchubIntCompInt}.
\begin{proposition}\label{prop:PrimalDualSchubProb}
 If $X_{\alpha^1} \Fdot^1,\dotsc,X_{\alpha^m} \Fdot^m$ are Schubert varieties in $\Gr(k,V)$, then the set $\Delta^m(X_{\alpha^1}^\circ \Fdot^1 \cap \cdots \cap X_{\alpha^m}^\circ \Fdot^m)$ is equal to
 \[(X_{\alpha^1}^\circ \Fdot^1 \times X_{\alpha^{2{\perp}}}^\circ \Fdot^{2{\perp}} \times \cdots \times X^\circ_{\alpha^{m{\perp}}} \Fdot^{m{\perp}})\cap\Delta^m(\Gr(k,V))\,,\]
 and is the solution set to a system of $k(n-k)(m-1)$ bilinear equations in the coordinates $(\s_{\alpha^1},\hats\tbox_{\alpha^{2{\perp}}},\dotsc,\hats\tbox_{\alpha^{m{\perp}}})$.
\end{proposition}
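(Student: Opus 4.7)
The plan is to extend the $m=2$ argument from Proposition~\ref{prop:SchubIntCompInt} and Equation~(\ref{eqn:bilin2SV}) by treating one dual factor at a time. For the set-theoretic equality, I would apply Proposition~\ref{prop:dualDiagonalm} with $A_i := X_{\alpha^i}^\circ \Fdot^i$; this reduces the claim to the identification
\[{\perp}\bigl(X_{\alpha^i}^\circ \Fdot^i\bigr) \;=\; X^\circ_{\alpha^{i{\perp}}} \Fdot^{i{\perp}}\,,\]
which is exactly the open-cell statement noted in the paragraph preceding the proposition, and which follows from Proposition~\ref{prop:dualSV} together with the observation that the dual map ${\perp}$ exchanges the cover $\mathcal{G}\Fdot$ with its counterpart on the dual Grassmannian.

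Next, I would describe the defining equations of $\Delta^m(\Gr(k,V))$ inside $\Gr(k,V) \times \Gr(n-k,V^*)^{m-1}$. For a tuple $(H, L_2, \dotsc, L_m)$ with $\dim H = k$ and $\dim L_i = n-k$, the condition $L_i = H^{\perp}$ is equivalent to the vanishing $H \cdot L_i = 0$, so $\Delta^m(\Gr(k,V))$ is cut out by the $m-1$ matrix conditions indexed by $i = 2, \dotsc, m$. Passing to local coordinates exactly as in Section~\ref{sec:primal-dual}, I would parametrize the primal factor $X_{\alpha^1}^\circ \Fdot^1$ by $M_{\alpha^1} \Fdot^1$ with $M_{\alpha^1} \in \s_{\alpha^1}$, and each dual factor $X^\circ_{\alpha^{i{\perp}}} \Fdot^{i{\perp}}$ by $\widehat{\Fdot^{i{\perp}}} \widehat{M}_{\alpha^{i{\perp}}}$ with $\widehat{M}_{\alpha^{i{\perp}}} \in \hats\tbox_{\alpha^{i{\perp}}}$. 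Substitution yields the $m-1$ matrix equations
\[M_{\alpha^1} \Fdot^1 \, \widehat{\Fdot^{i{\perp}}} \, \widehat{M}_{\alpha^{i{\perp}}} \;=\; 0_{k \times (n-k)}\,, \qquad i = 2, \dotsc, m\,,\]
each contributing $k(n-k)$ bilinear equations in the entries of $M_{\alpha^1}$ and $\widehat{M}_{\alpha^{i{\perp}}}$, for a grand total of $k(n-k)(m-1)$, as claimed.

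The main obstacle here is really just careful bookkeeping rather than any new geometric idea: one must check that the local parametrizations commute with the dual map on the chosen open cells, and that no redundant equations appear beyond the $m-1$ copies of the $m=2$ system. Since $\Delta^m(\Gr(k,V))$ is the iterated fiber product of $m-1$ copies of $\Delta(\Gr(k,V))$ along the common primal factor, each block of $k(n-k)$ bilinear equations coincides with the system from Proposition~\ref{prop:SchubIntCompInt} applied to the pair $(X_{\alpha^1}^\circ \Fdot^1, X_{\alpha^i}^\circ \Fdot^i)$ (with the primal coordinates shared across all blocks), and the equation counts simply add.
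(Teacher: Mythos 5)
Your proposal is correct and follows the same route the paper takes: apply Proposition~\ref{prop:dualDiagonalm} (iterating the reduction-to-the-diagonal argument) for the set-theoretic identity, use the identification ${\perp}(X_\alpha^\circ\Fdot) = X^\circ_{\alpha^\perp}\Fdot^\perp$ on the chosen open cells, and obtain the bilinear system by pairing the primal factor with each of the $m-1$ dual factors exactly as in Equation~(\ref{eqn:bilin2SV}). The paper omits these details, stating only that the result is a straightforward generalization of Proposition~\ref{prop:SchubIntCompInt}, so your write-up is in fact slightly more explicit than the paper's own treatment, but the argument is the same.
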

The $k(n-k)(m-1)$ equations come from pairing the primal factor with each of the $m-1$ dual factors in Equation (\ref{eqn:bilin2SV}).
Kleiman's theorem, Proposition \ref{prop:genTrans}, implies that if $\balpha$ is a Schubert problem, and $\Fdot^1,\dots,\Fdot^m$ are in general position, then
\[X_{\alpha^1}^\circ \Fdot^1 \cap \cdots \cap X_{\alpha^m}^\circ \Fdot^m = X_{\alpha^1} \Fdot^1 \cap \cdots \cap X_{\alpha^m} \Fdot^m\,.\]
More generally, if $\balpha$ is not a Schubert problem, but $\Fdot^1,\dots,\Fdot^m$ are in general position, we still have that
\[X_{\alpha^1}^\circ \Fdot^1 \cap \cdots \cap X_{\alpha^m}^\circ \Fdot^m \subset X_{\alpha^1} \Fdot^1 \cap \cdots \cap X_{\alpha^m} \Fdot^m\]
is dense.
We give the result of a straightforward dimension calculation.
\begin{theorem}\label{thm:SchubProbCompInt}
 Suppose $\Fdot^1,\dotsc,\Fdot^m$ are sufficiently general flags in $V$ and $\balpha = (\alpha^1,\dotsc,\alpha^m)$ is a list of Schubert conditions.
 The intersection
 \[X:=X_{\alpha^1}^\circ \Fdot^1\cap \cdots \cap X^\circ_{\alpha^m}\Fdot^m\]
 is the solution set to the bilinear equations of Proposition \ref{prop:PrimalDualSchubProb}.
 This involves formulating $X$ using $k(n-k)(m-1)$ equations in a space of dimension $k(n-k)m-|\balpha|$.
 
 In particular, if $\balpha$ is a Schubert problem, then $X=X_{\alpha^1} \Fdot^1\cap \cdots \cap X_{\alpha^m}\Fdot^m$, and $X$ is formulated as the set of solutions to a square system of equations.
\end{theorem}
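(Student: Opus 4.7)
My plan is to reduce the claim to a direct combination of Proposition~\ref{prop:PrimalDualSchubProb}, the bijection observation from Corollary~\ref{cor:SchubIntCompInt}, and two dimension counts. First, I would verify the dimension of the ambient space. The primal factor $\s_{\alpha^1}$ has dimension $k(n-k)-|\alpha^1|$, and each dual factor $\hats\tbox_{\alpha^{i\perp}}$ parametrizes $X^\circ_{\alpha^{i\perp}}\Fdot^{i\perp}$, which has the same dimension as $X^\circ_{\alpha^i}\Fdot^i$ by Proposition~\ref{prop:perpSameCodim} (namely $k(n-k)-|\alpha^i|$). Summing the primal factor and the $m-1$ dual factors gives $mk(n-k)-|\balpha|$. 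The number of bilinear equations is immediate: Proposition~\ref{prop:PrimalDualSchubProb} produces one matrix equation of shape $k\times(n-k)$ for each of the $m-1$ primal--dual pairings, yielding $k(n-k)(m-1)$ scalar equations. When $\balpha$ is a Schubert problem, $|\balpha|=k(n-k)$ by definition, so the ambient dimension equals the number of equations, giving a square system.

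Next I would check that the common zero locus of the bilinear equations inside the product of local charts is exactly $\Delta^m(X)$ and projects bijectively onto $X$. By construction, the Stiefel coordinates $(\s_{\alpha^1},\hats\tbox_{\alpha^{2\perp}},\dotsc,\hats\tbox_{\alpha^{m\perp}})$ parametrize the product $X_{\alpha^1}^\circ\Fdot^1\times X_{\alpha^{2\perp}}^\circ\Fdot^{2\perp}\times\cdots\times X_{\alpha^{m\perp}}^\circ\Fdot^{m\perp}$. The bilinear equations $M_{\alpha^1}\Fdot^1\widehat{\Fdot^{i\perp}}\widehat{M}_{\alpha^{i\perp}}=0$ for $i=2,\dotsc,m$ cut out precisely the locus where the primal point $H$ annihilates each dual point, i.e.\ the intersection with $\Delta^m(\Gr(k,V))$ in the sense of Proposition~\ref{prop:PrimalDualSchubProb}. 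Applying that proposition identifies this locus with $\Delta^m(X_{\alpha^1}^\circ\Fdot^1\cap\cdots\cap X_{\alpha^m}^\circ\Fdot^m)$, and projection onto the primal factor recovers $X$ bijectively, just as in Corollary~\ref{cor:SchubIntCompInt}.

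Finally, I would justify the claim that $X=X_{\alpha^1}\Fdot^1\cap\cdots\cap X_{\alpha^m}\Fdot^m$ when $\balpha$ is a Schubert problem and the flags are sufficiently general. By Kleiman's generic transversality (Proposition~\ref{prop:genTrans}), for general $\Fdot^1,\dotsc,\Fdot^m$ the intersection $X_{\alpha^1}\Fdot^1\cap\cdots\cap X_{\alpha^m}\Fdot^m$ is zero-dimensional and transverse, hence finite. The open sets $X_{\alpha^i}^\circ\Fdot^i\subset X_{\alpha^i}\Fdot^i$ are dense and their complements are proper closed subvarieties; by a standard general-position argument one arranges that no intersection point lies in any of these complements, so the open intersection $X$ coincides with the full intersection.

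The only genuinely nontrivial step is the last one: ensuring that ``sufficiently general'' flags avoid all the boundary strata $X_{\alpha^i}\Fdot^i\setminus X_{\alpha^i}^\circ\Fdot^i$ simultaneously. I expect to handle this by noting that each boundary component is a union of smaller Schubert varieties (with respect to a shifted flag), applying Proposition~\ref{prop:genTrans} to each of finitely many resulting lists of Schubert conditions, and observing that each such intersection has strictly larger codimension than $|\balpha|$, hence is empty for generic flag placement. Everything else in the proof is bookkeeping: matching the dimensions of $\s_\alpha$ and $\hats\tbox_\alpha$, counting the equations row by row, and invoking the already-established primal-dual equivalence.
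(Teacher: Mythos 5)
Your proposal is correct and takes essentially the same approach as the paper: the theorem is presented there as a direct consequence of Proposition~\ref{prop:PrimalDualSchubProb} together with the observation (immediately preceding the theorem) that Kleiman's transversality, Proposition~\ref{prop:genTrans}, forces the open intersection $X_{\alpha^1}^\circ\Fdot^1\cap\cdots\cap X^\circ_{\alpha^m}\Fdot^m$ to coincide with the full intersection when $\balpha$ is a Schubert problem, and the paper compresses the remaining verification to ``a straightforward dimension calculation.'' You have simply spelled out that bookkeeping—the dimension of the product of Stiefel charts via Proposition~\ref{prop:perpSameCodim}, the count of $k(n-k)(m-1)$ bilinear equations, and the codimension argument showing that the boundary strata $X_{\alpha^i}\Fdot^i\setminus X^\circ_{\alpha^i}\Fdot^i$ miss the intersection for generic flags—which is exactly what the paper leaves implicit.
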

Using this formulation, we may certify approximate solutions to Schubert problems and therefore may use numerical methods to study Schubert calculus from a pure mathematical point of view.
In some circumstances, this square formulation may lead to more efficient computation than the determinantal formulation.
We give an example comparing the classical system of equations with the primal-dual system of equations.
\begin{example}\label{ex:overdetVSprimaldual}
 Let $\balpha=(\alpha^1,\dotsc,\alpha^4)$ be the Schubert problem in $\Gr(4,\C^8)$ given by $\alpha^i=(2,5,7,8)$ for $i=1,\dots,4$, and let $\Fdot^1,\dotsc,\Fdot^4$ be flags in general position.
 We denote $\alpha^i$ by its Young diagram $\IIIit$.
 The classical formulation of the instance
 \[X:=X_{\IIIis} \Fdot^1 \cap \cdots \cap X_{\IIIis}\Fdot^4\]
 of $\balpha$ uses determinantal equations in the coordinates $\s_{\IIIis}$ of $X_{\IIIis} \Fdot^1$.
 By Corollary \ref{cor:numGens}, this formulation involves a system of $3\cdot 17 = 51$ linearly independent quartic determinants in $16-4=12$ variables.

 The competing primal-dual formulation is a square system of bilinear equations in the coordinates $\left(\s_{\IIIis} , \hats\tbox_{\IIiIs} , \hats\tbox_{\IIiIs} , \hats\tbox_{\IIiIs}\right)$ of $X_{\IIIis} \Fdot^1\times X_{\IIiIs} \Fdot^{2{\perp}}\times X_{\IIiIs} \Fdot^{3{\perp}}\times X_{\IIiIs} \Fdot^{4{\perp}}$.
 This system involves $48$ bilinear equations in $48$ variables.
\end{example}
A feature of the primal-dual formulation for an instance of a Schubert problem is that it requires more variables than the classical formulation, but it typically lowers the degrees of the polynomials which must be solved.
If we have flags in linear general position, then we may reduce the number of variables and equations.
\begin{example}\label{ex:overVSprimdualLinGen}
 Let $\balpha=(\alpha^1,\dotsc,\alpha^4)$ be the Schubert problem for $\Gr(4,\C^8)$ given by $\alpha^i=(2,5,7,8)$ for $i=1,\dots,4$, and let $\Fdot^1,\dotsc,\Fdot^4$ be flags in general position.
 We denote $\alpha^i$ by its Young diagram $\IIIit$.
 The classical formulation of the instance
 \[X:=X_{\IIIis} \Fdot^1 \cap \cdots \cap X_{\IIIis}\Fdot^4\]
 of $\balpha$ uses determinantal equations in the coordinates $\s_{\IIIis}^{\IIIis}$ of $(X_{\IIIis} \Fdot^1 \cap X_{\IIIis} \Fdot^2)^\circ$.
 By Corollary \ref{cor:numGens}, this formulation involves a system of $2\cdot 17 = 34$ linearly independent quartic determinants in $16-4-4=8$ variables.

 The competing primal-dual formulation is a square system of bilinear equations in the coordinates $\left(\s_{\IIIis}^{\IIIis} , \hats\tbox_{\IIiIs}^{\IIiIs}\right)$ of $(X_{\IIIis} \Fdot^1 \cap X_{\IIIis} \Fdot^2)^\circ \times (X_{\IIiIs} \Fdot^{3{\perp}} \cap X_{\IIiIs} \Fdot^{4{\perp}})^\circ$.
 This system involves $16$ bilinear equations in $16$ variables.
\end{example}
\begin{proposition}\label{prop:PrimalDualSchubProbHalf}
 Suppose $m\geq 2$ is even.
 If $\alpha^i\in\tbinom{[n]}{k}$ for $i\in [m]$ and $\Fdot^i$ for $i\in [m]$ are flags in linear general position, then the set $\Delta^m(X_{\alpha^1} \Fdot^1 \cap \cdots \cap X_{\alpha^m} \Fdot^m)$ is equal to
 \[\left(X_{\alpha^1} \Fdot^1\cap X_{\alpha^2} \Fdot^2 \times \prod_{i=2}^{m/2} X_{\alpha^{(2i-1){\perp}}} \Fdot^{(2i-1){\perp}}\cap X_{\alpha^{2i{\perp}}} \Fdot^{2i{\perp}} \right)\cap\Delta^{m/2}(\Gr(k,V))\]
 and is expressed locally as a system of $k(n-k)(m/2-1)$ bilinear equations in the coordinates $(\s_{\alpha^1}^{\alpha^2},\hats\tbox_{\alpha^{3{\perp}}}^{\alpha^{4{\perp}}},\dotsc,\hats\tbox_{\alpha^{(m-1){\perp}}}^{\alpha^{m{\perp}}})$.
\end{proposition}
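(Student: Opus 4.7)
The plan is to iterate the dual-diagonal construction, but now pairing the Schubert varieties two at a time before passing to duals. Set $Y_i := X_{\alpha^{2i-1}}\Fdot^{2i-1}\cap X_{\alpha^{2i}}\Fdot^{2i}$ for $i=1,\dotsc,m/2$, and write $Z := X_{\alpha^1}\Fdot^1\cap\cdots\cap X_{\alpha^m}\Fdot^m = Y_1\cap\cdots\cap Y_{m/2}$. Applying Proposition \ref{prop:dualDiagonalm} with $m/2$ in place of $m$ to the sets $Y_1,\dotsc,Y_{m/2}$ gives
\[
\Delta^{m/2}(Z) \;=\; \bigl(Y_1\times{\perp}(Y_2)\times\cdots\times{\perp}(Y_{m/2})\bigr)\cap \Delta^{m/2}(\Gr(k,V))\,.
\]
Proposition \ref{prop:dualSV} now yields ${\perp}(Y_i) = X_{\alpha^{(2i-1){\perp}}}\Fdot^{(2i-1){\perp}}\cap X_{\alpha^{2i{\perp}}}\Fdot^{2i{\perp}}$, which gives exactly the claimed set equality.

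For the local description, because $\Fdot^1$ and $\Fdot^2$ are in linear general position, the matrices $\s_{\alpha^1}^{\alpha^2}$ parametrize a dense open subset of $Y_1$ with respect to a basis of $V$ adapted to these two flags (Proposition \ref{prop:linGenFlagBases} and the remark that follows it). The dualization ${\perp}$ is induced by the perfect pairing $V\otimes V^*\to \C$, and linear general position of $\Fdot^{2i-1},\Fdot^{2i}$ translates directly into linear general position of the annihilator flags $\Fdot^{(2i-1){\perp}},\Fdot^{2i{\perp}}$ (the transversality dimensions swap roles under $\perp$), so $\hats\tbox_{\alpha^{(2i-1){\perp}}}^{\alpha^{2i{\perp}}}$ similarly parametrizes a dense open subset of ${\perp}(Y_i)$. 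Let $M$ be a matrix of indeterminates from $\s_{\alpha^1}^{\alpha^2}$, changed into the standard basis $\be$ of $V$ by multiplication by a basis of $\Fdot^1$ on the right; let $\widehat{N}_i$ be a matrix of indeterminates from $\hats\tbox_{\alpha^{(2i-1){\perp}}}^{\alpha^{2i{\perp}}}$, changed into the standard dual basis $\be^*$ by multiplication on the left by a basis of $\Fdot^{(2i-1){\perp}}$. Then, exactly as in the derivation of Equation (\ref{eqn:bilin2SV}), the slice $\Delta^{m/2}(\Gr(k,V))$ is cut out of the product of local patches by the $m/2-1$ matrix equations
\[
M\cdot\widehat{N}_i \;=\; 0_{k\times(n-k)}\,,\qquad i=2,\dotsc,m/2\,,
\]
each of which contributes $k(n-k)$ bilinear equations, for the claimed total of $k(n-k)(m/2-1)$.

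The step I expect to require the most care is checking that the duality ${\perp}$ really does preserve linear general position of flags, since without it the coordinates $\hats\tbox_{\alpha^{(2i-1){\perp}}}^{\alpha^{2i{\perp}}}$ are not guaranteed to parametrize a dense open subset of the corresponding pairwise intersection; this is the dual analogue of the discussion following Proposition \ref{prop:linGenFlagBases} and can be handled by verifying Definition \ref{def:generalFlags} for the dual flags directly from $\dim(F_i^\perp\cap G_j^\perp)=\max\{0,(n-i)+(n-j)-n\}$ when $\Fdot,\Gdot$ are in linear general position. Modulo this verification, the remainder is just the same reduction-to-the-diagonal bookkeeping as in Propositions \ref{prop:SchubIntCompInt} and \ref{prop:PrimalDualSchubProb}, and the equality as sets holds globally while the bilinear realization holds on the described dense open patch.
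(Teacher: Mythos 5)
Your proof is correct and follows the approach the paper implicitly intends: the paper states this proposition without an explicit proof, treating it as an immediate variant of Proposition \ref{prop:PrimalDualSchubProb} obtained by applying Proposition \ref{prop:dualDiagonalm} to the pairwise intersections $Y_i$ rather than to the individual Schubert varieties, and your reduction is exactly this. The one point you flag as requiring care --- that $\perp$ carries flags in linear general position to flags in linear general position, so that $\hats\tbox_{\alpha^{(2i-1)\perp}}^{\alpha^{2i\perp}}$ does parametrize a dense open subset of $\perp(Y_i)$ --- is correctly identified and your sketch of the verification is right: from $\dim(F_{n-i}\cap G_{n-j})=\max\{0,n-i-j\}$ one computes $\dim((F_{n-i})^\perp\cap (G_{n-j})^\perp)=n-\dim(F_{n-i}+G_{n-j})=\max\{0,i+j-n\}$, which is precisely Definition \ref{def:generalFlags} for the dual flags. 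The paper elides this check, so including it is appropriate. The count of $k(n-k)(m/2-1)$ bilinear equations from the $m/2-1$ matrix equations $M\cdot\widehat{N}_i=0$ is also correct.
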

With this proposition, we eliminate roughly half of the variables and equations needed to define a Schubert problem with a square system.
\begin{theorem}\label{thm:SchubProbCompIntHalfEqs}
 Suppose $m\geq 2$ is even, $\Fdot^1,\dotsc,\Fdot^m$ are sufficiently general flags in $\Gr(k,V)$, and $\balpha = (\alpha^1,\dotsc,\alpha^m)$ is a list of Schubert conditions.
 The intersection
 \[X:=X^\circ_{\alpha^1}\Fdot^1\cap \cdots \cap X^\circ_{\alpha^m}\Fdot^m\]
 is the solution set to the bilinear equations of Proposition \ref{prop:PrimalDualSchubProbHalf}.
 This involves formulating $X$ using $k(n-k)(m/2-1)$ equations by realizing it in a space of dimension $k(n-k)m/2-|\balpha|$.

In particular, if $\balpha$ is a Schubert problem, then $X = X_{\alpha^1}\Fdot^1\cap \cdots \cap X_{\alpha^m}\Fdot^m$, and $X$ is formulated as the set of solutions to a square system of equations.
\end{theorem}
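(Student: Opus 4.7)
The plan is to deduce Theorem~\ref{thm:SchubProbCompIntHalfEqs} directly from Proposition~\ref{prop:PrimalDualSchubProbHalf} by combining it with a dimension count and by verifying that the dense open subsets parametrized by Stiefel coordinates are what we want. Since $m$ is even, I would first pair the flags as $(\Fdot^{2i-1},\Fdot^{2i})$ for $i\in [m/2]$. Because the flags are sufficiently general, each pair is in linear general position, so the coordinates $\s_{\alpha^1}^{\alpha^2}$ parametrize the dense open subset $(X_{\alpha^1}\Fdot^1\cap X_{\alpha^2}\Fdot^2)^\circ$, and the dual coordinates $\hats\tbox_{\alpha^{(2i-1)\perp}}^{\alpha^{2i\perp}}$ parametrize $(X_{\alpha^{(2i-1)\perp}}\Fdot^{(2i-1)\perp}\cap X_{\alpha^{2i\perp}}\Fdot^{2i\perp})^\circ$ for $i=2,\dotsc,m/2$.

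Next I would carry out the dimension bookkeeping. By the remark following Proposition~\ref{prop:genTrans}, $\dim (\s_{\alpha^{2i-1}}^{\alpha^{2i}}) = k(n-k)-|\alpha^{2i-1}|-|\alpha^{2i}|$, and by Proposition~\ref{prop:perpSameCodim} the analogous dimension holds for the dual factors. Summing over the $m/2$ pairs gives total ambient dimension
\[
\sum_{i=1}^{m/2}\bigl(k(n-k)-|\alpha^{2i-1}|-|\alpha^{2i}|\bigr) \;=\; \tfrac{m}{2}k(n-k) - |\balpha|,
\]
which is the dimension asserted by the theorem.

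Next I would count equations and interpret the cut-out set. The set equality from Proposition~\ref{prop:PrimalDualSchubProbHalf} says that $\Delta^{m/2}(X)$ is cut out of the product of $m/2$ pairwise intersections by the condition that the primal factor annihilates each of the $m/2-1$ dual factors. In Stiefel coordinates $M_{\alpha^1}$ on the primal factor and $\widehat{M}_i$ on the $i$-th dual factor (with the appropriate ambient flag bases, exactly as in Equation~(\ref{eqn:bilin2SV})), each annihilation condition is a $k\times(n-k)$ matrix equation of bilinear expressions, contributing $k(n-k)$ equations. With $m/2-1$ dual factors this yields $k(n-k)(m/2-1)$ bilinear equations, matching the count in the theorem. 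Via the projection onto the primal factor (a bijection by the generalized form of Corollary~\ref{cor:SchubIntCompInt}), the common zero locus of these equations is precisely $X$.

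Finally, for the square-system claim, I specialize to the case $|\balpha|=k(n-k)$, i.e.\ $\balpha$ is a Schubert problem. Then $\frac{m}{2}k(n-k)-|\balpha| = k(n-k)(m/2-1)$, so the ambient dimension equals the number of equations, proving squareness. The harmless subtlety I would check is that ``sufficiently general'' can simultaneously guarantee (i) pairwise linear general position for each $(\Fdot^{2i-1},\Fdot^{2i})$, (ii) Kleiman generic transversality of the full intersection so that $X^\circ_{\alpha^1}\Fdot^1\cap\cdots\cap X^\circ_{\alpha^m}\Fdot^m$ agrees with $X_{\alpha^1}\Fdot^1\cap\cdots\cap X_{\alpha^m}\Fdot^m$, and (iii) that the primal-dual projection remains bijective on the intersection; each of these is an open condition on the flag parameters, so the intersection of the corresponding dense open sets remains dense and nonempty. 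The only real obstacle is this bookkeeping of genericity, but it is standard: the three open conditions are each satisfied outside proper subvarieties of the flag variety, hence hold on a common dense open set of $m$-tuples of flags.
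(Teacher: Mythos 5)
Your proposal is correct and matches the approach the paper intends: the theorem is stated as a direct consequence of Proposition~\ref{prop:PrimalDualSchubProbHalf} together with a dimension count, and your bookkeeping of coordinates, equations, and genericity fills in exactly the details the paper leaves as a ``straightforward dimension calculation.''
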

We may use Theorem \ref{thm:SchubProbCompIntHalfEqs} in the case where $m$ is odd by appending a trivial Schubert condition $\alpha^{m+1}:=(n-k+1,\dotsc,n)$ to $\balpha$.

Since $X_{\Is}\Fdot$ is a hypersurface defined by one determinant, we may formulate a Schubert problem involving some hypersurface Schubert varieties using a square system involving fewer equations and variables than suggested by Theorem \ref{thm:SchubProbCompIntHalfEqs}.
We use the primal-dual formulation to express the intersection of non-hypersurface Schubert varieties and a determinant to define each hypersurface in the primal factor.
\begin{example}
 Consider the Schubert problem $(\IIIit^3,\It^4)$ in $\Gr(4,\C^8)$.
 Suppose $\Fdot^1,\dotsc,\Fdot^7$ are general flags.
 We may express the instance
 \[X:=X_{\IIIis} \Fdot^1 \cap \cdots \cap X_{\IIIis}\Fdot^3 \cap X_{\Is} \Fdot^4\cap \cdots \cap X_{\Is} \Fdot^7\]
 of $(\IIIit^3,\It^4)$ by a system of determinantal equations in the local coordinates $\s_{\IIIis}^{\IIIis}$ on $(X_{\IIIis} \Fdot^1\cap X_{\IIIis} \Fdot^2)^\circ$.
 By Corollary \ref{cor:numGens}, this formulation involves a system of $1\cdot 17 + 4\cdot 1= 21$ quartic determinants in $16-4-4=8$ variables.

 The na\"ive competing primal-dual formulation is a square system of $48$ bilinear equations in $48$ variables.

 Using a primal-dual formulation with $X_{\It}\Fdot^5 \cap X_{\It}\Fdot^6 \cap X_{\It}\Fdot^7$ defined by determinants in the primal factor yields a square system of equations in the coordinates $\left(\s_{\IIIis}^{\IIIis} , \hats\tbox_{\IIiIs}^{\Is}\right)$ of $X_{\IIIis} \Fdot^1 \cap X_{\IIIis} \Fdot^{2}\times X_{\IIiIs} \Fdot^{3{\perp}} \cap X_{\Is} \Fdot^{4{\perp}}$
 consisting of $16$ bilinear equations and $3$ quartic determinants in $19$ variables.
\end{example}

\section{Flag Varieties}

Many of the results of this chapter extend to Schubert problems more general than those in a Grassmannian.
As an example of this, we describe flag varieties, which are generalizations of Grassmannians.

Fix a positive integer $\ell$, and let $\bk:=(0 < k_1 < \cdots < k_\ell < n)$ be an increasing $\ell$-tuple of positive integers less than $n$.
\begin{definition}
 The \emph{flag variety $\Fl(\bk;V)$} is the set of $\ell$-tuples $H$ of nested $k_i$-planes,
 \[\Fl(\bk;V):=\{H\ |\ H_1\subset \cdots \subset H_\ell \subset V \,,\ \dim(H_i)=k_i\mbox{ for }i\in[\ell]\}\,.\]
\end{definition}
If $\ell=1$, then $\Fl(\bk;V)=\Gr(k_1,V)$.
We generalize the notion of a Schubert condition.
\begin{definition}
 Let $\alpha\in\tbinom{[n]}{\bk}$ denote the set of permutations on $[n]$ such that $\alpha_i < \alpha_{i+1}$ for $i\in[n]\setminus \bk$.
 We call $\alpha\in\tbinom{[n]}{\bk}$ a \emph{Schubert condition}.
\end{definition}

 We give a few Schubert conditions for the flag variety $\Fl(2,5;\C^7)$:
 \[(3,6\,|\,1,2,4\,|\,5,7)\,,\qquad (6,7\,|\,3,4,5\,|\,1,2)\,,\qquad(1,2\,|\,3,4,5\,|\,6,7)\,.\]
 We use a vertical line instead of a comma to denote positions where entries of $\alpha$ are allowed to decrease.

Flag varieties have local coordinates similar to the Stiefel coordinates.
\begin{definition}
 Let $\alpha\in \tbinom{[n]}{\bk}$ be a Schubert condition.
 The subset $S_\bk(\alpha)\subset \Mat_{k_\ell\times n}$ is the subset of matrices whose entries $m_{ij}$ satisfy the condition,
 \[m_{i,\alpha_j}=\delta_{ij}\quad\mbox{for}\quad k_{p-1}+1 \leq i \leq k_p\,,\ 1 \leq j \leq k_p\,,\]
 for $p\in[\ell]$ with the convention $k_0=0$.
 We call the coordinates given by these matrices the \emph{Stiefel coordinates}.
\end{definition}
\begin{example}
 Consider the flag variety $\Fl(2,4;6)$.
 Using $*$ to denote arbitrary entries, we give arbitrary matrices in $\s_\bk(\alpha)$ for $\alpha=(5,6\,|\,3,4)$ and $\alpha=(2,4\,|\,1,5)$ respectively,
 \[\left(
     \begin{matrix}
       * & * & * & * & 1 & 0 \\
       * & * & * & * & 0 & 1 \\
       * & * & 1 & 0 & 0 & 0 \\
       * & * & 0 & 1 & 0 & 0 \\
     \end{matrix}
   \right)\qquad\mbox{and}\qquad\left(
     \begin{matrix}
       * & 1 & * & 0 & * & * \\
       * & 0 & * & 1 & * & * \\
       1 & 0 & * & 0 & 0 & * \\
       0 & 0 & * & 0 & 1 & * \\
     \end{matrix}
   \right)\,. 
 \]
The $1$ in position $(i,\alpha_i)$ for $i\in[k_\ell]$ is called a \emph{pivot}.
\end{example}
The proof that $\Gr(k,V)$ is smooth extends to flag varieties.
\begin{proposition}\label{prop:dimFlKV}
 The flag variety $\Fl(\bk;V)$ is a smooth variety of dimension
 \[\dim(\Fl(\bk;V)) = \sum_{i=1}^\ell (k_i-k_{i-1})(n-k_i)\,,\]
 with the convention $k_0=0$.
\end{proposition}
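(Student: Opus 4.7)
The plan is to adapt the proof of Corollary \ref{cor:GrassDim} by exhibiting the Stiefel-type charts $S_\bk(\alpha)$ for $\alpha\in\tbinom{[n]}{\bk}$ as an open affine cover of $\Fl(\bk;V)$, each isomorphic to an affine space of the claimed dimension.

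First, I would realize $\Fl(\bk;V)$ as a closed subvariety of the product of Grassmannians $\prod_{i=1}^\ell \Gr(k_i,V)$ cut out by the incidence relations $H_i\subset H_{i+1}$ for $i\in[\ell-1]$, each of which is a closed condition given by the vanishing of minors (using Proposition \ref{prop:detConds} applied to a matrix representing $H_i$ relative to a basis of $H_{i+1}$). This makes $\Fl(\bk;V)$ a projective variety. Then, for each $\alpha\in\tbinom{[n]}{\bk}$, the map sending $M\in S_\bk(\alpha)$ to the flag whose $p$-th subspace is $\rowspace(M_1,\dotsc,M_{k_p})$ is well-defined (the pivot structure forces the correct dimensions and nesting) and injective, with regular inverse on its image by Pl\"ucker-coordinate arguments as in Proposition \ref{prop:PluckerCoords} applied in each factor.

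Next, I would show these charts cover $\Fl(\bk;V)$. Given a flag $H$, choose any $k_\ell\times n$ basis matrix $M$ whose first $k_p$ rows span $H_p$. The group of invertible block lower-triangular matrices (with block sizes $k_1,k_2-k_1,\dotsc,k_\ell-k_{\ell-1}$) acts on $M$ without altering the associated flag. Processing blocks in order---applying reduced row echelon form within block $p$ and using rows of earlier blocks to clear the entries of block $p$ rows at the pivot columns of those earlier blocks---produces a canonical representative lying in exactly one $S_\bk(\alpha)$.

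Finally, counting free entries: row $i$ with $k_{p-1}<i\leq k_p$ has its entries at columns $\alpha_1,\dotsc,\alpha_{k_p}$ fixed to $\delta_{ij}$ and its remaining $n-k_p$ entries free. Summing over rows and blocks gives $\dim S_\bk(\alpha) = \sum_{p=1}^{\ell}(k_p-k_{p-1})(n-k_p) =: D$. Since $\Fl(\bk;V)$ is covered by open affine charts each isomorphic to $\C^D$, it is smooth of dimension $D$. The main obstacle will be the block-wise row reduction in the covering step: one must carefully track how the block lower-triangular group acts and verify that the output pivot pattern is unambiguously an element of $\tbinom{[n]}{\bk}$. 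Once that bookkeeping is in place, everything else is a routine extension of the Grassmannian case.
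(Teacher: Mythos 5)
Your proposal is correct and follows exactly the route the paper indicates—the paper offers no separate proof, remarking only that ``the proof that $\Gr(k,V)$ is smooth extends to flag varieties,'' and your argument is precisely that extension: embed $\Fl(\bk;V)$ in $\prod_i\Gr(k_i,V)$ via nested incidence conditions, use the charts $\s_\bk(\alpha)$ as an open affine cover via block row reduction under the parabolic subgroup, and count free entries to obtain $\sum_{p}(k_p-k_{p-1})(n-k_p)$. The dimension count agrees with the examples in the text (e.g.\ $\Fl(2,4;\C^6)$ has dimension $2\cdot4+2\cdot2=12$), and the block-lower-triangular bookkeeping you flag as the key obstacle is indeed the only nontrivial point, but it goes through as you describe.
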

\begin{definition}
 Given a flag $\Fdot$ and $\alpha\in\tbinom{[n]}{\bk}$, we have a \emph{Schubert variety},
\begin{multline*}
 X_\alpha \Fdot := \{ H \in \Fl(\bk;V)\ |\ \dim(H_p \cap F_{\alpha_i})\geq \#\{\alpha_j\ |\ j\leq i\,,\ \alpha_j\leq \alpha_i\} \\
 \mbox{for }p\in[\ell]\,,\ k_{p-1}+1\leq i\leq k_p \}\,,
\end{multline*}
 with the convention $k_0=0$.
\end{definition}
Schubert varieties in flag varieties have local coordinates similar to the Stiefel coordinates.
\begin{definition}
 Let $\alpha\in \tbinom{[n]}{\bk}$.
 The \emph{Stiefel coordinates} of $X_\alpha \Fdot$ are given by the subset of matrices $(\s_\bk)_\alpha\subset\s_\bk(\alpha)$ which satisfy the requirement that every entry to the right of a pivot is zero.
\end{definition}
\begin{example}
 Consider the flag variety $\Fl(2,4;6)$.
 Using $*$ to denote arbitrary entries, we give arbitrary matrices in $(\s_\bk)_\alpha$, which give coordinates for $X_\alpha \Fdot$, for $\alpha=(5,6\,|\,3,4)$ and $\alpha=(2,4\,|\,1,5)$ respectively,
 \[\left(
     \begin{matrix}
       * & * & * & * & 1 & 0 \\
       * & * & * & * & 0 & 1 \\
       * & * & 1 & 0 & 0 & 0 \\
       * & * & 0 & 1 & 0 & 0 \\
     \end{matrix}
   \right)\qquad\mbox{and}\qquad\left(
     \begin{matrix}
       * & 1 & 0 & 0 & 0 & 0 \\
       * & 0 & * & 1 & 0 & 0 \\
       1 & 0 & 0 & 0 & 0 & 0 \\
       0 & 0 & * & 0 & 1 & 0 \\
     \end{matrix}
   \right)\,.
 \]
\end{example}
As in the Grassmannian case, one may count indeterminates to determine the dimension (or codimension) of a Schubert variety in a flag variety.
We write $|\alpha|$ to denote the codimension of $X_\alpha \Fdot$ in $\Fl(\bk;V)$.
The Stiefel coordinates for $X_\alpha \Fdot^1 \cap X_\beta \Fdot^2$ do not have a straightforward generalization for general flag varieties.

We extend properties of duality to flag varieties.
\begin{definition}
 Let $\bk^\perp$ denote the increasing $\ell$-tuple of integers defined as follows,
 \[\bk^\perp:=(0 < n-k_\ell < \cdots < n-k_1 <n)\]
\end{definition}
Recall that the duality between $V$ and $V^*$ gives a natural map ${\perp}:\Gr(k,V) \rightarrow \Gr(n-k,V^*)$ defined by $H\mapsto H^\perp$.
We may extend this map to a map from a flag variety to an associated flag variety,
\[{\perp}:\Fl(\bk,V) \rightarrow \Fl(\bk^\perp,V^*)\,,\]
given by $(H_1,\dotsc,H_\ell)\mapsto (H_\ell^\perp,\dotsc,H_1^\perp)$.

Let $\alpha\in\tbinom{[n]}{\bk^\perp}$.
The \emph{dual Stiefel coordinates} are the coordinates given by matrices in $\hats_\bk(\alpha)\subset \Mat_{n\times k^\perp_\ell} = \Mat_{n\times (n-k_1)}$ with entries $m_{ij}$ satisfying
 \[m_{n-\alpha_i+1,j}=\delta_{ij}\quad\mbox{for}\quad k^\perp_{p-1}+1 \leq j \leq k^\perp_p\,,\ 1 \leq i \leq k^\perp_p\,,\]
for $p\in[\ell]$ with the convention $k_0=0$.

The coordinates given by $\hats\tbox_\bk(\alpha)$ parametrize $\Fl(\bk^\perp;V^*)$, as the first $k^\perp_p$ columns parametrize a $k^\perp_p$-plane $H_p\in\Gr(k^\perp_p,V^*)$ for each $p\in[\ell]$, and we have $H_1\subset \cdots \subset H_\ell$.
This gives coordinates for a dense subset of $\Fl(\bk^\perp;V^*)$, because the first $k^\perp_p$ columns of the matrices in $\hats_\bk(\alpha)$ give coordinates for a dense subset of $\Gr(k^\perp_p,V^*)$ for each $p\in[\ell]$.
\begin{example}
 Let $n=7$.
 Consider the flag variety $\Fl(3,4,6;V^*)$ associated to $\Fl(1,3,4;V)$.
 We give arbitrary matrices in $(\hats_{(3,4,6)})_\alpha$ that provide local coordinates for the flag variety, for $\alpha=(5,6,7\,|\,4\,|\,2,3)$ and $\alpha=(1,3,5\,|\,4\,|\,2,7)$ respectively,
 \[\left(
     \begin{matrix}
       0 & 0 & 1 & 0 & 0 & 0 \\
       0 & 1 & 0 & 0 & 0 & 0 \\
       1 & 0 & 0 & 0 & 0 & 0 \\
       * & * & * & 1 & 0 & 0 \\
       * & * & * & * & 0 & 1 \\
       * & * & * & * & 1 & 0 \\
       * & * & * & * & * & * \\
     \end{matrix}
   \right)\qquad \mbox{and} \qquad \left(
     \begin{matrix}
       * & * & * & * & 0 & 1 \\
       * & * & * & * & * & * \\
       0 & 0 & 1 & 0 & 0 & 0 \\
       * & * & * & 1 & 0 & 0 \\
       0 & 1 & 0 & 0 & 0 & 0 \\
       * & * & * & * & 1 & 0 \\
       1 & 0 & 0 & 0 & 0 & 0 \\
     \end{matrix}
   \right)\,.
 \]
\end{example}
For $j\in [k_\ell]$, the 1 in the $(n-\alpha_j+1,j)$ position is called a pivot.
\begin{definition}
 The \emph{dual Stiefel coordinates} for the Schubert variety $X_\alpha \Fdot \subset \Fl(\bk^\perp;V^*)$ are the local coordinates given by the subset $(\hats\tbox_{\bk^\perp})_\alpha\subset \hats\tbox_{\bk^\perp}(\alpha)$ consisting of matrices whose entries above each pivot are zero.
\end{definition}

\begin{definition}\label{def:flagAlphaPerp}
 Let $\alpha\in\tbinom{[n]}{\bk}$ be a Schubert condition.
 We define $\omega = (n,n-1,\dotsc,2,1)$ to be the longest permutation on $[n]$.
 The Schubert condition $\alpha^\perp\in \tbinom{[n]}{\bk^\perp}$ associated to $\alpha$ is given by the composition of permutations,
 \[\alpha^\perp:=\omega\alpha\omega\,.\]
\end{definition}
Definition \ref{def:flagAlphaPerp} allows us to extend Proposition \ref{prop:dualSV} to Schubert varieties in a general flag variety.
\begin{proposition}
 If $\alpha\in\tbinom{[n]}{\bk}$ then $X_\alpha \Fdot\cong {\perp}(X_\alpha \Fdot) = X_{\alpha^\perp} \Fdot^\perp$.
\end{proposition}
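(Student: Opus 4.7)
The isomorphism part of the statement is essentially formal. The map $\perp\colon\Fl(\bk;V)\to\Fl(\bk^\perp;V^*)$ sending $(H_1,\dotsc,H_\ell)\mapsto(H_\ell^\perp,\dotsc,H_1^\perp)$ is an isomorphism of varieties, since taking the same construction with $V$ replaced by $V^*$ yields a two-sided inverse (using $(U^\perp)^\perp=U$). In particular, $\perp$ restricts to an isomorphism between $X_\alpha\Fdot$ and its image $\perp(X_\alpha\Fdot)$. So the substantive content of the proposition is the equality $\perp(X_\alpha\Fdot)=X_{\alpha^\perp}\Fdot^\perp$.

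The plan for the equality is to extend the argument of Proposition \ref{prop:dualSV} from a single Grassmannian component to an arbitrary flag variety. The key identities are the duality formulas
\[\dim(U^\perp\cap W^\perp)=n-\dim\Span(U,W)\qquad\text{and}\qquad\dim(U\cap W)+\dim\Span(U,W)=\dim U+\dim W\,,\]
valid for any subspaces $U,W\subset V$, together with the identification $F_{\alpha_i}^{\,\mathrm{ann}}=(F_{n-\alpha_i})^\perp$ of the annihilator of $F_{\alpha_i}$ with a subspace in $\Fdot^\perp$. First I would apply these formulas to rewrite the incidence condition
\[\dim(H_p\cap F_{\alpha_i})\geq c_i\,,\qquad c_i:=\#\{\alpha_j\mid j\leq i,\ \alpha_j\leq\alpha_i\}\,,\]
as a lower bound
\[\dim(H_p^\perp\cap F_{n-\alpha_i}^{\perp})\geq n-k_p-\alpha_i+c_i\,,\]
where $H_p^\perp$ is the component of $\perp(H)$ of dimension $n-k_p$, i.e.\ occupying position $\ell-p+1$ in the flag $\perp(H)$.

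Next I would match this dualized condition against a defining incidence condition for $X_{\alpha^\perp}\Fdot^\perp$. The correct index translation should be $q=\ell-p+1$ (for the flag component) together with a reflection of the row index $i\mapsto n{+}1{-}i$ (induced by $\alpha^\perp=\omega\alpha\omega$), so that $\alpha_i\mapsto n{+}1-\alpha^\perp_{n+1-i}$. The hardest part will be verifying the combinatorial identity
\[n-k_p-\alpha_i+\#\{\alpha_j\mid j\leq i,\ \alpha_j\leq\alpha_i\}=\#\{\alpha^\perp_{j'}\mid j'\leq n{+}1{-}i,\ \alpha^\perp_{j'}\leq\alpha^\perp_{n+1-i}\}\,,\]
so that the translated incidence conditions match exactly. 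This amounts to reading off rank statistics of the permutation matrix of $\alpha$ before and after the symmetry $\omega(\cdot)\omega$; it is a purely combinatorial identity about the reflection of a permutation matrix through its center. I expect this bookkeeping — and correctly handling the boundaries $k_{p-1}+1\leq i\leq k_p$ dualizing to $k^\perp_{q-1}+1\leq n{+}1{-}i\leq k^\perp_q$ — to be the main obstacle. Once this is in place, the equivalence of incidence conditions gives the set-theoretic equality, and the fact that both sides are Schubert varieties of equal codimension (by the straightforward codimension analysis analogous to Proposition \ref{prop:LGSVdim}) upgrades this to the scheme-theoretic equality.
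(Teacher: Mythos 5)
Your plan — establish $\perp$ as an isomorphism of ambient flag varieties, then verify the set equality by dualizing incidence conditions, extending the paper's proof of Proposition~\ref{prop:dualSV} — is what the paper intends; it precedes this statement only with ``Definition~\ref{def:flagAlphaPerp} allows us to extend Proposition~\ref{prop:dualSV}'' and gives no further argument. But the index translation you propose is off by one, and the combinatorial identity you write down is false. Dualizing $\dim(H_p\cap F_{\alpha_i})\geq c_i$ does land at $F^\perp_{n-\alpha_i}$, but $\alpha^\perp_{n+1-i}=n+1-\alpha_i$, so your reflected target $F^\perp_{\alpha^\perp_{n+1-i}}$ has dimension $n+1-\alpha_i$, one too high. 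For a concrete failure take $n=4$, $\bk=(2)$, $\alpha=(1,3\,|\,2,4)$, so $\alpha^\perp=\alpha$; at $p=1$, $i=1$ your left side is $4-2-1+1=2$ while your right side is $\#\{j'\leq 4:\alpha^\perp_{j'}\leq\alpha^\perp_4=4\}=4$. In fact the index $i'$ with $\alpha^\perp_{i'}=n-\alpha_i$ is $i'=n+1-\alpha^{-1}(\alpha_i+1)$, which is not $n+1-i$, so the sparse defining conditions do not dualize position-by-position.

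The paper's Grassmannian proof sidesteps this by first passing to the equivalent rank-function formulation indexed over all of $[n]$: $H\in X_\alpha\Fdot$ if and only if $\dim(H_p\cap F_j)\geq r_p(j):=\#\{i\leq k_p:\alpha_i\leq j\}$ for every $p\in[\ell]$ and every $j\in[n]$. Dualizing and substituting $j'=n-j$, $p'=\ell-p+1$ (so $k^\perp_{p'}=n-k_p$), the identity to verify becomes
\[\#\{i'\leq n-k_p:\alpha^\perp_{i'}\leq j'\}\;=\;j'-k_p+\#\{i\leq k_p:\alpha_i\leq n-j'\}\,,\]
which is immediate: under the bijection $i'\leftrightarrow n+1-i$ the left side counts $\{i>k_p:\alpha_i\geq n+1-j'\}$, and since $\alpha$ permutes $[n]$ the set $\{i:\alpha_i\geq n+1-j'\}$ has exactly $j'$ elements, of which exactly $k_p-\#\{i\leq k_p:\alpha_i\leq n-j'\}$ lie in $[k_p]$. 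This avoids the off-by-one bookkeeping and handles the boundary ranges $k_{p-1}<i\leq k_p$ automatically; you should follow that route rather than matching the sparse conditions directly.
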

\begin{example}
 Let $n=7$ and $\alpha = (4\,|\,2,5\,|\,1,6\,|\,3,7) \in\tbinom{[7]}{1,3,5}$.
 We have an associated Schubert condition $\alpha^\perp = (1,5\,|\,2,7\,|\,3,6\,|\,4,) \in \tbinom{[7]}{2,4,6}$.
 We give Stiefel coordinates $(\s_\bk)_\alpha$ and $(\hats\tbox_{\bk^\perp})_{\alpha^\perp}$ for $X_\alpha \Fdot$ and $X_{\alpha^\perp}\Fdot^\perp$ respectively,
 \[\left(
     \begin{matrix}
       a & b & c & 1 & 0 & 0 & 0 \\
       d & 1 & 0 & 0 & 0 & 0 & 0 \\
       e & 0 & f & 0 & 1 & 0 & 0 \\
       1 & 0 & 0 & 0 & 0 & 0 & 0 \\
       0 & 0 & g & 0 & 0 & 1 & 0 \\
     \end{matrix}
   \right)\qquad\mbox{and}\qquad
   \left(
     \begin{matrix}
       0 & 0  & 0 & 1  & 0 & 0  \\ 
       0 & 0  & 0 & -d & 0 & 1  \\
       0 & 1  & 0 & 0  & 0 & 0  \\
       0 & -c & 0 & -a & 1 & -b \\
       0 & -f & 0 & -e & 0 & 0  \\ 
       0 & -g & 1 & 0  & 0 & 0  \\ 
       1 & 0  & 0 & 0  & 0 & 0  \\ 
     \end{matrix}
   \right)\,.
 \]
 These parametrizations pair a point in $X_\alpha \Fdot$ with its dual in $X_{\alpha^\perp} \Fdot^\perp$.
\end{example}
Kleiman's theorem of general transitivity applies to intersections in a flag variety \cite{Kleiman}.
\begin{proposition}\label{prop:flagGenTrans}
 Let $\balpha = (\alpha^1,\dotsc,\alpha^m)$ be a list of Schubert conditions for $\Fl(\bk;V)$.
 If $\Fdot^1,\dotsc,\Fdot^m$ are general flags, then
 \begin{equation}\label{eqn:FlagGeneralIntersection}
  X:=X_{\alpha^1} \Fdot^1 \cap \cdots \cap X_{\alpha^m} \Fdot^m
 \end{equation}
 is generically transverse.
 That is, $X=\emptyset$ or $\codim(X) = |\alpha^1|+\cdots+|\alpha^m| =: |\balpha|$.
\end{proposition}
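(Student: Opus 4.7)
The plan is to deduce this from Kleiman's transversality theorem, which is already invoked for the Grassmannian version (Proposition \ref{prop:genTrans}) and is cited from \cite{Kleiman, Va06b}. The key observation is that $\Fl(\bk;V)$ is a homogeneous space for the connected algebraic group $\GL(V)$, since any complete flag of subspaces of type $\bk$ can be mapped to any other by a change of basis. Moreover, this action carries Schubert varieties to Schubert varieties in the obvious way: for $g \in \GL(V)$ we have $gX_{\alpha}\Fdot = X_{\alpha}(g.\Fdot)$, because the incidence conditions defining $X_\alpha \Fdot$ depend only on the dimensions of the intersections $H_p \cap F_{\alpha_i}$, which are preserved by $g$.

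First I would state Kleiman's theorem in the form I need: if $G$ is a connected algebraic group acting transitively on a smooth variety $Y$, and $A, B \subset Y$ are subvarieties, then for $g$ in a dense open subset of $G$, the intersection $A \cap gB$ is generically transverse, and in particular either empty or of codimension $\codim A + \codim B$. I would then proceed by induction on $m$. The base case $m=1$ is trivial since a single Schubert variety has the correct codimension $|\alpha^1|$ by the dimension count implicit in the definition of $(\s_\bk)_{\alpha^1}$. For the inductive step, assume $Y := X_{\alpha^1}\Fdot^1 \cap \cdots \cap X_{\alpha^{m-1}}\Fdot^{m-1}$ is generically transverse with each component of codimension $|\alpha^1|+\cdots+|\alpha^{m-1}|$ for general choices of $\Fdot^1,\dotsc,\Fdot^{m-1}$. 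Apply Kleiman's theorem componentwise with $A$ equal to each irreducible component of $Y$, $B = X_{\alpha^m}\Fdot^m$, and $G = \GL(V)$: for $g$ in a dense open subset of $\GL(V)$, the intersection $A \cap g X_{\alpha^m}\Fdot^m = A \cap X_{\alpha^m}(g.\Fdot^m)$ is generically transverse of the correct codimension. Since there are only finitely many components of $Y$, a general $g$ works simultaneously for all of them; taking $\Fdot^m := g.\Fdot^m$ completes the induction.

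The main subtlety will be being careful about the word ``general.'' What the argument really gives is that there is a dense open subset $U \subset (\Fl(V))^m$ (where $\Fl(V)$ denotes the variety of complete flags, parametrizing bases for flags) such that the conclusion holds for $(\Fdot^1,\dotsc,\Fdot^m) \in U$. This requires that at each inductive step the relevant dense open subset of $\GL(V)$ pulls back to a dense open subset in the appropriate parameter space, which is automatic from continuity of the action. A secondary point is that Kleiman's original statement is for algebraically closed fields of characteristic zero; in positive characteristic transversality may fail but dimensional transversality (the codimension statement) still holds by Vakil \cite{Va06b}, and this is exactly the weaker conclusion recorded in the proposition. No new idea beyond the Grassmannian case is required: the only input is transitivity of $\GL(V)$ on $\Fl(\bk;V)$, which is immediate.
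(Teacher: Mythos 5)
The paper does not give a proof of this proposition: it simply asserts, in the sentence immediately preceding, that ``Kleiman's theorem of general transitivity applies to intersections in a flag variety'' and leaves the citation to Kleiman (and, implicitly, Vakil) as the justification, exactly as it did for the Grassmannian version (Proposition~\ref{prop:genTrans}). Your proposal fills in the details of that citation, and it does so correctly: the transitivity of $\GL(V)$ on $\Fl(\bk;V)$, the equivariance $g\cdot X_\alpha\Fdot = X_\alpha(g\cdot\Fdot)$, and the reduction to Kleiman's pairwise statement by induction on $m$ are exactly the standard route. The base case being a dimension count from the Stiefel coordinates $(\s_\bk)_{\alpha^1}$ is also the right observation. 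Your concluding remark about characteristic --- that Kleiman gives generic transversality in characteristic zero while Vakil recovers the dimensional statement in positive characteristic --- matches the paper's own framing of the Grassmannian case.

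The one place where you gesture rather than argue is the passage from ``for each fixed general $(\Fdot^1,\dotsc,\Fdot^{m-1})$ there is a dense open set of $g\in\GL(V)$'' to ``there is a dense open subset of the full parameter space of $m$-tuples of flags.'' Saying this is ``automatic from continuity of the action'' is not quite the right justification in the algebraic category; the clean statement is that the locus in $\Fl(V)^m$ where the conclusion holds is constructible (images of constructible sets under morphisms are constructible) and contains the generic point, hence contains a dense open set. This is routine bookkeeping and does not affect the correctness of the approach, but if you were to include this as a proof in the paper you would want to phrase it that way rather than appeal to continuity.
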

We say that $\balpha$ is a Schubert problem in $\Fl(\bk;V)$ if $X$ of Equation (\ref{eqn:FlagGeneralIntersection}) has expected dimension zero, that is, if $|\balpha| = \dim(\Fl(\bk;V))$.

The proposition and theorems of Section \ref{sec:primal-dual} which do not use the coordinates $\s_\alpha^\beta$ extend to flag varieties.
Thus we may formulate Schubert problems in flag varieties as solution sets to square systems of bilinear equations.
\begin{example}
 Consider the flag variety $\Fl(2,4;\mathbb C^6)$ which is $12$-dimensional, general flags $\Fdot^1,\dotsc,\Fdot^4$ in $V$, and the Schubert condition $\alpha = (3,6\,|\,2,5\,|\,1,4)$. 
 We have $|\alpha| = 3$, and
 \[X:=X_\alpha \Fdot^1 \cap \cdots \cap X_\alpha \Fdot^4\]
 contains $12$ points.
 The relevant conditions characterizing $(H_1,H_2)\in X_\alpha \Fdot^i$ are
 \[\dim(H_1\cap F_3)\geq 1\,,\qquad\mbox{and}\qquad\dim(H_2\cap F_2)\geq 1\,.\]
 Since $\dim(H_1)=2$, the first relevant condition is given by $3$ linearly independent quadratic determinants.
 Since $\dim(H_2)=4$, the second relevant condition is given by one maximal quartic determinant.
 Using local coordinates for $X_\alpha\Fdot^1$, the classical determinantal formulation of $X$ involves $3\cdot 3 = 9$ quadratic and $3 \cdot 1 = 3$  quartic equations in $12-3=9$ variables.

 The alternative primal-dual formulation involves a square system of $36$ bilinear equations in local coordinates $((\s_{(2,4)})_\alpha,(\s_{(2,4)})_{\alpha^\perp},(\s_{(2,4)})_{\alpha^\perp},(\s_{(2,4)})_{\alpha^\perp})$.
 Note that $\bk:=(2,4)$ implies $\bk^\perp=(2,4)$.
\end{example}

  \chapter{SUMMARY} 
  \label{chapter: Summary}

In Chapter \ref{chapSchubert}, we gave background needed to understand our study in enumerative real algebraic geometry.
We outlined the history surrounding some theorems and conjectures in Schubert calculus.
The Mukhin-Tarasov-Varchenko Theorem is a surprisingly elegant result in enumerative real algebraic geometry, which demonstrates that the enumerative theory of real Schubert calculus is a rich field of study.
This remarkable theorem was not generally accepted when it was first conjectured, and computations played a large role in giving credence to it.

In recent years, supercomputers have been used to solve billions of polynomial systems in order to investigate problems related to the Mukhin-Tarasov-Varchenko Theorem.
These investigations have lead to theorems and strongly supported conjectures.
We continued this practice of studying reality problems with the use of supercomputers.

In Chapter \ref{chapLower}, we described a study of Eremenko and Gabrielov, which used topological methods to obtain lower bounds to the number of real points in a fiber of the Wronski map over a real point.
We realized this inverse Wronski problem as a problem in Schubert calculus and used modern software tools to investigate these bounds from a more general point of view.
We discovered that the Eremenko-Gabrielov type lower bounds are often sharp.
In some cases, however, sharpness fails in an interesting way.

We solved over 339 million instances of 756 Schubert problems, using over 469 gigahertz-years of processing power.
While studying the data, we observed a remarkable congruence modulo four in the number of real solutions to problems with certain symmetries, and this congruence was the topic of Chapter \ref{chapMod4}.
We also discovered a family of Schubert problems, which has unusual gaps in the numbers of real solutions to real osculating instances.
These relate to work of Sottile and Soprunova, and we used their method of counting real factorizations of a real polynomial to explain the observed lower bounds and gaps.

In Chapter \ref{chapMod4}, we proved a congruence modulo four in the number of real solutions to real osculating instances of Schubert problems given by symmetric Schubert conditions.
This work affirmed the most surprising and compelling conjecture to come out of the computational project described in Chapter \ref{chapLower}.
One would typically expect the number of real solutions to a real osculating instance of a Schubert problem to be fixed modulo two, because nonreal solutions come in pairs.
We discovered that there is a Lagrangian involution which also acts on symmetric Schubert problems.
For a rich family of such problems, the Lagrangian involution and complex conjugation are independent and the nonreal solutions come in sets of four.
Establishing a congruence modulo four on the number of real solutions to real osculating instances, we established a new invariant in enumerative real algebraic geometry.

The work in Chapter \ref{chapLower}, and a lot of other work done in Schubert calculus, relied heavily on formulating problems in a way that is efficient for computation.
The computational complexity of calculating a Gr\"obner basis in characteristic zero was a bottleneck, which we hope to overcome through the use of certifiable numerical methods.
Algorithms from Smale's $\alpha$-theory may be used to certify numerical output, when the problem involved is given by a square system of polynomial equations.
However, Schubert problems are famously overdetermined.

In Chapter \ref{chapSquare}, we recast instances of Schubert problems as solution sets to square systems.
While this has the practical application of allowing us to use numerical methods in a pure mathematical study of Schubert calculus, our ability to reformulate such a problem as a square system is interesting by its own right.
The duality between $V$ and $V^*$ induces a duality between Schubert varieties in a Grassmannian and a dual Grassmannian, and we use this to give a primal-dual formulation for an instance of a Schubert problem.
This requires that we work in a larger space, adding variables, but we benefit from replacing higher-degree determinantal equations by bilinear equations.
The square system of equations may be used to certify approximate solutions obtained via an overdetermined system of determinantal equations, but if the bilinear equations provide a more efficient setting for solving instances via numerical methods, then we may do away with the overdetermined system altogether.



%
 \bibliographystyle{amsplain}
 \bibliography{References}


\end{document}